\numberwithin{equation}{section}
\newtheorem{thm}{Theorem}[subsection]
\newtheorem{lem}[thm]{Lemma}
\newtheorem{prop}[thm]{Proposition}
\newtheorem{cor}[thm]{Corollary}
\theoremstyle{definition}
\newtheorem{definition}[thm]{Definition}
\newtheorem{example}[thm]{Example}
\newtheorem{rmk}[thm]{Remark}
\newtheorem{conj}[thm]{Conjecture}
\newcommand{\Z}{\mathbb{Z}}
\renewcommand{\k}{\mathbf{k}}
\newcommand{\mf}{\operatorname{mf}}
\newcommand{\com}{\operatorname{com}}
\newcommand{\id}{\operatorname{id}}
\newcommand{\ov}{\overline}
\newcommand{\XX}{{\mathcal X}}
\newcommand{\YY}{{\mathcal Y}}
\newcommand{\ZZ}{{\mathcal Z}}
\newcommand{\HH}{{\mathcal H}}
\renewcommand{\SS}{{\mathcal S}}
\newcommand{\Hom}{\operatorname{Hom}}
\newcommand{\End}{\operatorname{End}}
\renewcommand{\a}{\alpha}
\renewcommand{\b}{\beta}
\newcommand{\om}{\omega}
\newcommand{\De}{\Delta}
\newcommand{\la}{\lambda}
\newcommand{\R}{{\mathbb R}}
\newcommand{\Ga}{\Gamma}
\newcommand{\wt}{\widetilde}
\newcommand{\ot}{\otimes}
\newcommand{\sing}{\operatorname{sing}}
\newcommand{\sub}{\subset}
\renewcommand{\k}{\mathbf{k}}
\newcommand{\bw}{{\mathbf w}}
\renewcommand{\mod}{\operatorname{mod}}
\newcommand{\und}{\underline}
\newcommand{\OO}{{\mathcal O}}
\newcommand{\Sym}{\operatorname{Sym}}
\newcommand{\Si}{\Sigma}
\newcommand{\coker}{\operatorname{coker}}
\newcommand{\BB}{{\mathcal B}}
\newcommand{\G}{{\mathbb G}}
\newcommand{\hra}{\hookrightarrow}
\newcommand{\lan}{\langle}
\newcommand{\ran}{\rangle}
\newcommand{\Coh}{\operatorname{Coh}}
\newcommand{\GG}{{\mathcal G}}
\newcommand{\UU}{{\mathcal U}}
\newcommand{\MF}{\operatorname{MF}}
\newcommand{\crit}{\operatorname{crit}}
\newcommand{\Spec}{\operatorname{Spec}}
\newcommand{\Sing}{\operatorname{Sing}}
\renewcommand{\P}{{\mathbb P}}
\newcommand{\si}{\sigma}
\newcommand{\ga}{\gamma}
\newcommand{\de}{\delta}
\renewcommand{\ker}{\operatorname{ker}}
\newcommand{\im}{\operatorname{im}}
\newcommand{\A}{{\mathbb A}}
\newcommand{\qcoh}{\operatorname{qcoh}}
\newcommand{\w}{{\mathbf w}}
\newcommand{\bbL}{{\mathbb L}}
\title[]{Homological mirror symmetry for the symmetric squares of punctured spheres}
\author{Yank\i\ Lekili}
\author{Alexander Polishchuk}
\address{Imperial College London}
\address{University of Oregon, National Research University Higher School of Economics, and
Korea Institute for Advanced Study}
\subjclass[2010]{Primary 14J33; Secondary 14F05, 53D37}
\keywords{Homological mirror symmetry, wrapped Fukaya category, symmetric products of surfaces, matrix factorizations}
\begin{document}

\begin{abstract}
        For an appropriate choice of a $\mathbb{Z}$-grading structure, we prove that the wrapped Fukaya category of the symmetric square of a $(k+3)$-punctured sphere, i.e. the Weinstein manifold given as the complement of $(k+3)$ generic lines in $\mathbb{C}P^2$ is quasi-equivalent to the derived category of coherent sheaves on a singular surface $\mathcal{Z}_{2,k}$ constructed as the boundary of a toric Landau-Ginzburg model $(\mathcal{X}_{2,k}, \w_{2,k})$. We do this by first constructing a quasi-equivalence between certain categorical resolutions of both sides and then localising. 
        We also provide a general homological mirror symmetry conjecture concerning all the higher symmetric powers of punctured spheres. The corresponding toric LG-models $(\mathcal{X}_{n,k},\w_{n,k})$ are constructed from the combinatorics of curves on the punctured sphere and are related to small toric resolutions of the singularity $x_1\ldots x_{n+1}=v_1\ldots v_k$.
  
\end{abstract}

\maketitle

\section{Introduction}

\subsection{Motivating examples}

We begin by considering the family \[ \mathcal{Y}_{1,1} \coloneqq \mathrm{Spec}\, \mathbb{C}[x,y,V]/(xy-V)\] over $\mathbb{A}^1_V$ as a ``toric degeneration'' of the general fiber to the singular variety $\mathcal{Z}_{1,1} \coloneqq \{ xy = 0\}$ which is a gluing of two toric varieties $\mathbb{A}^1_x$ and $\mathbb{A}^1_y$ at their toric boundary $0$. The symplectic mirror to this degeneration is well understood \cite{aurouxspec}. Namely, we consider the cotangent bundle of the circle $T^*\mathbb{S}^1$ as our symplectic manifold and $D_{1,1} \coloneqq \{ z \}$  a marked point on $T^*\mathbb{S}^1$ as a symplectic divisor. Then, one has an equivalence of pre-triangulated categories over $\mathbb{C}[V]$ 
\begin{align}\label{logfamily} \mathcal{W}(T^*\mathbb{S}^1, D_{1,1}) \simeq D^b \mathrm{Coh}(\mathcal{Y}_{1,1}) \end{align}
        where the left hand side is the wrapped Fukaya category of $T^*\mathbb{S}^1$ relative to the divisor $D_{1,1}$ (cf. \cite{seidelICM} \cite{LPe} \cite{sheridan2}) and the right hand side is the bounded derived category of $\mathcal{Y}_{1,1}$ over $\mathbb{A}^1_V$. 

The equivalence \eqref{logfamily} fits into the general framework of Gross-Siebert, Gross-Hacking-Keel \cite{GHK}, Abouzaid-Auroux-Katzarkov \cite{AAK}, where the general fiber is a log-Calabi-Yau variety and the degeneration given is a ``large complex structure limit'' degeneration of such a log-Calabi-Yau variety to a union of toric varieties. Even though in the case of this specific example both sides can be computed rather easily, in general this is a harder  problem. A viable strategy to establish this equivalence is to first look at the special fiber and prove the following equivalence
\begin{align} \label{node} \mathcal{W}(V_{1,1}) \simeq D^b \mathrm{Coh}(\mathcal{Z}_{1,1}) \end{align}
        for $V_{1,1}\coloneqq T^*\mathbb{S}^1 \setminus D_{1,1}$. Then one can try to deduce the equivalence \eqref{logfamily} via deformation theory (certain subcategories of those in equivalence \eqref{node} deform to give rise to the equivalence \eqref{logfamily}).

Note that $V_{1,1}$ is the pair-of-pants surface. To establish equivalences of the form \eqref{node}, we introduced a method via categorical partial compactifications in \cite{LP}, \cite{LPsymhms} 
(a similar idea was also applied in the log Calabi-Yau setting in \cite{keating}, \cite{hackingkeating}). On the A-side we introduce a stop $\Lambda^{\circ}$, that is a symplectic hypersurface on the contact boundary of $T^*\mathbb{S}^1 \setminus D_{1,1}$ and consider the partially wrapped Fukaya category, and on the B-side we partially compactify $\mathcal{Z}_{1,1}$ to $\overline{\mathcal{Z}}_{1,1}$ which is obtained by compactifying $\mathbb{A}^1_y$ to $\mathbb{P}^1_y$ by adding a point $y=\infty$. Thus, $\overline{\mathcal{Z}}_{1,1}$ is a union of $\mathbb{A}^1_x$ and $\mathbb{P}^1_{y}$ which intersect transversely at a point. We establish directly an easier equivalence \begin{align} \label{partcpct} 
        \mathcal{W}(V_{1,1}, \Lambda^{\circ}) \simeq D^b \mathrm{Coh} (\overline{\mathcal{Z}}_{1,1}) 
\end{align}
and then show that the equivalence of \eqref{node} is obtained from \eqref{partcpct} by localization.

The Liouville domain $V_{1,1}$ with the stop $\Lambda^\circ$ is given by the following picture.

\begin{figure}[ht!]
\centering
\begin{tikzpicture}

\begin{scope}[scale=0.8]

\tikzset{
  with arrows/.style={
    decoration={ markings,
      mark=at position #1 with {\arrow{>}}
    }, postaction={decorate}
  }, with arrows/.default=2mm,
}

\tikzset{vertex/.style = {style=circle,draw, fill,  minimum size = 2pt,inner        sep=1pt}}
\def \radius {1.5cm}

\foreach \s in {1,2,3,4,5,6,7,8,9,10} {
    \draw[thick] ([shift=({360/10*(\s)}:\radius-1.1cm)]-1,0) arc ({360/10 *(\s)}:{360/10*(\s+1)}:\radius-1.1cm);
    \draw[thick] ([shift=({360/10*(\s)}:\radius-1.1cm)]1,0) arc ({360/10 *(\s)}:{360/10*(\s+1)}:\radius-1.1cm);

   \draw[thick] ([shift=({360/10*(\s)}:\radius+1.5cm)]0,0) arc ({360/10 *(\s)}:{360/10*(\s+1)}:\radius+1.5cm);
}

\draw[with arrows] ([shift=({360/10*(7)}:\radius-1.1cm)]-1,0) arc ({360/10 *(7)}:{360/10*(5)}:\radius-1.1cm);
\draw[with arrows] ([shift=({360/10*(7)}:\radius-1.1cm)]1,0) arc ({360/10*(7)}:{360/10*(5)}:\radius-1.1cm);

\draw[with arrows]({360/10 * (3)}:\radius+1.5cm) arc ({360/10 *(3)}:{360/10*(4)}:\radius+1.5cm);

\draw [blue] ([shift=({360/10*(5)}:\radius-1.1cm)]-1,0) -- ([shift=({360/10*(5)}:\radius+1.5cm)]0,0);
\draw [blue] ([shift=({360/10*(10)}:\radius-1.1cm)]-1,0) --  ([shift=({360/10*(5)}:\radius-1.1cm)]1,0);

\node[blue] at ([shift=({360/10*(4)}:\radius-1.1cm)]-2,0.05) {\small $L_{0}$};

\node[blue] at (0,0.3) {\small $L_{1}$};

\node[vertex] at  ([shift=({360/10*(2.5)}:\radius+1.5cm)]0,0) {};

\draw[purple] (0.2, 3) to[in=0,out=260] (0,2.7);
\draw[purple] (-0.2, 3) to[in=180,out=280] (0,2.7);

 \node[purple] at ([shift=({360/10*(2.5)}:\radius+1.5cm)]0,-0.7) {\small $T_1$};

\end{scope}

\end{tikzpicture}
    \caption{Pair-of-pants}
    \label{pop}
\end{figure}

The equivalence \eqref{partcpct} is proved by matching the generators $L_0$ and $L_1$ with $\mathcal{O}_{\mathbb{P}^1_y}$ and $\mathcal{O}_{\mathbb{A}^1_{x}}$, respectively. To deduce the equivalence \eqref{node}, we localize at the subcategory generated by $T_1$ on the A-side and by the skyscraper sheaf $\mathcal{O}_{y=\infty}$ on the $B$-side. The reason that the equivalence \eqref{partcpct} is easier to establish is because the endomorphism algebra of our generators turns out to be formal which is not true before compactification.

The example that we considered above and the proof of the equivalences \eqref{logfamily}, \eqref{node},\eqref{partcpct} can be generalized in a straightforward way to the case of $(T^*\mathbb{S}^1, D_{1,k})$ where $D_{1,k}$ is now $k$ points (see \cite{LP} for a proof of \eqref{node}, \eqref{partcpct}) and the family $\mathcal{Y}_{1,k}$ is a $k$-dimensional family over $\mathbb{A}^k = \mathrm{Spec}\, \mathbb{C}[V_1,V_2,\ldots, V_k]$. The general fiber is again isomorphic to $\mathbb{C}^\times$ and the special fiber is a nodal chain given by the following 
figure: 

\begin{figure}[!h]
\begin{tikzpicture}
\draw (0,0) -- (1,1);
\draw (0.7,1) -- (1.7,0);
\draw (1.4,0) -- (2.4,1);
\draw (2.1,1) -- (3.1,0);
\draw (2.7,0) -- (3.7,1);
\draw (3.4,1) -- (4.4,0);

\node at (0,-0.3) {\tiny $\mathbb{A}^1_x$};
\node at (4.5,-0.3) {\tiny $\mathbb{A}^1_y$};
\end{tikzpicture}

        \caption{Mirror to $7$-punctured sphere, $V_{1,5}$}

\end{figure}
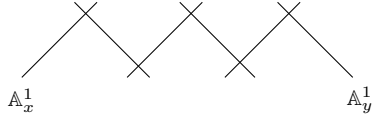

The two components at the ends are $\mathbb{A}^1_x$ and $\mathbb{A}^1_y$, and the ``interior components'' are isomorphic to $\mathbb{P}^1$.

Another way to represent this singular curve is via the image under the moment map induced by the toric action. This results in the following diagram where the rays on the left and right represent $\mathbb{A}^1_x$ and $\mathbb{A}^1_y$ and the interior intervals represent 4 components isomorphic to $\mathbb{P}^1$.

\begin{figure}[!h]
\begin{tikzpicture}
\draw (0,0) -- (6,0);
\fill (1,0) circle[radius=2pt];
\fill (2,0) circle[radius=2pt];
\fill (3,0) circle[radius=2pt];
\fill (4,0) circle[radius=2pt];
\fill (5,0) circle[radius=2pt];
\end{tikzpicture}

        \caption{Moment map picture of $\mathcal{Z}_{1,5}$ - the mirror to $V_{1,5}$}
\label{fg:intersection11}

\end{figure}
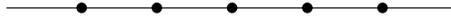

The $A$-side in the above example has a natural generalization to higher dimensions.
Namely, for $n,k\geq 0$, let $\{x_1,x_2, \ldots x_{n+1}\} \cup \{ v_1,\ldots, v_k\}$ be $(n+k+1)$ points in $\mathbb{P}^1_{\mathbb{C}}$ that are mutually distinct. We consider the exact symplectic manifold 
\[ V_{n,k} = \mathrm{Sym}^n (\mathbb{P}_{\mathbb{C}}^1 \setminus (\{x_1,x_2,\ldots x_{n+1}\} \cup \{v_1,v_2, \ldots, v_k\})) \] with the standard Liouville structure which can alternatively be described as the complement of $(n+k+1)$ generic hyperplanes in $\mathbb{P}_{\mathbb{C}}^n$. Note that $V_{n,0}$ can be symplectically identified with the cotangent bundle of the $n$-dimensional torus, 
\[ \mathrm{Sym}^n (\mathbb{P}_{\mathbb{C}}^1 \setminus (\{x_1,x_2,\ldots x_{n+1}\}) =  T^*\mathbb{T}^n,\] and we have $V_{n,k} = T^*\mathbb{T}^n \setminus D_{n,k}$ with
\[ D_{n,k} = \bigcup_{j=1}^k \{ v_j\} \times \mathrm{Sym}^{n-1} (\mathbb{P}_{\mathbb{C}}^1 \setminus (\{x_1,x_2,\ldots x_{n+1}\})\] 

In our previous work \cite{LPsymhms}, we studied (partially) wrapped Fukaya categories associated with $V_{n,k}$. In the case $k=1$, i.e., the case of $n$-dimensional pairs-of-pants, we proved a homological mirror symmetry statement that provides an equivalence of $\mathbb{Z}$-graded pre-triangulated categories over a regular commutative ring $\mathbf{k}$:
\begin{align} \mathcal{W}(V_{n,1}) \simeq \mathrm{D}^b \mathrm{Coh} (\mathcal{Z}_{n,1})  \simeq \MF_{\G_m}(\mathcal{Y}_{n,1} \times \mathbb{A}^1, \mathbf{w}_{n,1}) 
\end{align}
where $\mathcal{W}(V_{n,1})$ is the fully wrapped Fukaya category of the $n$-dimensional pair-of-pants $V_{n,1}$, the middle category is the derived category of coherent sheaves on the singular affine variety
\[ \mathcal{Z}_{n,1} = \mathrm{Spec}\, \mathbf{k}[x_1,\ldots, x_{n+1}]/(x_1x_2\ldots x_{n+1} )\]
and the third category is the $\mathbb{G}_m$-equivariant derived category of matrix factorizations on 
$\mathcal{Y}_{n,1}\times \mathbb{A}^1_{U_1}\simeq \mathbb{A}^{n+2}$, where 
\[ \mathcal{Y}_{n,1}\coloneqq \mathrm{Spec}\, \mathbb{C}[x_1,x_2,\ldots, x_{n+1},V_1]/(x_1x_2\ldots x_{n+1} - V_1)\simeq
\A^{n+1}\]  
for the potential $\mathbf{w}_{n,1}= U_1V_1$.

\subsection{Formulation of the main result}

In this paper, we give an explicit candidate for the $B$-side for any $n$ and $k$. To that end, inspired by the work of Abouzaid-Auroux-Katzarkov \cite{AAK}, we introduce a smooth toric variety $\mathcal{Y}_{n,k}$ over $\mathbb{A}^k = \mathrm{Spec}\, \k[V_1,V_2,\ldots, V_k]$ with the general fiber isomorphic to 
the torus $\G_m^n$ and the special fiber $\mathcal{Z}_{n,k}$ over $V_1=V_2=\ldots = V_k=0$ given by a union of toric varieties glued along toric strata
(see Sec.\ \ref{toric-constr-sec} for the construction). 
In fact, $\YY_{n,k}$ is a small toric resolution of the singularity $X_1\ldots X_{n+1}=V_1\ldots V_k$, in particular, it is Calabi-Yau
(see Sec.\ \ref{Toric-geom-sec}).
We also consider a Landau-Ginzburg version by considering the variety
\[ \mathcal{X}_{n,k} \coloneqq \mathcal{Y}_{n,k} \times \mathrm{Spec}\, \k[U_1,U_2,\ldots, U_k] \] 
with the potential 
\[ \mathbf{w}_{n,k} = U_1V_1+U_2V_2+\ldots +U_kV_k\]

There is a universal grading of the Fukaya category of $V_{n,k}$ with values in an abelian group 
$\bbL\simeq \Z^{n+k+1}$ which is compatible with a certain natural $\Z/2$-grading structure. In order to get a relation with the B-side we identify $\bbL$ with a natural grading group $\bbL_B$ on the B-side, which is a sublattice in
the character group of an algebraic torus acting on $\XX_{n,k}$ in a way that rescales
$\mathbf{w}_{n,k}$ according to the given character $l_0\in \bbL_B$.

The possible $\mathbb{Z}$-grading structures on $V_{n,k}$ is a torsor over $H^1(V_{n,k}) = \mathbb{Z}^{n+k}$. Among these there is a natural $\mathbb{Z}$-grading structure $\eta_0$ on $V_{n,k}$ which roughly speaking is determined by the property that the primitive chords around the $x_i$ have degree 0, and the primitive chords around $v_i$ have degree 2. This corresponds to the unique subgroup $\G_m$ in the above torus, such that all $x_i$ and $V_j$ have weight $0$ (and $U_j$ have weight $1$) with respect to this subgroup.

We formulate a conjectural homological mirror symmetry statement that generalizes the above result to arbitrary $n,k$.

\begin{conj} \label{mainconj} There exists a quasi-equivalence of $\mathbb{Z}$-graded pre-triangulated categories 
        \begin{align} \label{limitcat} \mathcal{W}(V_{n,k},\eta_0) \simeq D^b \mathrm{Coh} (\mathcal{Z}_{n,k}),
       \end{align}
       and a quasi-equivalence of $\bbL$-graded pre-triangulated categories
       \begin{align} \label{LG-limitcat} \mathcal{W}(V_{n,k})\simeq \BB_{n,k},\end{align}
               where $\BB_{n,k}$ is the category of $\bbL_B$-graded equivariant matrix factorizations of $\mathbf{w}_{n,k}$ on $\XX_{n,k}$.
Moreover,  by considering deformations over $\mathbb{A}^k$, there exists a quasi-equivalence
        \begin{align} \mathcal{W}(T^*\mathbb{T}^n, D_{n,k},\eta_0) \simeq D^b \mathrm{Coh}(\mathcal{Y}_{n,k}) \end{align} 
\end{conj}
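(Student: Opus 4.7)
The plan is to push the categorical partial compactification technique, which handles the $k=1$ cases and (per the abstract) the $n=2$ case of this paper, to arbitrary $n,k$. Concretely, one first introduces a Legendrian stop $\Lambda^\circ$ in the contact boundary of $V_{n,k}$, with one component for each boundary ray of the moment polytope of $\ZZ_{n,k}$ along which one wishes to compactify. Dually, one compactifies $\ZZ_{n,k}$ to a scheme $\overline{\ZZ}_{n,k}$ by adjoining the corresponding toric divisors, and establishes the ``easier'' equivalence $\WW(V_{n,k},\Lambda^\circ)\simeq D^b\Coh(\overline{\ZZ}_{n,k})$. Localizing on the A-side at the subcategory generated by linking Lagrangians near $\Lambda^\circ$, and on the B-side at the subcategory generated by skyscrapers on the added divisors, should then yield \eqref{limitcat}, mirroring the passage from \eqref{partcpct} to \eqref{node}.

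To set up the easier equivalence I would match, on the B-side, the structure sheaves of the irreducible toric components of $\overline{\ZZ}_{n,k}$ with, on the A-side, an explicit collection of Lagrangian sections of the symmetric product built from combinatorial arc diagrams in the punctured sphere lifted to $\mathrm{Sym}^n$. Chord combinatorics compute the $\Hom$'s on the A-side, while a \v{C}ech-style resolution computes $\Ext$'s between the structure sheaves, and a direct comparison of the two diagrams of algebras should match them. The pivotal technical step is formality of the compactified $A_\infty$ endomorphism algebra, which I expect to follow from a purity argument with respect to the $\bbL$-grading specified in the excerpt together with the $\Z/2$-grading $\eta_0$, once one verifies that the pure-weight piece splits off the higher operations.

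For the $\bbL$-graded matrix-factorization version \eqref{LG-limitcat}, the natural route is Kn\"orrer/Orlov periodicity in the pairs $(U_j,V_j)$: since $\mathbf{w}_{n,k}=\sum_j U_jV_j$ has critical locus along $\{U_j=V_j=0\}$ which, inside $\XX_{n,k}=\YY_{n,k}\times \A^k$, is exactly $\ZZ_{n,k}$, one expects $\MF_{\bbL_B}(\XX_{n,k},\mathbf{w}_{n,k})\simeq D^b\Coh(\ZZ_{n,k})$ after identifying $\bbL$ with $\bbL_B$ via the prescription in the excerpt. The deformation statement over $\A^k$ is then obtained by deforming the formal compactified endomorphism algebra on the B-side in the parameters $V_1,\ldots,V_k$ and matching these with the bulk deformations of $\WW(T^*\mathbb{T}^n,D_{n,k},\eta_0)$ generated by wrapping past $D_{n,k}$; this is essentially the same argument that passes from \eqref{node} to \eqref{logfamily} in the case $n=k=1$.

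The hardest step is formality of the compactified algebra for higher-dimensional symmetric products: the surface combinatorics that works at $n=2$ becomes much more intricate for $n\geq 3$, and one probably needs a more structural input, such as a group action or a weight filtration on the Reeb chord complex, to control higher operations. A second obstacle is that $\YY_{n,k}$ is only one of several small toric resolutions of $X_1\cdots X_{n+1}=V_1\cdots V_k$, with different resolutions related by toric flops; one must show the equivalence is invariant under these flops, presumably realized on the A-side by Lagrangian mutations or by reconfiguring $\Lambda^\circ$. Finally, the precise identification of $\bbL$ with $\bbL_B$ and compatibility with the $\Z/2$-grading refining $\eta_0$ is a delicate bookkeeping point, but one that the framework set up in the excerpt seems designed to make routine once the main equivalence is in hand.
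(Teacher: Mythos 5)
This statement is a \emph{conjecture} in the paper: the authors prove it only for $n=1,2$ (Theorem \ref{main-thm}, via Theorem \ref{main-thm-bis}), so there is no complete proof to compare against. Your outline correctly reproduces the paper's strategy for the cases that are done — introduce a stop, compactify $\ZZ_{n,k}$ to $\ov{\ZZ}_{n,k}$, match explicit product-of-arcs generators with structure sheaves of toric strata, prove formality of the compactified endomorphism algebra via an $\bbL$-grading purity argument (Lemma \ref{formality-lem}), then localize. But as a proof of the general conjecture it has genuine gaps beyond the formality issue you flag: the paper itself states that even for $n\le 2$ the $\bbL$-graded statement \eqref{LG-limitcat} remains open because a localization theory for $\bbL$-graded $A_\infty$-categories has not been developed, and the deformation statement over $\A^k$ is not proved in any case.

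More concretely, your treatment of \eqref{LG-limitcat} is wrong. You assert that Kn\"orrer/Orlov periodicity should give $\MF_{\bbL_B}(\XX_{n,k},\bw_{n,k})\simeq D^b\Coh(\ZZ_{n,k})$. First, Kn\"orrer periodicity does not apply: the $V_j$ are fixed functions on $\YY_{n,k}$, not free quadratic variables, so the relevant result is I\c{s}\i k's theorem \eqref{Isik-equivalence}, which produces $D^b\Coh(\ZZ_{n,k})$ only from the $\G_m$-equivariant ($\Z$-graded) category with $|U_j|=2$ and all other variables in degree $0$. Second, and more importantly, the $\bbL_B$-graded category $\BB_{n,k}$ is strictly finer than $D^b\Coh(\ZZ_{n,k})$ — the whole point of the two-part conjecture is that \eqref{limitcat} is the collapse of \eqref{LG-limitcat} along one admissible homomorphism $\bbL\to\Z$; identifying $\BB_{n,k}$ with $D^b\Coh(\ZZ_{n,k})$ as you propose would conflate the two statements. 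Relatedly, the critical locus of $\bw_{n,k}$ is not $\{U_j=V_j=0\}$: already for $n=1$ it is the union of $\ZZ_{1,k}$ with extra affine lines in the $U$-directions sitting over the nodes, and this larger critical locus is exactly what the generation and $\und{\End}$ computations in the paper must contend with.
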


We give a precise definition of the $\bbL$-graded dg-category of equivariant 
matrix factorizations $\BB_{n,k}$ in Sec.\ \ref{L-gr-G-mf}.
Note that \eqref{limitcat} follows from \eqref{LG-limitcat} by choosing a specific $\Z$-grading and applying localization theorems from \cite{auroux} on the A-side and from \cite{Isik} on the B-side.
Our main result is the proof of \eqref{limitcat} and \eqref{LG-limitcat} when $n=2$. Along the way, we also give a new proof for $n=1$.

\begin{thm}\label{main-thm} Equivalence  \eqref{limitcat} holds for $n=1$ and $n=2$.
\end{thm}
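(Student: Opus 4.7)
The plan is to follow the categorical partial compactification strategy of \cite{LP, LPsymhms}. Rather than attack equivalence \eqref{limitcat} directly, on the A-side we introduce a stop $\Lambda^\circ$ on the contact boundary of $V_{n,k}$, and on the B-side we partially compactify $\mathcal{Z}_{n,k}$ to a variety $\overline{\mathcal{Z}}_{n,k}$ obtained by adjoining certain toric strata. The intermediate statement
\[
\mathcal{W}(V_{n,k},\Lambda^\circ,\eta_0)\simeq D^b\Coh(\overline{\mathcal{Z}}_{n,k})
\]
is more tractable because the natural generators acquire formal endomorphism algebras after partial compactification. The desired equivalence \eqref{limitcat} is then recovered by simultaneous localization: stop removal on the A-side via \cite{auroux} corresponds to quotienting by the structure sheaves of the added toric strata on the B-side via \cite{Isik}.

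For $n=1$, one generalizes Figure~\ref{pop} to $k$ stops on the cylinder. The arcs $L_0,\ldots,L_k$ cutting the region into contractible pieces match the structure sheaves of the components of $\overline{\mathcal{Z}}_{1,k}$; the morphism algebra is a finite-dimensional quiver with relations, whose formality is forced by a weight argument with respect to the universal $\bbL$-grading on the A-side (all higher $A_\infty$-products are obstructed on grading grounds). Localizing then recovers the $n=1$ case of \eqref{limitcat}, thereby reproving it through the same machinery that will be used for $n=2$. For $n=2$, the Weinstein manifold $V_{2,k}$ is the complement of $k+3$ generic lines in $\mathbb{C}P^2$. The input is a maximal system of disjoint arcs on the $(k+3)$-punctured sphere; following the symmetric-product construction of \cite{LPsymhms}, one takes symmetric squares of pairs of these arcs to produce a family of exact Lagrangians in $V_{2,k}$ that is expected to split-generate the partially wrapped category. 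The heart of the proof is then to identify the $A_\infty$-endomorphism algebra of this collection with the $\Ext$-algebra of a tilting collection on $\overline{\mathcal{Z}}_{2,k}$, which by Sec.~\ref{toric-constr-sec} is a union of toric surfaces glued along toric $\mathbb{P}^1$'s.

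The main obstacle lies in the $n=2$ case, and within it two technical points. First, split-generation by the proposed symmetric-square Lagrangians needs to be proved; the plan is to combine the geometric generation criteria for wrapped Fukaya categories of symmetric products with a cofinal wrapping argument, reducing the question to a combinatorial check on the $(k+3)$-punctured sphere. Second, formality of the $A_\infty$-structure must be established; here again the $\bbL$-grading is the essential tool, ruling out higher operations by degree reasons exactly as in the $n=1$ case. Once both steps are in place, the matching with the B-side is forced by the combinatorics of the arc system against the toric description of $\overline{\mathcal{Z}}_{2,k}$, and a final localization at the stop-generated subcategory yields \eqref{limitcat} for $n=2$.
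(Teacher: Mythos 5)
Your overall strategy---partially compactify, prove an easier equivalence for the stopped/compactified categories, then localize---is exactly the route the paper takes (it deduces Theorem \ref{main-thm} from Theorem \ref{main-thm-bis}). But as written the proposal has a genuine gap on the formality step. You argue formality only on the A-side, "by a weight argument with respect to the universal $\bbL$-grading." The $\bbL$-grading by itself does not kill higher products: $\mathfrak{m}_n$ has $\bbL$-degree $(2-n)l_0$, which is perfectly consistent with nonzero higher operations. What Lemma \ref{formality-lem} actually requires is an \emph{admissible} collapse $f:\bbL\to\Z$ under which the whole endomorphism algebra of a suitably \emph{shifted} collection of generators sits in degree $0$; exhibiting such shifts is nontrivial (for $n=1$ the paper uses $P_k\oplus P_{k-1}[-1]\oplus\cdots\oplus P_0[-k^2]$, and for $n=2$ the shifted objects $\wt P_{ij}$, $\wt Q_j$ of Theorem \ref{B-side-formality-thm}). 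Moreover, even granting A-side formality (which is already known from \cite{LPsymhms}), you still need formality of the B-side dg-endomorphism algebra of the mirror generators to conclude a quasi-equivalence from an isomorphism of cohomology algebras. This is precisely where the paper's Landau--Ginzburg model is essential: the generators on $\ov{\ZZ}_{n,k}$ are realized as equivariant matrix factorizations of $\bw_{n,k}$ on $\ov{\XX}_{n,k}$, their $\und{\Hom}$-sheaves are computed via Koszul factorizations, and the degree-zero concentration is read off from the resulting $\Z$-grading. Your proposal never addresses B-side formality, nor the explicit computation of the B-side morphism algebra, nor generation of $D^b\Coh(\ov{\ZZ}_{n,k})$ by the chosen sheaves (Lemmas \ref{generation-lem-k=1}, \ref{k2-generation-lem} and Propositions \ref{n1-generation-prop}, \ref{n2-generation-prop}).

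A second, smaller gap is in the final localization. It is not enough to localize both sides "at the stop-generated subcategory": one must check that under the equivalence of compactified categories, the subcategory generated by the stop objects $T\times L_i$ on the A-side is carried onto the subcategory of objects supported on $\ov{\ZZ}_{n,k}\setminus\ZZ_{n,k}$ on the B-side. The paper does this by writing explicit twisted-complex resolutions of $T\times L_i$ (Equations \eqref{n2-T-resolution}, \eqref{n2-T-resolution-bis}) and matching them term by term with the resolutions of $\OO_{D_i}(-1)$ and $\OO_D$ in Lemma \ref{n2-resolution-lem}. Without this identification the localized categories need not agree. Finally, generation on the A-side is not an open problem to be attacked with "geometric generation criteria and cofinal wrapping": it is supplied by Auroux's theorem \cite{auroux}, which the paper simply invokes.
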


The equivalence \eqref{LG-limitcat} of $\bbL$-graded categories in the cases $n\le 2$ should also be within reach. We prove its 
$\Z$-graded version, as well as a version for the partially wrapped Fukaya category with one stop (see Theorem \ref{main-thm-bis} below). To deduce \eqref{LG-limitcat} one has to develop a setup for localization of $\bbL$-graded $A_\infty$ (or dg-)categories.

Figure \ref{toricdim2} describes the moment map picture of the singular variety $\mathcal{Z}_{2,k}$ for $k=4$. This is a two-dimensional analogue of Figure \ref{fg:intersection11}. 
The 3 components on the corners are all isomorphic to $\mathbb{A}^2$, 6 components isomorphic to $\mathbb{A}^1 \times \mathbb{P}^1$, 3 components isomorphic to $\mathbb{P}^1 \times \mathbb{P}^1$ and 3 components isomorphic to a blow-up of $\mathbb{A}^1 \times \mathbb{P}^1$. The markings $C_{ij}$ denote coordinates on one of the affine charts of each 
$\P^1$ along which the components are glued.

\begin{figure}[!h]
\begin{tikzpicture}

\draw (0,0) -- (4,0);
\draw (0,1) -- (3,1);
\draw (0,2) -- (2,2);
\draw (0,3) -- (1,3);

\draw (1,3) -- (1,-1);
\draw (2,2) -- (2,-1);
\draw (3,1) -- (3,-1);
\draw (4,0) -- (4,-1);

\draw (1,3) -- (2,4); 
\draw (2,2) -- (3,3); 
\draw (3,1) -- (4,2); 
\draw (4,0) -- (5,1);

\node at (-0.3,3) { $x_1$};
\node at (-0.3,2) { $x_1$};
\node at (-0.3,1) { $x_1$};
\node at (-0.3,0) { $x_1$};

\node at (1,-1.3) { $x_3$};
\node at (2,-1.3) { $x_3$};
\node at (3, -1.3) { $x_3$};
\node at (4, -1.3) { $x_3$};

\node at (2.2,4.2) { $x_2$};
\node at (3.2,3.2) { $x_2$};
\node at (4.2,2.2) { $x_2$};
\node at (5.2,1.2) { $x_2$};

        \node[red] at (1.5, 0) {\tiny $C_{13}$};
        \node[red] at (2.5, 0) {\tiny $C_{12}$};
        \node[red] at (3.5, 0) {\tiny $C_{11}$};
        \node[red] at (1.5, 1) {\tiny $C_{13}$};
        \node[red] at (2.5, 1) {\tiny $C_{12}$};
        \node[red] at (1.5, 2) {\tiny $C_{13}$};

        \node[red] at (1, 0.5) {\tiny $C_{21}$};
        \node[red] at (1, 1.5) {\tiny $C_{22}$};
        \node[red] at (1, 2.5) {\tiny $C_{23}$};
        \node[red] at (2, 0.5) {\tiny $C_{21}$};
        \node[red] at (2, 1.5) {\tiny $C_{22}$};
        \node[red] at (3, 0.5) {\tiny $C_{21}$};

\end{tikzpicture}

        \caption{Moment map picture of $\mathcal{Z}_{2,4}$ - the mirror to $V_{2,4}$}
\label{toricdim2}

\end{figure}
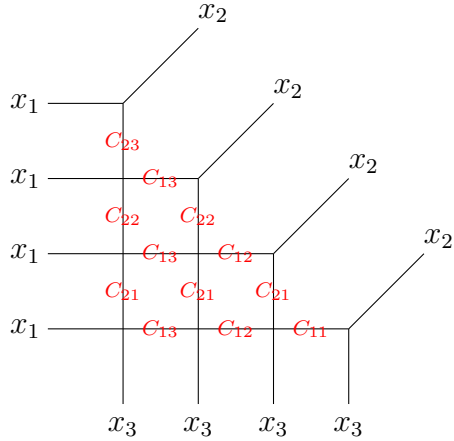

\subsection{Description of the toric variety}\label{toric-constr-sec}

We now explicitly describe the toric variety $\mathcal{Y}_{n,k}$. Let us set 
\[ \Sigma= \mathbb{P}^1_\mathbb{C} \setminus (\{x_1,x_2,\ldots x_{n+1}\} \cup \{v_1,v_2, \ldots, v_k\}), \] the $(n+k+1)$ punctured surface whose boundary components are marked by $x_1,\ldots,x_{n+1}$ and $v_1,\ldots, v_n$ as in Figure \ref{distinguishedcurves}.

\begin{figure}[!h] \centering
\begin{tikzpicture}
\tikzset{
  with arrows/.style={
    decoration={ markings,
      mark=at position #1 with {\arrow{>}}
    }, postaction={decorate}
  }, with arrows/.default=8mm,
}

\draw[with arrows] (0,0) to [in=230, out=310]  (1,0);  
\draw[dashed] (1,0) to [in=40, out=140]  (0,0);  

\draw[with arrows] (2,0) to [in=230, out=310]  (3,0);  
\draw[dashed] (3,0) to [in=40, out=140]  (2,0);  

\draw[with arrows] (4,0) to [in=230, out=310]  (5,0);  
\draw[dashed] (5,0) to [in=40, out=140]  (4,0);  

\draw[with arrows] (6,0) to [in=230, out=310]  (7,0);  
\draw[dashed] (7,0) to [in=40, out=140]  (6,0);

\draw[with arrows] (-1,-4) to [in=230, out=310]  (0,-4);  
\draw[dashed] (0,-4) to [in=40, out=140]  (-1,-4);  

\draw[with arrows] (3,-4) to [in=230, out=310]  (4,-4);  
\draw[dashed] (4,-4) to [in=40, out=140]  (3,-4);  

\draw[with arrows] (7,-4) to [in=230, out=310]  (8,-4);  
\draw[dashed] (8,-4) to [in=40, out=140]  (7,-4);

\draw[]  (0,0) to [in=60, out=270] (-1,-4);

\draw[black!40!green]  (1,0) to [in=210, out=330] (2,0);
\draw[black!40!green]  (3,0) to [in=210, out=330] (4,0);
\draw[black!40!green]  (5,0) to [in=210, out=330] (6,0);

\draw[]  (7,0) to [in=120, out=270] (8,-4);

\draw[blue]  (0,-4) to [in=150, out=30] (3,-4);
\draw[blue]  (4,-4) to [in=150, out=30] (7,-4);

\draw[red, with arrows]  (0.75, -3.67) to [in=280, out=60] (1.3,-0.11);
\draw[red, dashed]  (0.75, -3.67) to [in=245, out=90] (1.3,-0.11);

\draw[red, with arrows]  (1.5, -3.57) to [in=270, out=50] (3.3,-0.11);
\draw[red, dashed]  (1.5, -3.57) to [in=230, out=80] (3.3,-0.11);

\draw[red, with arrows]  (2.25, -3.67) to [in=260, out=40] (5.3,-0.11);
\draw[red, dashed]  (2.25, -3.67) to [in=215, out=70] (5.3,-0.11);

\draw[red, with arrows]  (4.75, -3.67) to [in=280, out=140] (1.6,-0.13);
\draw[red, dashed]  (4.75, -3.67) to [in=325, out=110] (1.6,-0.13);

\draw[red, with arrows]  (5.5, -3.57) to [in=270, out=130] (3.6,-0.13);
\draw[red, dashed]  (5.5, -3.57) to [in=310, out=100] (3.6,-0.13);

\draw[red, with arrows]  (6.25, -3.67) to [in=260, out=120] (5.6,-0.13);
\draw[red, dashed]  (6.25, -3.67) to [in=295, out=90] (5.6,-0.13);

\node[red] at (0.75, -3.9) {\tiny $C_{13}$};
\node[red] at (1.5, -3.9) {\tiny $C_{12}$};
\node[red] at (2.25, -3.9) {\tiny $C_{11}$};

\node[red] at (4.75, -3.9) {\tiny $C_{23}$};
\node[red] at (5.5, -3.9) {\tiny $C_{22}$};
\node[red] at (6.25, -3.9) {\tiny $C_{21}$};

\node[blue] at (0.3, -3.6) {$a_1$};
\node[blue] at (6.8, -3.6) {$a_2$};

\node[black!40!green] at (1.5, 0.1) {$b_3$};
\node[black!40!green] at (3.5, 0.1) {$b_2$};
\node[black!40!green] at (5.5, 0.1) {$b_1$};

\node[] at (0.5, 0.4) {$v_4$};
\node[] at (2.5, 0.4) {$v_3$};
\node[] at (4.5, 0.4) {$v_2$};
\node[] at (6.5, 0.4) {$v_1$};

\node[] at (-0.5, -4.5) {$x_1$};
\node[] at (3.5, -4.5) {$x_2$};
\node[] at (7.5, -4.5) {$x_3$};

\end{tikzpicture}
\caption{The distinguished curves on the surface $\Sigma$; $n=2$, $k=4$.}
\label{distinguishedcurves} 
\end{figure}

Here we will describe the toric variety $\mathcal{Y}_{n,k}$ in terms of the dual cones to cones forming the toric fan. 
These dual cones live in $H \otimes \mathbb{R}$, where $H = H_1(\Sigma)$ is the homology lattice of the punctured surface $\Sigma$
and are governed by the combinatorics of curves on $\Sigma$. 
We denote by $T$ the algebraic torus with the character lattice $H$
($T$ will be the torus acting on $\YY_{n,k}$).

Let us equip each circle $x_i$ with the orientation induced by the orientation of the surface, and we equip each circle $v_j$ with the opposite orientation to the one induced by the orientation of the surface. We still denote by $x_i$ and $v_j$ the corresponding classes in $H$. 
As in Figure \ref{distinguishedcurves}, let us denote by $a_i$ the arc from $x_i$ to $x_{i+1}$ and $b_j$ the arc from $v_j$ to $v_{j+1}$, and consider the circles $c_{ij}$ which intersect $a_i$ and $b_j$ once. We equip $c_{ij}$ with the orientation induced by the subsurface of $\Sigma$ to the left of $c_{ij}$. It is also convenient to set 
$$c_{1k} = -x_1 \ \text{ and } c_{n,0}= x_{n+1}.$$  
Note that we have the following relation in $H$: 
\begin{equation}\label{cij-formula-eq}
c_{ij}=v_k+v_{k-1}+\ldots+v_{j+1}-x_1-x_2-\ldots-x_i.
\end{equation}
We denote by $X_i,V_j,C_{ij}$ the invertible functions on $T$ corresponding to the elements $x_i,v_j,c_{ij}$ of the lattice $H$
(some of these functions and their inverses will extend to regular functions forming coordinates on affine charts of $\YY_{n,k}$).

We will often use the following relations in our lattice and the corresponding relations between functions:
\begin{align}
        c_{i,j} &= c_{i+1,j} + x_{i+1}, \ \ C_{i,j}=C_{i+1,j}X_{i+1},\\
        c_{i,j} &= c_{i,j+1} + v_{j+1}, \ \ C_{i,j}=C_{i,j+1}V_{j+1}. 
\end{align}

Now, we can define our cones in $H$. We start with the set $S \subset H$ given by the classes $v_1,\ldots, v_k$. The cones are numbered by nonincreasing functions
\[ \phi : [1,n] \to [1,k]. \] 
For each such function $\phi$, we will define a set of vectors $S_\phi$ obtained by removing some elements from $S$ and adding some new elements. 
The vectors of $S_\phi$ will correspond to coordinates on an affine chart $\UU_\phi\sub\YY_{n,k}$.
Here is the recipe. 

Let $[1,n]=I_1\sqcup\ldots\sqcup I_r$ be the partition of $[1,n]$ into the level sets of the function $\phi$. For each interval $I=[i,p]\subset [1,n]$ and an index $j$, $1 \leq j \leq k$, we consider the subsurface $\Sigma(I,j)$ bounded by the circles
\[ -c_{i,j}, -v_j, c_{p,j-1}, x_{i+1},\ldots, x_p \] 
Let us denote by $\partial'\Sigma(I,j)$ the set of classes of all the boundary components of $\Sigma(I,j)$ except for $-v_j$, so that $v_j$ is equal to the sum of the classes in $\partial'\Sigma(I,j)$. 
Now, for our partition of the arcs $I_1,\ldots, I_r$, we consider the subsurfaces $\Sigma_1,\ldots, \Sigma_r$, where $\Sigma_m = \Sigma(I_m, \phi(I_m))$. To get the set $S_\phi$ from $S$, we replace each $v_{\phi(I_m)}$ for $m=1,\ldots, r$ with the classes in $\partial' \Sigma_m$. In other words,
\[ S_\phi = \{ v_j | j \notin \mathrm{im}(\phi)\} \sqcup \{x_{i+1}| \phi(i)=\phi(i+1)\} \sqcup \] \[ \{ - c_{i, \phi(i)}| \phi(i-1)>\phi(i) \text{ or } i =1\} \sqcup \{ c_{i,\phi(i)-1}| \phi(i+1) < \phi(i) \text { or } i=n \}\]
Let $\sigma_\phi^\vee \subset H \otimes \mathbb{R}$ denote the cones generated by $S_\phi$, and let $\sigma_\phi \subset H^\vee \otimes \mathbb{R}$ be the dual cones. One can check that the cones $\sigma_\phi$ form a smooth toric fan, and we define $\mathcal{Y}_{n,k}$ to be the corresponding smooth toric scheme 
over the base ring $\k$.
By definition, $\YY_{n,k}$ is covered by the open affine charts $\UU_\phi$, each isomorphic to the $(n+k)$-dimensional affine space with the coordinates given by the multiplicative
variables corresponding to elements of $S_\phi$. Instead of checking that $(\sigma_\phi)$ forms a fan we will construct a projective morphism
$\YY_{n,k}\to \mathbb{A}^{n+k+1}$, which proves that $\YY_{n,k}$ is separable (see Corollary \ref{toric-proj-emb-cor}).

Finally, we define \[ \mathcal{X}_{n,k} \coloneqq \mathcal{Y}_{n,k} \times \mathbb{A}^k_{U_1,\ldots, U_k} \] 
where $\mathbb{A}^k_{U_1,\ldots,U_k}$ is the $k$-dimensional affine space with coordinate functions $(U_1,\ldots, U_k)$.

The vectors $v_1,\ldots, v_k$ belong to each of the cones $\sigma_\phi^{\vee}$, so they give $n$ functions $V_1,\ldots, V_n$ on $\mathcal{Y}_{n,k}$. Now, we define the superpotential $\mathbf{w}_{n,k}$ on the smooth toric variety $\mathcal{X}_{n,k}$ by
\[ \mathbf{w}_{n,k} \coloneqq U_1 V_1  + U_2 V_2 + \ldots + U_k V_k \]

Furthermore, for each $\phi$ the vectors $v_1,\ldots v_k$ are obtained by taking sums of disjoint subsets of the bases $S_\phi$. Hence, the functions $V_1,\ldots, V_k$ form a regular sequence on $\mathcal{Y}_{n,k}$. 

We define the singular variety 
\[ \mathcal{Z}_{n,k} \coloneqq  \{V_1=V_2=\ldots, V_k=0 \} \subset \mathcal{Y}_{n,k} \] 

Let $T$ denote the torus (of dimension $n+k$) acting on $\ov{\YY}_{n,k}$, 
so that the character lattice of $T$ is the homology lattice $H$. 
With respect to this action the weight of the functions $X_i$, $C_{ij}$ and $V_i$ are the corresponding
homology classes $x_i$, $c_{ij}$ and $v_i$. 
The torus $T\times \G_m^k$ acts naturally on $\XX_{n,k}=\YY_{n,k}\times\A^k_{U_1,\ldots,U_k}$, where the second factor 
only rescales $U_i$'s. We restrict this action to the subtorus $\wt{T}\sub T\times \G_m^k$ given by
$$\wt{T}=\{(t,\la_1,\ldots,\la_k) \ |\ v_1(t)\la_1=\ldots=v_k(t)\la_k\}.$$
We can identify the character lattice of $\wt{T}$ with 
$$\wt{H}:=(H\oplus (\bigoplus_i \Z\cdot u_i))/(u_i+v_i-u_j-v_j \ |\ i<j),$$
so that $u_i$ is the weight of the function $U_i$ on $\XX_{n,k}$.

By definition, we have the character 
$$l_0:=u_1+v_1=\ldots=u_k+v_k\in \wt{H},$$
so that the function $\bw=\sum_i U_iV_i$ has weight $l_0$.
Let us consider the sublattice
$$\bbL_B:=2\wt{H}+\Z l_0\sub \wt{H}.$$

The main category of interest for us is the $\bbL_B$-graded category of $\wt{T}$-equivariant matrix factorizations of
$\bw_{n,k}$,
$$\BB_{n,k}:=\MF_{\wt{T}}(\XX_{n,k},\bw_{n,k})_{\bbL_B}$$ 
(see Sec.\ \ref{L-gr-G-mf} for the precise definition).
The grading group $\bbL_B$ can be naturally identified with the grading group $\bbL$ appearing on the A-side.

\begin{example} Consider the case $n=2$, $k=4$. The homology classes of our distinguished curves in this case are, $x_1,x_2,x_3,v_1,v_2,v_3,v_4,c_{11},c_{12},c_{13}, c_{21},c_{22},c_{23}$. We have the relations:
\begin{align*} v_4 &= x_1 + c_{13},\\
                v_3 &= c_{12} - c_{13} = c_{22} - c_{23},  \\
                v_2 &= c_{11} - c_{12} = c_{21} - c_{22},\\
                v_1 &= x_3 - c_{21}, \\ 
         c_{11} &= c_{21} + x_2, \, c_{12} = c_{22} + x_2, \, c_{13}=c_{23}+x_2, \end{align*}
        For each such curve, we write the corresponding toric monomial with a capital letter. The cones are numbered by $\phi:[1,2] \to [1,4]$. Let us write them as $\sigma_{\phi(1)\phi(2)}\coloneqq \sigma_\phi$. We have 10 cones and the corresponding monomials are given as follows:
\begin{align*}
   \sigma_{44}^\vee &= \langle X_1,X_2,C_{23},V_3,V_2,V_1 \rangle, & \sigma_{43}^\vee &= \langle X_1,C_{13},C^{-1}_{23},C_{22},V_2,V_1 \rangle, \\ 
   \sigma_{42}^\vee &= \langle X_1,C_{13},V_3,C_{22}^{-1},C_{21},V_1 \rangle, & \sigma_{41}^\vee &= \langle X_1,C_{13},V_3,V_2,C_{21}^{-1},X_3\rangle \\
        \sigma_{33}^\vee &= \langle V_4,C_{13}^{-1},X_2,C_{22},V_2,V_1 \rangle, & 
        \sigma_{32}^\vee &= \langle V_4,C_{13}^{-1},C_{12},C_{22}^{-1},C_{21},V_1 \rangle \\
        \sigma_{31}^\vee &= \langle V_4,C_{13}^{-1},C_{12},V_2,C_{21}^{-1},X_3\rangle, & \sigma_{22}^\vee &= \langle V_4,V_3,C_{12}^{-1},X_2,C_{21},V_1 \rangle \\
        \sigma_{21}^\vee &= \langle V_4,V_3,C_{12}^{-1},C_{11}, C_{21}^{-1},X_3 \rangle, & \sigma_{11}^\vee &= \langle V_4,V_3,V_2,C_{11}^{-1},X_2,X_3 \rangle
\end{align*}
        Setting $V_1=V_2=V_3=V_4=0$ gives $\k[\sigma_{ii}^\vee]/(V_\bullet) \simeq \mathbb{C}[X,Y,Z]$ for all $i$ and $\k[\sigma_{ij}^\vee]/(V_\bullet) \simeq \mathbb{C}[X,Y,Z,W]/(XY,ZW)$ for all $i>j$, which form the affine charts for $\mathcal{Z}_{2,4}$, corresponding to the vertices of Figure \ref{toricdim2}. Note that these charts are mirror to either the 2-dimensional pair of pants (i.e., the symmetric square of the 4-puncture sphere), or the product of two 1-dimensional pairs of pants. One can see the corresponding 4-punctured spheres or embeddings of two disjoint 3-punctured spheres in the surface $\Sigma$ bounded by the corresponding curves in Figure \ref{distinguishedcurves}. 

        Via homological mirror symmetry, skyscraper sheaves on the components of $\mathcal{Z}_{2,4}$ match with $\mathbb{Z}$-gradable Lagrangian 2-tori in $V_{2,4}$ equipped with unitary local systems. Such Lagrangian tori can be given as products of two disjoint circles on $\Sigma$. We describe this correspondence without proof in Figure \ref{supports}. Lagrangian tori appearing in the same chamber are Hamiltonian isotopic (\cite{perutz}). Any other products among our distinguished curves not appearing in Figure \ref{supports} are non-gradable.
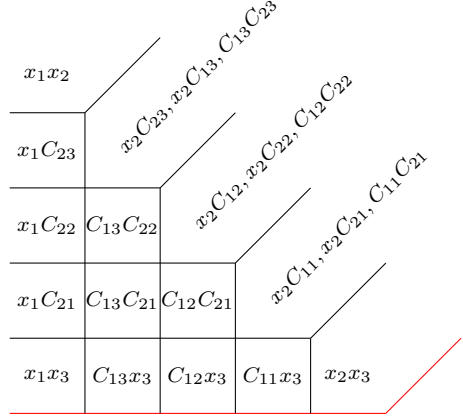
\begin{figure}[!h]
\begin{tikzpicture}

\draw (0,0) -- (4,0);
\draw (0,1) -- (3,1);
\draw (0,2) -- (2,2);
\draw (0,3) -- (1,3);
\draw[red] (0,-1) -- (5,-1);

\draw (1,3) -- (1,-1);
\draw (2,2) -- (2,-1);
\draw (3,1) -- (3,-1);
\draw (4,0) -- (4,-1);

\draw (1,3) -- (2,4); 
\draw (2,2) -- (3,3); 
\draw (3,1) -- (4,2); 
\draw (4,0) -- (5,1);

\draw[red] (5,-1) -- (6,0);

        \node at (1.5, -0.5) {\tiny $C_{13}x_3$};
        \node at (2.5, -0.5) {\tiny $C_{12}x_3$};
        \node at (3.5, -0.5) {\tiny $C_{11}x_3$};
        \node at (1.5, 0.5) {\tiny $C_{13}C_{21}$};
        \node at (2.5, 0.5) {\tiny $C_{12}C_{21}$};
        \node at (1.5, 1.5) {\tiny $C_{13}C_{22}$};

        \node at (0.5, 0.5) {\tiny $x_1 C_{21}$};
        \node at (0.5, 1.5) {\tiny $x_1 C_{22}$};
        \node at (0.5, 2.5) {\tiny $x_1 C_{23}$};
        
        \node[rotate=45] at (4.5, 1.5) {\tiny $x_2C_{11}, x_2 C_{21}, C_{11}C_{21}$};
        \node[rotate=45] at (2.5, 3.5) {\tiny $x_2 C_{23}, x_2 C_{13}, C_{13}C_{23}$};
        \node[rotate=45] at (3.5, 2.5) {\tiny $x_2 C_{12}, x_2 C_{22}, C_{12}C_{22}$};

        \node at (0.5,3.5) {\tiny $x_1 x_2$ };
        \node at (0.5,-0.5) {\tiny $x_1 x_3$ };
        \node at (4.5,-0.5) {\tiny $x_2 x_3$ };

\end{tikzpicture}

	\caption{Lagrangian tori corresponding to skyscraper sheaves on the irreducible components of $\mathcal{Z}_{2,4}$. The red line indicates the compactification divisor $\overline{\mathcal{Z}}_{2,4}\setminus \mathcal{Z}_{2,4}$  }
\label{supports}

\end{figure}

\end{example}

\subsection{Compactified LG model}

It is easy to see that there is a natural locally closed toric embedding
$$\YY_{n,k}\to \YY_{n,k+1}$$
whose image is the subscheme $V_1=0$ in the union of open charts of $\YY_{n,k+1}$ corresponding to $\phi:[1,n]\to [1,k+1]$
with $\im(\phi)\sub [2,k+1]$.
Under this embedding the functions $V_{i+1}$ on $\YY_{n,k+1}$ restrict to $V_i$ on $\YY_{n,k}$, and
function $X_{n+1}$ on $\YY_{n,k}$ gets identified with the restriction of the function $C_{n,1}$ on these open charts of $\YY_{n,k+1}$.

We define $\ov{\YY}_{n,k}$ (resp., $\ov{\ZZ}_{n,k}$) as the closure of $\YY_{n,k}$ (resp., $\ZZ_{n,k}$) under this embedding.
The functions $V_1,\ldots,V_k$ extend regularly to $\ov{\YY}_{n,k}$,
so the potential $\bw_{n,k}$ extends regularly to $\ov{\XX}_{n,k}=\ov{\YY}_{n,k}\times \A^k$. We denote by $\ov{\BB}_{n,k}$ the category of $\bbL_B$-graded equivariant
matrix factorizations of $\bw_{n,k}$ on $\ov{\XX}_{n,k}$ (recall that $\bbL_B$ is naturally isomorphic to $\bbL$). 

We have the following extensions of Conjecture \ref{mainconj} and Theorem \ref{main-thm}.
We equip $V_{n,k}$ with one stop $\Lambda$ corresponding to a point on the component marked by $x_k$ 
and consider the corresponding partially wrapped Fukaya category. 

\begin{conj} \label{mainconj-bis} There exists a quasi-equivalence of $\mathbb{Z}$-graded pre-triangulated categories 
        \begin{align} \label{limitcat-bis} \mathcal{W}(V_{n,k},\Lambda,\eta_0) \simeq D^b \mathrm{Coh} (\ov{\ZZ}_{n,k}),
       \end{align}
       and a quasi-equivalence of $\bbL$-graded pre-triangulated categories
       \begin{align} \label{LG-limitcat-bis} \mathcal{W}(V_{n,k},\Lambda)\simeq \ov{\BB}_{n,k},\end{align}
               where $\ov{\BB}_{n,k}$ is the category of $\bbL$-graded matrix factorizations of $\mathbf{w}_{n,k}$ on $\ov{\XX}_{n,k}$.
\end{conj}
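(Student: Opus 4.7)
The plan is to follow the paradigm already employed for Theorem~\ref{main-thm}: partial compactifications on both sides (the stop $\Lambda$ on the A-side and the additional divisor $\ov{\ZZ}_{n,k}\setminus\ZZ_{n,k}$ on the B-side) are chosen precisely to force the endomorphism algebra of a natural generating collection to become formal, so that the quasi-equivalence reduces to an explicit generator-by-generator identification. The Conjecture~\ref{mainconj} then follows from Conjecture~\ref{mainconj-bis} by localization at the subcategories supported on the added locus: Lagrangians with a chord at $\Lambda$ on the A-side (via \cite{auroux}) and sheaves supported on the compactification divisor on the B-side (via \cite{Isik}). For $n\le 2$ the $\Z$-graded version is essentially the content of Theorem~\ref{main-thm-bis}; the purpose of the proposal is to outline how the argument extends to arbitrary $n$ and to the full $\bbL$-graded enhancement.

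On the B-side I would take as generators the structure sheaves of the toric irreducible components of $\ov{\ZZ}_{n,k}$, twisted by line bundles carrying characters of $\wt{T}$ that collectively span the weight lattice. Using the toric cover $\{\UU_\phi\}$ of Section~\ref{toric-constr-sec}, the morphism complex is presented as a quiver-with-relations read off combinatorially from the moment-polyhedron (generalizing Figure~\ref{toricdim2}), and the resulting dg-algebra is strictly formal by inspection, with the $\bbL_B$-grading automatic as each generator sits in a single weight space. The matrix-factorization enhancement $\ov{\BB}_{n,k}$ is obtained by adjoining Koszul-type factorizations of the summands $U_iV_i$ of $\bw_{n,k}$, with morphisms computable from the same combinatorial recipe.

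On the A-side the generators are the product Lagrangian tori in $V_{n,k}=\mathrm{Sym}^n(\Sigma)$ coming from $n$-tuples of pairwise disjoint gradable circles drawn from the distinguished set in Figure~\ref{distinguishedcurves} (as listed in Figure~\ref{supports} for $n=2$), equipped with rank one local systems and graded compatibly with $\eta_0$. The stop $\Lambda$ on the $x_k$-component is placed exactly so as to kill the wrapping chord that would otherwise deform the endomorphism algebra away from formality. The morphism complexes are then enumerable via Heegaard-Floer type counts of ``polygonal'' disks in $\Sigma$, in the spirit of \cite{perutz} and \cite{LPsymhms}, and these counts match the B-side quiver-with-relations on the nose. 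The identification of grading groups $\bbL\cong\bbL_B$ already built into the paper matches the weights of the chosen generators, so the $\bbL$-graded upgrade comes for free once the underlying graded algebras agree.

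The hardest part will be the A-side disk enumeration for $n>2$: the moduli spaces in $\mathrm{Sym}^n(\Sigma)$ grow in dimension with $n$, and the direct combinatorial case analysis sufficient for $n\le 2$ must be replaced by a structural input, most plausibly a degeneration of $\Sigma$ to a nodal surface reducing the count to the already-understood one-dimensional pair-of-pants computations of \cite{LP}. A second, logically independent obstacle is that deducing the $\bbL$-graded open statement \eqref{LG-limitcat} from \eqref{LG-limitcat-bis} requires a theory of localization for $\bbL$-graded $A_\infty$-categories, which is not yet in the literature; without this machinery one can transfer only the underlying $\Z$-graded equivalence through the localization step. I expect these two ingredients — higher-symmetric-product disk control and $\bbL$-graded localization — to be the decisive technical hurdles.
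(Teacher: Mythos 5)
Two remarks before the main point: the statement you are proving is stated in the paper as a conjecture, and the paper itself only establishes it for $n\le 2$ (Theorem \ref{main-thm-bis}), so your proposal can only be measured against that proof and against the paper's stated expectations for general $n$. Against that benchmark there is a genuine gap on the A-side: you take as generators the compact product tori of Figure \ref{supports}, equipped with unitary local systems. These are objects of the compact Fukaya category and do not generate the (partially) wrapped category $\mathcal{W}(V_{n,k},\Lambda)$; in the paper they appear only as the conjectural mirrors of \emph{skyscraper} sheaves on the irreducible components of $\ZZ_{2,k}$, a correspondence explicitly stated without proof. Skyscrapers do not generate $D^b\Coh(\ov{\ZZ}_{n,k})$, and dually one cannot build the noncompact objects (products of arcs, i.e.\ the cocore-type Lagrangians) out of compact tori, so the generator-by-generator matching you describe cannot get off the ground. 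The paper's proof instead uses Auroux's generators: the products $L_S=L_{i_1}\times\cdots\times L_{i_n}$ of disjoint \emph{arcs} from Figure \ref{Sigmagen}, whose endomorphism algebra is the strand-type algebra computed in \cite{LPsymhms}, and these are matched with the matrix factorizations $P_{ij}$ and $Q_j$ associated to structure sheaves of the components $\ov{R}_{ij}$ and of the unions $\ov{R}_{*j}$, twisted by $f_1^*p_i^*\OO(-1)$, not with skyscraper-type objects.

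A second, smaller gap is your claim that the B-side dg-algebra is ``strictly formal by inspection'' with the $\bbL_B$-grading automatic. In the paper this is the delicate step: the Hom-spaces are computed through Koszul matrix factorizations and complete-intersection presentations of the $\ov{R}_{ij}$ (Lemmas \ref{local-End-lem}--\ref{PQ-morphisms-lem}), one must pass to shifted and twisted generators $\wt{P}_{ij},\wt{Q}_j$, and formality is then deduced from Lemma \ref{formality-lem} by exhibiting an admissible homomorphism $\bbL_B\to\Z$ for which the whole endomorphism algebra sits in degree $0$ (Theorem \ref{B-side-formality-thm}); likewise generation requires the multigraded version of the Lin--Pomerleano criterion (Lemma \ref{L-graded-generation-lem}), not merely the toric chart cover. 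Your closing caveats — that disk/moduli control in $\mathrm{Sym}^n(\Sigma)$ for $n>2$ and a theory of $\bbL$-graded localization are missing — agree with the paper's own assessment of why this remains a conjecture, but they do not compensate for the incorrect choice of A-side generators.
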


\begin{thm}\label{main-thm-bis} Equivalences  \eqref{limitcat-bis} and \eqref{LG-limitcat-bis} hold for $n=1$ and $n=2$.
\end{thm}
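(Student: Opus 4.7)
The plan is to deduce \eqref{limitcat-bis} from \eqref{LG-limitcat-bis} by specializing the $\bbL_B$-grading to the $\Z$-grading that assigns weight $1$ to each $U_i$ and weight $0$ to each $X_i, V_j$ (this matches the $\eta_0$-grading on the A-side) and then applying Isik's theorem on the B-side to identify the resulting $\G_m$-equivariant matrix factorization category with $D^b\Coh(\ov{\ZZ}_{n,k})$. The stop $\Lambda$ is compatible with this specialization since it sits on the component labelled by $x_k$, whose associated primitive chord is $\Z$-gradable. So the core task is the direct proof of \eqref{LG-limitcat-bis}.

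For this I would construct explicit generating collections on both sides, indexed by the nonincreasing functions $\phi:[1,n]\to [1,k+1]$ labelling the open affine charts $\UU_\phi$ of $\ov{\YY}_{n,k}$. On the B-side, each such $\phi$ yields a $\wt T$-equivariant matrix factorization $E_\phi$ of Koszul type associated with $\UU_\phi$, and the collection $\{E_\phi\}$ generates $\ov{\BB}_{n,k}$ by a standard toric covering/Cech argument. On the A-side, for each $\phi$ I would build a Lagrangian $L_\phi \sub V_{n,k}$ as an admissible lift to $\Sym^n$ of a product of $n$ disjoint simple arcs on $\Sigma$ whose homology classes reproduce the generators of $S_\phi \sub H$; following the strategy of \cite{LPsymhms}, the collection $\{L_\phi\}$ generates $\WW(V_{n,k},\Lambda)$ by a dissection of $V_{n,k}$ into chambers matching the cones of the toric fan.

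The main computation would identify the $\bbL$-graded $A_\infty$-algebras
\[
\bigoplus_{\phi,\phi'}\Hom^\bullet_{\WW}(L_\phi,L_{\phi'}) \ \simeq\ \bigoplus_{\phi,\phi'}\Hom^\bullet_{\ov{\BB}_{n,k}}(E_\phi,E_{\phi'}).
\]
On the B-side this is a Cech/Koszul computation producing a path algebra of a quiver with quadratic relations reflecting the toric fan, refined to an $\bbL_B$-grading by $\wt T$-equivariance. On the A-side, morphism spaces would be computed via the symmetric-product Fukaya category machinery of \cite{LPsymhms}, by enumerating intersection points of products of arcs in $\Sym^n\Sigma$; the stop $\Lambda$ cuts off wrapping so that each $\bbL$-weight component is finite-dimensional. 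I would then establish formality of the resulting A-side $A_\infty$-algebra by exploiting the $\bbL$-grading: most potential higher products land in weight components that vanish by a dimension count, and the remaining ones are ruled out by symmetries of the arc configuration, as in \cite{LPsymhms}.

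The principal obstacle will be the A-side computation for $n=2$. Although the quiver with relations is dictated by the B-side model, confirming its structure constants requires counting holomorphic triangles (and checking vanishing of potential higher products) in the two-dimensional symmetric product $\Sym^2\Sigma$. My approach will be to choose the arcs so they intersect minimally and each non-zero composition is realized by a unique, combinatorially identifiable triangle, and to use the $\bbL$-grading as an obstruction-free constraint that forces formality. The case $n=1$ is shorter and recovers, in the present stopped and compactified setting, the results of \cite{LP}.
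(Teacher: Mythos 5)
Your global architecture (match generating collections, compute and compare the $\bbL$-graded endomorphism algebras, prove formality, then collapse the grading and invoke Isik to get the $\Z$-graded statement) is the same as the paper's, and the reduction of \eqref{limitcat-bis} to \eqref{LG-limitcat-bis} via the grading collapse is fine. But the specific choices you make for the generators are where the proof actually lives, and there you have a genuine gap. On the B-side, there is no meaningful ``Koszul matrix factorization associated with the affine chart $\UU_\phi$'': the charts are open subsets, and objects of $\ov{\BB}_{n,k}$ are supported on the critical locus. The correct generators are the (suitably line-bundle-twisted) structure sheaves of the irreducible components of $\ov{\ZZ}_{n,k}\times\A^k$ — the objects $P_{ij}$ and the ``row'' objects $Q_j$ — viewed in the singularity category; each admits a Koszul presentation because each component is a complete intersection in a neighborhood (Lemma \ref{k=2-complete-int-lem}), and generation follows not from a \v{C}ech argument but from the Lin--Pomerleano criterion adapted to the multigraded setting (Lemma \ref{L-graded-generation-lem}) together with an inductive generation statement for $D^b\Coh(\ov{\ZZ}_{n,k})$. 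Correspondingly, on the A-side your prescription ``products of arcs whose homology classes reproduce the generators of $S_\phi$'' does not parse: the elements of $S_\phi$ are classes of closed curves, while the generators are Auroux's products $L_{i_1}\times\cdots\times L_{i_n}$ of a fixed collection of disjoint arcs, matched with the components via \eqref{objects-correspondence-eq} ($P_{ij}\leftrightarrow L_i\times L_j$, $Q_j\leftrightarrow L_j\times L_{k+1}$), not with the charts.

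Two further points. First, no new holomorphic triangle counts are needed for $n=2$: the A-side endomorphism algebra and its formality are already available from the strand-algebra computation of \cite[Thm.\ 3.2.5]{LPsymhms}, so your ``principal obstacle'' is not where the work is. Second, your formality mechanism is too vague to carry the argument; the paper's key device is that for a specific admissible collapse $f:\bbL_B\to\Z$ the endomorphism algebra of appropriately \emph{shifted} generators is concentrated in degree $0$, whence formality of the full $\bbL_B$-graded $A_\infty$-structure by Lemma \ref{formality-lem} (this is Theorem \ref{B-side-formality-thm}, and it is precisely why the Landau--Ginzburg model is used even to prove the coherent-sheaf statement). Without a concrete substitute for that step, and without well-defined generators, the proposal does not yet constitute a proof.
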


Note that we derive Theorem \ref{main-thm} from Theorem \ref{main-thm-bis} by passing to localizations.
The Landau-Ginzburg version is also crucial for our approach, as it allows to prove formality of the $A_\infty$-endomorphism algebra
of the generators in $D^b\Coh(\ov{\ZZ}_{n,k})$ (for $n\le 2$). 
Namely, we observe that for a certain $\Z$-grading the endomorphism algebra of the corresponding matrix factorizations is concentrated in degree zero
and derive from this formality of the $\bbL$-graded endomorphism algebra.

\medskip
             
\noindent
{\it Conventions}. For our main results we work over a ground field $\k$. In some results $\k$ can be any regular Noetherian ring.
Both for Fukaya categories and for matrix factorization categories, we pass to a perfect derived category of the original dg-category.

\medskip
             
\noindent
{\it Acknowledgments}. Y.~L. would like to thank Denis Auroux and Nick Sheridan for helpful correspondences. Y.~L. was partially funded by the Royal Society
URF\textbackslash R\textbackslash 180024. A.~P. is partially supported by the NSF grant DMS-2001224, 
and within the framework of the HSE University Basic Research Program and by the Russian Academic Excellence Project `5-100'.

\section{Preliminaries}

\subsection{(Multi)gradings of $A_\infty$-algebras}\label{multigrading-sec}

Let $\bbL$ be an abelian group equipped with an element $l_0\in \bbL$ and a homomorphism
$\bbL\to \Z/2: x\mapsto |x|$, such that $|l_0|=1\mod 2$.
We will use this homomorphism to collapse an $\bbL$-grading to a $\Z/2$-grading.
The usual notion of an $A_\infty$-algebra which is a $\Z$-graded vector space with some operations
has a natural $\bbL$-graded version.

\begin{definition}
An {\it $\bbL$-graded $A_\infty$-algebra} is an $\bbL$-graded vector space $A=\bigoplus_{l\in \bbL} A_l$ equipped with operations
        $$\mathfrak{m}_n:A^{\ot n}\to A, \ n\ge 1,$$
        where the $\bbL$-degree of $\mathfrak{m}_n$ is $(2-n)l_0$. These operations satisfy the usual $A_\infty$-axioms 
       \[ \sum_{m,n} (-1)^{|a_1|+\ldots + |a_n|-n} \mathfrak{m}_{d-m+1}(a_d,\ldots, a_{n+m+1}, \mathfrak{m}_m (a_{n+m},\ldots, a_{n+1}), a_n,\ldots a_1) = 0. \] 
        where we use
the induced $\Z/2$-grading on $A$. Similarly, we can define the notion of an $A_\infty$-homomorphism between $\bbL$-graded $A_\infty$-algebras.
\end{definition}

We will call a homomorphism $f:\bbL\to \Z$ {\it admissible} if $f(l_0)=1$ and $f(x)\equiv |x|\mod(2)$.
Given an $\bbL$-graded $A_\infty$-algebra $A$ and an admissible homomorphism $f:\bbL\to \Z$,
we can collapse the $\bbL$-grading on $A$ to a $\Z$-grading and get a usual $A_\infty$-algebra $A/f$.
Note that an admissible homomorphism $f$ induces a decomposition
$$\bbL=\Z\cdot l_0\oplus \bbL_0,$$
where $\bbL_0=\ker(f)$. Then the $\bbL$-grading on $A$ can be viewed as an $\bbL_0$-grading on $A/f$ compatible with the $\Z$-grading,
such that all $\mathfrak{m}_n$ on $A/f$ preserve the $\bbL_0$-degree. The same is true about $A_\infty$-homomorphisms (in particular, gauge equivalences)
between $\bbL$-graded $A_\infty$-algebras $A$ and $B$: they can be viewed as usual $A_\infty$-homomorphisms between the usual $A_\infty$-algebras
$A/f$ and $B/f$, preserving the $\bbL_0$-degree.

Note that when $\bbL_0$ is finitely generated, an $\bbL_0$-graded vector space can be thought as an algebraic representation of the corresponding commutative algebraic group $G$, such that $\bbL_0$ is the group of characters of $G$.
Thus, in the presence of a splitting $\bbL=\Z\oplus \bbL_0$, the above notion of an $\bbL$-graded $A_\infty$-algebra is equivalent to that of a $G$-equivariant
$\Z$-graded $A_\infty$-algebra. Since for a reductive group $G$ we can choose all the projections involved in
the homological perturbation theory to be $G$-equivariant, we have the following statement.

\begin{lem} Let $G$ be a linearly reductive algebraic group over a field $\k$.
For a dg-algebra $A$ over $\k$ equipped with an algebraic $G$-action (respecting the dg-algebra structure) there is a $G$-equivariant version
of the homological perturbation producing a $G$-equivariant minimal $A_\infty$-structure on $H^*(A)$.
\end{lem}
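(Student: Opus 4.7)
The plan is to carry out the usual Kadeishvili construction of a minimal model via homological perturbation, checking at each step that all data involved can be chosen $G$-equivariantly using linear reductivity.

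First, I would split the underlying complex of $A$ equivariantly. Since $d: A \to A$ commutes with the $G$-action, the subspaces $B^n := \im(d|_{A^{n-1}})$ and $Z^n := \ker(d|_{A^n})$ are $G$-subrepresentations, hence so is $H^n := Z^n/B^n$. Using linear reductivity of $G$ (so every algebraic $G$-representation is semisimple), I can choose $G$-equivariant splittings of the short exact sequences
\[
0 \to B^n \to Z^n \to H^n \to 0, \qquad 0 \to Z^n \to A^n \to A^n/Z^n \to 0.
\]
This yields $G$-stable decompositions $A^n = B^n \oplus \HH^n \oplus C^n$ where $\HH^n \subset Z^n$ is a $G$-equivariant complement to $B^n$ identified with $H^n$, and $d$ restricts to an isomorphism $C^n \xrightarrow{\sim} B^{n+1}$.

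Next, I would define the projection $\pi: A \to H^*(A)$, the section $i: H^*(A) \to A$, and the contracting homotopy $h: A \to A[-1]$ directly from the decomposition: $\pi$ kills $B^\bullet \oplus C^\bullet$ and is the identity on $\HH^\bullet$; $i$ is the inclusion $\HH^\bullet \hookrightarrow A$; and $h$ vanishes on $\HH^\bullet \oplus C^\bullet$ and is the inverse of $d|_{C^{n-1}}$ on $B^n$. By construction all three maps are $G$-equivariant, and they satisfy the standard side conditions $\pi i = \id$, $\id - i\pi = dh + hd$, $h^2 = 0$, $\pi h = 0$, $h i = 0$.

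Finally, I would invoke Kadeishvili's inductive formula (or equivalently the sum over planar rooted trees) for the higher products $\mathfrak{m}_n$ on $H^*(A)$ built from the product $\mathfrak{m}_2^A$ on $A$ together with $i$, $\pi$, and $h$. Since each of $\mathfrak{m}_2^A$, $i$, $\pi$, and $h$ is $G$-equivariant and the formulas are built out of compositions and tensor products of these maps, every $\mathfrak{m}_n$ is automatically $G$-equivariant, giving a $G$-equivariant minimal $A_\infty$-structure on $H^*(A)$ together with a $G$-equivariant $A_\infty$-quasi-isomorphism to $A$.

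There is really no serious obstacle: the only substantive input is the existence of $G$-equivariant splittings, which is exactly what linear reductivity provides. The mild subtlety to keep in mind is that for this to work algebraically one needs the $G$-action on each $A^n$ to be a rational (algebraic) representation, which is part of the hypothesis; after that the standard perturbation lemma proof goes through verbatim in the category of $G$-representations.
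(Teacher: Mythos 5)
Your proposal is correct and follows essentially the same route as the paper, which simply notes that all projections (and homotopies) in the homological perturbation construction can be chosen $G$-equivariantly thanks to linear reductivity; your write-up just makes this explicit via equivariant splittings of $\ker d$ and $\im d$ and the tree/Kadeishvili formulas. No gaps.
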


We have the following obvious formality criterion for $\bbL$-graded $A_\infty$-algebras.

\begin{lem}\label{formality-lem} 
  Let $(A, \mathfrak{m}_\bullet)$ be a minimal $\bbL$-graded $A_\infty$-algebra such that for some admissible homomorphism $f:\bbL\to \Z$, the algebra $A/f$ is
        concentrated in degree $0$. Then $\mathfrak{m}_n=0$ for $n>2$.
\end{lem}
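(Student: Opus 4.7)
The plan is essentially a one-line degree count, but let me spell it out as a short proposal.

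First I would recall the compatibility between the $\bbL$-grading and the $\Z$-grading on $A/f$. By definition the operation $\mathfrak{m}_n$ has $\bbL$-degree $(2-n)l_0$, so after applying the admissible homomorphism $f$ (which sends $l_0\mapsto 1$), the induced operation on $A/f$ has $\Z$-degree $f((2-n)l_0)=2-n$. This is, of course, the standard $A_\infty$-degree convention and requires no clever input.

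Next I would use the hypothesis that $A/f$ is concentrated in $\Z$-degree $0$. Since $\mathfrak{m}_n:A^{\otimes n}\to A$ raises total $f$-degree by $2-n$, taking inputs from $A/f$ (all of degree $0$) yields an element of $\Z$-degree $2-n$ in $A/f$. For $n\ne 2$ this degree is nonzero, but $A/f$ has no nonzero elements outside degree $0$, so the output must vanish. Hence $\mathfrak{m}_n=0$ for $n\ne 2$, which in particular gives $\mathfrak{m}_n=0$ for all $n>2$ (and also recovers minimality $\mathfrak{m}_1=0$, which is already assumed). Note we do not need to consider $\bbL$-homogeneous components separately: because every piece of $A$ survives to $A/f$ in some degree, a component on which $\mathfrak{m}_n$ acts nontrivially would contribute nontrivially in the wrong $\Z$-degree.

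There is essentially no obstacle; the lemma is a bookkeeping consequence of the fact that an admissible $f$ realizes the $\Z$-degree as a specialization of the $\bbL$-degree, and that the $A_\infty$-operations have the expected shift $2-n$ under this specialization. The only thing to be careful about is that "concentrated in degree $0$" is applied to $A/f$ rather than to $A$ itself: the $\bbL$-grading on $A$ may be nontrivially spread out in the kernel $\bbL_0=\ker(f)$, but this does not affect the argument since $f$-degree alone is enough to force the vanishing of $\mathfrak{m}_n$ for $n\ne 2$.
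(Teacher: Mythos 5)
Your degree count is exactly the intended argument: the paper states this lemma without proof as ``obvious,'' and the observation that $\mathfrak{m}_n$ shifts the $f$-collapsed $\Z$-degree by $2-n$, which must be $0$ for the output to land in $A/f$ concentrated in degree $0$, is precisely why. The proposal is correct and matches the paper's (implicit) reasoning.
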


\subsection{Multigraded Fukaya categories} 

Multigradings appear naturally in the context of Fukaya categories: they have been used in \cite{seidelgenus2}, \cite{FOOO18} and studied extensively by Sheridan \cite{sheridan}, \cite{sheridan2}. Given a symplectic manifold $V$, we consider
the relative Lagrangian Grassmannian of the tangent bundle $TV$,
$$\mathcal{G}V \to V.$$
The exact sequence associated to this fibration gives the following exact sequence
\[ \mathbb{Z} = H_1(\mathcal{G}_p V) \to H_1(\mathcal{G}V) \to H_1(V) \to 0\] 
where $\GG_pV$ is the Lagrangian Grassmannian of the tangent space at a point $p\in V$, and
the identification $\mathbb{Z} = H_1(\mathcal{G}_pV)$ is given by the Maslov index. We set 
$$\bbL:= H_1(\mathcal{G}V)$$ 
and let $l_0$ be the image of $1 \in \mathbb{Z}$ under the homomorphism $H_1(\mathcal{G}_p V ) \to H_1(\mathcal{G}V)$. 

In addition, we choose a homomorphism 
\[ \bbL = H_1(\mathcal{G}V) \to \mathbb{Z}/2 \]
which sends $l_0$ to 1. 

In \cite{sheridan}\ \cite{sheridan2}, a canonical choice of such a homomorphism is given by the pairing with the first Stiefel-Whitney class $w_1(\mathcal{L}) \in H^1(\mathcal{G}V;\mathbb{Z}/2)$ of the universal bundle $\mathcal{L} \to \mathcal{G}V$ whose fiber over a point is the Lagrangian subspace at that point. This corresponds to the canonical $\mathbb{Z}/2$-grading structure on Fukaya categories corresponding to the fibrewise double cover of $\mathcal{G}V$ given by the oriented Lagrangian Grassmannian of the tangent bundle $TV$. 

To allow for other choices of $\mathbb{Z}/2$-gradings, we consider pairings with 
\[  w_1(\mathcal{L}) + p^*\sigma \in H^1(\mathcal{G}V; \mathbb{Z}/2) \]
for $\sigma \in H^1(V;\mathbb{Z}/2)$, where $p: \mathcal{G}V \to V$ is the natural projection. By a slight abuse of notation, we denote the corresponding homomorphism by
\[ \sigma: \mathbb{L} \to \mathbb{Z}/2 \] 
Note that as we obtained this by pulling back a class from $V$, this homomorphism still satisfies $\sigma(l_0)=1$. 

Sheridan explains in \cite[Section 3]{sheridan} that the gluing formulae for index theory of Cauchy-Riemann operators allow one to define an $\bbL$-graded Fukaya category and in \cite[Appendix B.3]{sheridan2} provides careful elaborations of the arguments by Seidel in \cite[Sections 11 \& 12 ]{seidelbook}. The key topological observation that makes this work is that if there is a holomorphic disk contributing to some $A_\infty$-product, then its boundary is nullhomologous. The proof given in \cite[Appendix B.3]{sheridan2} applies in the same way if we use the more general $\mathbb{Z}/2$-gradings corresponding to $\sigma: \mathbb{L} \to \mathbb{Z}/2$ after modifying the definition in \cite[Definition B.4]{sheridan2} so that $\mathbb{Z}/2$ degree of a generator is computed with respect to the $\mathbb{Z}/2$ grading associated to $\sigma$. 

The objects of this Fukaya category are $\bbL$-graded Lagrangian submanifolds of $V$. This means that, for a Lagrangian submanifold $L$, we require a lift of its Gauss map $L \to \mathcal{G}V$ to the universal abelian cover $\tilde{\mathcal{G}}V \to \mathcal{G}V$ and a Pin structure on $L$. If $H_1(L) \to H_1(V)$ is trivial, then such a lift exists (and is unique up to translation by $\bbL$ for connected $L$). A Pin structure on $L$ exists if and only if the second Stiefel-Whitney class vanishes, and the space of Pin structures on $L$ is a torsor over $H^1(L; \mathbb{Z}_2)$. In this paper, we work only with contractible Lagrangians, so they will always be gradable (uniquely up to translation by $\bbL$) and will have canonical Pin structures on them. 

In summary, for each $\mathbb{Z}/2$-grading structure $\sigma$ on $V$, which can be prescribed by exhibiting a homomorphism $\sigma: \mathbb{L} \to \mathbb{Z}_2$ with $\sigma(l_0)=1$, we can construct a $\mathbb{L}$-graded Fukaya category of $V$, where $\mathbb{L}= H_1(\mathcal{G}V)$. There is an effective $H^1(V;\mathbb{Z}_2)$ worth of choices for the $\mathbb{Z}/2$-grading structures, but once we fix such a $\mathbb{Z}/2$-grading, there is a canonical $\mathbb{L}$-graded Fukaya category.  

\begin{rmk} i) Vanishing of $2c_1(V)$ guarantees that the morphism $\mathbb{Z} = H_1(\mathcal{G}_p V) \to H_1(\mathcal{G}V)$ is injective, or equivalently $l_0 \in \bbL$ is non-torsion. 
 \ \\
        ii) Recall that the usual way of equipping Fukaya categories with a $\mathbb{Z}$-grading goes via similar construction (see \cite{seidelgraded}) however instead of using the universal abelian cover of $\mathcal{G}V$, one considers a fibrewise universal cover of $\mathcal{G}V$. Such a cover exists if and only if $2c_1(V)=0$. Moreover, if non-empty, the space of such covers is an $H^1(V)$-torsor.
        \end{rmk}

\subsection{Categories of graded matrix factorizations}

Let $X$ be a smooth space over $\k$ (possibly an algebraic stack) with a regular function $W$.
We are going to recall the definition of $\Z$-graded categories of matrix factorizations following
\cite{PV-mf-st} and explain how to generalize it to get multigraded versions of these categories.

\subsubsection{$\Z$-graded categories of $G$-equivariant matrix factorizations}\label{Z-gr-G-mf}

Assume that we have an action of a reductive group $G$ (over $\k$) on $X$, and the potential $W$ on $X$ satisfies 
$$W(gx)=l_0(g)W(x).$$
for some character $l_0:G\to \G_m$.
A {\it $(G,l_0)$-equivariant matrix factorization} of $W$ consists of 
$G$-equivariant vector bundles $E_0$ and $E_1$ and $G$-equivariant homomorphisms
$$d_1:E_1\to E_0, \ \ d_0:E_0\to E_1\ot l_0,$$
such that $d_1 d_0=d_0 d_1=W$. 
 
For a pair $(E,d_E)$, $(F,d_F)$ of $(G,l_0)$-equivariant matrix factorizations we define a $(G,l_0)$-equivariant matrix
factorization $\und{\Hom}=\und{\Hom}((E,d_E),(F,d_F))$ of $0$, by setting
\footnote{This construction can be seen as a combination of two operations on matrix factorizations, 
the tensor product and the duality (see \cite[Sec.\ 1]{PV-cohft}).}
$$\und{\Hom}_0:=E_0^\vee\ot F_0\oplus E_1^\vee\ot F_1, \ \
\und{\Hom}_1:=E_0^\vee\ot F_1\oplus E_1^\vee\ot F_0\ot l_0^{-1}.
$$

Now for any $(G,l_0)$-equivariant matrix factorization $(H,d_H)$ of $0$, we can consider a $\Z$-graded complex of
$G$-equivariant vector bundles
$$\com(H)=[\ldots H_1\to H_0\to H_1\ot l_0\to H_0\ot l_0\ldots]$$
with $H_0$ in degree $0$. Note that it is equipped with an isomorphism 
\begin{equation}\label{alpha-E-shift-relation}
\a_H:\com(H)[2]\to \com(H)\ot l_0.
\end{equation}

For a pair $E=(E,d_E)$, $F=(F,d_F)$ as above, the $\Z$-graded morphism space is defined by
$$\hom_{\Z}(E,F):=R\Ga(X,\com(\und{\Hom}(E,F)))^G,$$
where we use some functorial multiplicative resolutions to compute $R\Ga$.
\footnote{Equivalently, one can first consider the naive dg-category, without deriving the functor of global sections, and then
pass to the quotient by the subcategory of locally contractible objects (see \cite{PV-mf-st}).}
We denote by $\MF_{G,l_0}(X,W)_{\Z}$ the perfect derived category of the $\Z$-graded dg-category of $(G,l_0)$-equivariant
matrix factorizations of $W$. Note that if $W$ is not a zero then $l_0$ is uniquely determined by $W$, so we will sometimes
omit $l_0$ from the notation.

Note that due to isomorphism \eqref{alpha-E-shift-relation}, for any $G$-equivariant matrix factorization $E$ we have 
$$E\ot l_0\simeq E[2].$$

Assume that $W$ is not a zero divisor and consider the hypersurface $X_0\sub X$ given by $W=0$. 
We denote by $D_{\sing}([X_0/G])$
the singularity category of the stack $[X_0/G]$, i.e., the quotient of the bounded derived category of $G$-equivariant sheaves on $X_0$ by the subcategory perfect complexes. Assuming that the stack $[X/G]$ is sufficiently nice
(of finite cohomological dimension and with a resolution property) one has a natural equivalence
(see \cite[Thm.\ 3.14]{PV-mf-st})
\begin{equation}\label{MF-Dsing-G-equivalence}
\MF_G(X,W)_{\Z}\simeq D_{\sing}([X_0/G]).
\end{equation}

\subsubsection{$\G_m$-equivariant matrix factorizations of a potential of degree $1$}

An important case is when $G=\G_m$ and $l_0:\G_m\to \G_m$ is the identity character,
so the potential $W$ satisfies
$$W(\la\cdot x)=\la\cdot W(x),$$
where $\la\in \G_m$, $x\in X$. 
In this case the above definition gives a definition of the dg category $\MF_{\G_m}(X,W):=\MF_{\G_m,l_0}(X,W)_{\Z}$ 
of $\G_m$-equivariant matrix factorizations of $W$. 

\begin{rmk}
1. It can be shown that the above category $\MF_{\G_m}(X,W)$ is equivalent to the category of $B$-branes defined as in
Segal's paper \cite{Segal} for the {\it doubled action} of $\G_m$ on $X$.

\noindent
2. If $W$ is a potential of weight $d>0$ with respect to the $\G_m$-action, we can pass to the stack $\XX:=[X/\mu_d]$ and equip it
with the action of $\G_m/\mu_d\simeq \G_m$. Then $W$ descends to a function on $\XX$ of weight $1$ with respect to the $\G_m$-action.
Then the above definition can be applied to the stack $\XX$ with the descended potential.
\end{rmk}

In the case when $W$ is not a zero divisor and the stack $[X/\G_m]$ is sufficiently nice, we get
from \eqref{MF-Dsing-G-equivalence} an equivalence
\begin{equation}\label{MF-Dsing-Gm-equivalence}
\MF_{\G_m}(X,W)\simeq D_{\sing}([X_0/\G_m]).
\end{equation}

Now assume that $V_1,\ldots,V_k$ is a regular sequence of global functions on a smooth scheme $Y$, and let $Z\sub Y$
be the zero locus $Z(V_1,\ldots,V_k)$. Then we can consider the scheme $X=Y\times \A^k_{U_1,\ldots,U_k}$ with the
potential $W=U_1V_1+\ldots+U_kV_k$. We equip $X$ with the natural $\G_m$-action, trivial on $Y$ and such that each $U_i$
has weight $1$, and consider the category of matrix factorizations $\MF_{\G_m}(X,W)$. Then I\c{s}\i k's theorem \cite{Isik}
states that there is an equivalence
\begin{equation}\label{Isik-equivalence}
D^b\Coh(Z)\simeq \MF_{\G_m}(X,W).
\end{equation}

\subsubsection{Multigraded categories of matrix factorizations}\label{L-gr-G-mf}

Now we observe that in the context of Sec.\ \ref{Z-gr-G-mf}, we can also define a multigraded dg-category
of matrix factorizations.

Namely, let $L$ denote the group of characters of $G$. We assume that $l_0\not\in 2L$
and consider the sublattice
$$L(l_0):=\Z\cdot l_0+2L\sub L.$$
Note that we have a natural surjective homomorphism
$$L(l_0)\to \Z/2$$
sending $l_0$ to $1\mod 2$, and sending $2L$ to $0$.

For any $(G,l_0)$-equivariant matrix factorization $(H,d_H)$ of $0$ we can define an $L(l_0)$-graded complex
of $G$-equivariant bundles $\com_{L(l_0)}(H)$ by 
$$\com_{L(l_0)}(H)_{2l}:=H_0\ot l, \ \ \com_{L(l_0)}(H)_{2l+l_0}:=H_1\ot l\ot l_0,$$
with the differential of degree $l_0$ induced by $d_H$.

Now for a pair $E=(E,d_E)$, $F=(F,d_F)$ of $(G,l_0)$-equivariant matrix factorizations
we define the $L(l_0)$-graded space of morphisms
$$\hom_{L(l_0)}(E,F):=R\Ga(X,\com_{L(l_0)}(\und{\Hom}(E,F)))^G,$$
where on the right we take the totalization of the $\Z\times L(l_0)$-graded complex with
respect to the homomorphism $\Z\times L(l_0)\to L(l_0): (n,l)\mapsto nl_0+l$.
We denote by $\MF_{G,l_0}(X,W)_{L(l_0)}$ the corresponding $L(l_0)$-graded dg-category
(sometimes we will omit $l_0$ from the notation).

Note that we have a natural shift operation $(E,d_E)\mapsto (E[1],d_{E[1]})$, where
$$E[1]_0=E_1\ot l_0, \ E[1]_1=E_0\ot l_0,$$
and $d_{E[1]}$ is induced by $-d_E$. This shift operation is compatible with the shift of degree by $l_0$ on the spaces
of morphisms.

\begin{rmk}
It is clear from the definition that 
if a global function $f$ on $X$ has weight $l$ with respect to the $G$-action, then the multiplication by $f$
gives an endomorphism of degree $2l$ of any object of $\MF_{G,l_0}(X,W)_{L(l_0)}$.
Thus, the natural $G$-weights get doubled when passing to the endomorphisms in this category.
\end{rmk}

\begin{lem}\label{G-to-Gm-lem}
(i) There is an equivalence of $\Z$-graded dg-categories
$$\MF_{G,l_0}(X,W)_{\Z l_0}\simeq \MF_{G,l_0}(X,W)_{\Z},$$
where on the left we consider the subcategory corresponding to the 
subcomplexes of $\hom$-spaces that have grading in $\Z l_0\sub L(l_0)$ 
(and identify the grading group $\Z l_0$ with $\Z$).

\noindent
(ii) Assume that $G$ is a split algebraic torus: $G\simeq \G_m^n$.
Then for a subgroup $\G_m\sub G$, such that $l_0$ restricts to the identity character of $\G_m$, the
natural forgetful functor
$$\MF_{G,l_0}(X,W)_{L(l_0)}\to \MF_{\G_m}(X,W)$$
acts as identity on morphism spaces by collapsing the $L(l_0)$-grading to the $\Z$-grading via the homomorphism
$L(l_0)\hra L\to \Z$ dual to the embedding $\G_m\to G$.
\end{lem}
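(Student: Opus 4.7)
The plan is to prove both parts by carefully unwinding the definitions and matching the various gradings. Neither part presents a genuine obstacle: the content is essentially representation-theoretic bookkeeping for actions of commutative (or split toral) algebraic groups, and both statements become tautological once the gradings are set up correctly. The main thing to watch is keeping sign and weight conventions consistent across the $L$-grading on $G$-equivariant bundles, the $L(l_0)$-grading on the $\com_{L(l_0)}$ construction, and the $\Z$-grading on $\com$.

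For part (i), I would begin by fixing a $(G, l_0)$-equivariant matrix factorization $H = (H_0, H_1, d_H)$ of $0$ and showing that the $\Z l_0 \sub L(l_0)$-graded part of $\com_{L(l_0)}(H)$ is canonically isomorphic to $\com(H)$ as a $\Z$-graded complex of $G$-equivariant bundles. Under the standing assumption $l_0 \notin 2L$, any element $\ell \in \Z l_0$ can be written in at most one way as $\ell = 2l$ or $\ell = 2l + l_0$ with $l \in L$: if $\ell = 2k l_0$ then $l = kl_0$ (the $H_0$ case), and if $\ell = (2k+1) l_0$ then $l = k l_0$ (the $H_1$ case). The $\Z l_0$-graded piece of $\com_{L(l_0)}(H)$ is therefore $H_0 \otimes l_0^k$ in degree $2kl_0$ and $H_1 \otimes l_0^{k+1}$ in degree $(2k+1)l_0$, exactly matching $\com(H)$ under the identification $nl_0 \leftrightarrow n$. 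The differentials coincide because both are induced from $d_H$. Applying $R\Ga(X, -)$ and then $G$-invariants termwise preserves this identification, yielding the desired equivalence.

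For part (ii), the functor on objects is defined by restricting the $G$-action along $\G_m \hra G$; since $l_0|_{\G_m} = \id_{\G_m}$, a $(G, l_0)$-equivariant matrix factorization restricts to an object of $\MF_{\G_m}(X, W)$. The content is the identification of morphism complexes. Since $G = \G_m^n$ is a split torus, every $G$-equivariant bundle on $X$ decomposes into $L$-weight summands, and the $G$-invariants of $R\Ga(X, V \otimes l)$, for a $G$-equivariant $V$ and character $l \in L$, pick out precisely the $L$-weight-$(-l)$ summand of $R\Ga(X, V)$. After restriction to $\G_m$, the $\G_m$-weight-$m$ summand of $R\Ga(X, V)$ is the direct sum, over all $l \in L$ with $l|_{\G_m} = m$, of the $L$-weight-$l$ summands. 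Applying this observation to $\com_{L(l_0)}(\und{\Hom}(E, F))$, the $L(l_0)$-degree $\ell$ part of $\hom_{L(l_0)}(E, F)$ is the $L$-weight-$(-l)$ part of $R\Ga(X, \und{\Hom}_0)$ when $\ell = 2l$, and the $L$-weight-$(-l - l_0)$ part of $R\Ga(X, \und{\Hom}_1)$ when $\ell = 2l + l_0$. Collapsing along $L(l_0) \hra L \to \Z$ to $\Z$-degree $n$ collects exactly the summands whose $L$-weight restricts to the appropriate $\G_m$-weight, reproducing $R\Ga(X, \und{\Hom}_0 \otimes l_0^k)^{\G_m}$ in $\Z$-degree $2k$ and $R\Ga(X, \und{\Hom}_1 \otimes l_0^{k+1})^{\G_m}$ in $\Z$-degree $2k+1$. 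This is precisely the $\Z$-graded morphism complex defining $\MF_{\G_m}(X, W)$, with matching differentials since both come from $d_H$.
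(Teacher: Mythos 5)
Your proof is correct and follows essentially the same route as the paper: part (i) is the same direct unwinding of the definitions, and your weight-by-weight bookkeeping in part (ii) is exactly the paper's argument, which packages it as the decomposition $\com_{L(l_0)}(H)=\bigoplus_{k\in\ker(p_\Z)}\com(H)\ot k$ induced by the splitting $L=\Z l_0\oplus\ker(p_\Z)$ and then takes $G$-invariants. One phrase should be repaired: a $G$-equivariant bundle on $X$ does \emph{not} decompose into $L$-weight summands when $G$ acts nontrivially on $X$ --- what you actually use, and what is true since $G$ is a split (hence diagonalizable) torus, is that the complex of $G$-representations $R\Ga(X,V)$ decomposes into weight spaces.
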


\begin{proof}
Part (i) follows immediately from the definitions. For part (ii), we first observe that the homomorphism $p_\Z:L\to \Z$ induces a splitting
$$L=\Z l_0\oplus K,$$
where $K=\ker(p_\Z)$. From this we get an induced splitting
$$L(l_0)=\Z l_0 \oplus 2K.$$
Next, let $(H,d_H)=\und{\Hom}(E,F)$, for some
$(G,l_0)$-equivariant matrix factorizations $E$ and $F$.
Then we observe that
$$\com_{L(l_0)}(H)=\bigoplus_{k\in K} \com(H)\ot k.$$
Due to our assumption on $G$, this implies that
$$R\Ga(X,\com_{L(l_0)}(H))^G\simeq R\Ga(X,\com(H))^{\G_m}$$
as claimed.
\end{proof}

From Lemma \ref{G-to-Gm-lem}(i) we see that one can recover all morphisms 
in $\MF_{G,l_0}(X,W)_{L(l_0)}$ from those in $\MF_{G,l_0}(X,W)_{\Z}$ using the isomorphism
of morphism spaces in $\MF_{G,l_0}(X,W)_{L(l_0)}$,
$$\Hom^{2l+ml_0}(E,F)\simeq \Hom^{ml_0}(E,F\ot l).$$

Assume now that $W$ is not a zero divisor.
Then using the equivalence \eqref{MF-Dsing-G-equivalence}
we get an equivalence 
$$\MF_{G,l_0}(X,W)_{\Z l_0}\simeq \MF_{G,l_0}(X,W)_{\Z}\simeq D_{\sing}([X_0/G]).$$
Thus, given a $G$-equivariant coherent sheaf $F$ on $X_0$, we get from the corresponding object of the $G$-equivariant singularity category an object of $\MF_{G,l_0}(X,W)_{L(l_0)}$.

\subsubsection{Grothendieck-Serre duality and a vanishing criterion for morphisms}\label{Serre-dual-sec}

Assume now that $G=\G_m$ and $l_0$ is the identity character.
Recall that for a pair of $\G_m$-equivariant matrix factorizations of $W$, $E$ and $F$, the cohomology of the morphism
space are given by
$$\Hom^*(E,F)\simeq H^*(X,\com(\und{\Hom}(E,F)))^{\G_m}.$$
Now assume that we have a $\G_m$-equivariant proper
morphism $\pi:X\to S$, where $S=\Spec(A)$ is a regular Noetherian affine scheme with a $\G_m$-action.
Then we can define a $\G_m$-equivariant sheaf on $S$ by
$$R\Hom(E,F)_S:=R\pi_*\com(\und{\Hom}(E,F)).$$

In addition, we will need the following construction from
\cite[Sec.\ 1.1]{PV-mf-st}.
For a bounded complex $(C^\bullet,\de)$ of $\G_m$-equivariant vector bundles on $X$, we define
a $\G_m$-equivariant matrix factorization $\mf(C)$ of $0$, by setting
$$\mf(C)_0=\bigoplus_n C^{2n}\ot l_0^{-n}, \ \ \mf(C)_1=\bigoplus_n C^{2n-1}\ot l_0^{-n},$$
with the differential induced by $\de$.
It is easy to see that there is a natural isomorphism of $\G_m$-equivariant complexes
$$\com(\mf(C))=\bigoplus_n C\ot l_0^n[-2n].$$

\begin{lem}\label{SD-mf-lem} 
(i) In the above situation,
for $\G_m$-equivariant matrix factorizations $E$ and $F$,
one has an isomorphism
$$R\Hom(E,F)^\vee_S\simeq R\Hom(F,E\ot \om_{X/S}[n])_S,$$
where $n$ is the relative dimension of $\pi$, and $M^{\vee}=R\Hom(M,\OO_S)$, for $M\in D_{\G_m}(S)$.

\noindent
(ii) In the same situation assume that $\und{\Hom}(F,E)\simeq \mf(i_*H)$ for a closed $\G_m$-invariant
subscheme $i:Z\hra X$ and $H\in D_{\G_m}(Z)$, such that
$i^*\om_{X/S}\simeq \OO_Z\ot l_0^m$ for some $m\in \Z$. 
Then the vanishing of $\Hom^*(E,F)$ implies the vanishing of $\Hom^*(F,E)$.
\end{lem}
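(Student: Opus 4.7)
I would apply relative Grothendieck-Serre duality for the proper morphism $\pi:X\to S$ to the $\G_m$-equivariant complex $\com(\und{\Hom}(E,F))$. The classical formula
\[(R\pi_*C)^\vee \simeq R\pi_* R\Hom_X(C,\om_{X/S}[n])\]
holds for bounded complexes of coherent sheaves; although $\com(\und{\Hom}(E,F))$ is $2$-periodic and hence unbounded, the periodicity isomorphism $\com(\und{\Hom}(E,F))[2]\simeq \com(\und{\Hom}(E,F))\ot l_0$ from \eqref{alpha-E-shift-relation} means that the $\G_m$-weight decomposition of $R\pi_*$ contributes only finitely many terms in each weight, so the duality can be extracted weight-by-weight. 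The remaining task is to produce a natural isomorphism
\[R\Hom_X(\com(\und{\Hom}(E,F)),\om_{X/S})\simeq \com(\und{\Hom}(F, E\ot\om_{X/S})),\]
which reduces to the pointwise identification of the duals of the bundles making up $\und{\Hom}(E,F)$ with those of $\und{\Hom}(F,E)$ (up to a single $l_0$-twist on the odd summand that is absorbed by the $\com$ construction), together with compatibility of differentials.

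\textbf{Plan for (ii).} I would deduce this from (i) together with the structural hypothesis on $\und{\Hom}(F,E)$. Applying (i) with the roles of $E$ and $F$ swapped gives
\[R\Hom(E,F)^\vee_S \simeq R\Hom(F, E\ot\om_{X/S}[n])_S,\]
so the vanishing of $\Hom^*(E,F)$ forces the vanishing of $\Hom^*(F,E\ot\om_{X/S}[n])$. To convert this into the vanishing of $\Hom^*(F,E)$, I would use that $\mf(-)$ commutes with tensoring by line bundles, together with the projection formula for the closed embedding $i:Z\hra X$:
\[\und{\Hom}(F,E\ot\om_{X/S})\simeq \mf(i_*H)\ot\om_{X/S}\simeq \mf(i_*(H\ot i^*\om_{X/S}))\simeq \mf(i_*H)\ot l_0^m\simeq \und{\Hom}(F,E)\ot l_0^m.\]
Since tensoring by $l_0$ corresponds to the shift $[2]$ in the $\Z$-graded matrix factorization category, $\Hom^*(F,E\ot\om_{X/S}[n])$ agrees with $\Hom^*(F,E)$ up to an overall degree shift by $n+2m$, and therefore both vanish simultaneously.

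\textbf{Main obstacle.} The principal subtlety is handling the unboundedness of $\com(\und{\Hom}(E,F))$ when invoking Grothendieck-Serre duality. One must justify the passage through the $\G_m$-equivariant weight decomposition (or equivalently use a $2$-periodic, mf-theoretic version of relative duality) while carefully tracking the $l_0$-twists that appear in the identification of the odd component of $\und{\Hom}(E,F)^\vee$ with a summand of $\und{\Hom}(F,E)$. Once those twists and sign conventions are pinned down, the rest is a formal unwinding of the definitions.
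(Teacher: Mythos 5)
Your part (i) follows the paper's route: the paper likewise reduces to the usual $\G_m$-equivariant Grothendieck--Serre duality for $\pi$ via the identification $\com(\und{\Hom}(F,E\ot\om_{X/S}))\simeq\com(\und{\Hom}(E,F))^\vee\ot\om_{X/S}$, and the unboundedness you flag is handled exactly as you suggest, weight by weight using the periodicity \eqref{alpha-E-shift-relation}. The second half of your part (ii) --- using the projection formula and $i^*\om_{X/S}\simeq\OO_Z\ot l_0^m$ to identify $\com(\und{\Hom}(F,E))\ot\om_{X/S}$ with $\com(\und{\Hom}(F,E))\ot l_0^m$ and so trade the $\om_{X/S}$-twist for an even shift --- also matches the paper.

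The gap is in the first step of your (ii): ``the vanishing of $\Hom^*(E,F)$ forces the vanishing of $\Hom^*(F,E\ot\om_{X/S}[n])$.'' The duality of (i) is an isomorphism $\HH^\vee\simeq R\Hom(F,E\ot\om_{X/S}[n])_S$ in $D_{\G_m}(S)$, where $\HH=R\Hom(E,F)_S$, whereas the hypothesis only gives the vanishing of the $\G_m$-invariant part of $R\Ga(S,\HH)$. Since $S$ carries a nontrivial $\G_m$-action in the intended applications, the weight-zero part of $\HH^\vee$ is not computed from the weight-zero part of $\HH$ alone (for instance, for $A=\k[V]$ with $|V|=2$ and $\HH$ the free module of rank one generated in weight $2$, one has $\HH_0=0$ but $(\HH^\vee)_0\neq 0$), so the inference does not follow from (i) as stated. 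The missing ingredient is the two-periodicity of $\HH$: since $\HH\ot l_0^n\simeq\HH[2n]$, the vanishing of $R\Ga(S,\HH)^{\G_m}$ implies the vanishing of $R\Ga(S,\HH\ot l_0^n)^{\G_m}$ for every $n$, i.e., of every weight component of $R\Ga(S,\HH)$; as $S$ is affine this gives $\HH=0$, and only then may one dualize and conclude $R\Hom(F,E\ot\om_{X/S}[n])_S=0$. With that step inserted, the remainder of your argument goes through and coincides with the paper's proof.
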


\begin{proof} (i) Since $\com(\und{\Hom}(F,E\ot \om_{X/S}))\simeq \com(\und{\Hom}(E,F))^\vee\ot \om_{X/S}$, 
this follows immediately from the usual ($\G_m$-equivariant) Grothendieck-Serre duality applied to $\com(\und{\Hom}(E,F))$.

\noindent
(ii) 
Set $\HH:=R\Hom(E,F)_S$. By assumption, we have $R\Ga(S,\HH)^{\G_m}=0$. But we also have
$$\HH\ot l_0^n\simeq \HH[2n],$$
hence, we get $R\Ga(S,\HH\ot l_0^n)^{\G_m}=0$ for every $n\in \Z$. Since $S$ is affine, this implies that $\HH=0$.
Therefore, by part (i), we deduce that 
$$R\pi_*\com(\und{\Hom}(F,E\ot \om_{X/S}[n]))\simeq R\Hom(F,E\ot \om_{X/S}[n])_S=0.$$
By assumption, we have an isomorphism
$$\com(\und{\Hom}(F,E))\ot \om_{X/S}\simeq \bigoplus_n i_*H\ot l_0^n[-2n]\ot \om_{X/S}\simeq 
\bigoplus_n i_*H\ot l_0^{n+m}[-2n]\simeq \com(\und{\Hom}(F,E))\ot l_0^m.$$
Therefore, we deduce the vanishing of $R\pi_*\com(\und{\Hom}(F,E))$, and hence, of $\Hom^*(F,E)$.
\end{proof}

\subsubsection{Generation}

We will use the standard generation result for the singularity categories (see \cite[Thm.\ 3.5]{LinPom}).
However, we need to adapt this result to the multigraded categories we work with.

Let us assume that we are in the setup of Sec.\ \ref{L-gr-G-mf}. Assume in addition that $G$ is a split algebraic torus,
and let us fix a subgroup $\G_m\sub G$, such that $l_0|_{\G_m}$ is the identity character $\chi$.

\begin{lem}\label{L-graded-generation-lem} 
Let $Z\sub X_0$ be a $G$-invariant closed subscheme containing the singular locus of $X_0$, and let
$(E_i)_{i\in I}$ be a collection of $G$-equivariant coherent sheaves on $Z$, which we view as objects of $D_{\sing}([X_0/G])$ and as objects of $\MF_G(X,W)_{L(l_0)}$.
Assume that $(E_i\ot\chi^n)_{i\in I,n\in \Z}$ generate the $D^b\Coh(Z/\G_m)$. Then $(E_i)$ generate $\MF_G(X,W)_{L(l_0)}$.
\end{lem}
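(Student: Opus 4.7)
The strategy is to reduce to the standard $\G_m$-equivariant generation theorem and then lift the conclusion via the morphism-space identification of Lemma~\ref{G-to-Gm-lem}(ii).

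First, I would apply \cite[Thm.~3.5]{LinPom} in the $\G_m$-equivariant setting: the hypothesis that $(E_i\otimes\chi^n)_{i,n}$ generate $D^b\Coh([Z/\G_m])$ yields generation of $D_{\sing}([X_0/\G_m])$ by the same collection, and via the equivalence \eqref{MF-Dsing-Gm-equivalence}, generation of $\MF_{\G_m}(X,W)$. Since in $\MF_{\G_m}(X,W)$ the twist by $\chi$ is identified with the shift $[2]$, this amounts to $(E_i)$ generating $\MF_{\G_m}(X,W)$ as a triangulated category.

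To transfer the generation to $\MF_G(X,W)_{L(l_0)}$, I would pass to the ambient dg derived category and use the standard criterion that a set of compact objects thickly generates the perfect derived category if and only if its right orthogonal in the ambient category vanishes. Suppose for contradiction that there is a non-zero $A$ in this ambient category with $\Hom^l(E_i,A)=0$ for all $i\in I$ and $l\in L(l_0)$. By Lemma~\ref{G-to-Gm-lem}(ii), the total morphism space $\bigoplus_l\Hom^l(E_i,A)$ is identified via the collapse $L(l_0)\hra L\to\Z$ with the total $\Z$-graded morphism space $\bigoplus_n\Hom^n(\Phi(E_i),\Phi(A))$ in $\MF_{\G_m}(X,W)$, where $\Phi$ denotes the forgetful functor. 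Hence the latter vanishes for each $i$, and by the first step $\Phi(A)\simeq 0$ in $\MF_{\G_m}(X,W)$.

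Finally, I would conclude $A\simeq 0$ by a $G'$-averaging argument, where $G\simeq\G_m\times G'$ is a chosen splitting. A $\G_m$-equivariant contracting homotopy $h$ of $\Phi(A)$ decomposes, under the $G'$-isotypic decomposition of morphism spaces, into $h=\sum_l h_l$, with each $h_l$ a $G$-equivariant morphism of $L(l_0)$-degree $l=-l_0+2k$ for some $k\in K=\ker(L\to\Z)$. Splitting the identity $dh+hd=\id$ by $L(l_0)$-degree, the degree-$0$ component reads $dh_{-l_0}+h_{-l_0}\,d=\id$, supplying a $G$-equivariant contraction of $A$ and yielding $A\simeq 0$, a contradiction. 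The main obstacle is this final compatibility between the $G'$-isotypic decomposition and the $L(l_0)$-grading, which is essentially the content of the proof of Lemma~\ref{G-to-Gm-lem}(ii), where the identification $\com_{L(l_0)}(H)\simeq\bigoplus_{k\in K}\com(H)\otimes k$ underlies the comparison of morphism spaces.
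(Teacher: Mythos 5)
Your proposal is correct and follows essentially the same route as the paper's proof: reduce to $\MF_{\G_m}(X,W)$ via the orthogonality criterion in the quasicoherent extension and Lemma~\ref{G-to-Gm-lem}(ii), use $E_i\ot\chi^n\simeq E_i[2n]$ together with \eqref{MF-Dsing-Gm-equivalence} and \cite[Thm.~3.5]{LinPom}. The only difference is cosmetic --- you apply the generation theorem first and transfer afterwards, spelling out via the isotypic decomposition of the contracting homotopy what the paper compresses into the one-line claim that the assertion may be checked after collapsing the $L(l_0)$-grading.
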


\begin{proof} The category $\MF:=\MF_G(X,W)_{L(l_0)}$ has a natural quasicoherent extension $\MF^{\qcoh}$, such that the objects of $\MF$ are compact.
Thus, to prove that $(E_i)$ generate $\MF$, it is enough to check that for $X\in \MF^{\qcoh}$, 
the vanishing $\Hom(E_i,X)=0$ for all $i$ implies that $X=0$.
It is enough to know the same assertion after collapsing the $L(l_0)$-grading to the $\Z$-grading via some homomorphism 
$f:L(l_0)\to\Z$ sending $l_0$ to $1$.
Hence, by Lemma \ref{G-to-Gm-lem}(ii),
we can replace $G$ with our fixed subgroup $\G_m$. 
Thus, we are reduced to showing that $(E_i)$ generate $\MF_{\G_m}(X,W)$. Note that 
$E_i\ot l_0^n\simeq E_i[2n]$
in this category. Hence, using equivalence with the singularity category \eqref{MF-Dsing-Gm-equivalence}, it is enough to check that
$(E_i\ot\chi^n)$ generate $D_{\sing}([X_0/\G_m])$. By the equivariant version of \cite[Thm.\ 3.5]{LinPom}, it is enough to check that 
the subcategory of $D^b \Coh(X_0/\G_m)$ generated by $(E_i\ot\chi^n)$ contains $i_*D^b\Coh(\Sing X_0)$, where $i:\Sing X_0\hra X_0$ is the singular locus. 
But this follows from our assumption on $(E_i)$.
\end{proof}

\subsubsection{Elementary computations with Koszul matrix factorizations}

Let us denote by $\{a,b\}$ the Koszul matrix factorization of $a\cdot b$, where $a$ and $b$ are possibly sections of
line bundles,
$$E_1=\OO(-b)\rTo{b} E_0=\OO\rTo{a}\OO(a)=E_1(ab)$$
where $a$ (resp., $b$) is a section of a line bundle $\OO(a)$ (resp., $\OO(b)$).
  In the computations below we view coherent sheaves as matrix factorizations of $0$ via the functor $\mf(\cdot)$
described in Sec.\ \ref{Serre-dual-sec}.

\begin{lem}\label{Koszul-mf-lem}
(i) Assume $(a,b)$ is a regular sequence. Then one has natural quasiisomorphisms
$$\und{\End}(\{a,b\})\simeq \{a,b\}^\vee|_{b=0}\simeq \OO/(a,b).$$

\noindent
(ii) Assume $(a,b,c)$ is a regular sequence. Then one has natural quasiisomorphisms
$$\und{\Hom}(\{bc,a\},\{ca,b\})\simeq \{bc,a\}^\vee|_{b=0}\simeq \OO(a)/(a,b)[-1].$$
The generating morphism comes from the exact triangle
$$\OO(-a)/(b)\rTo{a} \OO/(ab)\to \OO/(a)\to \OO(-a)/(b)[1].$$

\noindent
(iii) Assume $(a,b,c)$ is a regular sequence. Then one has natural quasiisomorphisms
$$\und{\Hom}(\{c,ab\},\{ca,b\})\simeq \{c,ab\}^\vee|_{b=0}\simeq \OO/(b,c),$$
$$\und{\Hom}(\{ca,b\},\{c,ab\})\simeq \{ca,b\}^\vee|_{ab=0}\simeq \OO(-a)/(b,c).$$
The generating morphism for the former is the natural projection $\OO/(ab)\to \OO/(b)$,
while the generating morphism for the latter corresponds to the natural embedding
$$\OO/(b)\rTo{a} \OO(a)/(ab).$$
\end{lem}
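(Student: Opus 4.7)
The plan is to approach all three parts in a uniform way: first, expand $\und{\Hom}$ explicitly as a matrix factorization of $0$ using the formula from Sec.\ \ref{L-gr-G-mf}; second, simplify via the regularity assumption to a concentrated representative; third, translate to a $\Z$-graded complex via $\com(\cdot)$ and identify its cohomology as the stated quotient sheaf. The generating morphism is then read off from the explicit comparison map.

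For part (i), I would write out $\und{\End}(\{a,b\})$ using $E_0=\OO$, $E_1=\OO(-b)$ to get
$\und{\End}_0 = \OO\oplus \OO(b)\otimes\OO(-b)$ and
$\und{\End}_1 = \OO(-b)\oplus \OO(b)\otimes l_0^{-1}$,
with differentials built from $a$ and $b$. The summand carrying the factor $b:\OO(-b)\to\OO$ contracts to $\{a,b\}^\vee|_{b=0}$ because $(a,b)$ is regular, giving the first quasi-isomorphism. Applying $\com(\cdot)$ to $\{a,b\}^\vee|_{b=0}$ produces the $2$-periodicized Koszul complex for the sequence $(a,b)$, which is exact except in degree $0$ where it computes $\OO/(a,b)$, giving the second quasi-isomorphism.

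For parts (ii) and (iii), I would apply the same template but with an extra regular element $c$. In (ii), writing $\und{\Hom}(\{bc,a\},\{ca,b\})$ as the tensor product $\{bc,a\}^\vee\otimes\{ca,b\}$ (a matrix factorization of $0$), the differential contains a factor of $b$ in one component, which can be killed modulo $b=0$ by the regularity of $(a,b,c)$. The resulting complex $\com$ is the Koszul complex computing $\OO(a)/(a,b)$, shifted by $[-1]$ because of the parity of the surviving summand. The exact triangle
\[
\OO(-a)/(b)\rTo{a} \OO/(ab)\to \OO/(a)\to \OO(-a)/(b)[1]
\]
arises by writing the natural short exact sequence $0\to \OO(-a)/(b)\xrightarrow{a}\OO/(ab)\to\OO/(a)\to 0$ and observing that the class of its connecting morphism is precisely the generating class in $\Hom^1$. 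Part (iii) is analogous: both $\und{\Hom}$'s are computed by the same mechanism, restricting to $b=0$ (resp.\ $ab=0$), and the generating maps $\OO/(ab)\to\OO/(b)$ and $\OO/(b)\xrightarrow{a}\OO(a)/(ab)$ are identified by tracing the explicit morphisms of matrix factorizations induced by the inclusions and projections on components.

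The main obstacle is bookkeeping: one must track the grading shifts by $\OO(a),\OO(b),\OO(c)$ and the parities carefully, and verify that the morphism classes indicated arise from the correct connecting maps. No deep input is needed beyond regularity of the relevant sequences and the standard translation between matrix factorizations of $0$ and $2$-periodic complexes; the result is essentially a direct verification once the correct contraction of contractible summands is performed.
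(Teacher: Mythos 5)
Your proposal is correct and follows essentially the same route as the paper: reduce $\und{\Hom}(E,\{u,v\})\simeq E^\vee\otimes\{u,v\}\simeq E^\vee|_{v=0}$ (the paper simply cites \cite[Prop.\ 1.6.3(ii)]{PV-cohft} for this step), compute the resulting two-periodic complexes over the quotient using the regular-sequence hypothesis, and read off the generating morphisms from the evident short exact sequences. The only point your sketch understates is the second case of (iii): after restricting to $ab=0$ neither differential vanishes, so one must identify $\ker(ca)=\im(b)$ and $\ker(b)=\im(a)$ on $\OO/(ab)$ to extract $\OO(-a)/(b,c)$ --- still the direct verification you predict, just not of the ``one differential dies'' shape.
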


\begin{proof}
We use the standard quasiisomorphism of matrix factorizations of zero,
$$\und{\Hom}((E,d_E),\{a,b\})\simeq (E,d_E)^\vee\ot \{a,b\}\simeq (E,d_E)^\vee|_{b=0}$$  
(see \cite[Prop.\ 1.6.3(ii)]{PV-cohft}). 

In (i) this leads to
$$\OO(-a)/(b)\rTo{a} \OO/(b)\rTo{0}\ldots$$
which is quasiisomorphic to $\OO/(a,b)$.

In (ii) we get
$$\OO(-bc)/(b)\rTo{0} \OO/(b)\rTo{a}\OO(a)/(b)$$
which is quasiisomorphic to $\OO(a)/(a,b)[-1]$.

In (iii), for the first $\und{\Hom}$ we get
$$\OO(-c)/(b)\rTo{c}\OO/(b)\rTo{0}\ldots$$
which is quasiisomorphic to $\OO/(b,c)$.

Finally, for second $\und{\Hom}$ in (iii) we get
$$\OO(-ca)/(ab)\rTo{ca} \OO/(ab)\rTo{b}\OO(b)/(ab)\to\ldots$$
We observe that 
$$\ker(ca:\OO/(ab)\to \OO(ca)/(ab))=\im(b:\OO(-b)/(ab)\to \OO/(ab)),$$  
$$\ker(b:\OO/(ab)\to \OO(b)/ab)=\im(a:\OO(-a)/(ab)\to \OO/(ab)).$$
Hence, the above matrix factorization of $0$ is quasiisomorphic to
$$\coker(\OO(-ca)/(ca)\rTo{ca} a\OO(-a)/(ab))\simeq \coker(\OO(-ca)/(b)\rTo{c} \OO(-a)/(b))\simeq \OO(-a)/(b,c).$$
\end{proof}

\subsection{Toric geometry of $\YY_{n,k}$}\label{Toric-geom-sec}

Below we use the notation from the description of the dual cones of $\YY_{n,k}$ from Sec.\ \ref{toric-constr-sec}.
Let us denote by $\lan S_\phi\ran_{\Z_{\ge 0}}$ the linear combinations of vectors in $S_\phi$ with nonnegative integer coefficients. 

\begin{lem}\label{toric-cones-lem} 
Fix a nonincreasing map $\phi:[1,n]\to [1,k]$.

\noindent
(i) Each subset $S_\phi\sub H$ is a basis of $H$.

\noindent
(ii) One has $c_{i,j}\in \lan S_\phi\ran_{\Z_{\ge 0}}$ whenever $j<\phi(i)$ and $-c_{i,j}\in \lan S_\phi\ran_{\Z_{\ge 0}}$ whenever $j\ge \phi(i)$. 

\noindent
(iii) For each $i=1,\ldots,k$ and each $j=1,\ldots,n+1$, the vectors $v_i$ and $x_j$ are in $\lan S_\phi\ran_{\Z_{\ge 0}}$.
\end{lem}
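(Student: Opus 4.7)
My plan is to prove the parts in the order (iii), (ii), (i), with (i) following from (iii) by a dimension count and (ii) reducing to (iii) via the basic identities
\[
c_{i,j}=c_{i+1,j}+x_{i+1},\qquad c_{i,j-1}=c_{i,j}+v_j,
\]
which are immediate from formula~\eqref{cij-formula-eq}. The one nontrivial identity driving (iii) is
\[
v_{j_m}=-c_{i_m,j_m}+x_{i_m+1}+\ldots+x_{p_m}+c_{p_m,j_m-1}
\]
for each level set $I_m=[i_m,p_m]$ with value $j_m=\phi(I_m)$; this is exactly the ``replacement rule'' defining $S_\phi$ read backwards, and it is verified by substituting \eqref{cij-formula-eq} into the right-hand side.

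For (iii), I would enumerate the $v_i$ and $x_j$ case by case. Those $v_j$ with $j\notin\im(\phi)$, and those $x_j$ for $j\in[2,n]$ not equal to any starting index $i_m$, lie in $S_\phi$ directly. Each $v_{j_m}$ is handled by the identity above. For $x_{i_m}$ with $m\ge 2$, I would use $x_{i_m}=c_{p_{m-1},j_m}-c_{i_m,j_m}$ (since $i_m-1=p_{m-1}$) and expand
\[
c_{p_{m-1},j_m}=c_{p_{m-1},j_{m-1}-1}+v_{j_{m-1}-1}+\ldots+v_{j_m+1};
\]
the first summand and $-c_{i_m,j_m}$ are in $S_\phi$, and the inserted $v_{j'}$ have $j'\notin\im(\phi)$ since $j_m<j_{m-1}$ are consecutive values of the nonincreasing function $\phi$. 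The boundary cases $x_1=-c_{1,k}=-c_{i_1,j_1}+\sum_{j>j_1}v_j$ and $x_{n+1}=c_{n,0}=c_{p_r,j_r-1}+\sum_{j<j_r}v_j$ are treated analogously, using the extensions $c_{1,k}=-x_1$ and $c_{n,0}=x_{n+1}$ together with the fact that $j_1$ (resp.\ $j_r$) is the maximum (resp.\ minimum) of $\im(\phi)$, so all the inserted $v$-indices lie outside $\im(\phi)$.

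For (ii), given $i\in I_m$, I would iterate the first identity to reduce to $i=i_m$ when $j\ge j_m$ (or to $i=p_m$ when $j<j_m$), picking up only $x$-terms $x_{i_m+1},\ldots,x_i$ (resp.\ $x_{i+1},\ldots,x_{p_m}$) that sit in $S_\phi$ as non-starters within $I_m$; then iterating the second identity reduces the $j$-index down to $j_m$ (or up to $j_m-1$), adding $v$-terms that lie in $\lan S_\phi\ran_{\Z_{\ge 0}}$ by (iii). The final ingredient $-c_{i_m,j_m}$ or $c_{p_m,j_m-1}$ is in $S_\phi$ by definition, so both sign cases conclude. For (i), a direct count gives $|S_\phi|=(k-r)+(n-r)+2r=n+k=\rk(H)$, and combined with (iii) this forces the rank-$(n+k)$ generating set $S_\phi$ to be a $\Z$-basis of $H$.

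The main obstacle I expect is the bookkeeping in (iii): correctly treating the boundary indices $x_1,x_{n+1}$ and the starts of level sets $x_{i_m}$ ($m\ge 2$) hinges on the extensions $c_{1,k}=-x_1$, $c_{n,0}=x_{n+1}$ together with the fact that the nonincreasing structure of $\phi$ always leaves a gap in $\im(\phi)$ between consecutive values, into which the inserted $v_j$ terms must fall so that they land in $S_\phi$. Once these decompositions are in place, the reductions in (ii) and the counting argument in (i) are essentially mechanical.
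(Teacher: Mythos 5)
Your proof is correct. The substantive difference from the paper is in part (i): the paper gets the basis property in one line from the topological splitting $H_1(\Si_1)\oplus\ldots\oplus H_1(\Si_r)\oplus \bigoplus_{i\not\in\im(\phi)}\Z\cdot v_i\simeq H$, whereas you deduce it from the count $|S_\phi|=n+k$ together with the fact (from your part (iii)) that $S_\phi$ $\Z$-generates $H$, so the surjection $\Z^{n+k}\to H$ must be an isomorphism. Your route is more elementary and self-contained (it does not require justifying the direct-sum decomposition), at the cost of making (i) depend on (iii). For (ii) and (iii) the content is essentially identical to the paper's; you have merely reversed the logical order. The paper proves (ii) first (reducing via $c_{i,j}=c_{i+1,j}+x_{i+1}$ and $c_{i,j-1}=c_{i,j}+v_j$ to the generators $-c_{i_m,\phi(i_m)}$ and $c_{p_m,\phi(p_m)-1}$, by descending induction within a level set) and then obtains $x_i\in\lan S_\phi\ran_{\Z_{\ge 0}}$ as a corollary of (ii) via $x_i=c_{i-1,\phi(i)}-c_{i,\phi(i)}$; you instead prove the $x_{i_m}$ statements directly by inlining the same telescoping expansions, and then run (ii) afterwards. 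Note that both orderings secretly rely on the easy half of (iii) (that each $v_j$ lies in $\lan S_\phi\ran_{\Z_{\ge 0}}$, immediate from the replacement rule) in order to perform the $j$-reduction in (ii); you make this dependency explicit, which is a small clarity gain over the paper.
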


\begin{proof} 
(i) This follows immediately from the fact that the natural map of lattices 
$$H_1(\Si_1)\oplus\ldots\oplus H_1(\Si_r)\oplus \bigoplus_{i\not\in\im(\phi)}\Z\cdot v_i\to H$$
is an isomorphism.

\noindent
(ii) The relations $c_{i,j-1}=c_{i,j}+v_j$ and $-c_{i,j+1}=-c_{i,j}+v_{j+1}$ reduce this to
$$c_{i,\phi(i)-1}\in \lan S_\phi\ran_{\Z_{\ge 0}}, \ \ -c_{i,\phi(i)}\in \lan S_\phi\ran_{\Z_{\ge 0}}.$$
Let us prove the first inclusion---the proof of the second is similar. We use descending induction on $i$.
If $\phi(i+1)<\phi(i)$ or $i=n$ then
$c_{i,\phi(i)-1}\in S_\phi$ by the definition. If $\phi(i+1)=\phi(i)$ then $x_{i+1}\in S_\phi$ and we can write 
$c_{i,\phi(i)-1}=c_{i+1,\phi(i+1)-1}+x_{i+1}$, and the assertion follows from the induction assumption.

\noindent
(iii) The fact that $v_i\in \lan S_\phi\ran_{\Z_{\ge 0}}$ 
follows from the first definition of $S_\phi$ (to get $S_\phi$ from $S$ we keep all $v_i$ for $i\not\in\im(\phi)$ and replace each $v_i$
for $i\in \im(\phi)$ with a collection of vectors with the sum $v_i$).

Next, let us prove that $x_i\in \lan S_\phi\ran_{\Z_{\ge 0}}$. For $i=1$ this follows from the equality $x_1=-c_{1,k}$.
For $i>1$ we use the equality $x_i=c_{i-1,j}-c_{i,j}$. 
If $\phi(i)<\phi(i-1)$ then we set $j=\phi(i)$ and note that $-c_{i,j}\in \lan S_\phi\ran_{\Z_{\ge 0}}$ and
$c_{i-1,j}\in \lan S_\phi\ran_{\Z_{\ge 0}}$ since $j<\phi(i-1)$. In the case $\phi(i)=\phi(i-1)$ we have $x_i\in S_\phi$ by the definition.
\end{proof}

Let $\SS_{n,k}$ denote the toric hypersurface $X_1\ldots X_{n+1}=V_1\ldots V_k$ in the affine space with coordinates 
$(X_1,\ldots,X_{n+1},V_1,\ldots,V_k)$.
We have a natural toric birational morphism
$$\pi:\YY_{n,k}\to \SS_{n,k},$$
which is a resolution of singularities since $\YY_{n,k}$ is smooth.

Note that $\SS_{n,k}$ is the affine toric variety associated with the cone
$C\sub \R H^\vee$ spanned by the vectors $\om_{ij}=e_i+f_j$, $i=1,\ldots,n+1$, $j=1,\ldots,k$.
Here we use the presentation
$$H=(\Z x_1\oplus \ldots\oplus \Z x_{n+1}\oplus \Z v_1\oplus\ldots\oplus \Z v_k)/(x_1+\ldots+x_{n+1}-v_1-\ldots-v_k),$$
which allows to identify $\R H^\vee$ with the hyperplane $\a_1+\ldots+\a_{n+1}=\b_1+\ldots+\b_k$ in $\R^{n+1}\times \R^k$, and we
denote by $(e_1,\ldots,e_{n+1})$ and $(f_1,\ldots,f_k)$ the standard bases in $\R^{n+1}$ and $\R^k$.

Note that the vectors $\om_{ij}$ are precisely the vertices of the polytope
$$\Pi:=\De_n\times \De_{k-1}\sub \R^{n+1}\times \R^k,$$
where $\De_m\sub \R^{m+1}$ is the standard simplex spanned by the basis vectors.
Thus, $C$ is the cone generated by this polytope. Now taking cones over simplices of a standard triangulation of $\Pi$ 
(see e.g., \cite[Ch.\ 7, Sec.\ 3D]{GKZ}) one gets a toric fan, and the corresponding toric variety is a small resolution of $\SS_{n,k}$
(since we do not add new $1$-dimensional cones). We claim that this precisely leads to our variety $\YY_{n,k}$. 

\begin{prop}\label{small-res-prop} 
The map $\pi$ is the small resolution, associated with the standard triangulation of $\Pi$ corresponding to the usual ordering of the vertices of
$\De_n$ and the reverse ordering of the vertices of $\De_{k-1}$.
\end{prop}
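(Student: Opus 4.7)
The plan is to verify directly that the fan $\{\sigma_\phi\}$ defining $\YY_{n,k}$ coincides with the fan of cones over the maximal simplices of the staircase triangulation of $\Pi = \De_n \times \De_{k-1}$ associated to the given vertex orderings. Since $\SS_{n,k}$ is the affine toric variety with top cone $C = \R_{\ge 0} \cdot \Pi$ and (as we will show) the $\sigma_\phi$'s introduce no new rays, this exhibits $\pi$ as the small toric resolution attached to this triangulation.

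The maximal simplices of the staircase triangulation are in bijection with monotone lattice paths from $(1,k)$ to $(n+1,1)$ in the $(n+1) \times k$ grid, using steps $(+1,0)$ or $(0,-1)$, and hence with nonincreasing functions $\phi: [1,n] \to [1,k]$: send $\phi$ to the path whose horizontal step out of column $l = i$ occurs at height $m = \phi(i)$. Given the level decomposition $[1,n] = \bigsqcup_{s=1}^{r} I_s$ with $I_s = [a_s, b_s]$ and $\phi|_{I_s} = \phi_s$ (so $\phi_1 > \cdots > \phi_r$; set $\phi_0 = k$), the $n+k$ vertices on the corresponding path are $\om_{a_s, m}$ for $\phi_s \le m \le \phi_{s-1}$, $\om_{l, \phi_s}$ for $a_s < l \le b_s$, and $\om_{n+1, m}$ for $1 \le m \le \phi_r$.

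The key technical step is to show that $\sigma_\phi$ is generated by exactly these $\om_{l,m}$. Under the identification $H^\vee \otimes \R = \{(\alpha, \beta) \in \R^{n+1} \oplus \R^k : \sum \alpha_i = \sum \beta_j\}$, one computes $\langle \om_{l,m}, x_i \rangle = \delta_{il}$, $\langle \om_{l,m}, v_j \rangle = \delta_{jm}$, and $\langle \om_{l,m}, c_{i,j} \rangle = [m > j] - [l \le i]$. Inspecting the four types of vectors in $S_\phi$, the inequalities $\langle \om_{l,m}, s \rangle \ge 0$ for $s \in S_\phi$ reduce to: for each $s$, (i) $m > \phi_s \Rightarrow l \le a_s$ (from $-c_{a_s, \phi_s}$), and (ii) $l \le b_s \Rightarrow m \ge \phi_s$ (from $c_{b_s, \phi_s-1}$). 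A case analysis on which $I_s$ contains $l$ (or on $l = n+1$) shows these hold if and only if $(l,m)$ lies on the staircase path described above. By Lemma \ref{toric-cones-lem}(i), $\sigma_\phi^\vee$ is simplicial of full dimension $n+k$, hence so is $\sigma_\phi$; the $n+k$ path vertices span an $(n+k-1)$-simplex inside $\Pi$ not passing through the origin, hence are linearly independent in $H^\vee \otimes \R$, and therefore constitute exactly the set of extremal ray generators of $\sigma_\phi$.

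That the collection of cones $\sigma_\phi$ then assembles into a fan reduces to the classical fact that the staircase simplices form a regular triangulation of $\De_n \times \De_{k-1}$ (see \cite[Ch.\ 7]{GKZ}), and since this triangulation uses only vertices of $\Pi$ the corresponding toric morphism is a small resolution of $\SS_{n,k}$, which by construction is $\pi$. The main obstacle is the pairing computation of the third paragraph: although elementary, it requires careful bookkeeping at the left and right endpoints of each level set $I_s$, where the definition of $S_\phi$ switches between vectors of types $-c_{i,\phi(i)}$, $c_{i,\phi(i)-1}$, and $x_{i+1}$.
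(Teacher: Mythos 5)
Your overall strategy---matching each $\sigma_\phi$ with the cone over the staircase simplex indexed by the corresponding lattice path---is the same as the paper's, and your pairing computations are correct: $\langle\om_{l,m},x_i\rangle=\delta_{il}$, $\langle\om_{l,m},v_j\rangle=\delta_{jm}$, $\langle\om_{l,m},c_{i,j}\rangle=[m>j]-[l\le i]$, and the case analysis does show that the vertices of $\Pi$ lying in $\sigma_\phi$ are exactly the path vertices. Where you diverge from the paper is in working with the generator (vertex) description of the staircase simplices rather than their half-space description. The paper takes the defining inequalities of $\De_L$ from GKZ, rewrites them as $\pm c_{i,j-1}\ge 0$, adjoins the inequalities $x_i\ge 0$, $v_j\ge 0$ cutting out $C$, and then uses Lemma \ref{toric-cones-lem} to see that this set of functionals generates the same cone in $H\ot\R$ as $S_\phi$; equality $\si_L=\sigma_\phi$ follows by duality with no need to identify extremal rays.

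This difference is where your argument has a genuine gap. From ``the $n+k$ path vertices are linearly independent and lie in the simplicial cone $\sigma_\phi$ of dimension $n+k$'' you conclude that they ``constitute exactly the set of extremal ray generators of $\sigma_\phi$.'' That implication is false in general: in $\R^2$ the vectors $(1,1)$ and $(2,1)$ are linearly independent and lie in the cone generated by $(1,0)$ and $(0,1)$, yet do not generate it. What you have actually proved is only the inclusion of the cone over $\De_L$ into $\sigma_\phi$, together with the (not directly relevant) fact that no other vertex of $\Pi$ lies in $\sigma_\phi$; the cone $\sigma_\phi$ could a priori have rays passing through no vertex of $\Pi$ at all. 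To close the gap you need to show that each path vertex spans an extremal ray, most easily by upgrading your pairing computation to check that each $\om_{l,m}$ on the path pairs to zero with all but exactly one element of the basis $S_\phi$, so that the path vertices are, up to positive scalars, the dual basis of $S_\phi$ --- and the dual basis does generate $\sigma_\phi=(\sigma_\phi^\vee)^\vee$. This is true and follows from the bookkeeping you already set up (for instance $\om_{l,\phi_t}$ with $a_t<l\le b_t$ pairs nontrivially only with $x_l$, while $\om_{a_t,m}$ pairs nontrivially only with $v_m$, with $-c_{a_t,\phi_t}$, or with $c_{b_{t-1},\phi_{t-1}-1}$ according to whether $\phi_t<m<\phi_{t-1}$, $m=\phi_t$, or $m=\phi_{t-1}$), but it is an additional verification, not a formal consequence of linear independence.
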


\begin{proof} We need to identify the dual simplices to $\si^\vee_\phi$, where $\phi$ runs through nonincreasing maps $[1,n]\to [1,k]$,
with the simplices of the standard triangulation of $\Pi$ described in \cite[Ch.\ 7, Sec.\ 3D]{GKZ}. 
Note that due to our ordering of the vertices of $\De_{k-1}$, the latter simplices
are numbered by lattice paths from $(1,k)$ to $(n+1,1)$, i.e., paths obtained by starting at the point $(1,k)$ and moving either to the right or down
along the grid until reaching the point $(n+1,1)$. Alternatively, they can be numbered by shuffles $w$ of the word 
$A_1A_2\ldots A_nB_1B_2\ldots B_{k-1}$ (preserving the order of $A_i$'s and of $B_j$'s), where $A_i$ stands for the $i$th horizontal move
and $B_j$ stands for the $j$th vertical move. 

The simplex $\De_L$ of the triangulation associated with the lattice path $L$ is given inside $\Pi$ by the collection of inequalities:
$\a_1+\ldots+\a_i\le \b_k+\ldots+\b_j$ whenever $A_i$ precedes $B_j$ in the corresponding shuffled word $w_L$, and
$\a_1+\ldots+\a_i\ge \b_k+\ldots+\b_j$ whenever $B_j$ precedes $A_i$.

We have a natural bijection between lattice paths and nonincreasing functions $\phi:[1,n]\to [1,k]$: for a lattice path $L$
we let $\phi_L$ to be the unique function such that
$(i+1/2,\phi_L(i))$ lies on the path $L$ for each $i=1,\ldots,n$. Now it is easy to see that $A_i$ precedes $B_j$ in the shuffled word $w_L$
if and only if $\phi_L(i)\ge j$. Thus, due to formula \eqref{cij-formula-eq}
the simplex $\De_L$ is given inside $\Pi$ by the inequalities $\lan v,c_{i,j-1}\ran\ge 0$ whenever $\phi_L(i)\ge j$, and
$\lan v,-c_{i,j-1}\ran\ge 0$ whenever $\phi_L(i)<j$. Hence, the same inequalities give the cone $\si_L$ generated by $\De_L$ inside
the cone $C$.

It remains to observe that the cone $C$ is dual to the cone generated by all vectors $x_i$ and $v_j$ in $H$. Hence, 
Lemma \ref{toric-cones-lem} implies that the cone $\si_L$ is dual to the cone $\si^\vee_{\phi_L}$. Thus, $\YY_{n,k}$ is precisely the
toric variety associated with the fan with the maximal cones $\si_L$.
\end{proof}

\begin{cor}\label{CY-cor}
The canonical line bundle $\om_{\YY_{n,k}}$ is trivial.
\end{cor}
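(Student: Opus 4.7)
The plan is to invoke the standard toric criterion for triviality of the canonical bundle: for a smooth toric variety with fan $\Sigma$ in $N_\R$ and character lattice $M=N^\vee$, one has $\omega\simeq \OO(-\sum_\rho D_\rho)$, where $\rho$ runs over the rays of $\Sigma$. Hence $\omega$ is trivial if and only if there exists a character $m\in M$ with $\langle m,v_\rho\rangle =1$ for every primitive ray generator $v_\rho$.

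First I would identify the rays. By Proposition \ref{small-res-prop}, $\pi:\YY_{n,k}\to\SS_{n,k}$ is a small resolution associated with a triangulation of the polytope $\Pi=\Delta_n\times\Delta_{k-1}$, so no new one-dimensional cones are introduced. Consequently, the primitive ray generators of the fan of $\YY_{n,k}$ are exactly the vectors $\omega_{ij}=e_i+f_j\in H^\vee$ for $i=1,\ldots,n+1$, $j=1,\ldots,k$, i.e.\ the vertices of $\Pi$.

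Next I would exhibit the required character. Under the presentation
\[
H=(\Z x_1\oplus\ldots\oplus\Z x_{n+1}\oplus\Z v_1\oplus\ldots\oplus\Z v_k)/(x_1+\ldots+x_{n+1}-v_1-\ldots-v_k),
\]
the pairing $H\times N\to \Z$ with $N\subset \R^{n+1}\oplus\R^k$ the hyperplane $\sum\alpha_i=\sum\beta_j$ is induced by the standard dot product (well-defined because $(1,\ldots,1,-1,\ldots,-1)$ pairs to $0$ with any point of the hyperplane). Taking
\[
m:=v_1+v_2+\ldots+v_k\in H,
\]
we compute $\langle m,\omega_{ij}\rangle=\langle m,e_i+f_j\rangle=1$ for every $i,j$. (Equivalently one could take $m=x_1+\ldots+x_{n+1}$, which coincides with $v_1+\ldots+v_k$ modulo the defining relation of $H$.)

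There is no real obstacle here — the whole content is the observation that all vertices of the polytope $\Pi=\Delta_n\times\Delta_{k-1}$ lie on a common affine hyperplane of the form $\{\sum\beta_j=1\}$, which is the geometric reflection of the Calabi–Yau property of the singularity $X_1\cdots X_{n+1}=V_1\cdots V_k$ and is inherited by every small (crepant) toric resolution, in particular by $\YY_{n,k}$.
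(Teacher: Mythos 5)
Your argument is correct and is exactly the reasoning the paper leaves implicit when deducing the corollary from Proposition \ref{small-res-prop}: since the resolution is small, the rays of the fan of $\YY_{n,k}$ are spanned by the vertices $\om_{ij}=e_i+f_j$ of $\Pi=\De_n\times\De_{k-1}$, and these all pair to $1$ with the character $v_1+\cdots+v_k=x_1+\cdots+x_{n+1}\in H$, so $\om_{\YY_{n,k}}=\OO(-\sum_\rho D_\rho)$ is trivial. Nothing is missing.
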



\section{A-side}

\subsection{Fukaya categories}\label{Fuk-gen-sec}

\subsubsection{Generalities}

In \cite{LPsymhms}, we studied the partially wrapped Fukaya category of
$\Sym^{n}(\Sigma)$ equipped with a stop of the form $Z \times
\mathrm{Sym}^{n-1} (\Sigma) $, where $\Sigma$ is a genus 0 surface with boundary
and $Z$ is either one point or two points lying on a single component of
$\partial\Sigma$. Our main tool was Auroux's determination of
generators for this category \cite{auroux} and the combinatorial description of their endomorphism algebras via strand algebras from \cite{LOT}. In the case where $Z$ is two points we explicitly wrote out the endomorphism algebra of a particular generator of these categories. 

Here, we will concentrate in the case where $Z$ is a single point marked at the outer boundary component as in Figure \ref{Sigmagen}. We will use the same Lagrangians as in \cite{LPsymhms}. Namely, they are given by products of $n$ disjoint arcs among the Lagrangians drawn on $\Sigma$ in Figure \ref{Sigmagen} except that as we only consider the case $Z$ is a single point, the last arc $L_{n+k}$ is not needed. Thus, our generators are of the form
\[ L_S = L_{i_1} \times L_{i_2} \times \ldots \times L_{i_n} \]
where $S$ is a subset of $[0,n+k-1]$ of size $n$. The computation given in \cite[Thm 3.2.5]{LPsymhms} readily gives the endomorphism algebras we need, and the corresponding $A_\infty$-algebra is formal. 

In order to avoid repetition, we have chosen to skip over many of the details; the reader is referred to \cite[Sections 2 \& 3]{LPsymhms} for a complete background on these calculations. We also note that in \cite{LPsymhms}, $\Sigma$ was a $(k+1)$-punctured surface of genus 0. Here, we let $\Sigma$ to be the genus 0 surface with $(n+k+1)$-punctures. This slight modification in notation turns out to be more natural.  Finally, we note that in \cite{LPsymhms} we worked with $\mathbb{Z}$-graded categories (for all possible grading structures), whereas here we consider the $\bbL$-graded categories, which in a sense is equivalent to working with all the $\mathbb{Z}$-grading structures compatible with a given $\Z/2$-grading structure. The latter structure
is characterized by the fact that all endomorphisms of our generators have even grading (this choice is made in order to
get an equivalence with the B-side).

\begin{figure}[ht!]
\centering
\begin{tikzpicture}
\begin{scope}[scale=0.8]

\tikzset{
  with arrows/.style={
    decoration={ markings,
      mark=at position #1 with {\arrow{>}}
    }, postaction={decorate}
  }, with arrows/.default=2mm,
}

\tikzset{vertex/.style = {style=circle,draw, fill,  minimum size = 5pt,inner        sep=1pt}}
\def \radius {1.5cm}

\foreach \s in {2.5,3.5,4.5,5.5,6.5} {
   \draw[thick] ([shift=({360/10*(\s)}:\radius+1.5cm)]0,0) arc ({360/10 *(\s)}:{360/10*(\s+1)}:\radius+1.5cm);
}

\foreach \s in {7.5,8.5,9.5,10.5,11.5} {
   \draw[thick] ([shift=({360/10*(\s)}:\radius+1.5cm)]11,0) arc ({360/10 *(\s)}:{360/10*(\s+1)}:\radius+1.5cm);
}

\draw[thick] (0,3) -- (11,3);
\draw[thick] (0,-3) -- (11,-3);
\foreach \s in {0.5,1.5,2.5,3.5,4.5,5.5,6.5,7.5,8.5,9.5} {
    
\draw[thick] ([shift=({360/10*(\s)}:\radius-1cm)]1,0) arc ({360/10 *(\s)}:{360/10*(\s+1)}:\radius-1cm);
\draw[thick] ([shift=({360/10*(\s)}:\radius-1cm)]-1,0) arc ({360/10 *(\s)}:{360/10*(\s+1)}:\radius-1cm);
\draw[thick] ([shift=({360/10*(\s)}:\radius-1cm)]12,0) arc ({360/10 *(\s)}:{360/10*(\s+1)}:\radius-1cm);
\draw[thick] ([shift=({360/10*(\s)}:\radius-1cm)]10,0) arc ({360/10 *(\s)}:{360/10*(\s+1)}:\radius-1cm);
\draw[thick] ([shift=({360/10*(\s)}:\radius-1cm)]6,0) arc ({360/10 *(\s)}:{360/10*(\s+1)}:\radius-1cm);
\draw[thick] ([shift=({360/10*(\s)}:\radius-1cm)]4,0) arc ({360/10 *(\s)}:{360/10*(\s+1)}:\radius-1cm);

}

\draw[with arrows] ([shift=({360/10*(7)}:\radius-1cm)]1,0) arc ({360/10*(7)}:{360/10*(5)}:\radius-1cm);
\draw[with arrows] ([shift=({360/10*(7)}:\radius-1cm)]-1,0) arc ({360/10*(7)}:{360/10*(5)}:\radius-1cm);
\draw[with arrows] ([shift=({360/10*(7)}:\radius-1cm)]10,0) arc ({360/10*(7)}:{360/10*(5)}:\radius-1cm);
\draw[with arrows] ([shift=({360/10*(7)}:\radius-1cm)]12,0) arc ({360/10*(7)}:{360/10*(5)}:\radius-1cm);
\draw[with arrows] ([shift=({360/10*(7)}:\radius-1cm)]4,0) arc ({360/10*(7)}:{360/10*(5)}:\radius-1cm);
\draw[with arrows] ([shift=({360/10*(7)}:\radius-1cm)]6,0) arc ({360/10*(7)}:{360/10*(5)}:\radius-1cm);
\draw[with arrows]({360/10 * (3)}:\radius+1.5cm) arc ({360/10 *(3)}:{360/10*(4)}:\radius+1.5cm);
\draw [blue] ([shift=({360/10*(10)}:\radius-1cm)]4,0) --  ([shift=({360/10*(5)}:\radius-1cm)]6,0); 
\draw [blue] ([shift=({360/10*(10)}:\radius-1cm)]0,0) --  ([shift=({360/10*(5)}:\radius-1cm)]0.4,0); 
\draw [blue] ([shift=({360/10*(10)}:\radius-1cm)]-1,0) --  ([shift=({360/10*(5)}:\radius-1cm)]0.5,0); 
\draw [blue] ([shift=({360/10*(10)}:\radius-1cm)]-3.4,0) --  ([shift=({360/10*(5)}:\radius-1cm)]-1,0); 
\draw [blue] ([shift=({360/10*(10)}:\radius-1cm)]11,0) --  ([shift=({360/10*(5)}:\radius-1cm)]11.5,0); 
\draw [blue] ([shift=({360/10*(10)}:\radius-1cm)]10,0) --  ([shift=({360/10*(5)}:\radius-1cm)]11.5,0); 
\draw [black, dashed] ([shift=({360/10*(10)}:\radius-1cm)]1.2,0) --  ([shift=({360/10*(5)}:\radius-1cm)]3.7,0); 
\draw [black, dashed] ([shift=({360/10*(10)}:\radius-1cm)]6.2,0) --  ([shift=({360/10*(5)}:\radius-1cm)]9.9,0); 

\node[black] at (-1,0.7) {\small $\ell_1$};
\node[black] at (1,0.7) {\small $\ell_2$};
\node[black] at (4,0.7) {\small $\ell_{k}$};
\node[black] at (6,0.7) {\small $\ell_{k+1}$};
\node[black] at (10,0.7) {\small $\ell_{n+k-1}$}; 
\node[black] at (-1,-0.9) {\small $r_1$};
\node[black] at (1,-0.9) {\small $r_2$};
\node[black] at (4,-0.9) {\small $r_k$};
\node[black] at (6,-0.9) {\small $r_{k+1}$};
\node[black] at (10,-0.9) {\small $r_{n+k-1}$}; 
\node[black] at (-1,0) {\tiny $u_1$};
\node[black] at (1,0) {\tiny $u_2$};
\node[black] at (4,0) {\tiny $u_k$};
\node[black] at (6,0) {\tiny $x_{1}$};
\node[black] at (10,0) {\tiny $x_{n-1}$};
\node[black] at (12,0) {\tiny $x_n$};
\node[blue] at (-2,-0.5) {\small $L_{0}$};
\node[blue] at (0,-0.5) {\small $L_{1}$};
\node[blue] at (5,-0.6) {\small $L_{k}$};
\node[blue] at (11.2,-0.6) {\small $L_{n+k-1}$};
\draw[purple] (4.7, 3) to[in=180,out=270] (5,2.6);
\draw[purple] (5, 2.6) to[in=270,out=0] (5.3,3);
\node[purple] at ([shift=({360/10*(7)}:\radius+1.5cm)]6,5.1) {\small $T$}; 
\node[vertex] at  ([shift=({360/10*(2.5)}:\radius+1.5cm)]5,0) {};

\end{scope}

\end{tikzpicture}
\caption{Surface $\Sigma$ and generating Lagrangians}
\label{Sigmagen}
\end{figure}

\subsubsection{The universal grading group and its splitting}

Let us set $V= \Sym^n(\Sigma)$.
Note that there is an isomorphism between $H_1(V)$ and $H_1(\Sigma) = \mathbb{Z}^{n+k}$ and $c_1(V)=0$. 
Thus, the grading group $\bbL = H_1(\mathcal{G}V)$, where $\GG V$ is the bundle of Lagrangian subspaces in
$TV$, fits into an exact sequence
\begin{equation}\label{A-side-grading-sequence}
0 \to \mathbb{Z} \to \bbL \to H_1(\Sigma) \to 0 
\end{equation}

In Lemma \ref{A-side-grading-splitting-lem} below, we will see that there is a canonical splitting of this sequence. Using this splitting, we can lift the boundary loops $u_1,\ldots, u_k, x_1, \ldots, x_n$ to homology classes in $\bbL = H_1(\mathcal{G}V)$ (denoted in the same way) and we choose the unique $\mathbb{Z}/2$-grading structure 
\[ \sigma : \mathbb{L} \to \mathbb{Z}/2 \]
which sends $u_1,\ldots u_k, x_1,\ldots, x_n$ to zero (and $\sigma(l_0)=1$).  
This particular choice of $\mathbb{Z}/2$-grading structure is made in order to match the $B$-side computations.

Following \cite{sheridan}, the grading of a morphism in a partially wrapped Fukaya category can be defined as follows. 
Suppose $L$ and $L'$ are two Lagrangians in $V$ equipped with grading structures, i.e., lifts of their Gauss maps $L, L' \to \mathcal{G}V$ to $\tilde{\mathcal{G}}V$, the universal abelian cover of $\GG V$. Note that the group $\bbL$ acts on $\tilde\GG V$ by deck transformations. This gives a
simply transitive action of $\bbL$, on the set of grading structures on a Lagrangian $L$, which we denote as
$L\mapsto L(\ell)$, where $\ell\in \bbL$. By definition, one has $\hom^{\ell_1}(L,L'(\ell_2))=\hom^{\ell_1+\ell_2}(L,L')$,
so it is enough to characterize which morphisms have degree $0\in \bbL$.

We have the following commutative diagram
\[ \begin{tikzcd} \tilde{\mathcal{G}}V \arrow[r]\arrow[d]& \tilde{V}\arrow[d] \\ \mathcal{G}V \arrow[r]& V
\end{tikzcd} \]
where $\tilde{V}$ is the universal abelian cover of $V$. Note that the top arrow $\tilde{\mathcal{G}}V \to \tilde{V}$ also can be identified as the (unique) fibre-wise universal cover of the Lagrangian Grassmannian $\mathcal{G}\tilde{V}$. 
Thus, the symplectic manifold $\tilde{V}$ is equipped with a (unique) $\Z$-grading structure.

Now assume that $\gamma \in \mathrm{hom}(L,L')$ is a Reeb chord.
The grading structures on $L,L'$ determine lifts $\tilde{L}$ and $\tilde{L}'$ of $L$ and $L'$ to $\tilde{V}$. Furthermore, $\tilde{L}$ and $\tilde{L}'$
come equipped with $\Z$-grading structures, so morphisms between $\tilde{L}$ and $\tilde{L}'$ in the partially wrapped Fukaya category of $\tilde{V}$
are equipped with $\Z$-grading.
The path $\gamma: L\to L'$ lifts to a unique path $\tilde{\gamma}: \tilde{L} \to \overline{L}'$ where $\overline{L}'$ is a lift of $L'$ to $\tilde{V}$,
possibly different from $\tilde{L}'$. Now we say that $\gamma$ has degree $0\in \bbL$ if $\overline{L}'=\tilde{L}'$ and the $\Z$-degree of
the morphism given by $\tilde{\gamma}$ is $0$.
\medskip
\medskip

Note that the classes of the boundary loops $v_1,\ldots,v_k,x_1,\ldots,x_n$ (see Figure \ref{Sigmagen}) form a basis of $H_1(\Sigma)$. Let us set $u_i$ the be the circle associated oriented with the opposite orientation to $v_i$ so that its boundary orientation agrees with the boundary orientation of the surface.
We are going to use these classes to define a splitting of the sequence \eqref{A-side-grading-sequence}.

\begin{lem}\label{A-side-grading-splitting-lem}
There exist canonical lifts of the classes of the boundary loops $u_1,\ldots,u_k$, and $x_1,\ldots,x_n$ to $\bbL$ (which we will denote by the same letters), such that
the $\bbL$-degree of any nonzero endomorphism $u_i\in \End(L_S)$ (resp., $x_i\in \End(L_S)$) in the partially wrapped Fukaya category of $V=\Sym^n(\Sigma)$
is given by this lift, i.e the $\bbL$-degree is independent of $S$.
\end{lem}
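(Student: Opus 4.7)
The strategy is to compute the $\bbL$-degree of $u_i \in \End(L_S)$ by passing to the universal abelian cover $\pi\colon \tilde V\to V$ and showing that this degree is determined purely by local data near the puncture $u_i$. Recall that $u_i$ is realized by a Reeb chord in which one arc-factor of $L_S$, namely an arc whose endpoint sits on the boundary circle labeled $u_i$, wraps once around the $i$-th puncture. Its image in $H_1(\Sigma)$ under the quotient $\bbL\to H_1(\Sigma)$ is manifestly $u_i$, so the full $\bbL$-degree is specified by an additional integer $m_S\in \Z$ (the ``Maslov part''). The task reduces to showing that $m_S$ is independent of $S$.

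The first step is to lift the picture to $\tilde V$, which carries a canonical $\Z$-grading structure. The graded Lagrangian $L_S$ lifts to a $\Z$-graded Lagrangian $\tilde L_S\subset \tilde V$, and the Reeb chord representing $u_i$ lifts to a chord $\tilde\gamma_S\colon \tilde L_S\to \tau_{u_i}\tilde L_S$, where $\tau_{u_i}$ is the deck transformation corresponding to $u_i\in H_1(V)$. Then $m_S$ is precisely the $\Z$-degree of $\tilde\gamma_S$ in the $\Z$-graded partially wrapped Fukaya category of $\tilde V$.

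The second step is a locality argument. The chord $\tilde\gamma_S$ is supported in an arbitrarily small neighborhood of a lift of $u_i$. Over such a neighborhood, the symplectic manifold $V$ decomposes as a symplectic product: a punctured disk $D_{u_i}$ around the puncture times a neighborhood $W$ of a fixed configuration of the remaining $n-1$ strands away from $u_i$. Crucially, since $L_S$ is a product of disjoint arcs in $\Sigma$, its restriction to this neighborhood splits as $L^{(i)}\times L^{(\mathrm{rest})}$, with $L^{(i)}$ a single arc in $D_{u_i}$. The chord $\tilde\gamma_S$ is constant in the $W$-factor, so the path of Lagrangian subspaces that it defines splits as a product: constant in the $W$-direction and equal, in the $D_{u_i}$-direction, to the local winding of a half-line once around the puncture. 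By additivity of the Maslov index under such product decompositions, $m_S$ equals the $\Z$-degree $m(u_i)$ of the primitive wrapping chord in the two-dimensional local model. In particular, $m_S$ depends only on $u_i$. Setting $\tilde u_i\in \bbL$ to be the class with image $u_i$ in $H_1(\Sigma)$ and Maslov part $m(u_i)$ yields the required canonical lift; the argument for $x_i$ is identical.

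The main technical obstacle is justifying the product decomposition of the Maslov index. This requires verifying not only that $V$ is locally a symplectic product near the puncture, but also that each $L_S$ respects this splitting as a product of arcs, and that the canonical grading on $\tilde V$ is compatible with the product structure. The choice of the $\Z/2$-grading $\sigma$ sending each $u_i$ and $x_i$ to $0$ (so that the parity of $m(u_i)$ is even) is what makes the resulting $\bbL$-valued degree well defined. Once these local product structures are in place, the independence of $S$ follows from standard additivity of the Maslov index under products of Lagrangian paths.
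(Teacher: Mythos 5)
Your strategy is essentially the same as the paper's, just packaged differently: the paper also reduces the computation to a neighborhood of the chord (it replaces $L_S$ by the open subset $L_i\times U_{i_1}\times\cdots\times U_{i_{n-1}}$) and then argues by continuity, whereas you phrase the locality via a product decomposition of the Maslov index in the universal abelian cover. The one step where your write-up under-delivers is precisely the one you flag as ``the main technical obstacle'' and then dispose of too quickly. Additivity of the Maslov index under a product decomposition $D_{u_i}\times W$ tells you, \emph{for a fixed $S$}, that $m_S$ equals the winding degree computed in the $D_{u_i}$-factor \emph{with respect to the grading structure induced on that factor by the chosen splitting}. But the splitting depends on $S$: for different $S$ the complementary factor $W$ is a neighborhood of a different configuration of $n-1$ points with different tangent planes, so a priori the induced grading structure on the $2$-dimensional factor (a torsor over $H^1(D_{u_i})=\Z$) could differ, and additivity alone does not rule this out. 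What closes the gap is that the space of auxiliary configurations (distinct points on the arcs together with their tangent Lagrangian lines, staying away from the boundary annulus) is connected, so the induced class in $H^1(\mathcal{G}D_{u_i})$ is locally constant and hence independent of $S$. This connectedness argument is exactly the content of the paper's three explicit deformations: sliding the marked points $p_1,\ldots,p_{n-1}$ and their neighborhoods along and between arcs, deforming $L_i$ into an adjacent arc $L_j$ meeting the same boundary circle, and swapping two arcs along the boundary circle when the wrapping can be placed in either factor. You should also note that your quantity $m(u_i)$, ``the degree of the primitive wrapping chord in the two-dimensional local model,'' is only well defined once this independence is established, and that the comparison between different arcs ending on the same boundary component (the paper's second and third deformations) needs to be addressed explicitly rather than absorbed into the claim that the local model depends only on $u_i$. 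With those deformation steps supplied, your argument is correct and coincides in substance with the paper's.
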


\begin{proof}
Recall that away from the big diagonal $\Delta \subset \Sym^n(\Sigma)$, the Reeb flow along the contact boundary is given by the product of the Reeb flow on the surface and this agrees with the circular flow around the boundary components of $\Sigma$ in the direction induced by the boundary orientations. Therefore, to an orbit $\gamma$ going once around a boundary component of $\Sigma$, we can associate a Reeb orbit of the form 
$$\tilde{\gamma} = p_1 \times \ldots \times p_{n-1} \times \gamma$$ 
in $\Sym^n(\Sigma) \setminus \Delta$, where $p_1,\ldots,p_{n-1}$ are fixed points. 

Given a boundary component $\gamma$ of $\Sigma$, other than the one marked with a stop, and an arc $L_i$ intersecting it, for any subset
$S=\{i,i_1,\ldots,i_{n-1}\}$,
we construct as above the Reeb orbit $\tilde{\gamma} =p_1 \times \ldots \times p_{n-1} \times \gamma$ with $\tilde{\gamma}(0)=\tilde{\gamma}(1) \in L_S$,
by fixing some points $p_1\in L_{i_1},\ldots,p_{n-1}\in L_{i_{n-1}}$.
We need to check that the grading in $\bbL$ of the corresponding morphism in $\mathrm{hom}(L_S,L_S)$ does not depend on $S$. 
Suppose we are given some other $S'$ containing $i$, $S'=\{i,i'_1,\ldots,i'_{n-1}\}$ and some fixed points $p'_1\in L_{i'_1},\ldots,p'_{n-1}\in L'_{i'_{n-1}}$,
so that we have the corresponding Reeb chord $\tilde{\gamma}' =p'_1 \times \ldots \times p'_{n-1} \times \gamma$.
Now we note that the definition of the grading of $\tilde{\gamma}$ does not change if we replace $L_S$ by its open subset
$U_S=L_i\times U_{i_1}\times\ldots\times U_{i_{n-1}}$, where $U_{i_r}$ is an open neighborhood of $p_{i_r}$ in $L_{i_r}$.
Similarly, we can replace $L_{S'}$ by an open subset $U_{S'}$. 
Now we can deform $(U_S,\tilde{\gamma})$ to $(U_S',\tilde{\gamma}')$ continuously by moving the fixed points $p_1,\ldots,p_{n-1}$ and their neighborhoods
in the arcs $L_{i_1},\ldots,L_{i_{n-1}}$ to the points $p'_1,\ldots,p'_{n-1}$ and their neighborhoods in the arcs $L_{i'_1},\ldots,L_{i'_{n-1}}$.
Clearly, such a continuous deformation does not change the calculation of the grading. 

        Next, given another arc $L_j$ with $j=i\pm 1$, intersecting $\gamma$, and indices $i_1,\ldots,i_{n-1}$ distinct from $i$ and $j$, we have
to check that the grading of the chord from $L_i\times L_{i_1}\times\ldots\times L_{i_{n-1}}$ to itself, induced by $\gamma$, is equal to the
grading of the similar chord from $L_j\times L_{i_1}\times\ldots L_{i_{n-1}}$. For this we use a similar trick: we can replace $L_i$ and $L_j$ by neighborhoods
of the point of intersection with $\gamma$ and deform one into another.

Finally, for any indices $i_1,\ldots,i_{n-2}$, we can check that the gradings of the two chords from $L_i\times L_j\times L_{i_1}\times\ldots\times L_{i_{n-2}}$
to itself, induced by $\gamma$ (which we can stick either in the first or in the second factor) are the same: we replace $L_i$ and $L_j$ by neighborhoods
of the points of intersection with $\gamma$ and move them along $\gamma$ until they swap.
\end{proof}

Lemma \ref{A-side-grading-splitting-lem} allows us to determine the $\bbL$-grading on the endomorphism algebra of our generators. Indeed, the $\bbL$-grading structure can be computed as in \cite[Thm.\ 3.2.5(iii)]{LPsymhms}, which shows that up to shifts of the generators, the grading is uniquely determined by the degrees of $u_r$ and $x_r$ in $\bbL$.

\subsection{Endomorphism algebra of the generating Lagrangians: case $n=1$}
\ \\
\ \\
The generating objects are 
\[ L_0, L_1, \ldots, L_k \] 
We can express the endomorphism algebra 
\[ \mathcal{A}_{1,k} = \bigoplus_{i,j=0}^k \mathrm{hom}(L_i, L_j)\]
via the following quiver 
\begin{figure}[H]
\begin{tikzpicture}
\tikzset{vertex/.style = {style=circle,draw, fill,  minimum size = 2pt,inner    sep=1pt}} 
\tikzset{edge/.style = {->,-stealth',shorten >=8pt, shorten <=8pt  }}
\node[vertex] (a) at  (0,0) {};
\node[vertex] (a1) at (1.5,0) {};
\node[vertex] (a2) at (3,0) {};

\node[vertex] (a3) at (7.5,0) {};
\node[vertex] (a4) at (9,0) {};

\node at  (0,0.3) {\tiny 0};
\node at (1.5,0.3) {\tiny 1}; 
\node at (3,0.3) {\tiny 2};
        \node at (7.5,0.3) {\tiny (k-1)};
\node at (9,0.3) {\tiny k};


\draw[edge] (a) to[in=150,out=30] (a1);
\draw[edge] (a1) to[in=330,out=210] (a);
\draw[edge] (a1) to[in=150,out=30] (a2);
\draw[edge] (a2) to[in=330,out=210] (a1);
\draw[edge] (a3) to[in=150,out=30] (a4);
\draw[edge] (a4) to[in=330,out=210] (a3);
\draw[edge, looseness=40] (a4) to[in=330,out=30] (a4);

\draw[black, dashed] (4,0) to (6.5,0);

\node at (0.75,-0.5) {\tiny $r_1$};
\node at (0.75,0.5) {\tiny $\ell_1$}; 
\node at (2.25,-0.5) {\tiny $r_2$};
\node at (2.25,0.5) {\tiny $\ell_2$};
\node at (8.25,-0.5) {\tiny $r_k$};
\node at (8.25,0.5) {\tiny $\ell_k$};
\node at (10, 0) {\tiny $x_1$}; 
\end{tikzpicture}
\label{endn1}
\end{figure}
with relations
\begin{align*} 
        \ell_{i+1} \ell_i &= 0 = r_i r_{i+1} \\
        x_1 \ell_k &= 0 = r_k x_1
\end{align*}
Let us work with the $\mathbb{Z}$-graded category for which $\deg(\ell_i)=\deg(r_i)=1$ for $i\le k$ and $\deg(x_1)=0$. To remove the stop, we need to localize by the subcategory generated by an arc $T$ around the stop. It is easy to see that
$T$ can be represented by the twisted complex
\begin{equation}\label{n1-T-resolution} 
L_0[1] \xrightarrow{\ell_1} L_1[1] \xrightarrow{\ell_2} \ldots \xrightarrow{\ell_{k-1}} L_{k-1}[1] \xrightarrow{\ell_k} L_k[1] \xrightarrow{x_1} L_k \xrightarrow{r_k} L_{k-1} \ldots \xrightarrow{r_2} L_1 \xrightarrow{r_1} L_0. \end{equation}

\subsection{Endomorphism algebra of the generating Lagrangians: case $n=2$}
\ \\
\ \\
The generating objects are 
\[ L_{i,j} \coloneqq L_i \times L_j \text{ for } i,j \in \{0,1,\ldots, k+1\} \text{ with } i\neq j.\] 

The endomorphism algebra is defined by
\[ \mathcal{A}_{2,k} = \bigoplus_{S,S'} \mathrm{hom}(L_S, L_{S'})\]
where the sum is over distinct two element subsets of $\{0,1\ldots, k+1\}$. 
Let us write $e_{ij} \in \mathrm{end}(L_S)$ for the idempotent corresponding to $S = \{i,j\}$.

$\mathcal{A}_{2,k}$ can be described via the following quiver over $\mathbf{k}[u_1,u_2,\ldots,u_k,x_1,x_2]$ 
\begin{figure}[H]
\begin{tikzpicture}

\tikzset{vertex/.style = {style=circle,draw, fill,  minimum size = 2pt,inner    sep=1pt}} 
\tikzset{edge/.style = {->,-stealth',shorten >=8pt, shorten <=8pt  }}
\tikzset{vedge/.style = {->,-stealth',shorten >=9pt, shorten <=9pt  }}

\node[vertex] (a1) at (0,0) {};
\node[vertex] (a2) at (1.5,0) {};
\node[vertex] (a3) at (3,0) {};
\node[vertex] (a6) at (7.5,0) {};

\node[vertex] (b2) at (1.5,-1.5) {};
\node[vertex] (b3) at (3,-1.5) {};
\node[vertex] (b6) at (7.5,-1.5) {};

\node[vertex] (c3) at (3,-3) {};
\node[vertex] (c6) at (7.5,-3) {};

\node[vertex] (f6) at (7.5,-7.5) {};

\node at (0,0.2) {\tiny $01$};
\node at (1.5,0.2) {\tiny $02$};
\node at (3,0.2) {\tiny $03$};
\node at(1.5,-1.3) {\tiny $12$};
\node at(3,-1.3) {\tiny $13$};

\node at(3,-2.8) {\tiny $23$};

\node at (7.2,-7.95) {\tiny $k(k+1)$};


\draw[edge] (a1) to[in=165,out=15] (a2);
\draw[edge] (a2) to[in=345,out=195] (a1);
\draw[edge] (a2) to[in=165,out=15] (a3);
\draw[edge] (a3) to[in=345,out=195] (a2);
\draw[edge] (a3) to[in=165,out=15] (4.5,0);
\draw[edge] (4.5,0) to[in=345,out=195] (a3);
\draw[edge] (6,0) to[in=165,out=15] (a6);
\draw[edge] (a6) to[in=345,out=195] (6,0);

\draw[edge] (b2) to[in=165,out=15] (b3);
\draw[edge] (b3) to[in=345,out=195] (b2);
\draw[edge] (b3) to[in=165,out=15] (4.5,-1.5);
\draw[edge] (4.5,-1.5) to[in=345,out=195] (b3);
\draw[edge] (6,-1.5) to[in=165,out=15] (b6);
\draw[edge] (b6) to[in=345,out=195] (6,-1.5);

\draw[edge] (c3) to[in=165,out=15] (4.5,-3);
\draw[edge] (4.5,-3) to[in=345,out=195] (c3);
\draw[edge] (6,-3) to[in=165,out=15] (c6);
\draw[edge] (c6) to[in=345,out=195] (6,-3);

\draw[vedge] (a2) to[in=75,out=285] (b2);
\draw[vedge] (b2) to[in=255,out=105] (a2);

\draw[vedge] (a3) to[in=75,out=285] (b3);
\draw[vedge] (b3) to[in=255,out=105] (a3);

\draw[vedge] (b3) to[in=75,out=285] (c3);
\draw[vedge] (c3) to[in=255,out=105] (b3);

\draw[vedge] (a6) to[in=75,out=285] (b6);
\draw[vedge] (b6) to[in=255,out=105] (a6);

\draw[vedge] (b6) to[in=75,out=285] (c6);
\draw[vedge] (c6) to[in=255,out=105] (b6);

\draw[vedge] (c6) to[in=75,out=285] (7.5,-4.5);
\draw[vedge] (7.5,-4.5) to[in=255,out=105] (c6);

\draw[vedge] (7.5,-6) to[in=75,out=285] (f6);
\draw[vedge] (f6) to[in=255,out=105] (7.5,-6);




\draw[black, dashed] (4.8,0) to (5.8,0) ;
\draw[black, dashed] (4.8,-1.5) to (5.8,-1.5) ;
\draw[black, dashed] (4.8,-3) to (5.8,-3) ;

\draw[black, dashed] (7.5,-4.8) to (7.5,-5.8) ;

\draw[black, dashed] (4.5,-4.8) to (6,-6.2) ;


\node at (0.75,-0.35) {\tiny $r_2$};
\node at (0.75,0.3) {\tiny $\ell_2$}; 
\node at (2.25,-0.35) {\tiny $r_3$};
\node at (2.25,0.3) {\tiny $\ell_3$};
\node at (3.75,0.3) {\tiny $\ell_4$};
\node at (3.75,-0.35) {\tiny $r_4$};
\node at (6.75,-0.35) {\tiny $r_{k+1}$};
\node at (6.75,0.3) {\tiny $\ell_{k+1}$};

\node at (2.25,-1.85) {\tiny $r_3$};
\node at (2.25,-1.2) {\tiny $\ell_3$};
\node at (3.75,-1.85) {\tiny $r_4$};
\node at (3.75,-1.2) {\tiny $\ell_4$};
\node at (6.75,-1.85) {\tiny $r_{k+1}$};
\node at (6.75,-1.2) {\tiny $\ell_{k+1}$};


\node at (3.75,-3.35) {\tiny $r_4$};
\node at (3.75,-2.7) {\tiny $\ell_4$};
\node at (6.75,-3.35) {\tiny $r_{k+1}$};
\node at (6.75,-2.7) {\tiny $\ell_{k+1}$};


\node at (1.2,-0.75) {\tiny $r_1$};
\node at (2.7,-0.75) {\tiny $r_1$};
\node at (7.2,-0.75) {\tiny $r_1$};

\node at (1.85,-0.75) {\tiny $\ell_1$};
\node at (3.35,-0.75) {\tiny $\ell_1$};
\node at (7.85,-0.75) {\tiny $\ell_1$};

\node at (2.7,-2.25) {\tiny $r_2$};
\node at (7.2,-2.25) {\tiny $r_2$};
\node at (3.35,-2.25) {\tiny $\ell_2$};
\node at (7.85,-2.25) {\tiny $\ell_2$};

\node at (7.2,-3.75) {\tiny $r_3$};
\node at (7.85,-3.75) {\tiny $\ell_3$};

\node at (7,-6.75) {\tiny $r_k$};
\node at (8,-6.75) {\tiny $\ell_k$};

\end{tikzpicture}
\label{endn2}
\end{figure}
with relations, using the convention that $x_1= u_{k+1}, x_2=u_{k+2}$,
\begin{align*}
        &u_r e_{ij}=0 \text{ if }r\neq i,i+1,j,j+1, \\
        &\ell_{i+1} \ell_i = 0 = r_i r_{i+1} \text{ for all } i,\\
        &r_i r_{i+j} = r_{i+j} r_i \text{ for all } i \text{ and for all } j \geq 2, \\
        &\ell_i \ell_{i+j} = \ell_{i+j} \ell_i \text{ for all } i \text{ and for all } j \geq 2,  \\
        &\ell_i r_{i+j} = r_{i+j} \ell_i \text { for all } i \text{ and for all } j \geq 2,  \\
        &r_i \ell_{i+j} = \ell_{i+j} r_i \text { for all } i \text{ and for all } j \geq 2,  \\
        &r_i \ell_i = u_i = \ell_i r_i  \text{ for } i = 1,\ldots, k+1, \\
        &x_2 \ell_{k+1} = 0 = r_{k+1} x_2. 
\end{align*}

To remove the stop, we need to localize by the subcategory generated by
\[ T \times L_i , \text{ for } i=0,\ldots, k+1 \]
where $T$ is the arc around the stop. We will identify resolutions for these objects in terms of the generators $(L_{i,j})$.
Let us work in the $\Z$-graded Fukaya category,
where we use the gradings of $(L_{i,j})$ such that $\deg(\ell_i)=\deg(r_i)=1$ for $i\le k$ and $\deg(\ell_{k+1})=\deg(r_{k+1})=\deg(x_2)=0$.

\begin{prop} For $i\le k$, $T\times L_i$ is represented by the twisted complex 
\begin{equation}\label{n2-T-resolution} 
\begin{array}{l}
L_{0,i}[1]\xrightarrow{\ell_1}\ldots \xrightarrow{\ell_{i-1}} L_{i-1,i}[1] \xrightarrow{\ell_{i}\times\ell_{i+1}} L_{i,i+1}[2] \xrightarrow{\ell_{i+2}} \ldots 
\xrightarrow{\ell_k} L_{i,k}[2] \xrightarrow{\ell_{k+1}} L_{i,k+1}[1] \xrightarrow{x_2} \\
L_{i,k+1} \xrightarrow{r_{k+1}} L_{i,k}[-1]\xrightarrow{r_k}\ldots \xrightarrow{r_{i+2}} L_{i,i+1}[-1] \xrightarrow{r_i\times r_{i+1}}
L_{i-1,i}\xrightarrow{r_{i-1}}\ldots\xrightarrow{r_1}L_{0,i},
\end{array}
\end{equation}
while $T\times L_{k+1}$ is represented by
\begin{equation}\label{n2-T-resolution-bis} 
L_{0,k+1}[1]\xrightarrow{\ell_1}\ldots\xrightarrow{\ell_k}L_{k,k+1}[1]\xrightarrow{x_1}L_{k,k+1}\xrightarrow{r_k}\ldots
\xrightarrow{r_1}L_{0,k+1}.
\end{equation}
\end{prop}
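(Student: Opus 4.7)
The plan is to construct the resolution by multiplying the $n=1$ resolution \eqref{n1-T-resolution} of $T$ by $L_i$ in $\mathrm{Sym}^2(\Sigma)$, handling the diagonal appropriately, and then to verify well-definedness and identify the resulting object using the explicit presentation of $\mathcal{A}_{2,k}$. Since $\mathcal{A}_{2,k}$ is formal (cf.\ \cite[Thm.~3.2.5]{LPsymhms}), once the combinatorics on both sides agree, the claim follows.

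The first step is to check that the differential in \eqref{n2-T-resolution} squares to zero. On the purely $\ell$- or $r$-segments this reduces to the relations $\ell_{j+1}\ell_j = 0$ and $r_j r_{j+1} = 0$. At the ``double-step'' $L_{i-1,i}[1] \xrightarrow{\ell_i \times \ell_{i+1}} L_{i,i+1}[2]$, the composition with the adjacent $\ell_{i-1}$ or $\ell_{i+2}$ vanishes because one of $\ell_i \ell_{i-1}$ or $\ell_{i+2}\ell_{i+1}$ is already zero in a single coordinate; symmetrically for the $r$-side. The middle turn at $L_{i,k+1}$ uses $x_2 \ell_{k+1} = 0$ and $r_{k+1} x_2 = 0$. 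The remaining square vanishings, involving one coordinate moved by an $\ell$ and the other by an $r$ with indices differing by at least $2$, follow from the commutation relations in $\mathcal{A}_{2,k}$.

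The second step is to identify this twisted complex with $T \times L_i$. Formally multiplying each term of \eqref{n1-T-resolution} by $L_i$ would give a resolution with terms $L_j \times L_i$. For $j \ne i$ this is the generator $L_{j,i}$, but $L_i \times L_i$ sits on the diagonal of $\mathrm{Sym}^2(\Sigma)$ and is not a generator. The remedy is a contraction: the length-two sub-chain $L_{i-1}[1]\to L_i[1]\to L_{i+1}[1]$ in the $n=1$ complex is replaced by the single arrow $L_{i-1,i}[1]\to L_{i,i+1}[2]$, with the degree shift absorbing the composite of two degree-one maps; symmetrically on the return segment $L_{i+1}\to L_i \to L_{i-1}$. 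This produces exactly \eqref{n2-T-resolution}. For $T \times L_{k+1}$ no contraction is necessary because $L_{k+1}$ does not appear in the $n=1$ resolution of $T$, so the straightforward product gives \eqref{n2-T-resolution-bis}.

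The main obstacle is justifying the contraction rigorously. I would verify the isomorphism by matching Hom spaces: compute $\hom^*(L_{i',j'}, T \times L_i)$ directly using the cylindrical model of $\mathrm{Sym}^2(\Sigma)$ (as in \cite{auroux}), where holomorphic strips correspond to pairs of strips in $\Sigma$, and compare with the Hom complex into the proposed twisted complex, which is a purely algebraic computation in $\mathcal{A}_{2,k}$. By formality of $\mathcal{A}_{2,k}$ and the fact that the $L_{i',j'}$ split-generate the category, agreement of Hom spaces together with an identification coming from explicit intersection cycles in $\Sigma$ suffices. Alternatively, one can exhibit a Hamiltonian isotopy of $T \times L_i$ to an explicit configuration whose intersections with each generator realize the claimed differentials term by term, reducing the verification to a combinatorial matching of disks.
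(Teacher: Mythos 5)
Your proposal is correct and follows essentially the same route as the paper: take the $n=1$ resolution of $T$, multiply by $L_i$, and handle the diagonal by contracting the segment around $L_i\times L_i$ into the single arrow $L_{i-1,i}\xrightarrow{\ell_i\times\ell_{i+1}}L_{i,i+1}$, with the connecting morphisms pinned down by $\bbL$-grading. The paper implements the contraction concretely via your second alternative: it introduces auxiliary arcs $U$, $D$ (and $E$) obtained by arc slides, rewrites $T$ as a twisted complex avoiding $L_i$, multiplies by $L_i$, and identifies $U\times L_i$ with the cone of $\ell_i\times\ell_{i+1}$ using the Hamiltonian isotopies given by the slides, which is lighter than your primary suggestion of matching all Hom spaces in the cylindrical model.
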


\begin{proof} By considering the $n=1$ case, we already know that $T$ can be represented by the twisted complex
        \[ L_0[1] \xrightarrow{\ell_1} L_1[1] \xrightarrow{\ell_2} \ldots \xrightarrow{\ell_{k}} L_{k}[1] \xrightarrow{\ell_{k+1}} L_{k+1}[1] \xrightarrow{x_2} L_{k+1} \xrightarrow{r_{k+1}} L_{k} \ldots \xrightarrow{r_2} L_1 \xrightarrow{r_1} L_0 
        \]
        Since this description involves $L_i$, we cannot directly multiply this complex with $L_i$ to get a description of $T \times L_i$. We instead will argue in two steps by using the arcs $U$ and $D$ drawn in Figure \ref{auxlag}. This applies for $i \leq k$ and for $i=k+1$ we use the arc $E$ which plays a similar role. Using these, we see that $T$ can be represented by 
        \begin{equation}
                \begin{array}{l} L_0[1] \xrightarrow{\ell_1} \cdots L_{i-2}[1]  \to U\to L_{i+2}[1] \xrightarrow{\ell_{i+3}} \cdots \xrightarrow{\ell_{k+1}} L_{k+1}[1] \xrightarrow{x_2} L_{k+1} \xrightarrow{r_{k+1}}  \\ L_{i+2}[-1] \to D \to L_{i-2} \cdots \xrightarrow{r_1} L_0 
       \end{array}
        \end{equation}
for $i \leq k$ and 
        \begin{equation}
                \begin{array}{l} L_0[1] \xrightarrow{\ell_1} \cdots \xrightarrow{\ell_{k-1}} L_{k-1}[1] \to E \to L_{k-1} \xrightarrow{r_{k-1}} \cdots \xrightarrow{r_1} L_0 
       \end{array}
        \end{equation}
for $i=k+1$. 

Since none of these complexes involve $L_i$, we can multiply every term by $L_i$ to get a twisted complex describing $T \times L_i$. 

Thus, it remains to describe $U \times L_i$, $D \times L_i$ for $i\leq k$ and $E \times L_{k+1}$ in terms of our generators. We will prove that $U \times L_i$ is given by the cone 
        \[ L_{i-1} \times L_i \xrightarrow{\ell_i \times \ell_{i+1}} L_i \times L_{i+1}\] 
        The other cases are similar. 
        The existence of this triangle can be seen as follows: First slide $L_{i-1}$ over $L_i$ to $\tilde{U}$. As this is a Hamiltonian isotopy, this gives an equivalence $L_{i-1} \times L_i \simeq \tilde{U} \times L_i$. Next, observe that sliding $\tilde{U}$ over $L_{i+1}$ gives $U$, hence we have an exact $C$ is given as a cone $\tilde{U} \to L_{i+1}$. Multiplying this with $L_i$, we conclude that $L_{i-1} \times L_i \simeq \tilde{U} \times L_i \to L_{i} \times L_{i+1}$ describes the object $U \times L_i$. 
        
        Finally, since none of the slides involved lower part of the surface, the morphism in the cone description has to be $\ell_i \times \ell_{i+1}$ as any other morphism from $L_{i-1} \times L_i \to L_i \times L_{i+1}$ goes around the holes. Indeed, one can see that this exact triangle in $\bbL$-graded setting which necessitates that the connecting morphism to be $\ell_i \times \ell_{i+1}$. Similarly, note that there exists no morphism from $L_{i-2} \times L_i$ to $L_i \times L_{i+1}$ (resp. $L_{i-1} \times L_i$ to $L_i \times L_{i+2}$) and there exists a unique morphism from $L_{i-2} \times L_i$ to $L_{i-1} \times L_i$ (resp. $L_i \times L_{i+1} \to L_i \times L_{i+2}$), namely $\ell_{i-1} \times \id_{{L}_i}$ (resp. $\id_{{L}_i} \times \ell_{i+2}$), which does not go around the holes. Hence, for reasons of $\bbL$-grading, it follows that the morphisms in the twisted complex representation of the $T \times L_i$ in terms of our generators must be those given in Equations \ref{n2-T-resolution} and \ref{n2-T-resolution-bis}. 
\end{proof}

\begin{figure}[ht!]
\centering
\begin{tikzpicture}

\begin{scope}[scale=0.8]

\tikzset{
  with arrows/.style={
    decoration={ markings,
      mark=at position #1 with {\arrow{>}}
    }, postaction={decorate}
  }, with arrows/.default=2mm,
} 

\tikzset{vertex/.style = {style=circle,draw, fill,  minimum size = 5pt,inner        sep=1pt}}
\def \radius {1.5cm}

\foreach \s in {2.5,3.5,4.5,5.5,6.5} {
  
   \draw[thick] ([shift=({360/10*(\s)}:\radius+1.5cm)]0,0) arc ({360/10 *(\s)}:{360/10*(\s+1)}:\radius+1.5cm);
}

\foreach \s in {7.5,8.5,9.5,10.5,11.5} {
  
   \draw[thick] ([shift=({360/10*(\s)}:\radius+1.5cm)]11,0) arc ({360/10 *(\s)}:{360/10*(\s+1)}:\radius+1.5cm);
}

\draw[thick] (0,3) -- (11,3);
\draw[thick] (0,-3) -- (11,-3);

\foreach \s in {0.5,1.5,2.5,3.5,4.5,5.5,6.5,7.5,8.5,9.5} {
 
    \draw[thick] ([shift=({360/10*(\s)}:\radius-1.1cm)]-1,0) arc ({360/10 *(\s)}:{360/10*(\s+1)}:\radius-1.1cm);

        \draw[thick] ([shift=({360/10*(\s)}:\radius-1.1cm)]1,0) arc ({360/10 *(\s)}:{360/10*(\s+1)}:\radius-1.1cm);

    \draw[thick] ([shift=({360/10*(\s)}:\radius-1.1cm)]3,0) arc ({360/10 *(\s)}:{360/10*(\s+1)}:\radius-1.1cm);
    \draw[thick] ([shift=({360/10*(\s)}:\radius-1.1cm)]5,0) arc ({360/10 *(\s)}:{360/10*(\s+1)}:\radius-1.1cm);
  \draw[thick] ([shift=({360/10*(\s)}:\radius-1.1cm)]9,0) arc ({360/10 *(\s)}:{360/10*(\s+1)}:\radius-1.1cm);
  \draw[thick] ([shift=({360/10*(\s)}:\radius-1.1cm)]11,0) arc ({360/10 *(\s)}:{360/10*(\s+1)}:\radius-1.1cm);
  \draw[thick] ([shift=({360/10*(\s)}:\radius-1.1cm)]13,0) arc ({360/10 *(\s)}:{360/10*(\s+1)}:\radius-1.1cm);

} 

\draw[with arrows] ([shift=({360/10*(7)}:\radius-1.1cm)]-1,0) arc ({360/10*(7)}:{360/10*(5)}:\radius-1.1cm);

\draw[with arrows] ([shift=({360/10*(7)}:\radius-1.1cm)]1,0) arc ({360/10*(7)}:{360/10*(5)}:\radius-1.1cm);

\draw[with arrows] ([shift=({360/10*(7)}:\radius-1.1cm)]3,0) arc ({360/10*(7)}:{360/10*(5)}:\radius-1.1cm);

\draw[with arrows] ([shift=({360/10*(7)}:\radius-1.1cm)]5,0) arc ({360/10*(7)}:{360/10*(5)}:\radius-1.1cm);

\draw[with arrows] ([shift=({360/10*(7)}:\radius-1.1cm)]9,0) arc ({360/10*(7)}:{360/10*(5)}:\radius-1.1cm);

\draw[with arrows] ([shift=({360/10*(7)}:\radius-1.1cm)]11,0) arc ({360/10*(7)}:{360/10*(5)}:\radius-1.1cm);

\draw[with arrows]({360/10 * (3)}:\radius+1.5cm) arc ({360/10 *(3)}:{360/10*(4)}:\radius+1.5cm);

\draw [blue] ([shift=({360/10*(10)}:\radius-1.1cm)]-1,0) --  ([shift=({360/10*(5)}:\radius-1.1cm)]1,0);

\draw [blue] ([shift=({360/10*(10)}:\radius-1.1cm)]1,0) -- ([shift=({360/10*(10)}:\radius+1.1cm)]0,0);

\draw [blue] ([shift=({360/10*(10)}:\radius-1.1cm)]3,0) -- ([shift=({360/10*(10)}:\radius+1.1cm)]2,0);

\draw [blue, dashed] ([shift=({360/10*(10)}:\radius-1.1cm)]5,0) -- ([shift=({360/10*(10)}:\radius+1.1cm)]3.5,0);

\draw [blue, dashed] ([shift=({360/10*(10)}:\radius-1.1cm)]7.5,0) --  ([shift=({360/10*(5)}:\radius-1.1cm)]9,0);

\draw [blue, dashed] ([shift=({360/10*(10)}:\radius-1.1cm)]-2.5,0) --  ([shift=({360/10*(5)}:\radius-1.1cm)]-1,0);

\draw [blue] ([shift=({360/10*(10)}:\radius-1.1cm)]9,0) -- ([shift=({360/10*(10)}:\radius+1.1cm)]8,0);

\draw [blue] ([shift=({360/10*(10)}:\radius-1.1cm)]11,0) -- ([shift=({360/10*(10)}:\radius+1.1cm)]10,0);

\draw [red] ([shift=({360/10*(7.5)}:\radius-1.1cm)]-1,0) to[in=210, out=330] ([shift=({360/10*(7)}:\radius-1.1cm)]5,0);

\draw [red] ([shift=({360/10*(2.5)}:\radius-1.1cm)]-1,0) to[in=150, out=30] ([shift=({360/10*(3)}:\radius-1.1cm)]5,0);

\draw [red] ([shift=({360/10*(1.5)}:\radius-1.1cm)]-1,0) to[in=150, out=30] ([shift=({360/10*(4)}:\radius-1.1cm)]3,0);

\draw [red] ([shift=({360/10*(1.5)}:\radius-1.1cm)]9,0) to[in=90, out=40] ([shift=({360/10*(5)}:\radius-1.1cm)]14,0);
\draw [red] ([shift=({360/10*(7.5)}:\radius-1.1cm)]9,0) to[in=270, out=320] ([shift=({360/10*(5)}:\radius-1.1cm)]14,0);

\node[blue] at (0,0.3) {\small $L_{i-1}$};
\node[red] at (1,1.4) {\small $U$};
\node[red] at (1,-1.4) {\small $D$};

\node[blue] at (2,0.3) {\small $L_{i}$};

\node[red] at (2.5,0.7) {\small $\tilde{U}$};

\node[blue] at (4,0.3) {\small $L_{i+1}$};
\node[blue]at (10,0.3) {\small $L_{k}$};
\node[blue]at (12,0.3) {\small $L_{k+1}$};

\node[red]at (11,1.3) {\small $E$};

\node[vertex] at  ([shift=({360/10*(2.5)}:\radius+1.5cm)]5,0) {};

\end{scope}

\end{tikzpicture}
    \caption{Auxiliary Lagrangians}
    \label{auxlag}
\end{figure}

\section{B-side: case $n=1$}

\subsection{Compactified LG model}

The variety $\ov{\YY}_{1,k}$ is covered by $k+1$ affine charts $\UU_0,\ldots,\UU_k$, each isomorphic to the affine space,
where the coordinates on each chart are given as follows
\begin{equation}
\begin{array}{l}\nonumber
\UU_0: V_k,\ldots,V_2,V_1,X_2^{-1},\\
\UU_1: V_k,\ldots,V_2,C_1^{-1},X_2,\\
\cdots\\
\UU_{k-1}: V_k,C_{k-1}^{-1},C_{k-2},V_{k-2},\ldots,V_1,\\
\UU_k: X_1,C_{k-1},V_{k-1},\ldots,V_1.
\end{array}
\end{equation}
where we abbreviate $C_{1i}$ as $C_i$.
We have 
$$V_k=X_1C_{k-1}, \ V_{k-1}=C_{k-1}^{-1}C_{k-2}, \ \ldots, \ V_1=C_1^{-1}X_2,$$
and we consider $\ov{\XX}_{1,k}=\ov{\YY}_{1,k}\times \A^k$ with the potential
$$\bw_{1,k}=U_1V_1+\ldots+U_kV_k.$$
It is convenient to set $C_k=X_1^{-1}$, $C_0=X_2$.

The subvariety $\ov{\ZZ}_{1,k}$ given as the vanishing locus of $(V_1,\ldots,V_k)$ is the nodal chain of rational curves
$\ov{R}_0\cup R_1\cup\ldots R_{k-1}\cup R_k$,
where $R_k=\A^1$ with coordinate $X_1$; $R_i=\P^1$ for $0<i<k$ with $C_i$ and $C_i^{-1}$ being the coordinates on its affine charts; $\ov{R}_0=\P^1$ with $X_2$ and $X_2^{-1}$ being the coordinates on its affine charts. Note that the intersection point
$R_i\cap R_{i-1}$ lies on the nodal union of the affine lines $\A^1_{C_i^{-1}}\sub R_i$ and $\A^1_{C_{i-1}}\sub R_{i-1}$.

The critical locus $\crit(\bw_{1,k})\sub\ov{\XX}_{1,k}$ is the union of our nodal chain $\ov{\ZZ}_{1,k}$ with $k$ affine lines $L_1,\ldots,L_k$, such that $L_i$ intersects the nodal chain at
the $i$th node.

\begin{lem}\label{proj-P1-lem}
(i) For every $i=0,\ldots,k-1$, there is a morphism $p_i:\ov{\YY}_{1,k}\to \P^1$ such that
$p_i^*z=C_i$ for $i>0$ and $p_0^*z=X_2$, where $z$ is a coordinate on $\A^1\sub\P^1$.
Furthermore, the induced map
$$f:\ov{\YY}_{1,k}\rTo{X_1,V_1,\ldots,V_k,p_0,\ldots,p_{k-1}} \A^{k+1}_{X_1,V_1,\ldots,V_k}\times (\P^1)^k$$
is a closed embedding. 

\noindent
(ii) For $i=1,\ldots,k-1$, we have sections of $p_i^*\OO(-1)$, defined in an open neighborhood of $R_i$,
\begin{equation}\label{sections-of-O(-1)-eq}
s_1(i)=\frac{C_{i+1}^{-1}}{p_i^*z_0}=\frac{V_{i+1}}{p_i^*z_1}, \ \ s_2(i)=\frac{C_{i-1}}{p_i^*z_1}=\frac{V_i}{p_i^*z_0}
\end{equation}
(where $(z_0:z_1)$ are homogeneous coordinates on $\P^1$ such that $z=z_1/z_0$),
such that $R_i$ is the vanishing locus of 
$$V_1,\ldots,V_{i-1},s_1(i),s_2(i),V_{i+2},\ldots,V_k.$$
Similarly, we have a section of $p_0^*\OO(-1)$, defined in a neighborhood of $\ov{R}_0$
$$s_1(0)=\frac{C_{1}^{-1}}{p_0^*z_0}=\frac{V_1}{p_0^*z_1},$$
such that $\ov{R}_k$ is the vanishing locus $s_1(0),V_2,\ldots,V_k$. Finally, locally near $R_k$, it is given as the vanishing locus
of $V_1,\ldots,V_{k-1},C_{k-1}$. 
\end{lem}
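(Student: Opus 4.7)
My plan is to prove both parts by direct analysis of the explicit affine charts $\UU_0,\ldots,\UU_k$ of $\ov\YY_{1,k}$.

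For the construction of $p_i$ in part (i), the clean toric argument is that $c_i\in H$ is a character of $T$ and, by Lemma \ref{toric-cones-lem}(ii), for each maximal cone $\si_j^\vee=\lan S_\phi\ran_{\Z_{\ge 0}}$ with $\phi(1)=j$, exactly one of $c_i$ or $-c_i$ lies in $\si_j^\vee$---the former when $i<j$, the latter when $j\le i$. This is the standard combinatorial condition for $c_i$ to extend to a toric morphism to $\P^1$. Concretely, on $\UU_j$ with $j>i$ one has $C_i=C_{j-1}V_{j-1}\cdots V_{i+1}$, a monomial in the chart coordinates (reducing to $C_i=C_{j-1}$ when $j=i+1$), so $p_i$ maps $\UU_j$ to $\A^1_z\sub \P^1$ via $p_i^*z=C_i$; on $\UU_j$ with $j\le i$ one similarly has $C_i^{-1}=C_j^{-1}V_{j+1}\cdots V_i$ (reducing to $X_2^{-1}$ when $i=j=0$), so $p_i$ maps to $\A^1_w\sub \P^1$ via $p_i^*w=C_i^{-1}$. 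These glue via $C_iC_i^{-1}=1$.

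For the closed embedding claim, I would exhibit the image of $f$ explicitly as the closed subscheme $Z\sub\A^{k+1}\times(\P^1)^k$ cut out by the toric relations $X_1z_1^{(0)}=z_0^{(0)}V_1\cdots V_k$ and $z_1^{(i-1)}z_0^{(i)}=z_0^{(i-1)}z_1^{(i)}V_i$ for $i=1,\ldots,k-1$, where $(z_0^{(i)}:z_1^{(i)})$ are homogeneous coordinates on the $i$th $\P^1$. On each open $W_j:=\A^{k+1}\times\prod_{i<j}\A^1_{z_i}\times\prod_{i\ge j}\A^1_{w_i}$ one can set $z_0^{(i)}=1$ (for $i<j$) and $z_1^{(i)}=1$ (for $i\ge j$); the defining equations of $Z\cap W_j$ then express the redundant coordinates in terms of a set of free coordinates matching exactly the coordinates of $\UU_j$. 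A direct check gives an isomorphism $f|_{\UU_j}:\UU_j\to Z\cap W_j$; gluing these yields $f:\ov\YY_{1,k}\to Z$, which is therefore a closed embedding into $\A^{k+1}\times(\P^1)^k$.

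For part (ii), I would define $s_1(i),s_2(i)$ on the open $\UU_i\cup\UU_{i+1}\supset R_i$ by the given pairs of formulas: since $\UU_i$ maps into $\A^1_w$, the pullback $p_i^*z_1$ is non-vanishing on $\UU_i$, and similarly $p_i^*z_0$ is non-vanishing on $\UU_{i+1}$, so each local expression is a regular section of $p_i^*\OO(-1)$. Compatibility on $\UU_i\cap\UU_{i+1}$ follows from $p_i^*z_1=C_ip_i^*z_0$ together with the toric relations $V_{i+1}C_{i+1}=C_i$ and $V_iC_i=C_{i-1}$. For the vanishing locus of $R_i$: on $\UU_i$ the equations $s_1(i)=s_2(i)=0$ reduce, by non-vanishing of $p_i^*z_1$, to $V_{i+1}=C_{i-1}=0$, which together with $V_l=0$ for $l\ne i,i+1$ cut out $\A^1_{C_i^{-1}}$; on $\UU_{i+1}$ analogously one obtains $\A^1_{C_i}$; the two pieces glue to $R_i\cong\P^1$. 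The cases of $\ov R_0$ (on $\UU_0\cup\UU_1$) and $R_k$ (on $\UU_k$, where $V_k=X_1C_{k-1}$ and the equation $C_{k-1}=0$ pins down $R_k=\A^1_{X_1}$) are similar direct verifications. No substantial obstacle is anticipated; the argument is essentially careful toric bookkeeping on the affine charts.
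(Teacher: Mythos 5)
Your construction of the $p_i$ (via the monomial identities $C_i=C_{j-1}V_{j-1}\cdots V_{i+1}$ on $\UU_j$ for $j>i$ and $C_i^{-1}=C_j^{-1}V_{j+1}\cdots V_i$ for $j\le i$) and your treatment of part (ii) are correct and coincide with the paper's argument. The gap is in the closed-embedding step. The relations you propose to cut out the image, namely $X_1z_1^{(0)}=z_0^{(0)}V_1\cdots V_k$ together with the adjacent relations $z_1^{(i-1)}z_0^{(i)}=z_0^{(i-1)}z_1^{(i)}V_i$, do \emph{not} define the image of $f$, and in particular $Z\cap W_j$ is not isomorphic to $\UU_j$. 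Already for $k=2$ on $W_1$ (where $z_0^{(0)}=1$, $z_1^{(1)}=1$, with free coordinates $X_1,V_1,V_2,\zeta_0:=z_1^{(0)},w_1:=z_0^{(1)}$) the relations read $X_1\zeta_0=V_1V_2$ and $\zeta_0 w_1=V_1$; substituting the second into the first gives only $\zeta_0(X_1-w_1V_2)=0$, so $Z\cap W_1$ contains the extra $3$-dimensional component $\{\zeta_0=V_1=0\}$ on which $X_1$ is unconstrained (e.g.\ the point $(X_1,V_1,V_2,\zeta_0,w_1)=(1,0,0,0,5)$ satisfies your equations but is not in the image, which requires $X_1=w_1V_2$). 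Your assertion that ``the defining equations of $Z\cap W_j$ express the redundant coordinates in terms of a set of free coordinates'' therefore fails: the redundant coordinate $X_1$ is never eliminated on the charts $W_j$ with $j>0$. To salvage this route you would need the full set of binomial generators of the toric ideal --- at least $X_1z_1^{(i)}=z_0^{(i)}V_{i+1}\cdots V_k$ for \emph{every} $i$, and the relations $z_1^{(i)}z_0^{(l)}=z_0^{(i)}z_1^{(l)}V_{i+1}\cdots V_l$ coupling non-adjacent factors --- and then redo the chart computation. (You would also need to record that $Z\subset\bigcup_j W_j$, which does follow from the adjacent relations.)

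The paper avoids writing equations for the image altogether: it covers the target by all $2^k$ products of affine charts of $(\P^1)^k$, observes that $f^{-1}(\A^{k+1}\times\A_{i_1,\ldots,i_k})$ is empty unless $(i_1,\ldots,i_k)=((0)^i,(1)^{k-i})$, in which case it equals $\UU_i$, and then notes that $f|_{\UU_i}$ is a closed embedding because its components include all $k+1$ coordinates of $\UU_i$ (so it is a graph). This is both shorter and immune to the issue above; I would recommend replacing your image-equations argument by it, or by the corrected toric ideal.
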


\begin{proof}
(i) 
Let $\A_0=(z_0\neq 0)$ and $\A_1=(z_1\neq 0)$ be the standard open covering of $\P^1$ (where $(z_0:z_1)$ are
the homogeneous coordinates such that $z=z_1/z_0$). Then we define $p_i$ as the unique map
such that
$$p_i^{-1}(\A_0)=\UU_{i+1}\cup\ldots\cup \UU_k, \ \ p_i^{-1}(\A_1)=\UU_{0}\cup\ldots\cup\UU_{i},$$
with $p_i^*z=C_i$ on $\UU_{i+1}\cup\ldots\cup\UU_k$ and $p_i^*z^{-1}=C_i^{-1}$ on $\UU_0\cup\ldots\cup\UU_{i}$.
The fact that $C_i$ (resp., $C_i^{-1}$) is indeed well defined on $\UU_j$ with $j>i$ (resp., $j\le i$) 
follows from the identity $C_j=C_{j+1}V_{j+1}$.

For a sequence $(i_1,\ldots,i_k)$ of $0$'s and $1$'s let us set 
$$\A_{i_1,\ldots,i_k}:=\A_{i_1}\times\ldots\times \A_{i_k}\sub (\P^1)^k.$$
To prove the second assertion we note that $f^{-1}(\A^{k+1}\times \A_{i_1,\ldots,i_k})$ is nonempty
only when $(i_1,\ldots,i_k)=((0)^{i},(1)^{k-i})$ for some $0\le i\le k$, and in this case
$$f^{-1}(\A^{k+1}\times \A_{(0)^{i},(1)^{k-i}})=\UU_i.$$
It is easy to check that the induced map
$$\UU_i\rTo{X_1,V_1,\ldots,V_k,C_0,\ldots,C_{i-1},C_i^{-1},\ldots,C_{k-1}} \A^{k+1}\times \A_{(0)^{i},(1)^{k-i}}$$
is a closed embedding.

\noindent
(ii) The fact that $s_1(i)$ (resp., $s_2(i)$) is well defined follows from the identity $C_{i+1}^{-1}C_i=V_{i+1}$
(resp., $C_i^{-1}C_{i-1}=V_{i}$). The fact that $R_i$ coincides with the claimed vanishing locus can be checked locally.
\end{proof}

Let us set $\ov{R}_i=R_i$ for $i>0$, and
consider the objects $(P_i)_{i=0,\ldots,k}$ of the multigraded category $\ov{\BB}_{1,k}$ of $\wt{T}$-equivariant
matrix factorization of $\bw_{1,k}$ on $\ov{\XX}_{1,k}$, associated with the objects $\OO_{\ov{R}_i\times \A^k}$ of the corresponding singularity category. Note that the character group $\wt{H}$ of the torus $\wt{T}$
is a free abelian group with the basis $x_1,u_1,\ldots,u_k,l_0$. Hence,
the grading group $\bbL_B=2\wt{H}+\Z l_0$ is a free abelian group with the basis $2x_1,2u_1,\ldots,2u_k,l_0$, so that
the grading of $X_1$ is $2x_1$ and the grading of $U_i$ is $2u_i$.
We have $v_i=l_0-u_i$, so the relations $v_i=-c_i+c_{i-1}$ allow to compute the grading $c_i$ of $C_i$ (recall that $c_k=-x_1$).

\begin{prop} The only nonzero morphisms between $(P_i)$ in $\ov{\BB}_{1,k}$ are the following:
\begin{align*}
&\End(P_k)=k[X_1,U_k]/(X_1U_k), \ \End(P_{k-1})=k[U_k,U_{k-1}]/(U_kU_{k-1}), \ \ldots, \ 
	\End(P_{1})=k[U_{2},U_1]/(U_{2}U_1), \\
&\End(P_0)=k[U_1],
\end{align*}
$$\Hom(P_i,P_{i-1})=k[U_i]\cdot a_i, \ \ \Hom(P_{i-1},P_i)=k[U_i]\cdot b_i, \ \text{ for } i=1,\ldots,k,$$
where 
$$\deg(a_i)=l_0-2c_{i-1}, \ \ \deg(b_i)=l_0+2c_i,$$
$$a_ib_i=b_ia_i=U_i.$$
\end{prop}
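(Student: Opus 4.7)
The strategy is to realize each $P_i$ as an explicit tensor product of Koszul matrix factorizations on each affine chart of $\ov{\XX}_{1,k}$, using the local cutting equations of $\ov{R}_i$ from Lemma \ref{proj-P1-lem}(ii) to factorize each summand $U_l V_l$ of the potential $\bw_{1,k}$. One then computes $\und{\Hom}$ sheaves via Lemma \ref{Koszul-mf-lem} and pushes forward to the supports $\ov{R}_i$. For example, on $\UU_k$ one has $V_k = X_1 C_{k-1}$, giving
$$P_k|_{\UU_k} = \{U_1,V_1\} \otimes \cdots \otimes \{U_{k-1},V_{k-1}\} \otimes \{U_k X_1, C_{k-1}\};$$
for $0 < i < k$, analogous presentations on $\UU_i$ and $\UU_{i+1}$ (which together cover $R_i = \P^1$) use $V_i = C_i^{-1}C_{i-1}$ factored as $\{U_i C_i^{-1}, C_{i-1}\}$ on $\UU_i$ and $V_{i+1} = C_{i+1}^{-1}C_i$ factored as $\{U_{i+1} C_i, C_{i+1}^{-1}\}$ on $\UU_{i+1}$; $P_0$ is similar using $\UU_0, \UU_1$.

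For $|i - j| \ge 2$ the supports $\ov{R}_i, \ov{R}_j$ are disjoint, so Lemma \ref{Koszul-mf-lem}(i) gives $\und{\Hom}(P_i, P_j) = 0$ locally and hence all Homs vanish. For $j = i$, Lemma \ref{Koszul-mf-lem}(i) applied factor by factor identifies $\und{\End}(P_i)$ as a coherent sheaf on $\ov{R}_i$: on $R_k = \A^1$ it equals $\k[X_1, U_k]/(X_1 U_k)$; on $R_i = \P^1$ for $0 < i < k$ the local descriptions $\k[C_i^{-1},U_i]/(U_i C_i^{-1})$ on $\UU_i$ and $\k[C_i,U_{i+1}]/(U_{i+1}C_i)$ on $\UU_{i+1}$ glue to a sheaf fitting in a short exact sequence
$$0 \to (U_i)_{N_i} \oplus (U_{i+1})_{N_{i+1}} \to \und{\End}(P_i) \to \OO_{\P^1} \to 0,$$
where $(U_l)_{N}$ denotes $\k[U_l] \cdot U_l$ supported at the node $N$. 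Since $H^0(\P^1, \OO_{\P^1}) = \k$ and $H^1(\P^1, \OO_{\P^1}) = 0$, taking $R\Ga$ yields $\End(P_i) = \k[U_i, U_{i+1}]/(U_i U_{i+1})$ with no higher Exts; the cases $P_0$ (one node on $\P^1$) and $P_k$ (affine, one node) are handled analogously.

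For $j = i - 1$ the supports meet only at the single node $N_i = \ov{R}_i \cap \ov{R}_{i-1}$, and on a chart containing $N_i$ the matrix factorizations $P_i, P_{i-1}$ agree in all Koszul factors except the one factorizing $U_i V_i$. Writing this summand locally as $U_i \cdot c \cdot a$ with $a$ vanishing on $R_{i-1}$ and $c$ on $R_i$, we have $P_i \sim \cdots \otimes \{U_i a, c\}$ and $P_{i-1} \sim \cdots \otimes \{U_i c, a\}$, and a suitable relabeling of Lemma \ref{Koszul-mf-lem}(iii) gives
$$\und{\Hom}(P_{i-1}, P_i) \simeq \OO/(U_i, c), \quad \und{\Hom}(P_i, P_{i-1}) \simeq \OO(-a)/(U_i, c),$$
both supported at $N_i$ and free of rank one over $\k[U_i]$. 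Taking global sections yields the stated morphism spaces, with $b_i$ and $a_i$ represented by the natural projection and embedding described in the lemma; the composition $a_i b_i = b_i a_i = U_i$ follows directly from the explicit description of these maps. The degrees $\deg(a_i) = l_0 - 2c_{i-1}$ and $\deg(b_i) = l_0 + 2c_i$ are obtained by tracking line bundle twists in the Koszul factors, consistently with $\deg(U_i) = 2l_0 - 2v_i$ and $v_i = c_{i-1} - c_i$. The main technical obstacle is the consistent $\wt{T}$-equivariant bookkeeping of the Koszul presentations and line bundle twists across charts; once this is in place, everything reduces to a direct application of Lemma \ref{Koszul-mf-lem}.
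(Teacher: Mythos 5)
Your overall strategy is exactly the one the paper intends (its ``proof'' of this proposition is a one-line deferral to the analogous $n=2$ computations), and your treatment of the presentations of the $P_i$ as tensor products of Koszul factorizations, of the vanishing for $|i-j|\ge 2$, and of the endomorphism algebras via the nodal chains $\A^1_{U_i}\cup\P^1\cup\A^1_{U_{i+1}}$ is correct.

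There is, however, a concrete error in the adjacent-$\Hom$ step. Lemma \ref{Koszul-mf-lem}(iii) does not apply to the pair $\{U_ia,c\}$, $\{U_ic,a\}$: matching these with $\{c,ab\}$ and $\{ca,b\}$ forces the lemma's $a$ to be the non-regular ratio of your $a$ and $c$, and no regular relabeling exists. The formulas you write down, $\und{\Hom}(P_{i-1},P_i)\simeq\OO/(U_i,c)$ and $\und{\Hom}(P_i,P_{i-1})\simeq\OO(-a)/(U_i,c)$, contradict your very next clause: a sheaf killed by $U_i$ and supported on $\{U_i=c=0\}$ is neither concentrated at $N_i$ nor free over $\k[U_i]$. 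The correct tool is Lemma \ref{Koszul-mf-lem}(ii), with $(a,b,c)$ there taken to be $(a,c,U_i)$ (resp.\ $(c,a,U_i)$), which gives
$$\und{\Hom}(P_{i-1},P_i)\simeq \OO(a)/(a,c)[-1],\qquad \und{\Hom}(P_i,P_{i-1})\simeq \OO(c)/(a,c)[-1],$$
i.e.\ (after tensoring with the remaining factors $\OO/(U_l,V_l)_{l\neq i}$) shifted line bundles on $N_i\times\A^1_{U_i}$, free of rank one over $\k[U_i]$ as claimed. This also corrects the description of the generators: they are not the even-degree projection/embedding of part (iii) but the odd connecting morphisms of the triangles $\OO(-a)/(c)\rTo{a}\OO/(ac)\to\OO/(a)\to\cdots$, which is what makes $\deg(a_i)$ and $\deg(b_i)$ congruent to $l_0$ modulo $2\wt{H}$ and is consistent with $a_ib_i=U_i$ being even. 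With this substitution the rest of your argument, including the degree bookkeeping, goes through.
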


The proof uses presentations of $\ov{R}_i$ as complete intersections and Lemma \ref{Koszul-mf-lem}. We will give the
details of similar calculations in the case $n=2$. 

There is a unique projection $|\cdot|:\bbL_B\to \Z$ sending $l_0$ to $1$ and $x_1$ and $u_i$ to $0$.
In the obtained $\Z$-graded category the $\Z$-gradings are given by 
$$|X_1|=|U_1|=\ldots=|U_k|=0.$$
With this choice of $\Z$-grading we have $|a_i|=1-2(k+1-j)$, and
the endomorphism algebra 
$$\End(P_k\oplus P_{k-1}[-1]\oplus P_{k-2}[-4] \oplus\ldots \oplus P_0[-k^2])$$
will be concentrated in degree $0$.

By Lemma \ref{formality-lem}, this implies formality of the $\bbL_B$-graded dg-endomorphism algebra of $\bigoplus P_i$.

The following simple general result will help us to prove generation.

\begin{lem}\label{components-generation-lem}
Suppose a scheme $X$ is the union of two closed subschemes: $X=Y\cup Z$. Set 
$$U=X\setminus Y=Z\setminus Y\cap Z.$$
Assume we have a collection of
coherent sheaves $(F_i)_{i\in I}$ on $Y$, generating $D^b\Coh(Y)$ and a collection of coherent sheaves 
$(G_j)_{j\in J}$ on $X$ such that $(G_j|_U)_{j\in J}$ generate $D^b\Coh(U)$. Then $(F_i)_{i\in I}$ and $(G_j)_{j\in J}$
together generate $D^b\Coh(X)$. The similar assertion holds for categories of equivariant sheaves.
\end{lem}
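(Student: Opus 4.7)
The plan is to use a standard Verdier localization argument. Let $i: Y \hra X$ denote the closed immersion and $j: U \hra X$ the open immersion. Denote by $\mathcal{T} \sub D^b\Coh(X)$ the thick triangulated subcategory generated by the pushforwards $(i_* F_i)_{i \in I}$ together with the sheaves $(G_j)_{j \in J}$. The goal is to show $\mathcal{T} = D^b\Coh(X)$.

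First I would identify $D^b_Y\Coh(X)$---the full subcategory of complexes with cohomology sheaves set-theoretically supported on $Y$---with the thick subcategory generated by $(i_* F_i)_{i \in I}$. Any coherent sheaf $F$ on $X$ with set-theoretic support in $Y$ is annihilated by some power $I_Y^n$ of the ideal sheaf of $Y$, and the descending filtration $F \supset I_Y F \supset \cdots \supset I_Y^n F = 0$ has successive quotients that are $\OO_Y$-modules, i.e., pushforwards from $Y$. Since $(F_i)$ generate $D^b\Coh(Y)$ and $i_*$ is exact and fully faithful on this subcategory, the thick subcategory generated by $(i_* F_i)$ contains all of $i_* D^b\Coh(Y)$. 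Combined with the filtration argument this gives $\langle i_* F_i \rangle = D^b_Y\Coh(X) \sub \mathcal{T}$.

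Next I would invoke the standard fact that $j^*$ induces an equivalence of Verdier quotient $D^b\Coh(X)/D^b_Y\Coh(X) \simeq D^b\Coh(U)$ (valid for the Noetherian schemes we consider). Modding out by $D^b_Y\Coh(X)$, the image of $\mathcal{T}$ inside $D^b\Coh(U)$ is the thick subcategory generated by the restrictions $(G_j|_U)_{j \in J}$, which by hypothesis equals all of $D^b\Coh(U)$. Therefore $\mathcal{T}$ contains a representative of every class in $D^b\Coh(X)/D^b_Y\Coh(X)$; together with $D^b_Y\Coh(X) \sub \mathcal{T}$, this forces $\mathcal{T} = D^b\Coh(X)$.

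For the equivariant case the same argument runs verbatim: the ideal $I_Y$ is $G$-invariant since $Y$ is $G$-stable, so the ideal filtration is $G$-equivariant, and the Verdier localization $D^b\Coh_G(X)/D^b_{G,Y}\Coh(X) \simeq D^b\Coh_G(U)$ has the same proof as the non-equivariant version. There is no substantial obstacle---the lemma is a formal consequence of well-known facts---so the only care needed is to confirm that the Verdier localization statement applies in the (possibly non-reduced) setting relevant to this paper, which it does.
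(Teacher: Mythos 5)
Your proof is correct and follows essentially the same route as the paper's: identify the thick subcategory generated by the $(i_*F_i)$ with the kernel of restriction to $U$ (the paper leaves the ideal-power filtration implicit), then conclude via the Verdier localization $D^b\Coh(X)/D^b_Y\Coh(X)\simeq D^b\Coh(U)$. No issues.
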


\begin{proof}
Indeed, in this case $(F_i)$ generate the subcategory in $D^b\Coh(X)$ consisting of objects with cohomology supported
on $Y$. But  latter subcategory is precisely the kernel of
the restriction functor
$$D^b\Coh(X)\to D^b\Coh(U).$$
The assertion follows from this.
\end{proof}

\begin{lem}\label{generation-lem-k=1} 
Let us work over a regular base ring $\k$. For any subgroup $\G_m\sub T$, the twists by $\chi^n$ of
the equivariant sheaves $\OO_{\ov{R}_0},\ldots,\OO_{R_k}$ (resp., $\OO_{R_1},\ldots,\OO_{R_k},\OO_{\ov{\ZZ}_{1,k}}$)
generate $D^b\Coh(\ov{\ZZ}_{1,k}/\G_m)$.
\end{lem}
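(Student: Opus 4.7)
My plan is to prove both assertions by iteratively applying Lemma~\ref{components-generation-lem}, peeling off one component at a time from the \emph{affine} end of the nodal chain.

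For the first assertion I would prove a slightly more general inductive claim: for any nodal chain $X=C_0\cup C_1\cup\ldots\cup C_m$ of smooth one-dimensional toric varieties ($C_i \in \{\P^1,\A^1,\G_m\}$) with adjacent components meeting at a single $T$-fixed point and with $C_m$ affine, the twists $\{\OO_{C_i}\otimes\chi^n\}_{i,n}$ generate $D^b\Coh(X/\G_m)$ for every subgroup $\G_m\subseteq T$. The base case $m=0$ is immediate: a smooth affine variety has its equivariant derived category generated by twists of its structure sheaf. For the inductive step I apply Lemma~\ref{components-generation-lem} with $Y=C_m$ and $Z=C_0\cup\ldots\cup C_{m-1}$, so that $Y\cap Z=\{p\}$ (the last node) and $U=Z\setminus\{p\}=C_0\cup\ldots\cup C_{m-2}\cup(C_{m-1}\setminus\{p\})$. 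The sheaves $F_i=\OO_{C_m}\otimes\chi^n$ on $Y$ generate $D^b\Coh(Y/\G_m)$ by the base case; taking $G_j$ to be the twists of $\OO_{C_0},\ldots,\OO_{C_{m-1}}$, their restrictions to $U$ are precisely the structure sheaves of the components of $U$. The newly-exposed extremal component $C_{m-1}\setminus\{p\}$ is always affine---equal to $\A^1$ if $C_{m-1}=\P^1$ and to $\G_m$ if $C_{m-1}=\A^1$---so the inductive hypothesis applies to $U$ and Lemma~\ref{components-generation-lem} concludes the induction. Specializing to $\ov\ZZ_{1,k}$ with $C_0=\ov R_0$ and $C_i=R_i$ for $i\ge1$ (affine end $C_k=R_k=\A^1$) proves the first assertion.

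For the second assertion, one additional application of Lemma~\ref{components-generation-lem} suffices, with $Y=R_1\cup\ldots\cup R_k$ and $Z=\ov R_0$: then $Y\cap Z=\{p_1\}$ and $U=\ov R_0\setminus\{p_1\}\cong\A^1$. The first assertion applied to the chain $Y$ (whose affine end is $R_k$) gives that $\{\OO_{R_i}\otimes\chi^n\}$ generates $D^b\Coh(Y/\G_m)$, while the restriction of $\OO_{\ov\ZZ_{1,k}}$ to $U\cong\A^1$ is simply $\OO_{\A^1}$, whose twists generate $D^b\Coh(\A^1/\G_m)$.

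The key conceptual observation---and the main reason the proof is forced into this shape---is that twists of $\OO_{\P^1}$ alone would \emph{not} generate $D^b\Coh(\P^1/\G_m)$: they yield only degree-zero line bundles with varying weights, and the relevant $\Ext$-groups vanish, so no new sheaves arise as cones. Peeling from the affine end at every stage keeps the piece $Y$ affine, so the base case applies, and simultaneously ensures the newly-exposed end of $U$ is again affine, so the induction continues without ever requiring auxiliary skyscraper sheaves at the nodes.
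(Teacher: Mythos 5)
Your proof is correct and follows essentially the same strategy as the paper's: an induction on the length of the nodal chain driven by Lemma \ref{components-generation-lem}, with the base case being that twists of the structure sheaf generate the equivariant derived category of a smooth affine curve. The only (cosmetic) difference is the direction of peeling — you remove the affine component $R_k$ and apply the inductive hypothesis to the open complement $U$, whereas the paper removes $\ov{R}_0$, applies the inductive hypothesis to the closed subchain $R_1\cup\ldots\cup R_k$, and checks the exposed affine line $\ov{R}_0\setminus R_1$ directly; both arguments rest on exactly the same ingredients.
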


\begin{proof} This can be proved by induction on $k$. By the induction assumption, the objects $\OO_{R_1},\ldots,\OO_{R_k}$ and their twists generate
the subcategory of equivariant sheaves supported on $R_1\cup\ldots\cup R_k$. 
Hence, by Lemma \ref{components-generation-lem},
it remains to check that the twists of the restriction of $\OO_{R_0}$ (resp., $\OO_{\ov{\ZZ}_{1,k}}$) to 
$\ov{R}_0\setminus R_{1}$ generate $D^b\Coh(\ov{R}_0\setminus R_{1})$. But this restriction is
 the structure sheaf, and $\ov{R}_0\setminus R_{1}$ is the affine line, so this is true.
\end{proof}

\begin{prop}\label{n1-generation-prop} 
The objects $P_0,\ldots,P_k$ generate the category $\ov{\BB}_{1,k}$.
\end{prop}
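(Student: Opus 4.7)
The plan is to invoke the multigraded generation criterion of Lemma \ref{L-graded-generation-lem}. I would take $\G_m\sub\wt{T}$ to be the one-parameter subgroup acting trivially on $\ov{\YY}_{1,k}$ and rescaling each coordinate $U_i$ uniformly with weight one, so that $l_0|_{\G_m}=\chi$. For the closed subscheme of $X_0=\{\bw_{1,k}=0\}$ required by the lemma, I would choose $Z:=\ov{\ZZ}_{1,k}\times \A^k$. The description $\crit(\bw_{1,k})=\ov{\ZZ}_{1,k}\cup\bigcup_{i=1}^k L_i$ preceding the proposition, together with the fact that each affine line $L_i$ runs in the $U_i$-direction through the $i$-th node of the chain, already yields $\Sing(X_0)\sub Z$.

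The remaining task is to prove that $(\OO_{\ov{R}_i\times \A^k}\ot\chi^n)_{i,n}$ generate $D^b\Coh(Z/\G_m)$. The projection $\pi\colon Z\to \ov{\ZZ}_{1,k}$ presents $Z$ as a trivial $\G_m$-equivariant rank $k$ vector bundle with fiberwise scaling action and trivial action on the base. A standard argument using graded-free resolutions of finitely generated graded $\OO_{\ov{\ZZ}_{1,k}}[U_1,\ldots,U_k]$-modules would show that $D^b\Coh(Z/\G_m)$ is generated by the pullbacks $\pi^*F\ot \chi^n$, where $F$ ranges over a generating collection of $D^b\Coh(\ov{\ZZ}_{1,k})$ and $n$ over $\Z$. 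Lemma \ref{generation-lem-k=1} provides such a collection: the sheaves $\OO_{\ov{R}_0},\OO_{R_1},\ldots,\OO_{R_k}$ generate $D^b\Coh(\ov{\ZZ}_{1,k})$, and their pullbacks are exactly $\OO_{\ov{R}_i\times\A^k}$, so the proposition follows.

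The only point that warrants extra care is the claimed reduction via $\pi^*$ from $D^b\Coh(Z/\G_m)$ down to $D^b\Coh(\ov{\ZZ}_{1,k})$; this is a standard equivariant descent property of a trivial vector bundle under a scaling action, best established by inducting on the filtration of an equivariant coherent sheaf by powers of the ideal $(U_1,\ldots,U_k)$, or equivalently via the Koszul complex on the regular sequence $U_1,\ldots,U_k$.
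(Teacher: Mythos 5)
Your overall skeleton is exactly the paper's: choose the $\G_m\sub\wt{T}$ scaling the $U_i$'s with $l_0|_{\G_m}=\chi$, take $Z=\ov{\ZZ}_{1,k}\times\A^k$, note $\crit(\bw_{1,k})\sub Z$ so that $\Sing(X_0)\sub Z$, and feed the generation of $D^b\Coh(Z/\G_m)$ by the twists of $\OO_{\ov{R}_i\times\A^k}$ into Lemma \ref{L-graded-generation-lem}. The divergence, and the gap, is in how you establish that generation statement. The paper does not pass through $D^b\Coh(\ov{\ZZ}_{1,k})$ and then ``descend'': Lemma \ref{generation-lem-k=1} is deliberately stated over an arbitrary regular base ring precisely so that it can be applied with base ring $\k[U_1,\ldots,U_k]$, which yields generation of $D^b\Coh\bigl((\ov{\ZZ}_{1,k}\times\A^k)/\G_m\bigr)$ in one stroke (its proof only needs, at the last step, that twists of the structure sheaf generate the equivariant derived category of an affine line over a regular ring, where graded modules do admit finite resolutions by sums of twists of free modules, up to summands).

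The justifications you offer for your ``standard'' descent step would not go through as written. Graded-free (or induced) resolutions of finitely generated graded $\OO_{\ov{\ZZ}_{1,k}}[U_1,\ldots,U_k]$-modules are in general \emph{infinite}, because the degree-zero part $\ov{\ZZ}_{1,k}$ is singular (nodal), so finiteness of such resolutions — the whole point of the regularity hypothesis in Lemma \ref{generation-lem-k=1} — is unavailable; an infinite resolution does not place an object in the thick subcategory generated by its terms. The filtration by powers of the ideal $(U_1,\ldots,U_k)$ does not terminate for any equivariant sheaf whose support meets $\ov{\ZZ}_{1,k}\times(\A^k\setminus 0)$, so induction on it only handles objects supported on the zero section. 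And the Koszul complex on $U_1,\ldots,U_k$ resolves zero-section pushforwards $i_*F\ot\chi^n$ by pullbacks $\pi^*F\ot\chi^m$ — the opposite of what you need for a general module (the ``characteristic'' Koszul resolution that does resolve a general equivariant sheaf by induced ones has terms built from $\pi_*M$, which is not coherent). Your intermediate claim is in fact true, but proving it requires a real argument, e.g.: dévissage of objects supported on the zero section to $i_*F\ot\chi^n$ (handled by the Koszul resolution above), plus the identification of the Verdier quotient by that subcategory with $D^b\Coh^{\G_m}\bigl(\ov{\ZZ}_{1,k}\times(\A^k\setminus 0)\bigr)\simeq D^b\Coh(\ov{\ZZ}_{1,k}\times\P^{k-1})$, where the images $F\boxtimes\OO(n)$ generate by the relative Beilinson/projective bundle theorem (no smoothness of the base needed), in the spirit of Lemma \ref{components-generation-lem}. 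The simpler fix, however, is just to apply Lemma \ref{generation-lem-k=1} over the base ring $\k[U_1,\ldots,U_k]$ as the paper does.
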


\begin{proof} 
By Lemma \ref{generation-lem-k=1} with the base ring being the polynomial ring in $U_1,\ldots,U_k$, we get that
the twists by $\chi^n$ of the equivariant coherent sheaves $(\OO_{\ov{R}_i\times\A^k})$ generate $D^b\Coh (\ov{\ZZ}_{1,k}/\G_m\times \A^k)$.
        It is easy to see that the critical locus $\crit(\bw_{1,k})$ is contained in $\ov{\ZZ}_{1,k}\times \A^k$, so the assertion follows from
Lemma \ref{L-graded-generation-lem}. 
\end{proof}

\subsection{Localization for $n=1$}\label{n1-loc-sec}

The complement $\ov{\ZZ}_{1,k}\setminus \ZZ_{1,k}$ consists of one point $p$, namely the infinite point of $\ov{R}_0\simeq \P^1$.
Furthermore, the critical locus of $\bw_{1,k}$ intersects the complement $\ov{\XX}_{1,k}\setminus \XX_{1,k}$ exactly at the point $p$.

Let us denote by $\BB_{1,k}^\Z$ (resp., $\ov{\BB}_{1,k}^\Z$) 
the category of $\Z$-graded matrix factorizations on $\bw_{1,k}$ on $\XX_{1,k}$ (resp., $\ov{\XX}_{1,k}$),
corresponding to the grading 
$$|U_i|=2, \ |X_i|=|C_i|=|V_i|=0.$$
By Isik's theorem (see \eqref{Isik-equivalence}), we have equivalences 
$$\BB_{1,k}^\Z\simeq D^b\Coh(\ZZ_{1,k}), \ \ \ov{\BB}_{1,k}^\Z\simeq D^b\Coh(\ov{\ZZ}_{1,k}),$$
compatible with the obvious restriction functors.

It is well known that the restriction functor induces an equivalence of $D^b\Coh(\ZZ_{1,k})$ with the quotient of
$D^b\Coh(\ov{\ZZ}_{1,k})$ by the subcategory of sheaves supported at $\ov{\ZZ}_{1,k}\setminus \ZZ_{1,k}=\{p\}$.
Since the restriction functor has an obvious dg-enhancement, this is also true at the level of dg-categories.
Hence, we can identify $\BB_{1,k}^\Z\simeq D^b\Coh(\ZZ_{1,k})$ with the quotient of
$\ov{\BB}_{1,k}^\Z$ by the subcategory of matrix factorizations supported at $p$.
The latter category is generated by the matrix factorization $E$ associated with the object $\OO_{p\times \A^k}$ of the singularity category.

We have the following resolution of $E$ in terms of the generating objects $P_i$.
For an interval $[i,j]$, where $j<k$ and for integers $m_i,\ldots,m_j$, let us consider
the object 
$$\OO_{R_{[i,j]}}(m_i,\ldots,m_j)=\bigotimes_{r=i}^j p_r^*\OO(m_r)|_{R_{[i,j]}},$$
where $R_{[i,j]}=\ov{R}_i\cup\ldots\cup R_j$.
Similarly, for $i<k$ we set
$$\OO_{R_{[i,k]}}(m_i,\ldots,m_{k-1})=\bigotimes_{r=i}^{k-1} p_r^*\OO(m_r)|_{R_{[i,k]}}.$$

\begin{lem}\label{n1-resolution-lem}
One has the following exact triangles
$$\OO_{R_{[0,i]}}(1,\ldots,1,0)\rTo{p_i^*z_1} \OO_{R_{[0,i+1]}}(1,\ldots,1,0)\to \OO_{R_{i+1}}\to\ldots, \ \text{ for }i<k-1,$$
$$\OO_{R_{[0,k-1]}}(1,\ldots,1,0)\rTo{p_{k-1}^*z_1} \OO_{R_{[0,k]}}(1,1,\ldots,1)\to \OO_{R_k}\to\ldots,$$
$$\OO_{R_{i+1}}\rTo{p_{i+1}^*z_0} \OO_{R_{[0,i+1]}}(0,1,\ldots,1)\to \OO_{R_{[0,i]}}(0,1,\ldots,1)\to\ldots, \text{ for }0<i<k-1,$$
$$\OO_{R_{k}}\rTo{X_1} \OO_{R_{[0,k]}}(0,1,\ldots,1)\to \OO_{R_{[0,k-1]}}(0,1,\ldots,1)\to\ldots,$$
$$\OO_{R_{1}}\rTo{p_1^*z_0} \OO_{R_{[0,1]}}(0,1)\to \OO_{R_0}\to\ldots,$$
which express $\OO_{R_{[0,k]}}(1,\ldots,1)$ and $\OO_{R_{[0,k]}}(0,1,\ldots,1)$ in terms of the generators.
Finally, there is an exact triangle
$$\OO_{R_{[0,k]}}(0,1,\ldots,1)\rTo{X_2^{-1}} \OO_{R_{[0,k]}}(1,\ldots,1)\to \OO_p\to \ldots$$
\end{lem}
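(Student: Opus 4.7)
The plan is to exhibit each of the six exact triangles as the image of a $\wt{T}$-equivariant short exact sequence of coherent sheaves on $\ov{\ZZ}_{1,k}\times\A^k$ (pulled back from $\ov{\ZZ}_{1,k}$) under the natural functor from the equivariant singularity category of $\ov{\XX}_{1,k}$ with potential $\bw_{1,k}$ to $\ov{\BB}_{1,k}$, afforded by an equivariant Isik-type equivalence.

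The geometric input we need is a precise description of how the projections $p_0,\ldots,p_{k-1}:\ov{\YY}_{1,k}\to\P^1$ restrict to the components of $\ov{\ZZ}_{1,k}$. Using the relations $V_{j+1}=C_jC_{j+1}^{-1}$ together with the vanishing of every $V_j$ on $\ov{\ZZ}_{1,k}$, one checks that $p_j$ restricts to an isomorphism with $\P^1$ on $R_j$ (resp.\ on $\ov{R}_0$ for $j=0$), while collapsing every other component $R_l$ to the point $\infty\in\P^1$ if $l<j$ and to $0\in\P^1$ if $l>j$. Consequently each node $R_j\cap R_{j+1}$ is mapped to $0$ by $p_j$ and to $\infty$ by $p_{j+1}$, the node on $R_k$ sits at $X_1=0$, and the point $p\in\ov{R}_0$ at infinity is mapped to $\infty$ by $p_0$.

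Triangles 1--2 then arise from the closed decomposition $R_{[0,i+1]}=R_{[0,i]}\cup R_{i+1}$ (or $R_{[0,k]}=R_{[0,k-1]}\cup R_k$ for triangle~2) and the ideal-sheaf sequence $0\to\II_{R_{i+1}}\to\OO_{R_{[0,i+1]}}\to\OO_{R_{i+1}}\to 0$. Since $R_{i+1}$ meets $R_{[0,i]}$ only at the single node on $R_i$, the input above identifies $\II_{R_{i+1}}$ with $p_i^*\OO(-1)|_{R_{[0,i]}}$, the inclusion being multiplication by $p_i^*z_1$. Tensoring with $\bigotimes_{j\le i}p_j^*\OO(1)$ and noting that each factor restricts trivially to $R_{i+1}$ (where $p_j$ is constant) yields triangle~1, and triangle~2 is identical with $R_k$ in place of $R_{i+1}$ and no $p_k$-index. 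Triangles 3--5 follow the dual pattern, coming from $0\to\II_{R_{[0,i]}}\to\OO_{R_{[0,i+1]}}\to\OO_{R_{[0,i]}}\to 0$ where $\II_{R_{[0,i]}}$ is now supported on the new component and identified with $p_{i+1}^*\OO(-1)|_{R_{i+1}}$ via $p_{i+1}^*z_0$ (for $0<i<k-1$, and analogously with $p_1^*z_0$ on $R_1$ for $i=0$), or with $(X_1)\sub\OO_{R_k}$ via multiplication by $X_1$ (for $i+1=k$); tensoring with $\bigotimes_{1\le j\le i}p_j^*\OO(1)$ then gives triangles 3--5.

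For the sixth triangle, $p_0|_{\ov{R}_0}:\ov{R}_0\simeq\P^1$ identifies $p$ with the zero locus of $p_0^*z_0=X_2^{-1}$, and since $p_0$ collapses $R_1,\ldots,R_k$ onto $0\in\P^1$ (where $z_0$ does not vanish), $X_2^{-1}$ is a nonzerodivisor on $R_{[0,k]}$ with vanishing locus exactly $\{p\}$; this gives $0\to\OO_{R_{[0,k]}}\xrightarrow{X_2^{-1}}p_0^*\OO(1)|_{R_{[0,k]}}\to\OO_p\to 0$, whose twist by $\bigotimes_{j=1}^{k-1}p_j^*\OO(1)$ is the final triangle. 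We expect the main bookkeeping burden to be verifying the triviality of $\bigotimes p_j^*\OO(1)$ on the target component of each sequence and identifying each node's ideal sheaf with $p_j^*\OO(-1)$ via the correct vanishing section ($p_j^*z_0$ or $p_j^*z_1$); both reduce to the collapsing behavior above, so there is no serious conceptual obstacle beyond the bookkeeping itself.
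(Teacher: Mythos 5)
Your proof is correct and follows essentially the same route as the paper's (one-line) argument: each triangle is the image of the natural short exact sequence of coherent sheaves on the nodal chain obtained by restricting a line bundle from a union of components to a smaller union (or to the point $p$), with the kernel identified as a twist by $p_j^*\OO(-1)$ via the sections $p_j^*z_0$, $p_j^*z_1$. Your explicit verification of the collapsing behaviour of the projections $p_j$ on the components and of the resulting trivializations supplies exactly the bookkeeping the paper leaves implicit.
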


\begin{proof} These exact triangles come from the natural exact sequences of coherent sheaves extending the surjections of the form $L|_C\to L|_{C'}$,
where $L$ is a line bundle, $C$ is a union of components of the nodal curve, $C'$ is either a smaller union of components, or the point $p$.
\end{proof}

\subsection{Toric maps $\YY_{n,k}\to \YY_{n,1}$}

Here we will prove that $\YY_{n,k}$ is projective over $\A^{n+k+1}$, and hence it is a toric variety over $\k$.
We reduce this to the case $n=1$ using the following observation.

\begin{prop}\label{toric-closed-emb-prop}
For every $a=1,\ldots,n$, there is a morphism 
$$f_a:\YY_{n,k}\to \YY_{1,k}$$
such that $f_a^{-1}\UU_i=\cup_{\phi:\phi(a)=i}\UU_\phi$, and
$$f_a^*V_i=V_i, \ f_a^*C_i=C_{ai}, \ f_a^*X_1=X_1\ldots X_a,\ f_a^*X_2=X_{a+1}\ldots X_{n+1}.$$
Furthermore, the morphism
$$\YY_{n,k}\rTo{(f_1,\ldots,f_n),X_2,\ldots,X_n} 
(\YY_{1,k}\times_{A^k_{V_1,\ldots,V_k}}\ldots\times_{\A^k_{V_1,\ldots,V_k}}\YY_{1,k})\times \A^{n-1}_{X_2,\ldots,X_n}$$
is a closed embedding.
\end{prop}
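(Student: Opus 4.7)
The plan is to realize each $f_a$ as a toric morphism coming from an explicit homomorphism of character lattices, verify the pullback formulas, and establish the closed-embedding claim by a chart-by-chart ring-surjectivity computation.

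I would first define $\psi_a: H_{1,k} \to H_{n,k}$ by $v_i \mapsto v_i$, $x_1 \mapsto x_1 + \ldots + x_a$, and $x_2 \mapsto x_{a+1} + \ldots + x_{n+1}$. Well-definedness is immediate from $\psi_a(x_1+x_2) = x_1+\ldots+x_{n+1} = v_1+\ldots+v_k$ in $H_{n,k}$, and formula \eqref{cij-formula-eq} applied in both lattices gives $\psi_a(c_{1,i}) = c_{a,i}$. To promote $\psi_a$ to a toric morphism $f_a: \YY_{n,k}\to \YY_{1,k}$ sending $\UU_\phi$ into $\UU_{\phi(a)}$, I would check, for each nonincreasing $\phi: [1,n]\to[1,k]$ and $i=\phi(a)$, the inclusion $\psi_a(S_{\phi_i}) \sub \lan S_\phi \ran_{\Z_{\ge 0}}$: the $v_j$'s are handled by Lemma \ref{toric-cones-lem}(iii), while Lemma \ref{toric-cones-lem}(ii) takes care of $-c_{a,i}$ (as $i\ge\phi(a)$) and $c_{a,i-1}$ (as $i-1<\phi(a)$). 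The pullback formulas are then immediate from $\psi_a$.

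The preimage identification $f_a^{-1}(\UU_i) = \bigcup_{\phi:\phi(a)=i} \UU_\phi$ follows from the converse of the same cone criterion---$\UU_\phi \sub f_a^{-1}(\UU_i)$ requires both $i\ge\phi(a)$ and $i-1<\phi(a)$, forcing $i=\phi(a)$---combined with the standard description of preimages of affine opens under toric morphisms as unions of affine opens indexed by all cones of the source fan mapping into the given target cone.

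For the closed-embedding claim, I would first check chart-by-chart that the restriction $\UU_\phi \to V_\phi := \UU_{\phi(1)}\times_{\A^k}\ldots\times_{\A^k}\UU_{\phi(n)}\times\A^{n-1}$ is a closed immersion for each nonincreasing $\phi$. This amounts to surjectivity of the induced ring map, which I would verify by matching its image---containing $V_j$, $X_a$ for $a\in[2,n]$, and $C_{a,\phi(a)-1}, C_{a,\phi(a)}^{-1}$ for $a\in[1,n]$---with the explicit form of $S_\phi$: the $v_j$ for $j\notin\im(\phi)$ come from the $V_j$'s; the $x_{i+1}$ for $\phi(i)=\phi(i+1)$ satisfy $i+1\in[2,n]$ and are among the $X_a$'s; and the $-c_{i,\phi(i)}, c_{i,\phi(i)-1}$ appear as direct images. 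To promote these chart-level closed immersions into $V_\phi\sub T$ into a global closed immersion, I would identify the image of $f$ as a closed subscheme of $T$ cut out by natural equations expressing the compatibility of the various $f_a$'s, and verify that $f$ induces an isomorphism with this image (which on each chart is the closed immersion just established).

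The main technical content is the cone-level verification via Lemma \ref{toric-cones-lem}; the $S_\phi$ bookkeeping is combinatorial and the global gluing argument is routine. I do not anticipate serious technical obstacles.
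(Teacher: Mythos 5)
Your overall strategy --- define $f_a$ from the lattice map $\psi_a$, verify the cone inclusions $\psi_a(S_{\phi_i})\sub\lan S_\phi\ran_{\Z_{\ge 0}}$ via Lemma \ref{toric-cones-lem}, and then check surjectivity of the ring maps chart by chart --- is exactly the ``straightforward calculation in the affine charts'' that the paper leaves to the reader, and those parts of your proposal are correct: the bookkeeping matching the pulled-back coordinates $V_j$, $X_2,\ldots,X_n$, $C_{a,\phi(a)}^{-1}$, $C_{a,\phi(a)-1}$ against the four constituent families of $S_\phi$ does establish that $\UU_\phi\to V_\phi$ is a closed immersion for every nonincreasing $\phi$.

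The one place where your plan is genuinely underdeveloped is the globalization. The opens $V_\phi$ indexed by \emph{nonincreasing} $\phi$ do not cover the target $T$: the target is covered by $V_{i_1,\ldots,i_n}=\UU_{i_1}\times_{\A^k}\cdots\times_{\A^k}\UU_{i_n}\times\A^{n-1}$ over \emph{all} tuples, and every one of these meets the image of $f$ (the dense torus of $\YY_{n,k}$ maps into each of them), so none can be discarded. Since ``closed immersion'' is local on the target but not on the source or on the image, you must also show that $f^{-1}(V_{i_1,\ldots,i_n})\to V_{i_1,\ldots,i_n}$ is a closed immersion for non-monotone tuples, where the preimage is a proper intersection of source charts (e.g.\ for $n=k=2$, $f^{-1}(V_{1,2})=\UU_{(1,1)}\cap\UU_{(2,2)}$, a principal open of a chart); composing the known closed immersion into $V_{1,2}\cap V_{1,1}$ with the open inclusion into $V_{1,2}$ only gives a locally closed immersion a priori. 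Your suggested fix --- cutting out the image by the compatibility relations coming from $C_{a,j}=C_{a+1,j}X_{a+1}$ --- does work (in the example above the relation $C_{1,1}^{-1}\cdot C_{2,1}\cdot X_2=1$ exhibits the image in $V_{1,2}$ as a closed subscheme), but this is precisely the step you would need to write out explicitly, for all non-monotone tuples, before the proof is complete. Note also that you cannot shortcut this by invoking properness of $f$, since the paper's logic derives separatedness/projectivity of $\YY_{n,k}$ \emph{from} this proposition (Corollary \ref{toric-proj-emb-cor}).
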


\begin{proof} The proof is a straightforward calculation in the affine charts.
\end{proof}

\begin{cor}\label{toric-proj-emb-cor}
The morphism
$$\YY_{n,k}\rTo{V_1,\ldots,V_k,X_1,\ldots,X_{n+1}}\A^{n+k+1}$$
is projective.
\end{cor}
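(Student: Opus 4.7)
The plan is to reduce the statement to the case $n=1$ via Proposition \ref{toric-closed-emb-prop}, and handle the $n=1$ case directly using Lemma \ref{proj-P1-lem}(i).

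For $n=1$, the closed embedding $\ov{\YY}_{1,k}\hookrightarrow \A^{k+1}_{X_1,V_1,\ldots,V_k}\times (\P^1)^k$ of Lemma \ref{proj-P1-lem}(i) uses the maps $p_0,\ldots,p_{k-1}$, where $p_0^*z = X_2$. Inspecting the affine charts shows that $\YY_{1,k}\subset \ov{\YY}_{1,k}$ coincides with $p_0^{-1}(\A^1_{X_2})$ (the added point of $\ov{\YY}_{1,k}$ is precisely $X_2=\infty$). Restricting the embedding therefore yields a closed embedding
\[
\YY_{1,k}\hookrightarrow \A^{k+2}_{V_1,\ldots,V_k,X_1,X_2}\times (\P^1)^{k-1},
\]
and composing with the projection to $\A^{k+2}$ gives the desired projective morphism.

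For general $n$, Proposition \ref{toric-closed-emb-prop} provides a closed embedding
\[
\YY_{n,k}\hookrightarrow (\YY_{1,k})^{\times_{\A^k} n}\times \A^{n-1}_{X_2,\ldots,X_n},
\]
where the $a$-th factor map $f_a$ satisfies $f_a^*X_1=X_1\cdots X_a$, $f_a^*X_2=X_{a+1}\cdots X_{n+1}$, and $f_a^*V_i=V_i$. Composing with the $n$-fold fiber product of the $n=1$ embedding above (fiber products of closed embeddings are closed embeddings) yields a closed embedding $\YY_{n,k}\hookrightarrow \A^{k+3n-1}\times (\P^1)^{n(k-1)}$, in which the $\A^{k+3n-1}$ factor carries coordinates $V_i$, the pairs $X_1^{(a)},X_2^{(a)}$ for $a=1,\ldots,n$, and $X_2,\ldots,X_n$. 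On the image of $\YY_{n,k}$, these satisfy $X_1^{(a)}=X_1\cdots X_a$ and $X_2^{(a)}=X_{a+1}\cdots X_{n+1}$ (with $X_1=X_1^{(1)}$ and $X_{n+1}=X_2^{(n)}$), so the image is contained in the graph $Z\simeq \A^{n+k+1}$ of the resulting polynomial map $\A^{n+k+1}\to\A^{2n-2}$. This produces a closed embedding
\[
\YY_{n,k}\hookrightarrow \A^{n+k+1}\times (\P^1)^{n(k-1)}
\]
compatible with the map to $\A^{n+k+1}$ in the statement, establishing projectivity.

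The main (modest) obstacle is bookkeeping: one must identify the redundant coordinates $X_1^{(a)}$ for $a\ge 2$ and $X_2^{(a)}$ for $a\le n-1$ as polynomial expressions in $(X_1,\ldots,X_{n+1})$ on the image, and verify that the resulting graph is closed in the ambient affine space; projectivity then follows formally from the properness of $(\P^1)^{n(k-1)}$.
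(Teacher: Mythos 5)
Your argument is correct and is essentially the paper's proof: both reduce to $n=1$ by combining the closed embedding of Proposition \ref{toric-closed-emb-prop} with Lemma \ref{proj-P1-lem}(i), and both dispose of the redundant product coordinates $X_1\cdots X_a$, $X_{a+1}\cdots X_{n+1}$ by observing that the map $(X_1,\ldots,X_{n+1})\mapsto(X_1\cdots X_a, X_{a+1}\cdots X_{n+1}; X_2,\ldots,X_n)$ is a closed embedding (your ``graph'' formulation is the same fact). You merely spell out more explicitly than the paper how the Lemma, stated for $\ov{\YY}_{1,k}$, restricts to a closed embedding of $\YY_{1,k}=p_0^{-1}(\A^1)$ into $\A^{k+2}\times(\P^1)^{k-1}$, which is a welcome detail.
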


\begin{proof} Combining Proposition \ref{toric-closed-emb-prop} with Lemma \ref{proj-P1-lem} we see that the composition
$$\YY_{n,k}\to \A^k_{V_1,\ldots,V_k}\times\A^{n+1}_{X_1,\ldots,X_{n+1}}\rTo{\id\times\phi} \A^k_{V_1,\ldots,V_k}\times\A^{3n-1}$$
is projective, where 
$$\phi(X_1,\ldots,X_{n+1})=(X_1,X_2\ldots X_{n+1};X_1X_2,X_3\ldots X_{n+1};\ldots;X_1\ldots X_n,X_{n+1};X_2,\ldots,X_{n-1}).$$
Since $\phi$ is a closed embedding, the morphism $\YY_{n,k}\to \A^{k+n+1}$ is also projective.
\end{proof}

\section{B-side: case $n=2$}

\subsection{Varieties $\ZZ_{2,k}$ and $\ov{\ZZ}_{2,k}$}

The affine charts $\UU_{ij}$ covering $\YY_{2,k}$ are numbered by pairs $i\ge j$ in $[1,k]$, where $i=\phi(1)$, $j=\phi(2)$.
For $i>j$ the coordinates on $\UU_{ij}$ are
$$V_k,\ldots,V_{i+1},C_{1,i}^{-1},C_{1,i-1},V_{i-1},\ldots,V_{j+1},C_{2,j}^{-1},C_{2,j-1},V_{j-1},\ldots,V_1,$$
while for $i=j$, the coordinates on $\UU_{ii}$ are
$$V_k,\ldots,V_{i+1},C_{1,i}^{-1},X_2,C_{2,i-1},V_{i-1},\ldots,V_0$$
(recall that $C_{1k}^{-1}=X_1$, $C_{2,0}=X_3$).

The variety $\ZZ_{2,k}$ (given as the vanishing locus of $V_1,\ldots,V_k$)
is $2$-dimensional and is a union of smooth irreducible components $R_{ij}$, with $k\ge i\ge j\ge 0$.
Here for $i<k$, $R_{ii}$ is the closure of the locus $C_{1,i}^{-1}=0$ in $\UU_{ii}\cap \ZZ_{2,k}$,
while $R_{kk}$ is the locus $C_{2,k-1}=0$ in $\UU_{kk}\cap\ZZ_{2,k}$.
For $k>i>j$, $R_{ij}$ is the closure of the locus $C_{1,i}^{-1}=C_{2,j}^{-1}=0$ in $\UU_{ij}\cap \ZZ_{2,k}$,
while for $j<k-1$, $R_{kj}$ is the closure of the locus $C_{1,k-1}=C_{2,j}^{-1}=0$ in $\UU_{kj}\cap \ZZ_{2,k}$,
and $R_{k,k-1}$ is the closure of the locus $X_2=0$ in $\UU_{kk}\cap \ZZ_{2,k}$.

Note that $R_{kk}$ is the affine plane with coordinates $X_1,X_2$; $R_{k0}$ is the affine plane with coordinates $X_1,X_3$; and
$R_{00}$ is the affine plane with coordinates $X_2,X_3$. The components $R_{kj}$ are isomorphic to $\P^1\times \A^1_{X_1}$, the components
$R_{i0}$ are isomorphic to $\P^1\times \A^1_{X_3}$,
the components $R_{ii}$ are isomorphic to the blow up of the point $(0,0)$ in $\P^1\times \A^1_{X_2}$, and the remaining components are
isomorphic to $\P^1\times\P^1$.
In the case $k=4$, the variety $\ZZ_{2,4}$ is schematically depicted in Figure \ref{toricdim2}, with
$R_{44}$ being the component at the top corner, $R_{40}$ at the bottom left corner, and $R_{00}$ at the bottom right corner.

In the compactification $\ov{\YY}_{2,k}$ we have extra charts $\UU_{i,0}$, where $i=0,\ldots,k$.
For $i>1$ the coordinates on $\UU_{i,0}$ are
$$V_k,\ldots,V_{i+1},C_{1,i}^{-1},C_{1,i-1},V_{i-1},\ldots,V_1,X_3^{-1},$$
while the coordinates on $\UU_{1,0}$ are
$$V_k,\ldots,V_{2},C_{1,1}^{-1},X_2X_3,X_3^{-1},$$
and the coordinates on $\UU_{0,0}$ are
$$V_k,\ldots,V_1,X_2^{-1}X_3^{-1},X_2.$$

The compactified variety $\ov{\ZZ}_{2,k}$ is the union of irreducible components $\ov{R}_{ij}$, $k\ge i\ge j\ge 0$,
where $\ov{R}_{ij}=R_{ij}$ for $j>0$, while $R_{i0}$ get compactified as follows.
The affine plane $R_{k0}$ gets compactified to $\P^1\times \A^1_{X_1}$.
For $0<i<k$, the component $R_{i0}=\P^1\times \A^1_{X_3}$ gets compactified to $\P^1\times\P^1$.
The component $R_{00}=\A^2_{X_2,X_3}$ gets compactified to the blow up of a point in $\P^1\times \A^1_{X_2}$.

Thus, the complement of $\ZZ_{2,k}$ in $\ov{\ZZ}_{2,k}$ is the nodal chain 
$$D=D'_0\cup D_0\cup D_1\cup\ldots\cup D_k,$$
where $D'_0=D_{k}=\A^1$, $D_i=\P^1$ for $i=0,\ldots,k-1$; $D_i=\ov{R}_{i0}\setminus R_{i0}$ for $i>0$,
$D'_0\cup D_0=\ov{R}_{00}\setminus R_{00}$.

We have the following extension of Proposition \ref{toric-closed-emb-prop} with a straightforward proof.

\begin{lem}
(i) There are two natural morphisms 
$$f_1:\ov{\YY}_{2,k}\to \YY_{1,k}, \ \ f_2:\ov{\YY}_{2,k}\to \ov{\YY}_{1,k}$$
such that $f_1^{-1}\UU_i=\cup_j \UU_{ij}$, $f_2^{-1}\UU_j=\cup_i \UU_{ij}$, and
$$f_1^*X_1=X_1, \ f_1^*X_2=X_2X_3, \ f_1^*C_i=C_{1i}, \ f_1^*V_i=V_i.$$
$$f_2^*X_1=X_1X_2, \ f_2^*X_2=X_3, \ f_2^*C_i=C_{2i}, \ f_2^*V_i=V_i,$$
Furthermore, the morphism
$$\ov{\YY}_{2,k} \rTo{(f_1,f_2),X_2} (\YY_{1,k}\times_{\A^{k}_{V_1,\ldots,V_k}} \ov{\YY}_{1,k})\times \A^1_{X_2}$$
is a closed embedding.

\noindent
(ii) The schematic preimage $f_2^{-1}(\ov{R}_j)$ is equal to the union (with the reduced scheme structure)
$$\ov{R}_{*j}:=\cup_i \ov{R}_{ij}$$
(these are ``rows" in Figure \ref{toricdim2}). 
\end{lem}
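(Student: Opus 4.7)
The plan is to establish both claims chart-by-chart using the explicit affine coverings of $\ov{\YY}_{2,k}$, $\YY_{1,k}$ and $\ov{\YY}_{1,k}$ recalled above. For $f_1$, on each chart $\UU_{ij}\sub\ov{\YY}_{2,k}$ I would define the morphism into $\UU_i\sub\YY_{1,k}$ by imposing the pullback rules $f_1^*V_r=V_r$, $f_1^*C_r=C_{1,r}$, $f_1^*X_1=X_1$, $f_1^*X_2=X_2X_3$, and verifying that each coordinate of the target chart thereby becomes regular on $\UU_{ij}$. For $i>j>0$ this is immediate since $C_{1,i}^{-1}$ and $C_{1,i-1}$ are themselves coordinates on $\UU_{ij}$; for the diagonal charts $\UU_{ii}$ and the extra charts $\UU_{i,0}$ I would use the toric relations $C_{1,r}=X_2 C_{2,r}$ and $C_{r,j-1}=C_{r,j}V_j$ established in Section \ref{toric-constr-sec} to rewrite the required pullbacks as monomials in the given coordinates. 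The same recipe with the two factors swapped produces $f_2$ targeting $\ov{\YY}_{1,k}$, with $f_2^*X_2=X_3$ regular since $X_3^{-1}$ (or its analogue) is the coordinate that appears in the boundary charts.

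Gluing of the chart-wise definitions on overlaps, as well as compatibility of the pullback formulas, reduces to the very same toric identities, so no additional input is needed. For the closed embedding assertion in (i), the strategy is to show chart by chart that the induced map on coordinate rings is surjective: on $\UU_{ij}$, the $V_r$'s arise from either factor, the pair $(C_{1,i}^{-1},C_{1,i-1})$ is pulled back through $f_1$, the pair $(C_{2,j}^{-1},C_{2,j-1})$ through $f_2$, and the remaining coordinate $X_2$ is supplied by the last factor $\A^1_{X_2}$. The image automatically lies in the fiber product over $\A^k_{V_1,\ldots,V_k}$ since $f_1^*V_r=f_2^*V_r=V_r$ by construction, and the surjectivity of the induced map on coordinates for each chart of the source translates into the closed immersion property.

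For part (ii), the component $\ov{R}_j\sub\ov{\YY}_{1,k}$ is locally cut out in each affine chart of $\ov{\YY}_{1,k}$ by the ideal $(V_1,\ldots,V_k)$ together with one local equation (namely $C_j^{-1}=0$ or $C_{j-1}=0$ depending on the chart) singling out the relevant branch of the nodal chain. Pulling back via $f_2^*V_r=V_r$ and $f_2^*C_r=C_{2,r}$, these turn, on each chart $\UU_{i\ell}\sub\ov{\YY}_{2,k}$, into exactly the defining equations of the union of components $\bigcup_i \ov{R}_{ij}$ (with $i$ indexing which chart we sit in). Reducedness of the scheme-theoretic preimage follows chart-wise since the pulled-back equations vanish transversally in the explicit coordinates, giving the claimed equality of schemes.

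The main obstacle, which is modest but requires bookkeeping, is the combinatorial handling of the boundary charts $\UU_{i,0}$ and $\UU_{0,0}$ introduced in the compactification: their coordinate systems (involving $X_3^{-1}$ or $X_2^{-1}X_3^{-1}$) break the uniform pattern of the interior charts $\UU_{ij}$ with $i\ge j\ge 1$, so one must verify carefully that the identifications $f_1^*X_2=X_2X_3$ and $f_2^*X_1=X_1X_2$ interact correctly with these coordinates and that the product map $(f_1,f_2,X_2)$ indeed extends as a closed embedding across these extra charts.
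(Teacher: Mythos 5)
Your plan---define $f_1$ and $f_2$ chart by chart via the stated pullback formulas, check regularity using the relations $C_{1,r}=X_2C_{2,r}$ and $C_{i,j-1}=C_{i,j}V_j$, prove the embedding claim by surjectivity on coordinate rings, and prove (ii) by pulling back local equations---is exactly the ``straightforward calculation in the affine charts'' that the paper itself invokes (it records no further details), so in outline you and the paper agree.

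However, the one point you single out as the main obstacle and then assert away does not in fact go through as you state it. On the boundary chart $\UU_{0,0}$, whose coordinates are $V_k,\ldots,V_1,\,X_2^{-1}X_3^{-1},\,X_2$, the required pullback $f_1^*X_2=X_2X_3$ is the \emph{inverse} of the coordinate $X_2^{-1}X_3^{-1}$; since these coordinates correspond to a basis of the character lattice, no toric relation rewrites $X_2X_3$ as a monomial with nonnegative exponents there. As $X_2$ is regular on all of $\YY_{1,k}$ (Lemma \ref{toric-cones-lem}(iii)), this shows $f_1$ cannot map $\UU_{0,0}$ into $\YY_{1,k}$ at all: over this chart it takes values in the added chart $\UU_0\subset\ov{\YY}_{1,k}$, where $f_1^*(X_2^{-1})=X_2^{-1}X_3^{-1}$ is regular. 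So your chart-wise verification, and with it the map $((f_1,f_2),X_2)$, only succeeds after enlarging the first target to $\ov{\YY}_{1,k}$ (equivalently, reading $f_1^{-1}\UU_i=\cup_j\UU_{ij}$ with $i$ running over $0,\ldots,k$ and $\UU_{0,0}$ sitting over $\UU_0$); with that reading the surjectivity of the ring map holds on every chart, e.g.\ on $\UU_{0,0}$ the images $V_\bullet$, $X_2^{-1}X_3^{-1}$, $X_3^{-1}=(X_2^{-1}X_3^{-1})X_2$ and $X_2$ visibly generate, and one should also note (as in Lemma \ref{proj-P1-lem}(i)) that the preimage of each product chart is exactly one chart $\UU_{ij}$, so that it is affine. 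Finally, in (ii) the reducedness is not a transversality statement (the preimage is a normal-crossing union of components); the correct reason is that in each chart every pulled-back generator of the ideal of $\ov{R}_j$ is either a coordinate or a squarefree product of distinct coordinates (e.g.\ $f_2^*V_i=C_{1,i}^{-1}C_{1,i-1}$), so the pullback ideal is radical and its zero locus is the union of the appropriate components $\ov{R}_{ij}$.
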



In the following result we determine equations that present $\ov{R}_{ij}$ as a complete intersection in its open neighborhood in $\ov{\YY}_{2,k}$.
Note that for $\ov{R}_{*j}=f_2^{-1}(\ov{R}_j)$ we have such a description due to Lemma \ref{proj-P1-lem}.

\begin{lem}\label{k=2-complete-int-lem}
(i) For $k>i>j>0$, such that $i-j>1$, $R_{ij}$ is the vanishing locus of 
$$V_k,\ldots,V_{i+2},f_1^*s_1(i),f_1^*s_2(i),V_{i-1},\ldots,V_{j+2},f_2^*s_1(j),f_2^*s_2(j),V_{j-1},\ldots,V_1,$$
where $s_1(i)$ and $s_2(i)$ are defined by \eqref{sections-of-O(-1)-eq}.

\noindent
(ii) For $1\le i<k$, there exists a section $s_i$ of
$f_2^*p_{i-1}^*\OO(-1)\ot f_1^*p_{i}^*\OO(-1)$, defined in a neighborhood of $\ov{R}_{i,i-1}$, given by
$$s_i:=\frac{f_1^*s_2(i)}{f_2^*p_{i-1}^*z_1}=\frac{f_2^*s_1(i-1)}{f_1^*p_{i}^*z_0}=
\frac{X_2}{f_2^*p_{i-1}^*z_0\cdot f_1^*p_{i}^*z_1}.$$
For $1<i<k$, $R_{i,i-1}$ is given as the vanishing locus of
$$V_k,\ldots,V_{i+2},f_1^*s_1(i),s_i,f_2^*s_2(i-1),V_{i-2},\ldots,V_1,$$
while 
$\ov{R}_{1,0}$ is the vanishing locus of
$$V_k,\ldots,V_{3},f_1^*s_1(1),s_1.$$ 

On the other hand, there exists a section $s_k$ of $f_2^*p_1^*\OO(-1)$ defined locally near $R_{k,k-1}$, given by
$$s_k=\frac{X_2}{f_2^*p_{k-1}^*z_0}=\frac{C_{1,k-1}}{f_2^*p_{k-1}^*z_1},$$
such that $R_{k,k-1}$ is the vanishing locus of
$$s_k,f_2^*s_2(k-1),V_{k-2},\ldots,V_1.$$

\noindent
(iii) For $0<j<k-1$, $\ov{R}_{kj}$ is cut out in its neighborhood as the vanishing locus of 
$$C_{1,k-1},V_{k-1},\ldots,V_{j+2},f_2^*s_1(j),f_2^*s_2(j),V_{j-1},\ldots,V_1.$$
For $1<i<k$, $\ov{R}_{i0}$ is the vanishing locus of
$$V_k,\ldots,V_{i+2},f_1^*s_1(i),f_1^*s_2(i),V_{i-1},\ldots,V_{2},f_2^*s_1(0).$$

\noindent
(iv) $\ov{R}_{k0}$ is cut out in its neighborhood as the vanishing locus of 
$$C_{1,k-1},V_{k-1},\ldots,V_2,f_2^*s_1(0).$$
\end{lem}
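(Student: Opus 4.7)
The plan is to derive the complete intersection descriptions from the $n=1$ case via the two projections $f_1,f_2$ of the preceding lemma, together with Lemma \ref{proj-P1-lem}. For most of the components, the set-theoretic identity $\ov{R}_{ij}=f_1^{-1}(R_i)\cap f_2^{-1}(\ov{R}_j)$ (or the appropriate variant when one index is $0$ or $k$) lets us pull back the defining equations of $R_i$ and $\ov{R}_j$ directly. The exceptional situation is the diagonal $i-j\le 1$, where the two pulled-back sections share a common factor and must be combined into a single new section $s_i$.

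First I would handle the generic case (i). In the chart $\UU_{ij}$ with $k>i>j+1>0$, the coordinates include
$V_k,\ldots,V_{i+1},C_{1,i}^{-1},C_{1,i-1},V_{i-1},\ldots,V_{j+1},C_{2,j}^{-1},C_{2,j-1},V_{j-1},\ldots,V_1$,
and $R_{ij}$ is locally cut out by $\{V_\ell\}\cup\{C_{1,i}^{-1},C_{2,j}^{-1}\}$. The relations $f_a^*V_\ell=V_\ell$ agree for $a=1,2$, and the formulae \eqref{sections-of-O(-1)-eq} pulled back through $f_1$ and $f_2$ reduce locally, via trivializations of $p_i^*\OO(-1)$ and $p_j^*\OO(-1)$ valid near $\UU_{ij}$, to $C_{1,i}^{-1}$ and $C_{2,j}^{-1}$ respectively. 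Both the regular sequence property and the scheme-theoretic equality follow. The same argument, using the $X_3^{-1}$-chart description of $\ov{R}_0\sub\ov{\YY}_{1,k}$ (coming from $p_0^*z=X_2$), treats $\ov{R}_{i0}$ in part (iii).

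Next I would treat the diagonal case $i=j+1$ of part (ii). The key observation is that both $f_1^*s_2(i)$ and $f_2^*s_1(i-1)$ carry $V_i$ as a factor, and $V_i$ itself vanishes along too large a locus to give a complete intersection for $\ov{R}_{i,i-1}$. The remedy is the section $s_i$ of $f_2^*p_{i-1}^*\OO(-1)\otimes f_1^*p_i^*\OO(-1)$. I would verify that the two stated ratios agree where both are defined, by using the relations $V_i=C_{1,i-1}/C_{1,i}=C_{2,i-1}/C_{2,i}$ together with $C_{1,m}=X_2\cdot C_{2,m}$, which force the reduced common factor to be exactly $X_2$. Once the well-definedness of $s_i$ as a section of the claimed line bundle is established, the local check that
$V_k,\ldots,V_{i+2},f_1^*s_1(i),s_i,f_2^*s_2(i-1),V_{i-2},\ldots,V_1$
cut out $R_{i,i-1}$ proceeds exactly as in the previous step, and the boundary variant $\ov{R}_{1,0}$ is analogous. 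The section $s_k$ featured in the case $R_{k,k-1}$ is constructed the same way, using $C_{1,k-1}$ in place of $f_1^*s_2(k)$ (since the $n=1$ component $R_k$ is cut out by $C_{k-1}$ rather than by a line-bundle section).

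Finally I would handle the corner cases in parts (iii) and (iv), namely $\ov{R}_{kj}$ for $0<j<k-1$ and $\ov{R}_{k0}$. For these, the role of $f_1^*s_1(k)$ is played by the regular function $C_{1,k-1}$, reflecting the special structure of $R_k\sub\YY_{1,k}$ as an affine line, while $f_2^*s_1(0)$ cuts out $\ov{R}_0\sub\ov{\YY}_{1,k}$. The same local verification as before applies. The main obstacle will be the well-definedness of the sections $s_i$ and $s_k$ in the diagonal and near-corner cases: concretely, one must show that the piecewise ratios glue to a genuine section of the advertised line bundle on a neighborhood of the relevant component, and that the resulting equations cut out the component scheme-theoretically. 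This is the concrete manifestation of the blow-up structure of $\ov{R}_{ii}$ visible in Figure \ref{toricdim2}, and it requires careful bookkeeping across the affine charts of $\ov{\YY}_{2,k}$ meeting the diagonal.
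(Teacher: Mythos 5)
Your proposal follows essentially the same route as the paper: both reduce to the $n=1$ complete-intersection descriptions of Lemma \ref{proj-P1-lem}(ii) via the projections $f_1,f_2$, and both construct the extra section $s_i$ on the subdiagonal from the common factor $V_i$ of $f_1^*s_2(i)$ and $f_2^*s_1(i-1)$, using the relation $C_{1,m}=X_2\,C_{2,m}$ to identify the residual factor as $X_2$. The only stylistic difference is that you verify the equations chart by chart, whereas the paper argues by noting that the redundant equations ($V_i,V_{i+1},V_j,V_{j+1}$, and on the subdiagonal the two sections replaced by $s_i$) vanish automatically on the locus cut out by the remaining ones; this is equivalent and your acknowledgment of the multi-chart bookkeeping covers the remaining details.
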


\begin{proof} (i)
Note that for $i>j$, we have
$$\ov{R}_{ij}=f_1^{-1}R_i\cap f_2^{-1}\ov{R_j}.$$
Thus, our assertion follows from Lemma \ref{proj-P1-lem}(ii), where we use the fact that
$$V_{i+1}=s_1(i)\cdot p_i^*z_1, \ V_i=s_2(i)\cdot p_i^*z_0,$$
so that $V_i$ and $V_{i+1}$ vanish on the zero locus of $s_1(i)$ and $s_2(i)$.

\noindent
(ii) The existence of $s_j$ follows from the claimed identities, which themselves follow from
$$f_1^*s_2(j)\cdot f_1^*p_{j}^*z_0=V_j=f_2^*s_1(j-1)\cdot f_2^*p_{j-1}^*z_1,$$
which holds by the definition of $s_1(j-1)$ and $s_2(j)$, and from
$$f_2^*s_1(j-1)\cdot f_2^*p_{j-1}^*z_0\cdot f_1^*p_{j}^*(\frac{z_1}{z_0})=C_{2,j}^{-1}\cdot C_{1,j}=X_2.$$
For the definition of $s_k$ we use the identity
$$C_{1,k-1}\cdot \frac{f_2^*p_{k-1}^*z_0}{f_2^*p_{k-1}^*z_1}=C_{1,k-1}\cdot C_{2,k-1}^{-1}=X_2.$$

It is clear that the claimed $k$ sections vanish on $R_{j,j-1}$. To prove that the subscheme they cut out is exactly
$R_{j,j-1}$, we recall that
by Lemma \ref{proj-P1-lem}(ii), we know that for $1<j<k$, $\ov{R}_{j-1,j}$ is
the vanishing locus of
$$V_1,\ldots,V_{j+2},f_1^*s_1(j), \ f_1^*s_2(j), \ f_2^*s_1(j-1), \ f_2^*s_2(j-1),V_{j-2},\ldots,V_k.$$
It remains to observe that $f_1^*s_2(j)$ and $f_2^*s_1(j-1)$ both vanish on the zero locus of $s_j$.

In the case of $R_{k,k-1}$, from Lemma \ref{proj-P1-lem}(ii) we get the description of $R_{k,k-1}$ as
the vanishing locus of
$$C_{1,k-1},f_2^*s_1(k-1),f_2^*s_2(k-1),V_{k-2},\ldots,V_1.$$
The fact that $C_{1,k-1}$ and $f_2^*s_1(k-1)$ vanish on the zero locus of $s_k$ follows from the factorizations 
$$C_{1,k-1}=s_k\cdot f_2^*p_{k-1}^*z_1, \ \ 
f_2^*s_1(k-1)=X_1\cdot s_k.$$

\noindent
(iii), (iv) The proofs are analogous, so we omit them. 
\end{proof}

\subsection{Generating matrix factorizations}

For each $k\ge i>j\ge 0$, let $P_{ij}$ denote the matrix factorization of $\bw_{2,k}$ on $\ov{\XX}_{2,k}$ corresponding to
the object (denoted in the same way), 
$$P_{ij}:=\OO_{\ov{R}_{ij}\times\A^k}\ot f_1^*p_i^*\OO(-1)$$ 
of the singularity category, where our convention is that $p_k^*\OO(-1)$ is trivial.
Here we equip $f_1^*p_i^*\OO(-1)$ with the $T$-equivariant structure by letting $f_1^*p_i^*z_0$ to have weight $0$, 
and $f_1^*p_i^*z_1$ to have weight $c_{1i}$.


We also 
define $Q_j$ to be the matrix factorization corresponding to the object of the singularity category
$$Q_j:=\OO_{\ov{R}_{*j}\times\A^k}.$$

\begin{lem}\label{k2-generation-lem}
Let us work over a regular base ring.
Then for any subgroup $\G_m\sub T$, the twists by $\chi^n$ of the equivariant sheaves
$$(\OO_{\ov{R}_{ij}}\ot f_1^*p_i^*\OO(-1))_{0\ge i>j\ge k} \ \text{ and }\ (\OO_{\ov{R}_{*j}})_{0\le j\le k}$$ 
generate $D^b\Coh(\ov{\ZZ}_{2,k}/\G_m)$.
\end{lem}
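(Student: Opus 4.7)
The plan is to apply Lemma \ref{components-generation-lem} iteratively, peeling off the irreducible components $\ov{R}_{ij}$ of $\ov{\ZZ}_{2,k}$ one at a time in a carefully chosen order. Writing $\ov{\ZZ}_{2,k}=Y_N\supset Y_{N-1}\supset\cdots\supset Y_0$ with $Y_n\setminus Y_{n-1}$ an irreducible piece, at each stage I need to verify that the complement $U_n:=\ov{R}_{i_n j_n}\setminus(\ov{R}_{i_n j_n}\cap Y_{n-1})$ is affine and that its $\G_m$-equivariant derived category is generated by the restrictions of our listed sheaves; generation of $D^b\Coh(Y_n/\G_m)$ then follows from generation of $D^b\Coh(Y_{n-1}/\G_m)$ (inductive hypothesis) together with Lemma \ref{components-generation-lem}.

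I would fix an order that starts at a corner component (e.g.\ $\ov{R}_{kk}\cong\A^2$) and sweeps outward, ensuring that each off-diagonal $\ov{R}_{ij}$ ($i>j$) is appended with at least two boundary curves (one from each of its two rulings) already in $Y_{n-1}$, and each diagonal $\ov{R}_{jj}$ is appended only after both adjacent off-diagonals $\ov{R}_{j+1,j}$ and $\ov{R}_{j,j-1}$ are in $Y_{n-1}$. A convenient such order processes rows of the triangular arrangement one at a time, within each row going from one end to the other with the diagonal placed last; for the compactifying row $j=0$, where the components $\ov{R}_{i,0}$ ($0<i<k$) are $\P^1\times\P^1$ rather than $\P^1\times\A^1$, the compactification divisor $D=D'_0\cup D_0\cup\cdots\cup D_k$ needs to be processed separately (or the order adjusted) so that on each compactified edge component curves from both rulings are removed by the time it is appended.

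For off-diagonals $\ov{R}_{ij}$, the sheaf $\OO_{\ov{R}_{ij}}\otimes f_1^*p_i^*\OO(-1)$ restricts to a line bundle of degree $-1$ in the $p_i$-ruling while $\OO_{\ov{R}_{*j}}$ restricts to the structure sheaf; combined with their $\chi^n$-twists, these provide the two line bundles needed to generate $D^b\Coh(U_n/\G_m)$ once $U_n$ is affine. For diagonals $\ov{R}_{jj}$ the row sheaf $\OO_{\ov{R}_{*j}}$ restricts to $\OO_{\ov{R}_{jj}}$, which suffices to generate $D^b\Coh(U_n/\G_m)$ on the affine complement. The explicit local coordinates and sections from Lemma \ref{k=2-complete-int-lem} let one identify the intersection curves concretely in each case and verify that the two removed curves lie in distinct rulings of $\P^1\times\P^1$ (respectively, distinct rulings of the underlying $\P^1\times\A^1_{X_2}$ for the blow-up).

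The main obstacle is the geometric verification that each $U_n$ is genuinely affine, in particular for the blow-up diagonals $\ov{R}_{jj}$ with $0<j<k$: one must check that the two intersection curves $\ov{R}_{jj}\cap\ov{R}_{j+1,j}$ and $\ov{R}_{jj}\cap\ov{R}_{j,j-1}$ together account for the exceptional divisor of the blow-up, so that removing them from the blow-up of $\P^1\times\A^1_{X_2}$ leaves an affine open; and similarly for the compactified edge components $\ov{R}_{i,0}\cong\P^1\times\P^1$. Once this geometric input is in place, generation at each step reduces to the standard fact that $D^b\Coh(\A^m/\G_m)$ is generated by the $\chi^n$-twists of its structure sheaf, closing the induction.
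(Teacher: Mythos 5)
Your overall strategy---iterating Lemma \ref{components-generation-lem} over the components in a suitable order and checking that each successive complement is affine with equivariant derived category generated by twists of the available restricted sheaf---is sound, and is essentially a component-by-component refinement of the paper's argument (the paper runs a single descending induction on the rows $\ov{R}_{*j}$ and disposes of the whole off-diagonal part of each row in one stroke by identifying it with $(R_k\cup\ldots\cup R_{j+1})\times\A^1$ and invoking Lemma \ref{generation-lem-k=1}). The off-diagonal steps in your scheme do go through: each complement is $\P^1\times\P^1$ (or $\P^1\times\A^1$) minus one fiber from each ruling, hence $\A^2$, where a single equivariant line bundle and its $\chi^n$-twists already generate (you do not need two), and the compactified components $\ov{R}_{i0}$ need no separate handling of $D$ for the same reason.

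The genuine gap is in your treatment of the diagonal components $\ov{R}_{jj}$. You never account for the fact that $\ov{R}_{jj}$ also meets the two adjacent \emph{diagonal} components $\ov{R}_{j\pm1,j\pm1}$ along curves, and the curve $\ov{R}_{jj}\cap\ov{R}_{j+1,j+1}$ is precisely one of the two curves whose removal is required: the correct statement is
$$\ov{R}_{jj}\setminus\bigl(\ov{R}_{j+1,j+1}\cup\ov{R}_{j+1,j}\bigr)\simeq\A^2,$$
i.e.\ one removes the intersection with the previously processed diagonal together with \emph{one} adjacent off-diagonal. The two curves you propose to remove, $E^v_j=\ov{R}_{jj}\cap\ov{R}_{j+1,j}$ and $E^h_j=\ov{R}_{jj}\cap\ov{R}_{j,j-1}$, satisfy $\OO(E^v_j+E^h_j)\simeq\OO$ (cf.\ the relations in the proof of Lemma \ref{n2-resolution-lem}), so their complement still contains the full curves $\ov{R}_{jj}\cap\ov{R}_{j\pm1,j\pm1}$ and is not $\A^2$; generation by twists of the structure sheaf there is neither clear nor checked. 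Moreover your ordering constraint is unrealizable in the sweep you describe: $\ov{R}_{j+1,j}$ lies in row $j$ but $\ov{R}_{j,j-1}$ lies in row $j-1$, so with rows processed one at a time and the diagonal last in its row, $\ov{R}_{j,j-1}$ cannot yet be in $Y_{n-1}$---and it should not be, since what is needed is that exactly $\ov{R}_{j+1,j+1}$ and $\ov{R}_{j+1,j}$, and nothing else, have been removed from $\ov{R}_{jj}$ at that stage.
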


\begin{proof}
Let  us prove by descending induction on $j_0\ge 0$ that the twists of the sheaves
$(\OO_{\ov{R}_{ij}}\ot f_1^*p_i^*\OO(-1))_{j\ge j_0}$, $(\OO_{\ov{R}_{*j}})_{j\ge j_0}$ generate
the subcategory of equivariant sheaves supported on 
$$\ov{R}_{*\ge j_0}:=\ov{R}_{*k}\cup\ldots\cup \ov{R}_{*,j_0}.$$
The base $j_0=k$ is clear as $\ov{R}_{*k}=R_{kk}\simeq \A^2$, so the twists of $\OO_{R_{kk}}$ generates the needed subcategory. 
Assume that $j_0<k$ and the assertion holds for $j_0+1$. 
By Lemma \ref{components-generation-lem}, it is enough to check generation after restricting to 
$\ov{R}_{*j_0}\setminus\ov{R}_{*,j_0+1}$.
Now we use the further restriction
$$D^b\Coh(\ov{R}_{*j_0}\setminus\ov{R}_{*,j_0+1})\to D^b\Coh(\ov{R}_{j_0j_0}\setminus (\ov{R}_{j_0+1,j_0+1}\cup\ov{R}_{j_0+1,j_0})).$$
We note that
$$\ov{R}_{j_0j_0}\setminus (\ov{R}_{j_0+1,j_0+1}\cup\ov{R}_{j_0+1,j_0})\simeq \A^2,$$
so its derived category is generated by the twists of the structure sheaf obtained as the restriction of $\OO_{\ov{R}_{*k_0}}$.
Hence, it remains to show that the subcategory of 
$D^b\Coh(\ov{R}_{*j_0}\setminus\ov{R}_{*,j_0+1})$
consisting of sheaves supported on $(\ov{R}_{kj_0}\cup\ldots\ov{R}_{j_0+1,j_0})\setminus\ov{R}_{*,j_0+1}$ 
is generated by the restrictions of our objects.
But we have
$$(\ov{R}_{kj_0}\cup\ldots\ov{R}_{j_0+1,j_0})\setminus\ov{R}_{*,j_0+1}\simeq (R_k\cup\ldots\cup R_{j_0+1})\times \A^1,$$
and our assertion follows from Lemma \ref{generation-lem-k=1} (after tensoring with the generators with the line bundle 
$p_{k-1}^*\OO(-1)\ot\ldots\ot p_{j_0+1}^*\OO(-1)$).
\end{proof}

\begin{prop}\label{n2-generation-prop}
The objects $(P_{ij})_{i>j}$ and $(Q_j)$ generate $\MF(\ov{\XX}_{2,k},\bw_{2,k})$.
\end{prop}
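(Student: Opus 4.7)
The plan is to imitate the proof of Proposition \ref{n1-generation-prop} almost verbatim, since both of the two key ingredients we need are already in place: Lemma \ref{k2-generation-lem} provides the generation statement on the singularity category side, and Lemma \ref{L-graded-generation-lem} bridges generation in the singularity category with generation in the $\bbL_B$-graded matrix factorization category.

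First I would apply Lemma \ref{k2-generation-lem} with the base ring taken to be the polynomial ring $\k[U_1,\ldots,U_k]$, viewed as a regular Noetherian ring on which $\wt T$ acts. This yields that, for any subgroup $\G_m \subset \wt T$ with $l_0$ restricting to the identity character, the twists by $\chi^n$ of the equivariant coherent sheaves underlying the $(P_{ij})_{k\ge i>j\ge 0}$ and $(Q_j)_{0\le j\le k}$ generate $D^b\Coh(\ov{\ZZ}_{2,k}\times\A^k/\G_m)$.

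Next I would verify that $\crit(\bw_{2,k}) \subset \ov{\ZZ}_{2,k}\times \A^k$, which ensures that the singular locus of the hypersurface $\{\bw_{2,k}=0\}\subset \ov{\XX}_{2,k}$ is contained in the closed subscheme $\ov{\ZZ}_{2,k}\times\A^k$. Since $\partial \bw_{2,k}/\partial U_i = V_i$, any critical point of $\bw_{2,k}$ satisfies $V_1=\cdots=V_k=0$, so lies in $\ov{\ZZ}_{2,k}\times\A^k$. Combining this inclusion with the previous step, the hypotheses of Lemma \ref{L-graded-generation-lem} are satisfied (taking $Z=\ov{\ZZ}_{2,k}\times\A^k\subset X_0=\{\bw_{2,k}=0\}$), and so the objects $(P_{ij})$ and $(Q_j)$ generate $\MF(\ov{\XX}_{2,k},\bw_{2,k})$.

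I do not expect any substantive obstacle to this argument. All of the real combinatorial and geometric work has already been done: Lemma \ref{k2-generation-lem} is the nontrivial input, requiring the inductive analysis of the components $\ov{R}_{ij}$ using Lemma \ref{components-generation-lem}, and Lemma \ref{L-graded-generation-lem} takes care of the passage to the multigraded matrix factorization category via the doubling-of-weights trick and \cite[Thm.\ 3.5]{LinPom}. One point to double-check carefully is that the $\G_m$-action on the base ring $\k[U_1,\ldots,U_k]$ used in Lemma \ref{k2-generation-lem} matches the restriction of the $\wt T$-action used to define the multigraded matrix factorization category (so that equivariance is preserved when passing between the two settings), but this is routine given the conventions already fixed in Sec.\ \ref{L-gr-G-mf}.
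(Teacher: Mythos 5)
Your proposal is correct and follows the paper's own proof essentially verbatim: the paper likewise observes that $\crit(\bw_{2,k})\subset \ov{\ZZ}_{2,k}\times\A^k$ and then invokes Lemma \ref{k2-generation-lem} (over the polynomial base ring in the $U_i$) together with Lemma \ref{L-graded-generation-lem}, exactly as in Proposition \ref{n1-generation-prop}. Your explicit check via $\partial\bw_{2,k}/\partial U_i=V_i$ is just the "easy to see" step the paper leaves implicit.
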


\begin{proof} It is easy to see that the critical locus $\crit(\bw_{2,k})$ is contained in $\ov{\ZZ}_{2,k}\times\A^k$.
Thus, as in Proposition \ref{n1-generation-prop}, the statement follows from Lemma \ref{k2-generation-lem} and Lemma \ref{L-graded-generation-lem}. 
\end{proof}

\subsection{Computation of morphisms}\label{n=2-morphisms-sec}

For the computation of morphisms, we consider first the $\Z$-graded category
$\MF_{\G_m}(\ov{\XX}_{2,k},\bw_{2,k})$, where the subgroup $\G_m\sub \wt{T}$ corresponds
to the homomorphism $\wt{H}\to \Z$ sending $x_1$,$x_2$,$u_1,\ldots,u_k$ to zero and $l_0$ to $1$.  
This corresponds to the following $\Z$-grading of the variables:
\begin{equation}\label{comp-grading}
|X_1|=|X_2|=|U_i|=0, \ \ |V_i|=2, \ \ |C_{1i}|=|C_{2i}|=2(k-i), \ \ |X_3|=2k.
\end{equation}
We will consider the case of an arbitrary grading later in Sec.\ \ref{Recovering-multigrading}.

\subsubsection{Endomorphisms}\label{End-sec}

Let us consider the critical locus
$\crit(\bw_{2,k})\sub \ov{\XX}_{2,k}$.
It is easy to see that its projection to $\ov{\YY}_{2,k}$ factors through $\ov{\ZZ}_{2,k}$,
so we have a natural projection
$$p:\crit(\bw_{2,k})\to \ov{\ZZ}_{2,k}.$$

Recall that $\ov{\ZZ}_{2,k}$ is the union of the irreducible components $\ov{R}_{ij}$.
Let us introduce the notation for some other strata in $\ov{\ZZ}_{2,k}$.
For $k\ge i\ge j\ge 1$, set 
$$L^h_{ij}:=R_{i,j-1}\cap R_{i,j},$$
Similarly, for $k\ge i\ge j\ge 1$, we set
$$L^v_{ij}:=\ov{R}_{i,j-1}\cap \ov{R}_{i-1,j-1}.$$
Also, for $i=1,\ldots,k$, set
$$L^d_i:=R_{i-1,i-1}\cap R_{ii}.$$
Finally, for $k\ge i>j\ge 1$, we set $p_{ij}:=L^h_{ij}\cap L^v_{ij}$.
A straightforward calculation gives the following description of the critical locus of $\bw_{2,k}$.

\begin{lem} 
The locus $\crit(\bw_{2,k})$ is the union (with the reduced scheme structure) of the following smooth two dimensional
components:
\begin{itemize}
\item $\ov{R}_{ij}$ for $k\ge i\ge j\ge 0$;
\item $L^h_{ij}\times \A^1_{U_j}$ for $k\ge i\ge j\ge 1$;
\item $L^v_{ij}\times \A^1_{U_i}$ for $k\ge i\ge j\ge 1$;
\item $L^d_i\times \A^1_{U_i}$ for $1\le i\le k$;
\item $p_{ij}\times \A^2_{U_i,U_j}$ for $k\ge i>j\ge 1$.
\end{itemize}
\end{lem}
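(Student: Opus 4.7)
The plan is to compute the critical locus chart by chart on $\ov{\XX}_{2,k}=\ov{\YY}_{2,k}\times\A^k$. From $\partial\bw_{2,k}/\partial U_m=V_m$ we immediately get that $\crit(\bw_{2,k})$ is supported in $\ov{\ZZ}_{2,k}\times\A^k$. On each affine chart $\UU_\phi\times\A^k$ the coordinate descriptions at the start of Sec.\ \ref{n=2-morphisms-sec} express each $V_m$ as an explicit monomial in the chart coordinates, so $\bw_{2,k}$ becomes an explicit polynomial in these coordinates and the vanishing of its partial derivatives can be solved by a finite case analysis.

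On a chart $\UU_{ij}$ with $i>j\ge 1$, writing $\a=C_{1,i}^{-1}$, $\b=C_{1,i-1}$, $\ga=C_{2,j}^{-1}$, $\de=C_{2,j-1}$, one has $V_i=\a\b$, $V_j=\ga\de$ and every other $V_m$ is itself a chart coordinate, so
$$\bw_{2,k}=U_i\a\b+U_j\ga\de+\sum_{m\ne i,j}U_mV_m.$$
The critical equations force $V_m=0$ and $U_m=0$ for $m\ne i,j$, together with $U_i\a=U_i\b=\a\b=0$ and $U_j\ga=U_j\de=\ga\de=0$. Sub-case analysis on which of $\a,\b$ (and independently which of $\ga,\de$) vanishes gives: if exactly one of $\a,\b$ vanishes and exactly one of $\ga,\de$ vanishes, then $U_i=U_j=0$ and one recovers an open piece of some $\ov{R}_{mn}$; if $\a=\b=0$ but only one of $\ga,\de$ vanishes, $U_j=0$ while $U_i$ is free, producing an open piece of $L^v_{\bullet\bullet}\times\A^1_{U_i}$; the symmetric case yields $L^h_{\bullet\bullet}\times\A^1_{U_j}$; and $\a=\b=\ga=\de=0$ gives $p_{mn}\times\A^2_{U_i,U_j}$.

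The charts $\UU_{ii}$ contain the exceptional divisor coming from the blow-up structure of $\ov{R}_{ii}$; here $V_i$ is a three-fold product $\a X_2\ga$ with $\a=C_{1,i}^{-1}$, $\ga=C_{2,i-1}$, and the corresponding critical equations $U_i X_2\ga=U_i\a\ga=U_i\a X_2=0$ produce in particular the new diagonal stratum $L^d_i\times\A^1_{U_i}$ supported where $\a=\ga=0$ with $X_2$ free. The compactification charts $\UU_{i,0}$ are handled identically and contribute the boundary pieces of the same list of strata. After checking that each listed component is visible in at least one chart and that no additional two-dimensional locus arises, smoothness of each piece is immediate from its chart-local description as a transverse intersection of coordinate hyperplanes with an affine space in the $U$-variables.

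The main obstacle is bookkeeping: the irreducible components $\ov{R}_{ij}$ come in several geometric types (affine planes at the corners, $\P^1\times\A^1$ along the sides, $\P^1\times\P^1$ in the interior, and the blow-up $\ov{R}_{ii}$ on the diagonal), and one must verify that all of their open pieces are recovered across the various affine charts together with all the one-dimensional intersection strata $L^h_{ij}$, $L^v_{ij}$, $L^d_i$ and the zero-dimensional $p_{ij}$. The most delicate chart is $\UU_{ii}$, where $V_i$ is a three-fold product and the case analysis gives several overlapping loci, one of which is the genuinely new diagonal stratum $L^d_i\times\A^1_{U_i}$ that is not visible in any of the charts $\UU_{ij}$ with $i>j$.
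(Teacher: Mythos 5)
Your proposal is correct and is exactly the ``straightforward calculation'' that the paper invokes without writing out: restrict to $\ov{\ZZ}_{2,k}\times\A^k$ via $\partial\bw/\partial U_m=V_m$, then solve the remaining critical equations chart by chart using the factorizations $V_i=C_{1,i}^{-1}C_{1,i-1}$, $V_j=C_{2,j}^{-1}C_{2,j-1}$ (and the three-fold factorization of $V_i$ on $\UU_{ii}$, which is indeed where the extra stratum $L^d_i\times\A^1_{U_i}$ appears). The case analysis, the identification of the strata, and the smoothness argument all match what the paper intends.
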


The crucial step in calculating the morphism spaces is the following local result.

\begin{lem}\label{local-End-lem}
For every $k\ge i>j\ge 0$, one has a natural quasiisomorphism
$$\und{\End}(P_{ij})\simeq \OO_{p^{-1}\ov{R}_{ij}}.$$
Let us set $L_{*j}=\cup_i L^h_{ij}\cup L^d_j$.
Then for $0<j<k$ we have
$$\und{\End}(Q_j)=\OO_{\ov{R}_{*j}\cup L_{*j}\times \A^1_{U_j}\cup L_{*j+1}\times \A^1_{U_{j+1}}},$$
while
$$\und{\End}(Q_k)=\OO_{p^{-1}R_{00}}=\OO_{R_{00}\cup L_{*k}\times \A^1_{U_k}},$$
$$\und{\End}(Q_0)=\OO_{\ov{R}_{*0}\cup L_{*1}\times \A^1_{U_1}}.$$
\end{lem}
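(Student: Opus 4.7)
The plan is to present each $P_{ij}$ and $Q_j$ as a tensor product of elementary Koszul matrix factorizations of the form $\{a,b\}$ (as in Lemma~\ref{Koszul-mf-lem}), using the complete intersection descriptions from Lemmas~\ref{proj-P1-lem}(ii) and~\ref{k=2-complete-int-lem}, and then to apply Lemma~\ref{Koszul-mf-lem}(i) factor by factor, exploiting multiplicativity of $\und{\End}$ under tensor products of matrix factorizations.

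Consider $P_{ij}$ in the generic range $k>i>j>0$, $i-j>1$. Lemma~\ref{k=2-complete-int-lem}(i) presents $\ov{R}_{ij}$ locally as the vanishing locus of the regular sequence
$f_1,\ldots,f_k$ given by
\[
V_k,\ldots,V_{i+2},\ f_1^*s_1(i),\ f_1^*s_2(i),\ V_{i-1},\ldots,V_{j+2},\ f_2^*s_1(j),\ f_2^*s_2(j),\ V_{j-1},\ldots,V_1.
\]
The factorisations $V_{i+1}=f_1^*s_1(i)\cdot f_1^*p_i^*z_1$, $V_i=f_1^*s_2(i)\cdot f_1^*p_i^*z_0$, and the analogous ones at index~$j$, yield a decomposition $\bw_{2,k}=\sum_\ell g_\ell f_\ell$ in which each cofactor $g_\ell$ is either $U_m$ or $U_m$ times a pullback of a homogeneous coordinate on some $\P^1$. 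Because both $P_{ij}$ and $\bigotimes_\ell \{g_\ell,f_\ell\}$ represent the same object $\OO_{\ov{R}_{ij}\times\A^k}\ot f_1^*p_i^*\OO(-1)$ of the singularity category, they are quasi-isomorphic as matrix factorizations, with the line bundle twist absorbed into the $\wt{T}$-equivariant structure of one of the Koszul factors. Applying Lemma~\ref{Koszul-mf-lem}(i) factor by factor gives
\[
\und{\End}(P_{ij})\ \simeq\ \bigotimes_\ell \OO/(g_\ell,f_\ell)\ \simeq\ \OO\big/(g_\ell,f_\ell)_\ell.
\]
On $\ov{R}_{ij}$ (the zero locus of the $f_\ell$'s) the additional constraints $g_\ell=0$ permit a given $U_m$ to be nonzero only along the sublocus where the corresponding pullback of a homogeneous coordinate vanishes, which are exactly the strata $L^h_{i,j}$, $L^h_{i,j+1}$, $L^v_{i,j}$, $L^v_{i+1,j}$, together with their intersections $p_{\bullet\bullet}$. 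Comparing with the description of $\crit(\bw_{2,k})$ recorded before the lemma, this identifies the subscheme as $p^{-1}(\ov{R}_{ij})$.

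The boundary cases ($i=j+1$, $i=k$, $j=0$) are treated analogously using parts (ii)--(iv) of Lemma~\ref{k=2-complete-int-lem}. The diagonal case $i=j+1$ is slightly different: the two middle Koszul factors merge into a single one of the form $\{U_i X_2, s_i\}$ involving the section $s_i$ of $f_2^*p_{i-1}^*\OO(-1)\ot f_1^*p_i^*\OO(-1)$, whose contribution $\OO/(U_iX_2,s_i)$ recovers the diagonal stratum $L^d_i\times\A^1_{U_i}$. For $Q_j$ the same method applies, but now using the $f_2$-pullback of the complete intersection description of $\ov{R}_j$ from Lemma~\ref{proj-P1-lem}(ii); this leads to
\[
Q_j\ \simeq\ \bigotimes_{m\neq j,j+1} \{U_m,V_m\}\ \ot\ \{U_{j+1} f_2^*p_j^*z_1,\ f_2^*s_1(j)\}\ \ot\ \{U_j f_2^*p_j^*z_0,\ f_2^*s_2(j)\},
\]
with the obvious modifications when $j=0$ or $j=k$. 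The extra freedom in the $U_j$- and $U_{j+1}$-directions along the loci where $f_2^*p_j^*z_0$ or $f_2^*p_j^*z_1$ vanish on $\ov{R}_{*j}$ produces the ``fat'' pieces $L_{*j}\times\A^1_{U_j}$ and $L_{*,j+1}\times\A^1_{U_{j+1}}$ in $\und{\End}(Q_j)$, matching the stated formula (and at $j=k$ the absence of the $U_{k+1}$-factor accounts for $\und{\End}(Q_k)=\OO_{p^{-1}R_{00}}$).

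The main obstacle is bookkeeping: one must enumerate the many boundary cases cleanly, track the $\wt{T}$-equivariant line bundle structure on each Koszul factor (this is needed later to pin down the multigrading of morphisms), and verify that the scheme-theoretic intersection $\bigcap_\ell V(g_\ell,f_\ell)$ really coincides, as a reduced subscheme, with the claimed unions. The key conceptual observation---that the extra components of $\und{\End}$ beyond the sheaf-theoretic support come from Koszul factors of the mixed shape $\{U_m\cdot z,\,s\}$ whose vanishing locus includes both the sheaf-theoretic support and an auxiliary $\A^1_{U_m}$-fat piece over the zero locus of $z$---reduces the proof to a uniform, essentially chart-by-chart verification.
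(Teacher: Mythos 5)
Your proposal follows essentially the same route as the paper's proof: present $P_{ij}$ and $Q_j$ as tensor products of Koszul matrix factorizations via the complete-intersection descriptions of Lemmas \ref{proj-P1-lem}(ii) and \ref{k=2-complete-int-lem}, apply Lemma \ref{Koszul-mf-lem}(i) factor by factor, and identify the resulting quotient with $\OO_{p^{-1}\ov{R}_{ij}}$ (resp.\ the stated loci for $Q_j$). The only slip is in the merged diagonal Koszul factor, which should be $\{f_1^*p_{i}^*z_0\cdot f_2^*p_{i-1}^*z_1\cdot U_i,\,s_i\}$ so that the product equals $U_iV_i$, rather than $\{U_iX_2,s_i\}$; otherwise the argument matches the paper's.
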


\begin{proof}
We will combine the description of $\ov{R}_{ij}$ as complete intersections from Lemma \ref{k=2-complete-int-lem} with 
the computation of Lemma \ref{Koszul-mf-lem}(i). In the case of $Q_j$ we use the description of 
$\ov{R}_{*j}=f_2^{-1}(\ov{R}_j)$
as a complete intersection. Below we provide details of the calculation (in later calculations we will skip over some
similar steps).

\noindent
{\bf Case of $\und{\End}(P_{ij})$, for $k>i>j>0$.}

Assume first that $i-j>1$. Then we can decompose $W$ as
\begin{align}\label{W-decomp-eq}
&W=\sum_{r\neq i,i+1,j,j+1}V_rU_r+  \nonumber\\
&f_1^*s_1(i)\cdot f_1^*p_i^*z_1\cdot U_{i+1}+f_1^*s_2(i)\cdot f_1^*p_i^*z_0\cdot U_{i}+
f_2^*s_1(j)\cdot f_2^*p_j^*z_1\cdot U_{j+1}+f_2^*s_2(j)\cdot f_2^*p_j^*z_0\cdot U_{j}.
\end{align}
The description of $\ov{R}_{ij}$ as a complete intersection implies that $P_{ij}\ot f_1^*p_i^*\OO(1)$ can be
presented as the tensor product of the following Koszul matrix factorizations:
\begin{itemize}
\item for $r\neq i,i+1,j,j+1$, $\{U_r,V_r\}$;
\item $\{f_1^*p_i^*z_1\cdot U_{i+1},f_1^*s_1(i)\}, \ \{ f_1^*p_i^*z_0\cdot U_{i},f_1^*s_2(i)\}$;
\item $\{f_2^*p_j^*z_1\cdot U_{j+1},f_2^*s_1(j)\}, \ \{f_2^*p_j^*z_0\cdot U_{j}, f_2^*s_2(j)\}$.
\end{itemize}
Hence, $\und{\End}(P_{ij})$ is the tensor product of endomorphism sheaves of these
Koszul matrix factorizations.
Applying Lemma \ref{Koszul-mf-lem}(i), we get on $\ov{R}_{ij}\times \A^k$
\begin{align*}
&\und{\End}(P_{ij})\simeq \\
&\OO/((U_r \ |\ r\neq i,i+1,j,j+1),f_1^*p_i^*z_1\cdot U_{i+1},f_1^*p_i^*z_0\cdot U_{i},f_2^*p_j^*z_1\cdot U_{j+1},f_2^*p_j^*z_0\cdot U_{j})=\OO_{p^{-1}\ov{R}_{ij}}.\end{align*}

In the case $j=i-1$, 
$P_{i,i-1}\ot f_1^*p_{i}^*\OO(1)$ is the tensor product of the Koszul matrix factorizations,
\begin{itemize}
\item $\{U_r,V_r\}$, for $r\neq i-1,i,i+1$;
\item $\{f_1^*p_{i}^*z_1\cdot U_{i+1},f_1^*s_1(i)\}, \ 
\{f_1^*p_{i}^*z_0\cdot f_2^*p_{i-1}^*z_1\cdot U_i,s_i\}, \ 
\{f_2^*p_{i-1}^*z_0\cdot U_{i-1},f_2^*s_2(i-1)\}.$
\end{itemize}
Hence, we get an isomorphism on $\ov{R}_{i,i-1}\times \A^k$,
\begin{align*}
&\und{\End}(P_{i,i-1})\simeq \\
&\OO/((U_r \ |\ r\neq i-1,i,i+1),f_1^*p_{i}^*z_1\cdot U_{i+1},
f_1^*p_{i}^*z_0\cdot f_2^*p_{i-1}^*z_1\cdot U_i,f_2^*p_{i-1}^*z_0\cdot U_{i-1})\simeq \OO_{p^{-1}\ov{R}_{i,i-1}}.
\end{align*}

\noindent
{\bf Case of $\und{\End}(P_{i0})$, for $0<i<k$.}

Assuming that $i>1$ 
we use the presentation of $P_{i0}$ as the tensor product of
\begin{itemize}
\item $\{U_r,V_r\}$, for $r\neq 0,i,i+1$;
\item $\{f_1^*p_i^*z_1\cdot U_{i+1},f_1^*s_1(i)\}, \ 
\{f_1^*p_i^*z_0\cdot U_{i},f_1^*s_2(i)\}, \ \{f_2^*p_0^*z_1\cdot U_1,f_2^*s_1(0)\}.$
\end{itemize}
As before, from this we deduce an isomorphism on $\ov{R}_{i0}\times \A^k$,
$$\und{\End}(P_{i0})\simeq \OO/((U_r \ |\ r\neq 0,i,i+1),f_1^*p_i^*z_1\cdot U_{i+1},f_1^*p_i^*z_0\cdot U_{i},
f_2^*p_0^*z_1\cdot U_1)\simeq \OO_{p^{-1}\ov{R}_{i0}}.$$

In the case $i=1$, we 
use the presentation of $P_{1,0}$ as the tensor product of 
\begin{itemize}
\item $\{U_r,V_r\}$, for $r>1$;
\item $\{f_1^*p_{1}^*z_1\cdot U_{2},f_1^*s_1(1)\}, \ \{f_1^*p_{1}^*z_0\cdot f_2^*p_0^*z_1\cdot U_1,s_1\}.$
\end{itemize}
From this we get an isomorphism on $\ov{R}_{1,0}\times \A^k$,
$$\und{\End}(P_{1,0})\simeq \OO/((U_r \ |\ r>1),f_1^*p_{1}^*z_1\cdot U_{2},
f_1^*p_{1}^*z_0\cdot f_2^*p_0^*z_1\cdot U_1)\simeq \OO_{p^{-1}\ov{R}_{1,0}}.$$

\noindent
{\bf Case of $\und{\End}(P_{kj})$.}

Assume first that $0<j<k-1$. Then in a neighborhood of $R_{kj}\times \A^k$,
$P_{kj}$ is obtained as the tensor product of
\begin{itemize}
\item $\{U_r,V_r\}$, for $r\neq j,j+1,k$;
\item $\{X_1U_k,C_{1,k-1}\}$;
\item $\{f_2^*p_j^*z_1\cdot U_{j+1},f_2^*s_1(j)\}, \ \{f_2^*p_j^*z_0\cdot U_{j},f_2^*s_2(j)\}$.
\end{itemize}
This leads to an isomorphism on $R_{kj}\times\A^k$,
$$\und{\End}(P_{kj})\simeq \OO/((U_r\ |\ r\neq j,j+1,k),X_1U_k,f_2^*p_j^*z_1\cdot U_{j+1},f_2^*p_j^*z_0\cdot U_{j})\simeq
\OO_{p^{-1}R_{kj}}.$$

In the case $j=k-1$, we present $P_{k,k-1}$ as the tensor product of
\begin{itemize}
\item $\{U_r,V_r\}$, for $r<k-1$;
\item $\{X_1U_k\cdot f_2^*p_{k-1}^*z_1,s_k\}$;
\item $\{f_2^*p_{k-1}^*z_0\cdot U_{k-1},f_2^*s_2(k-1)\}$,
\end{itemize}
which leads to an isomorphism on $R_{k,k-1}\times \A^k$,
$$\und{\End}(P_{k,k-1})\simeq \OO/((U_r\ |\ r<k-1),X_1U_k\cdot f_2^*p_{k-1}^*z_1,f_2^*p_{k-1}^*z_0\cdot U_{k-1})\simeq
\OO_{p^{-1}R_{k,k-1}}.$$

In the case $j=0$, in a neighborhood of $\ov{R}_{k0}\times \A^k$ we can present $P_{k0}$ as the tensor product of
\begin{itemize}
\item $\{U_r,V_r\}$, for $r\neq 1,k$;
\item $\{X_1U_k,C_{1,k-1}\}$;
\item $\{f_2^*p_0^*z_1\cdot U_1,f_2^*s_1(0)\}$,
\end{itemize}
so we get an isomorphism on $\ov{R}_{k0}\times \A^k$,
$$\und{\End}(P_{k0})\simeq \OO/((U_r\ |\ r\neq 1,k),X_1U_k,f_2^*p_0^*z_1\cdot U_1)\simeq
\OO_{p^{-1}\ov{R}_{k0}}.$$

\noindent
{\bf Case of $\und{\End}(Q_j)$.}

For $0<j<k$, we present $Q_j$ as the tensor product of
\begin{itemize}
\item $\{U_r,V_r\}$, for $r\neq j,j+1$;
\item $\{f_2^*p_j^*z_1\cdot U_{j+1},f_2^*s_1(j)\}, \ \{f_2^*p_j^*z_0\cdot U_{j},f_2^*s_2(j)\}$.
\end{itemize}
From this we get an isomorphism on $R_{*j}\times \A^k$,
$$\und{\End}(Q_j)\simeq \OO/((U_r \ |\ r\neq j,j+1),f_2^*p_j^*z_1\cdot U_{j+1},f_2^*p_j^*z_0\cdot U_{j})\simeq
\OO_Z,$$
where $Z=\ov{R}_{*j}\cup L_{*j}\times \A^1_{U_j}\cup L_{*j+1}\times \A^1_{U_{j+1}}$.

For $j=k$, in a neighborhood of $R_{kk}\times\A^k$, we can present $Q_k$ as the tensor product of
\begin{itemize}
\item $\{U_r,V_r\}$, for $r<k$;
\item $\{X_1X_2\cdot U_k,C_{2,k-1}\}$,
\end{itemize}
which leads to an isomorphism on $R_{kk}\times \A^k$,
$$\und{\End}(Q_k)\simeq \OO/((U_r \ |\ r<k),X_1X_2\cdot U_k)\simeq \OO_{R_{kk}\cup L_{*1}\times \A^1_{U_1}}.$$

For $j=0$, we present $Q_0$ as the tensor product of
\begin{itemize}
\item $\{U_r,V_r\}$, for $r\neq 1$;
\item $\{f_2^*p_0^*z_1\cdot U_1,f_2^*s_1(0)\}$,
\end{itemize}
which leads to an isomorphism on $\ov{R}_{*0}\times \A^k$,
$$\und{\End}(Q_0)\simeq \OO/((U_r \ |\ r\neq 1),f_2^*p_0^*z_1\cdot U_1)\simeq
\OO_{\ov{R}_{*0}\cup L_{*1}\times \A^1_{U_1}}.$$
\end{proof}

\begin{lem}\label{Pij-End-lem} 
Let us set $U_{k+1}:=X_1$.

\noindent
(i) For $k\ge i>j>0$, such that $i-j>1$, the natural embedding $k[U_i,U_{i+1},U_j,U_{j+1}]\sub \OO(\ov{\XX}_{2,k})$ 
induces an isomorphism 
$$\End(P_{ij})\simeq k[U_i,U_{i+1},U_j,U_{j+1}]/(U_iU_{i+1},U_jU_{j+1}).$$

\noindent
(ii) For $k\ge i>1$, there is a similar isomorphism
$$\End(P_{i0})\simeq k[U_1,U_i,U_{i+1}]/(U_iU_{i+1}).$$

\noindent
(iii) For $i>1$, there is a similar isomorphism
$$\End(P_{i,i-1})\simeq k[U_{i-1},U_i,U_{i+1}]/(U_{i-1}U_iU_{i+1}).$$ 

\noindent
(iv) There is a similar isomorphism
$$\End(P_{1,0})\simeq k[U_1,U_2].$$
\end{lem}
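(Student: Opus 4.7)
The plan is to deduce everything from Lemma~\ref{local-End-lem}, which identifies $\und{\End}(P_{ij})$ with $\mf(\OO_W)$ for $W:=p^{-1}\ov{R}_{ij}$. Since $\com(\mf(\OO_W))\simeq\bigoplus_{n\in\Z}\OO_W\otimes l_0^n[-2n]$, we have
$$\End^m(P_{ij})=\bigoplus_{n\in\Z} H^{m-2n}(\ov{\XX}_{2,k},\OO_W)^{\wt T,\,\text{weight}\,-n l_0}.$$
The first step is to show $H^{\ge 1}(\OO_W)=0$ by a Mayer--Vietoris argument. Each irreducible component of $W$ is isomorphic to (affine)$\times$(\,point, $\A^1$, $\P^1$, or $\P^1\times\P^1$\,) and all pairwise intersections are unions of points and $\P^1$'s times affine spaces, all of which have vanishing higher cohomology of $\OO$; hence so does $W$.

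The second step is to verify that $H^0(\OO_W)$ sits entirely in $\G_m$-weight zero, and equally in $2\wt H$-weight zero, so that $H^0(\OO_W)^{\wt T}=H^0(\OO_W)$. This holds because the $U_r$ all have weight zero, while $H^0$ of each irreducible constituent of $W$ (each a projective variety with $H^0(\OO)=\k$) is spanned by a weight-zero constant. Combined with step one, this shows $\End^m(P_{ij})=0$ for $m\ne 0$ and $\End(P_{ij})=H^0(\OO_W)$.

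The third step is to compute $H^0(\OO_W)$ by decomposing $\OO_W$ as a direct sum over $U$-monomials: for each monomial $U^{\mathbf a}$, the coefficient sheaf is the structure sheaf of the stratum of $\ov{R}_{ij}$ cut out by those pullbacks of $p^*z_\bullet$ forced to vanish by the explicit Koszul relations produced in the proof of Lemma~\ref{local-End-lem}. Writing $\ov{R}_{ij}\simeq\P^1\times\P^1$ with the two rulings coming from the projections via $f_1$ and $f_2$, the analysis splits into cases:
\begin{itemize}
\item (i) $U_i,U_{i+1}$ are supported on two disjoint rulings of the first family, forcing $U_iU_{i+1}=0$; symmetrically $U_jU_{j+1}=0$. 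All other products of two distinct $U$'s have $0$-dimensional support with $H^0=\k$, and all higher mixed monomials remain nonzero.
\item (ii) The same analysis, but with the pair $U_j,U_{j+1}$ replaced by the single variable $U_1$ (there is no ``$U_0$''), leaves just $U_iU_{i+1}=0$.
\item (iii) By the Koszul pair $\{f_1^*p_i^*z_0\cdot f_2^*p_{i-1}^*z_1\cdot U_i,\,s_i\}$, the support of $U_i$ is the union of a horizontal and a vertical ruling. Hence each of $U_{i-1}U_i$, $U_iU_{i+1}$, $U_{i-1}U_{i+1}$ is supported on a single point (and nonzero), while the triple intersection is empty, giving the single relation $U_{i-1}U_iU_{i+1}=0$.
\item (iv) Only two variables $U_1,U_2$ remain; their supports meet at a point, and no relation survives.
\end{itemize}
Reading off generators and relations finishes the proof.

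The main technical obstacle is case~(iii), where the non-standard Koszul pair with $U_i$ paired against a product of two coordinates alters the support geometry: a cubic relation replaces the expected pair of quadratic ones. The key point is tracking intersections of rulings on $\P^1\times\P^1$ carefully; the vanishing/nonvanishing in all other cases, and the cohomology vanishing, are essentially bookkeeping around Mayer--Vietoris.
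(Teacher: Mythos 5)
Your proposal follows the same route as the paper: everything is reduced to Lemma \ref{local-End-lem}, after which one computes the cohomology of $\OO_{p^{-1}\ov{R}_{ij}}$ using the explicit monomial equations cutting out $p^{-1}\ov{R}_{ij}$ inside $\ov{R}_{ij}\times\A^k$. The paper packages this computation as $R\pi_*$ along the projection $\pi:\ov{\XX}_{2,k}\to\A^{k+1}_{U_1,\ldots,U_{k+1}}$, identifying $p^{-1}\ov{R}_{ij}$ with a product of nodal chains $C_1\times C_2$ (and, in the diagonal cases (iii)--(iv), with a union $Z\cup Z'$ of two such products handled by a single Mayer--Vietoris triangle); your decomposition of $H^*(\OO_W)$ by $U$-monomials, with coefficient sheaf $\OO_{S_{\mathbf a}}$ for the stratum $S_{\mathbf a}\subseteq\ov{R}_{ij}$ forced by the Koszul factors, is an equivalent bookkeeping and in fact makes your separate Mayer--Vietoris step largely redundant, since each $S_{\mathbf a}$ is $\emptyset$, a point, a $\P^1$ or a product thereof. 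Your case analysis, including the cubic relation in (iii) coming from the Koszul pair $\{f_1^*p_i^*z_0\cdot f_2^*p_{i-1}^*z_1\cdot U_i,\,s_i\}$, matches the paper's.

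Two points need correction. First, the invariance step: the lemma lives in the $\Z$-graded category $\MF_{\G_m}(\ov{\XX}_{2,k},\bw_{2,k})$ of Sec.\ \ref{n=2-morphisms-sec}, where $\G_m\sub\wt{T}$ is the subgroup for which $|U_r|=|X_1|=|X_2|=0$. Your formula takes $\wt T$-invariants of weight $-nl_0$, and your assertion that $H^0(\OO_W)$ sits in $2\wt H$-weight zero is false ($U_r$ has $\wt T$-weight $u_r\neq 0$); as written, taking $\wt T$-invariants would leave only the constants. Replacing $\wt T$ by the distinguished $\G_m$ makes your step two true and the argument go through. Second, the uniform framing ``$\ov{R}_{ij}\simeq\P^1\times\P^1$ with two rulings'' fails in the boundary cases $i=k$, which are included in all of (i)--(iii): there $\ov{R}_{kj}\simeq\P^1\times\A^1_{X_1}$, the variable $U_{k+1}=X_1$ is a coordinate on the base rather than a $U$-direction with a vanishing constraint, and the relation $U_kU_{k+1}=0$ arises from the Koszul factor $\{X_1U_k,C_{1,k-1}\}$ (i.e.\ from $X_1$ vanishing on the support of $U_k$) rather than from disjoint rulings. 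The same adjustment is needed for $\ov{R}_{k0}$ in (ii) and $R_{k,k-1}$ in (iii). These are repairable, and with them your argument is correct.
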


\begin{proof}
By Lemma \ref{local-End-lem}, we have to calculate $H^*(\OO_{p^{-1}\ov{R}_{ij}})$.
Let us consider the natural projection
$$\pi:\ov{\XX}_{2,k}\to \A^{k+1}_{U_1,\ldots,U_k,U_{k+1}}.$$
For part (i), it is enough to show the natural isomorphism 
$$R\pi_*\OO_{p^{-1}\ov{R}_{ij}}\simeq \OO/((U_r)_{r\neq i,i+1,j,j+1},U_iU_{i+1},U_jU_{j+1}),$$
and similarly for parts (ii)--(iv).

Suppose that $k>i$, $j>0$ and $i-j>1$. Then 
$$p^{-1}R_{ij}\simeq C_1(U_i,U_{i+1})\times C_2(U_j,U_{j+1}),$$
where $C_1(U_i,U_{i+1})$ and $C_2(U_j,U_{j+1})$ are nodal chains with $3$ components:
$$C_1(U_i,U_{i+1})=\A^1_{U_i}\cup \P^1\cup \A^1_{U_{i+1}}, \ \ C_2(U_j,U_{j+1})=
\A^1_{U_j}\cup \P^1\cup \A^1_{U_{j+1}},$$
The morphism $C_1(U_i,U_{i+1})\times C_2(U_j,U_{j+1})\to \A^k$ 
is the product of the natural maps (contracting the $\P^1$ components) 
$$C_1(U_i,U_{i+1})\to A^2_{U_i,U_{i+1}}, \ \ C_2(U_j,U_{j+1})\to \A^2_{U_j,U_{j+1}}$$
composed with the coordinate embedding $\A^4_{U_i,U_{i+1},U_j,U_{j+1}}\hra \A^{k+1}$.
This immediately implies the assertion.

In the case $i=k$ and $0<j<k-1$, we use a similar decomposition of 
$p^{-1}R_{kj}$, where $C_1(U_k,U_{k+1})$ is a nodal chain with the $2$ components:
$C_1(U_k,U_{k+1})=\A^1_{U_k}\cup \A^1_{U_{k+1}}$. In the case $1<i<k$ and $j=0$,
we use a similar decomposition 
$$p^{-1}\ov{R}_{i0}\simeq C_1(U_i,U_{i+1})\times C_2(U_1),$$
where $C_2(U_1)$ is a nodal chain with $2$ components:
$C_2(U_1)=\A^1_{U_1}\cup \P^1$.
In the case $i=k$ and $j=0$ we use
$$p^{-1}\ov{R}_{k0}\simeq C_1(U_1,U_{k+1})\times C_2(U_1).$$

On the other hand, for $1<i<k$, 
$$p^{-1}R_{i,i-1}=Z\cup Z', \ \ \text{ where }$$
$$Z\simeq C_1(U_{i-1})\times C_2(U_{i},U_{i+1}), \ \ Z'\simeq C_1(U_{i-1},U_{i})\times C'_2(U_{i+1}),$$
$$Z\cap Z'\simeq C_1(U_{i-1})\times C'_2(U_{i+1}),$$
where $C_1(U_{i-1})=\A^1_{U_{i-1}}\cup \P^1$ and $C'_2(U_{i+1})=\P^1\cup \A^1_{U_{i+1}}$.
Now the exact triangle
$$R\pi_*\OO_{p^{-1}R_{i,i-1}}\to R\pi_*\OO_Z\oplus R\pi_*\OO_{Z'}\to R\pi_*\OO_{Z\cap Z'}\to\ldots$$
easily gives the result.

Similarly for $i=1$ we use
$$p^{-1}R_{1,0}=Z\cup Z', \ \ \text{ where }$$
$$Z\simeq C_1(U_{1})\times C_2(U_0), \ \ Z'\simeq C_1(U_0,U_1)\times \P^1,$$
$$Z\cap Z'\simeq C_1(U_{1})\times \P^1.$$
For $i=k$ we use
$$p^{-1}R_{k,k-1}=Z\cup Z', \ \ \text{ where }$$
$$Z\simeq \A^1_{U_{k+1}}\times C_2(U_{k-1},U_k), \ \ Z'\simeq C_1(U_k,U_{k+1})\times C'_2(U_{k-1}),$$
$$Z\cap Z'\simeq \A^1_{U_{k+1}}\times C'_2(U_{k-1}).$$
\end{proof}

Similarly we compute endomorphisms of $Q_j$.

\begin{lem}\label{Qj-End-lem}
For $0<j<k$, the embedding $k[X_1,X_2,U_j,U_{j+1}]\sub \OO(\ov{\XX}_{2,k})$ induces an isomorphism
$$\End(Q_j)\simeq k[X_1,X_2,U_j,U_{j+1}]/(X_1X_2,U_jU_{j+1}).$$
Similarly,
$$\End(Q_k)\simeq k[X_1,X_2,U_k]/(U_kX_1X_2),$$
$$\End(Q_0)\simeq k[U_1,X_1,X_2]/(X_1X_2).$$
\end{lem}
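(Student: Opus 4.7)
The proof follows the template of Lemma \ref{Pij-End-lem}: apply Lemma \ref{local-End-lem} to identify $\und{\End}(Q_j)$ with the structure sheaf of an explicit closed subscheme $Z_j\subset \ov{\XX}_{2,k}$, then compute the $\wt{T}$-equivariant cohomology by pushforward to an appropriate affine space. The case $j=k$ is immediate: the Koszul decomposition displayed in the proof of Lemma~\ref{local-End-lem}, which uses the identity $V_k=X_1X_2C_{2,k-1}$ valid in the chart $\UU_{kk}$, realizes $Z_k$ as the hypersurface $V(X_1X_2U_k)$ inside the affine space $\A^3_{X_1,X_2,U_k}$ (here $X_1,X_2$ are the coordinates of $R_{kk}\simeq \A^2$, and the two lines $L^h_{kk}$ and $L^d_k$ making up $L_{*k}$ are the two coordinate axes of $R_{kk}$). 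Since $Z_k$ is affine, its higher cohomology vanishes and one obtains $\End(Q_k)\simeq \k[X_1,X_2,U_k]/(X_1X_2U_k)$ directly.

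For $0<j<k$, the scheme is
$$Z_j=\ov{R}_{*j}\cup(L_{*j}\times \A^1_{U_j})\cup(L_{*,j+1}\times \A^1_{U_{j+1}})\subset \ov{R}_{*j}\times\A^2_{U_j,U_{j+1}},$$
with all other $U_r$ vanishing. The first key step is to verify that the regular functions $X_1$ and $X_2$ on $\ov{\YY}_{2,k}$ satisfy $X_1X_2=0$ on $\ov{R}_{*j}$. The identity $X_1=C_{1,i}^{-1}V_{i+1}\cdots V_k$, computed from the toric relations in the chart $\UU_{ij}$, shows that $X_1$ vanishes on every $R_{ij}$ with $i<k$, while $X_2$ is a multiple of $C_{2,j}^{-1}$ and therefore vanishes on every $R_{ij}$ with $i>j$; hence $X_1$ is supported on the single component $R_{kj}\simeq \P^1\times \A^1_{X_1}$ and $X_2$ on the single component $R_{jj}$ (a blow-up of $\P^1\times \A^1_{X_2}$), which are disjoint. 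A \v{C}ech computation on the cover of $\ov{R}_{*j}$ by its irreducible components then yields $H^0(\ov{R}_{*j},\OO)=\k[X_1,X_2]/(X_1X_2)$ with vanishing higher cohomology: the interior $\P^1\times\P^1$ components contribute only constants, the two ends contribute $\k[X_1]$ and $\k[X_2]$, and the chain of gluings along rational curves makes the cochain map surjective, leaving only the matching-of-constants constraint, which is precisely the relation $X_1X_2=0$.

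The final step is a Mayer--Vietoris for the decomposition $Z_j=A\cup B\cup C$ with $A=\ov{R}_{*j}$, $B=L_{*j}\times \A^1_{U_j}$, $C=L_{*,j+1}\times \A^1_{U_{j+1}}$. The relevant pairwise and triple intersections all lie in $\{U_j=U_{j+1}=0\}$ and are chains of rational curves, contributing no higher cohomology; the only new relation imposed by gluing the two $\A^1_{U}$-factors to $\ov{R}_{*j}$ is $U_jU_{j+1}=0$, producing $\End(Q_j)\simeq \k[X_1,X_2,U_j,U_{j+1}]/(X_1X_2,U_jU_{j+1})$. The case $j=0$ is entirely parallel, now involving only the single additional piece $L_{*1}\times \A^1_{U_1}$ adjoined to the compactified bottom row $\ov{R}_{*0}$; there $X_1$ and $X_2$ remain supported on the disjoint end components $\ov{R}_{k0}$ and $\ov{R}_{00}$ respectively, yielding $\End(Q_0)\simeq \k[U_1,X_1,X_2]/(X_1X_2)$. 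The main technical obstacle is the \v{C}ech computation for $\ov{R}_{*j}$: one has to confirm carefully that the gluings through the interior $\P^1\times\P^1$ components enforce exactly the single relation $X_1X_2=0$ without introducing spurious global sections or creating an $H^1$ obstruction.
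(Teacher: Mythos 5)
Your proof is correct and follows exactly the route the paper intends (the paper omits the argument for this lemma, saying only that it is computed ``similarly'' to Lemma \ref{Pij-End-lem}): identify $\und{\End}(Q_j)$ with the structure sheaf of the explicit reduced union of components via Lemma \ref{local-End-lem}, and then compute its cohomology component by component, with the relations $X_1X_2=0$ and $U_jU_{j+1}=0$ arising from the gluing. The only tiny imprecision is the claim that $R_{kj}$ and $R_{jj}$ are disjoint, which fails when $k=j+1$; but your argument only needs that $X_1X_2$ restricts to zero on every component of $\ov{R}_{*j}$, which still holds there.
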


It follows from Lemmas \ref{Pij-End-lem} and \ref{Qj-End-lem}
that for of our generators $G=P_{ij}$ or $G=Q_j$ the algebra 
$\End(G)$ is a quotient of the algebra $A=\k[U_1,\ldots,U_k,X_1,X_2]$.
Let us set for any pair of our generators $G,G'$,
$$\End(G,G'):=\End(G)\ot_A \End(G').$$
The morphism spaces $\Hom(G,G')$ and $\Hom(G',G)$ are $\End(G,G')$-modules,
and we will show that they are either $0$ or free of rank $1$ over $\End(G,G')$.

\subsubsection{Morphisms between $P_{ij}$'s}\label{P-morphisms-sec}

Looking at the supports we see that $\Hom(P_{ij},P_{i'j'})=0$ unless $|i'-i|\le 1$ and $|j'-j|\le 1$.
The nonzero morphisms are computed in the Lemma below. 

\begin{lem}\label{Pij-morphisms-lem}
(i) For $i>j$, we have generators
$$\a_{ij}\in \Hom^{-1-2(k-j)}(P_{i,j+1},P_{i,j}), \ \b_{ij}\in \Hom^{1+2(k-j)}(P_{i,j},P_{i,j+1})$$
such that $\a_{ij}\b_{ij}=\b_{ij}\a_{ij}=U_{j+1}$, and such that the spaces
$$\Hom^*(P_{i,j+1},P_{i,j})=\End(P_{i,j},P_{i,j+1})\cdot \a_{ij}, \ \ \Hom^*(P_{i,j},P_{i,j+1})=\End(P_{i,j},P_{i,j+1})\cdot\b_{ij}$$ 
are free $\End(P_{i,j},P_{i,j+1})$-modules.
The generator $\a_{ij}$ (resp., $\b_{ij}$) comes from the exact triangle 
$$\OO_{R_{i,j}}(-2-2(k-j))\rTo{c_{2,j}} \OO_{R_{i,j}\cup R_{i,j+1}}\to \OO_{R_{i,j+1}}\rTo{\a_{ij}} \OO_{R_{i,j}}[1](-2-2(k-j))$$
(resp., from the triangle
$$\OO_{R_{i,j+1}}(2(k-j))\rTo{c_{2,j+1}^{-1}} \OO_{R_{i,j}\cup R_{i,j+1}}\to \OO_{R_{i,j}}\rTo{\b_{ij}} \OO_{R_{i,j+1}}[1](2(k-j))\ ),$$
well defined near $R_{ij}\cap R_{i,j+1}$.

(ii) Similarly, for $i>j$, we have generators
$$\ga_{ij}\in \Hom^{-1}(P_{i+1,j},P_{i,j}), \ \de_{ij}\in \Hom^1(P_{i,j},P_{i+1,j})$$
such that $\ga_{ij}\de_{ij}=\de_{ij}\ga_{ij}=U_{i+1}$, and such that the spaces
$$\Hom^*(P_{i+1,j},P_{i,j})=\End(P_{i,j},P_{i+1,j})\cdot \ga_{ij}, \ \ \Hom^*(P_{i,j},P_{i+1,j})=\End(P_{i,j},P_{i+1,j})\cdot\de_{ij}$$ 
are free $\End(P_{i,j},P_{i+1,j})$-modules.
For $i<k$, the generator $\ga_{ij}$ (resp., $\de_{ij}$) has the form 
$$\ga_{ij}=\ga'_{ij}\cdot \frac{f_1^*p_{i+1}^*z_1}{f_1^*p_{i}^*z_0},$$ 
        $$(\text{ resp., }\ \de_{ij}=\de'_{ij}\cdot \frac{f_1^*p_{i}^*z_0}{f_1^*p_{i+1}^*z_1}),$$
where $\ga'_{ij}$ (resp., $\de'_{ij}$) comes from the exact triangle
$$\OO_{R_{i,j}}(-2-2(k-i))\rTo{c_{1,i}} \OO_{R_{i,j}\cup R_{i+1,j}}\to \OO_{R_{i+1,j}}\rTo{\ga'_{ij}} \OO_{R_{i,j}}[1](-2-2(k-i))$$
(resp., from the triangle
$$\OO_{R_{i+1,j}}(2(k-i))\rTo{c_{1,i+1}^{-1}} \OO_{R_{i,j}\cup R_{i+1,j}}\to \OO_{R_{i,j}}\rTo{\de'_{ij}} \OO_{R_{i+1,j}}[1](2(k-i))\ ),$$
well defined near $R_{i+1,j}\cap R_{i+1,j}$.
For $i=k$, we have
$$\ga_{kj}=\ga'_{kj}\cdot \frac{1}{f_1^*p_{k-1}^*z_0}, \ \ \de_{kj}=\de'_{0j}\cdot f_1^*p_{k-1}^*z_0,$$
where $\ga'_{kj}$ and $\de'_{kj}$ come from the above triangles with $i=k$.

\noindent
(iii) For $i-j>1$, the space $\Hom(P_{i+1,j+1},P_{ij})$ (resp., $\Hom(P_{ij},P_{i+1,j+1})$) is a free $\End(P_{ij},P_{i+1,j+1})$-module
generated by $\ga_{i,j+1}\a_{ij}=\a_{i+1,j}\ga_{ij}$ (resp., $\de_{ij}\b_{i+1,j}=\b_{ij}\de_{i,j+1}$).
The similar result holds for $\Hom(P_{i+1,j-1},P_{ij})$ (resp., $\Hom(P_{ij},P_{i+1,j-1})$).
The space $\Hom(P_{i+1,i+2},P_{i,i+1})$ (resp., $\Hom(P_{i,i+1},P_{i+1,i+2})$) is a free $\End(P_{i,i+1},P_{i+1,i+2})$-module
generated by $\ga_{i,i+2}\a_{i,i+1}$ (resp., $\b_{i,i+1}\de_{i,i+2}$).
\end{lem}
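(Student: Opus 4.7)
The plan is to proceed in complete parallel with the endomorphism computation in Lemma \ref{local-End-lem} and Lemma \ref{Pij-End-lem}. For each ordered pair of generators appearing in the statement, I would first compute $\und{\Hom}$ locally in a neighborhood of $\ov{R}_{ij}\cap \ov{R}_{i'j'}$ by tensoring together the Koszul presentations of both matrix factorizations. The coinciding Koszul factors contribute via $\und{\End}(\{a,b\})\simeq \OO/(a,b)$ from Lemma \ref{Koszul-mf-lem}(i), while the differing ones are handled via Lemma \ref{Koszul-mf-lem}(ii)--(iii). The outcome will always be of the form $\mf(i_*H)$ for a coherent sheaf $H$ on an intersection stratum, so I can then extract the claimed generator from the last assertion of each part of Lemma \ref{Koszul-mf-lem}.

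For part (i), only the three Koszul factors involving $V_j,V_{j+1},V_{j+2}$ differ between $P_{i,j}$ and $P_{i,j+1}$. On the open set where we work, the identity $V_{j+1}=f_2^*s_1(j)\cdot f_2^*p_j^*z_1=f_2^*s_2(j+1)\cdot f_2^*p_{j+1}^*z_0$ and the transition formulas between $p_j$ and $p_{j+1}$ on $R_{ij}\cap R_{i,j+1}$ will allow me to match the two triples of Koszul factors with the $\{c,ab\}$ vs.\ $\{ca,b\}$ pattern of Lemma \ref{Koszul-mf-lem}(iii). The generator $\a_{ij}$ (resp.\ $\b_{ij}$) is then read off directly from the exact triangle appearing in that lemma, which is exactly the triangle stated in the lemma. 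Part (ii) is identical, with the roles of $f_1$ and $f_2$ interchanged: the changing factors involve $V_i,V_{i+1},V_{i+2}$ and one invokes the same lemma in the $f_1$-direction. Part (iii) requires no new input: the ``$i$-changing'' factors and the ``$j$-changing'' factors are independent in the Koszul tensor product, so $\und{\Hom}(P_{ij},P_{i+1,j+1})$ is obtained by stacking the two one-step $\und{\Hom}$'s, and the description of the generator as $\ga_{i,j+1}\a_{ij}=\a_{i+1,j}\ga_{ij}$ follows from commutativity of the tensor product together with non-vanishing, which is visible from Lemma \ref{Koszul-mf-lem}(iii).

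To pass from the local statements to the claimed global formulas, I would push forward via $\pi:\ov{\XX}_{2,k}\to \A^{k+1}_{U_1,\ldots,U_k,X_1}$ exactly as in Lemma \ref{Pij-End-lem}. Each local structure sheaf $\OO_Z$ produced above pushes forward to a free module of rank one over the corresponding quotient of $\k[U_1,\ldots,U_k,X_1]$, so $\Hom^*(P,P')$ acquires the stated $\End(P,P')$-module structure with the explicit generator. The relations $\a_{ij}\b_{ij}=\b_{ij}\a_{ij}=U_{j+1}$ (and analogously $\ga_{ij}\de_{ij}=U_{i+1}$) will come from tracking what the composition does in the Koszul tensor description: each of $\a_{ij}$ and $\b_{ij}$ is represented by one of the two factors in the factorization $V_{j+1}U_{j+1}$, and after restriction to the support (which kills $V_{j+1}$ modulo the relevant ideal) the composition is multiplication by $U_{j+1}$. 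The grading shifts $-1-2(k-j)$ and $1+2(k-j)$ are read off from the $\Z$-grading \eqref{comp-grading} together with the weights of $f_2^*p_j^*z_0,f_2^*p_j^*z_1$.

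The main obstacle is the boundary casework. When $i=k$, $j=0$, or $i=j+1$, the complete intersection presentations of $\ov{R}_{ij}$ given in Lemma \ref{k=2-complete-int-lem} differ from the generic one (for instance $\ov{R}_{ii}$ is a blow-up rather than $\P^1\times\P^1$, and the compact direction degenerates), which forces extra factors such as $f_1^*p_{k-1}^*z_0^{\pm 1}$ in the formulas for $\ga_{kj},\de_{kj}$. These cases will need to be treated individually, exactly as in the endomorphism calculation; in each, one must identify the correct Koszul presentation and check that the distinguished exact triangle of Lemma \ref{Koszul-mf-lem}(ii) or (iii) still produces the desired generator with the stated twist.
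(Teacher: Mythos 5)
Your proposal follows essentially the same route as the paper's proof: localize near the relevant intersection stratum, present both matrix factorizations as tensor products of Koszul factorizations, compute $\und{\Hom}$ factor by factor via Lemma \ref{Koszul-mf-lem}, push forward to the affine base of $U$-variables to obtain the free rank-one module structure over $\End(P,P')$, and chase the generators and the relations $\a_{ij}\b_{ij}=U_{j+1}$ through the resulting exact triangles, with separate bookkeeping for the boundary cases $i=k$, $j=0$, $i=j+1$. One small correction: after localizing near $R_{ij}\cap R_{i,j+1}$ only the factorization of $V_{j+1}U_{j+1}$ genuinely differs between the two objects, and the relevant pattern for it is $\und{\Hom}(\{bc,a\},\{ca,b\})$ from Lemma \ref{Koszul-mf-lem}(ii) (which supplies the shift $[-1]$ and exactly the stated triangle), rather than part (iii).
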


\begin{proof} As before, we first compute the sheaves of homomorphisms using the complete intersection
equations for $\ov{R}_{ij}$ and Lemma \ref{Koszul-mf-lem}.
Furthermore, when calculating $\Hom(P_{ij},P_{i'j'})$
we only need to study the situation in a neighborhood of $(\ov{R}_{ij}\cap \ov{R}_{i'j'})\times \A^k$.

For example, let us compute $\und{\Hom}(P_{i,j+1},P_{i,j})$, where $i<k$.
We can work near $(R_{i,j}\cap R_{i,j+1})\times \A^k$, so we can replace
$f_2^*s_1(j+1)$ with $V_{j+2}$ and $f_2^*s_2(j+1)$ with $C_{2,j}$ in the equations of $R_{i,j+1}$.
Similarly, 
we can replace $f_2^*s_1(j)$ with $C_{2,j+1}^{-1}$ and $f_2^*s_2(j)$ with $V_{j}$
in the equations of $R_{i,j}$.
Thus, we split $W$ as
$$W=\sum_{r\neq i,i+1,j+1}V_rU_r+f_1^*s_1(i)\cdot f_1^*p_i^*z_1\cdot U_{i+1}+f_1^*s_2(i)\cdot f_1^*p_i^*z_0\cdot U_{i}+ 
C_{2,j+1}^{-1}C_{2,j} \cdot U_{j+1},$$
and then take the tensor product of the following sheaves
of homomorphisms between Koszul matrix factorizations:
\begin{itemize}
\item for $r\neq i,i+1,j+1$, $\und{\End}(\{U_r,V_r\})\simeq \OO/(U_r,V_r)$;
\item $\und{\End}(\{f_1^*p_i^*z_1\cdot U_{i+1},f_1^*s_1(i)\})\simeq \OO/(f_1^*p_i^*z_1\cdot U_{i+1},f_1^*s_1(i))$;
\item $\und{\End}(\{f_1^*p_i^*z_0\cdot U_{i},f_1^*s_2(i)\})\simeq \OO/(f_1^*p_i^*z_0\cdot U_{i},f_1^*s_2(i));$
\item $\und{\Hom}(\{C_{2,j+1}^{-1}\cdot U_{j+1},C_{2,j}\},\{C_{2,j}\cdot U_{j+1},C_{2,j+1}^{-1}\})\simeq
\OO/(C_{2,j+1}^{-1},C_{2,j})[-1]$.
\end{itemize}
Thus, we get an isomorphism on $\ov{R}_{ij}\cap\ov{R}_{i,j+1}\times \A^k$,
$$\und{\Hom}(P_{i,j+1},P_{i,j})\simeq 
\OO/((U_r \ |\ r\neq i,i+1,j+1),f_1^*p_i^*z_1\cdot U_{i+1},f_1^*p_i^*z_0\cdot U_{i})[-1]\simeq
\OO_{C(U_i,U_{i+1})\times \A^1_{U_{j+1}}}[-1],$$
where $C(U_i,U_{i+1})$ is the nodal chain with the components $\A^1_{U_i}$, $\P^1$,
and $\A^1_{U_{i+1}}$. 
Thus, we get
$$\Hom(P_{i,j+1},P_{i,j})\simeq \k[U_i,U_{i+1},U_{j+1}]/(U_iU_{i+1})[-1]=\End(P_{i,j+1},P_{i,j})[-1].$$
Chasing the generators from Lemma \ref{Koszul-mf-lem} we get the statement in this case.

In the case of $\und{\Hom}(P_{i+1,j},P_{i,j})$
we take the tensor product of the sheaves of homomorphisms,
\begin{itemize}
\item for $r\neq i+1,j,j+1$, $\und{\End}(\{U_r,V_r\})\simeq \OO/(U_r,V_r)$;
\item $\und{\End}(\{f_2^*p_j^*z_1\cdot U_{j+1},f_2^*s_1(j)\})\simeq \OO/(f_2^*p_j^*z_1\cdot U_{j+1},f_2^*s_1(j))$;
\item $\und{\End}(\{f_2^*p_j^*z_0\cdot U_{j},f_2^*s_2(j)\})\simeq \OO/(f_2^*p_j^*z_0\cdot U_{j},f_2^*s_2(j))$;
\item $\und{\Hom}(\{C_{1,i+1}^{-1}\cdot U_{i+1},C_{1,i}\},\{C_{1,i}\cdot U_{i+1},C_{1,i+1}^{-1}\})\simeq
\OO/(C_{1,i+1}^{-1},C_{1,i})[-1]$;
\end{itemize}
(we can omit the twists by line bundles in the definition of $P_{ij}$ and $P_{i+1,j}$ since we are working
near $\ov{R}_{ij}\cap \ov{R}_{i+1,j}$).
Using trivializations of the relevant line bundles over $\ov{R}_{ij}\cap \ov{R}_{i+1,j}\times \A^k$, we deduce the result
in this case.

Similarly we consider other $\Hom$-spaces in (i) and (ii).
Let us show how a similar computation goes in (iii). For example, for $i-j>1$ we compute
$\und{\Hom}(P_{i+1,j+1},P_{i,j})$ in the neighborhood of $(R_{i,j}\cap R_{i+1,j+1})\times\A^k$
as the tensor product of
\begin{itemize}
\item for $r\neq i+1,j+1$, $\und{\End}(\{U_r,V_r\})\simeq \OO/(U_r,V_r)$;
\item $\und{\Hom}(\{C_{1,i+1}^{-1}\cdot U_{i+1},C_{1,i}\},\{C_{1,i}\cdot U_{i+1},C_{1,i+1}^{-1}\})\simeq
\OO/(C_{1,i+1}^{-1},C_{1,i})[-1]$;
\item $\und{\Hom}(\{C_{2,j+1}^{-1}\cdot U_{j+1},C_{2,j}\},\{C_{2,j}\cdot U_{j+1},C_{2,j+1}^{-1}\})\simeq
\OO/(C_{2,j+1}^{-1},C_{2,j})[-1]$.
\end{itemize}
Recall that $R_{ij}\cap R_{i+1,j+1}$ is a single point $p_{i+1,j+1}$.
Thus, we get an isomorphism on $p_{i+1,j+1})\times\A^k$,
$$\und{\Hom}(P_{i+1,j+1},P_{i,j})\simeq \OO_{p_{i+1,j+1}\times\A^2_{U_{i+1},U_{j+1}}}[-2],$$
so 
$$\Hom(P_{i+1,j+1},P_{ij})\simeq \k[U_{i+1},U_{j+1}][-2]\simeq \End(P_{ij},P_{i+1,j+1})[-2]$$
and we can chase the generator in the above calculation.
\end{proof}

\subsubsection{Morphisms between $Q_i$'s}\label{Q-morphisms-sec}

We have the following analog of Lemma \ref{Pij-morphisms-lem}.
The proof is analogous (but simpler), so we omit it.

\begin{lem}\label{Qj-morphisms-lem}
(i) For $j\ge 0$, we have generators
$$\a_{j}\in \Hom^{-1-2(k-j)}(Q_{j+1},Q_j), \ \b_{j}\in \Hom^{1+2(k-j)}(Q_{j},Q_{j+1})$$
such that $\a_{j}\b_{j}=\b_{j}\a_{j}=U_{j+1}$, and such that
$$\Hom^*(Q_{j+1},Q_j)=\End(Q_{j},Q_{j+1})\cdot \a_j, \ \ \Hom^*(Q_{j},Q_{j+1})=\End(Q_{j},Q_{j+1})\cdot\b_{j}$$ 
in a way compatible with the $\End(Q_{j})-\End(Q_{j+1})$-bimodule structure.
The generator $\a_{j}$ (resp., $\b_{j}$) comes from the exact triangle 
$$\OO_{R_{*j}}(-2-2(k-j))\rTo{c_{2,j}} \OO_{R_{*j}\cup R_{*j+1}}\to \OO_{R_{*j+1}}\rTo{\a_{j}} \OO_{R_{*j+1}}[1](-2-2(k-j))$$
(resp., from the triangle
$$\OO_{R_{*j+1}}(2(k-j))\rTo{c_{2,j+1}^{-1}} \OO_{R_{*j}\cup R_{*j+1}}\to \OO_{R_{*j}}\rTo{\b_{j}} \OO_{R_{*j+1}}[1](2(k-j))\ ),$$
well defined near $R_{*j}\cap R_{*j+1}$.
\end{lem}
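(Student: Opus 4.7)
The plan is to follow exactly the pattern of the proof of Lemma \ref{Pij-morphisms-lem}(i), with the simplification that the $f_1$-direction contributions are absent: since $Q_j=\OO_{\ov{R}_{*j}\times\A^k}$ is the pullback under $f_2$ of the analogous ``column'' sheaf in $\ov{\YY}_{1,k}$, its local presentation as a tensor product of Koszul matrix factorizations only involves the pieces $\{U_r,V_r\}$ for $r\ne j,j+1$ together with the two $f_2$-factors $\{f_2^*p_j^*z_1\cdot U_{j+1},f_2^*s_1(j)\}$ and $\{f_2^*p_j^*z_0\cdot U_j,f_2^*s_2(j)\}$ coming from Lemma \ref{proj-P1-lem}(ii). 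There are no Koszul factors associated with the $C_{1i}$, so the computation is strictly shorter than in Section \ref{P-morphisms-sec}.

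For $\und{\Hom}(Q_{j+1},Q_j)$ I would work in a neighborhood of $(\ov{R}_{*j}\cap\ov{R}_{*,j+1})\times\A^k$. Near this locus we may replace $f_2^*s_1(j)$ with $C_{2,j+1}^{-1}$, $f_2^*s_2(j)$ with $V_j$, and similarly $f_2^*s_1(j+1)$ with $V_{j+2}$, $f_2^*s_2(j+1)$ with $C_{2,j}$. Then the potential decomposes as
\[
\bw_{2,k}=\sum_{r\ne j+1}V_rU_r+C_{2,j+1}^{-1}C_{2,j}\cdot U_{j+1},
\]
and $\und{\Hom}(Q_{j+1},Q_j)$ is the tensor product of the sheaves $\und{\End}(\{U_r,V_r\})\simeq \OO/(U_r,V_r)$ for $r\ne j+1$ with the cross-term $\und{\Hom}(\{C_{2,j+1}^{-1}\cdot U_{j+1},C_{2,j}\},\{C_{2,j}\cdot U_{j+1},C_{2,j+1}^{-1}\})$. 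By Lemma \ref{Koszul-mf-lem}(ii) this last piece is quasi-isomorphic to $\OO/(C_{2,j+1}^{-1},C_{2,j})[-1]$ with an explicit generator. Combining, one obtains $\OO_{(\ov{R}_{*j}\cap\ov{R}_{*,j+1})\times\A^1_{U_{j+1}}}[-1]$, and taking $T$-invariant global sections produces a free $\End(Q_j,Q_{j+1})$-module of rank one generated in $\Z$-degree $-1-2(k-j)$, where the shift $2(k-j)$ reflects the weight $|C_{2,j}|=2(k-j)$ from \eqref{comp-grading}. The dual computation, using the other half of Lemma \ref{Koszul-mf-lem}(iii), produces $\b_j$ in degree $1+2(k-j)$, and the relations $\a_j\b_j=\b_j\a_j=U_{j+1}$ follow by writing out representative cocycles in the two Koszul complexes and multiplying, exactly as in the $P$-case.

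To identify $\a_j$ and $\b_j$ with the connecting maps of the claimed exact triangles, I would chase the generators through the quasi-isomorphisms of Lemma \ref{Koszul-mf-lem}(ii)--(iii) as in the proof of Lemma \ref{Pij-morphisms-lem}(i); these come from the short exact sequences
\[
0\to\OO_{\ov{R}_{*j}}(-2-2(k-j))\xrightarrow{c_{2,j}}\OO_{\ov{R}_{*j}\cup\ov{R}_{*,j+1}}\to\OO_{\ov{R}_{*,j+1}}\to 0
\]
and its companion with the roles of $j$ and $j+1$ swapped. The only real bookkeeping issue, and the step most likely to require care, is matching the $T$-equivariant structures on the line bundles $p_i^*\OO(-1)$ with the gradings of $C_{2,j}$ and $C_{2,j+1}^{-1}$ so that the degrees $\mp(1+2(k-j))$ come out correctly; this is handled in the same manner as for the $P_{ij}$ and presents no new difficulty.
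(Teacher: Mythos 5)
Your proposal is correct and follows exactly the route the paper intends: the paper omits this proof as ``analogous (but simpler)'' to Lemma \ref{Pij-morphisms-lem}, and your computation is precisely that analogue — the same local Koszul presentation pulled back under $f_2$, the same splitting of $\bw_{2,k}$ near $(\ov{R}_{*j}\cap \ov{R}_{*,j+1})\times\A^k$, and the same chase of generators to identify $\a_j,\b_j$ with the connecting maps (including the boundary cases $j=0$ and $j=k-1$, where one substitutes the slightly different Koszul factors of $Q_0$ and $Q_k$ but the cross-term has the same shape). The only slip is a citation: the dual computation for $\b_j$ again uses Lemma \ref{Koszul-mf-lem}(ii) with the roles of $a=C_{2,j}$ and $b=C_{2,j+1}^{-1}$ interchanged, not part (iii), which concerns the splitting pattern $\{c,ab\}$ versus $\{ca,b\}$ and enters only in the $Q_j$--$P_{kj}$ computations of Lemma \ref{PQ-morphisms-lem}.
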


\subsubsection{Morphisms between $Q_j$ and $P_{ij}$}\label{QP-morphisms-sec}

\begin{lem}\label{Hom-P-Q-van-lem}
For $k>i>j$ and any $r$, one has $\Hom(P_{ij},Q_r)=\Hom(Q_r,P_{ij})=0$. 
\end{lem}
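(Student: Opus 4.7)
The plan is to compute $\und{\Hom}$ locally as a tensor product of Koszul matrix factorizations, and observe that the result carries a negative twist by $L := f_1^* p_i^* \OO(-1)$ along $p_i$-fibers. Since $k>i$ ensures that $p_i$ is a nontrivial projection to $\P^1$, this will force global vanishing via $R\Ga(\P^1,\OO(-1))=0$.

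For $|r-j|>1$: two toric components $\ov{R}_{ij}$ and $\ov{R}_{i',r}$ meet in $\ov{\ZZ}_{2,k}$ only when $|i-i'|\le 1$ and $|j-r|\le 1$, so the supports of $P_{ij}$ and $Q_r$ are disjoint, and both $\und{\Hom}$-sheaves vanish identically.

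For $r\in\{j-1,j,j+1\}$, I will carry out the local computation of $\und{\Hom}(P_{ij},Q_r)$ and $\und{\Hom}(Q_r,P_{ij})$ near each irreducible piece of $\ov{R}_{ij}\cap\ov{R}_{*r}$, following the pattern of Lemma \ref{local-End-lem} and Lemma \ref{Pij-morphisms-lem}. The crucial observation is that the Koszul factors indexed by $r'\in\{i,i+1\}$ in $P_{ij}$ are of the form $\{ca,b\}$ with $c=U_{r'}$, $a=f_1^* p_i^* z_?$, $b=f_1^* s_?(i)$, whereas in $Q_r$ the corresponding factors collapse to $\{c,ab\}=\{U_{r'},V_{r'}\}$. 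By Lemma \ref{Koszul-mf-lem}(iii), the pair of factors in directions $i$ and $i+1$ contributes an extra twist $L^{\otimes 2}$ in one direction of $\und{\Hom}$ and no twist in the other. Combining with the intrinsic twist on $P_{ij}$ itself (which enters $\und{\Hom}(P_{ij},-)$ as $L^{-1}$ and $\und{\Hom}(-,P_{ij})$ as $L$), both $\und{\Hom}$-sheaves identify, up to a shift, with $\mf(\OO_Z\otimes L)$ for a closed subscheme $Z\subset\ov{\XX}_{2,k}$ determined by the remaining $\{U_{r'},V_{r'}\}$-factors via Lemma \ref{Koszul-mf-lem}(i).

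It remains to show $R\Ga(Z,L\otimes l_0^n|_Z)=0$ for every $n$, which, after unpacking $\com$, is equivalent to the vanishing of $\Hom^*$. Each irreducible component of $Z$ is a product of an affine space (in the $U$-variables) with a subvariety of $\ov{R}_{ij}$ that is a $\P^1$-fibration in the $p_i$-direction: namely $\ov{R}_{ij}\simeq\P^1\times\P^1$ itself or its $p_j$-fibers $V(f_2^* p_j^* z_?)$ (for $r=j$), and the curves $L^h_{i,j}$ or $L^h_{i,j+1}$ (for $r=j\pm 1$). In every case $L$ restricts to $\OO(-1)$ on the $\P^1$-fiber, and the K\"unneth formula together with $R\Ga(\P^1,\OO(-1))=0$ yields the claim (the equivariant character $l_0$ only shifts the weight and does not affect the underlying cohomology).

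The main obstacle is the case analysis for $r=j\pm 1$, where the Koszul factorizations on the two sides in directions $j,j+1$ no longer align symmetrically and one must also handle contributions from any corner points $p_{i\pm 1,j\pm 1}$ appearing in the intersection of supports; the unifying vanishing mechanism is uniform, however, and traceable to the negative $L$-twist along $p_i$-fibers made available by the hypothesis $k>i$.
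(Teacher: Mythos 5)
Your proposal is correct, and its computational core is the same as the paper's: present $P_{ij}$ and $Q_r$ as tensor products of Koszul matrix factorizations, reduce $\und{\Hom}$ to a sheaf supported on $(\ov{R}_{ij}\cap \ov{R}_{*r})\times\A^k$, and kill its cohomology using the twist $f_1^*p_i^*\OO(-1)$ restricted to a $\P^1$ in the $p_i$-direction, which exists precisely because $i<k$. The genuine difference is in how the two directions of $\Hom$ are handled. The paper computes only $\Hom(Q_r,P_{ij})$ directly (for $r=j,j\pm1$) and deduces the vanishing of $\Hom(P_{ij},Q_r)$ from Grothendieck--Serre duality, Lemma \ref{SD-mf-lem}(ii), which rests on the Calabi--Yau property of Corollary \ref{CY-cor}; since that argument requires the relative canonical bundle to restrict trivially to the relevant component, it fails for $r=j=0$ (compactified components), and the paper must compute $\Hom(P_{i0},Q_0)$ separately. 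You instead note that Lemma \ref{Koszul-mf-lem}(iii), applied to the pair of Koszul factors in the directions $i,i+1$ (where $P_{ij}$ has factors of type $\{ca,b\}$ and $Q_r$ of type $\{c,ab\}$), yields a twist by $\bigl(f_1^*p_i^*\OO(-1)\bigr)^{\ot 2}$ in one direction of $\und{\Hom}$ and none in the other, so that after the intrinsic twist built into the definition of $P_{ij}$ both $\und{\Hom}$-sheaves carry exactly one factor of $f_1^*p_i^*\OO(-1)$. This bookkeeping checks out --- it reproduces the paper's answer $\OO_Z(-1)$ in the one case where both directions are computed explicitly --- and it renders the duality step unnecessary, at the cost of roughly doubling the number of local Koszul computations (including the non-aligned presentations when $i=j+1$ and the corner points, which you correctly flag). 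Both routes are complete; yours is more uniform and self-contained, the paper's is shorter.
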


\begin{proof} We only have to consider the cases $r=j-1$, $j$ or $j+1$.
In addition, we can use the vanishing criterion for
the proper morphism $\ov{\XX}_{2,k}\to \A^{2k+2}_{U_1,\ldots,U_k,V_1,\ldots,V_k,X_1,X_2}$ as in 
Lemma \ref{SD-mf-lem}(ii). Namely, by this Lemma, assuming that either $j>0$ or $r>0$, we obtain
that the vanishing of $\Hom(Q_r,P_{ij})$ implies the vanishing of $\Hom(P_{ij},Q_r)$.
Indeed, the local computation shows that $\und{\Hom}(P_{ij},Q_r)$ is a push-forward of a coherent sheaf
from $R_{ij}\cap R_{*r}$, so we can use the fact that for
$j>0$ (resp., $r>0$), the relative canonical bundle has trivial restriction to $R_{ij}$ (resp., $R_{*j}$)
(which follows from Corollary \ref{CY-cor}).

However, in the case $r=j=0$, we have to deal with both $\Hom$-spaces.

\noindent
{\bf Case $r=j$, $\Hom(Q_j,P_{ij})$}.
As before, we use the equations of $\ov{R}_{*j}$ and Lemma \ref{Koszul-mf-lem}. 
For $i-1>j>0$, we use the splitting \eqref{W-decomp-eq} of $W$, and consider the tensor product of sheaf
homomorphisms between the following Koszul matrix factorizations, twisted by $f_1^*p_i^*\OO(-1)$ (due to
the definition of $P_{ij}$),
\begin{itemize}
\item for $r\neq i,i+1,j,j+1$, $\und{\End}(\{U_r,V_r\})\simeq \OO/(U_r,V_r)$;
\item $\und{\Hom}(\{U_{i+1},V_{i+1}\},\{f_1^*p_i^*z_1\cdot U_{i+1},f_1^*s_1(i)\})\simeq \OO/(U_{i+1},f_1^*s_1(i))$; 
\item $\und{\Hom}(\{U_{i},V_{i}\},\{ f_1^*p_i^*z_0\cdot U_{i},f_1^*s_2(i)\})\simeq 
\OO/(U_{i},f_1^*s_2(i))$;
\item $\und{\End}(\{f_2^*p_j^*z_1\cdot U_{j+1},f_2^*s_1(j)\})\simeq \OO/(f_2^*p_j^*z_1\cdot U_{j+1},f_2^*s_1(j))$;
\item $\und{\End}(\{f_2^*p_j^*z_0\cdot U_{j}, f_2^*s_2(j)\})\simeq 
\OO/(f_2^*p_j^*z_0\cdot U_{j}, f_2^*s_2(j))$.
\end{itemize}
Thus, we get an isomorphism on $\ov{R}_{ij}\times \A^k$, 
\begin{align*}
&\und{\Hom}(Q_j,P_{ij})\simeq \OO/((U_r \ |\ r\neq j,j+1),U_{j+1}\cdot f_2^*p_j^*z_1,f_2^*p_j^*z_0\cdot U_{j})\ot f_1^*p_i^*\OO(-1)=\\
&\OO_Z(-1),
\end{align*}
where $Z\simeq \P^1\times C$, with $\OO(-1)$ coming from the factor $\P^1$. Here, for $j>0$,
$C$ is the nodal curve $C_2(U_j,U_{j+1})$, as in the proof of Lemma \ref{Pij-End-lem}.
For $j=0$, we have $C=C_2(U_0)$.
Calculating the cohomology of $\OO(-1)$ on this product we get zero.

\noindent
{\bf Case $r=j=0$, $\Hom(P_{i0},Q_0)$}. 
The sheaf $\und{\Hom}(P_{i0},Q_0)\ot f_1^*p_i^*\OO(-1)$ is given by the tensor product of
\begin{itemize}
\item $\und{\End}(\{U_r,V_r\})\simeq \OO/(U_r,V_r)$, for $r\neq 0,i,i+1$;
\item $\und{\Hom}(\{f_1^*p_i^*z_1\cdot U_{i+1},f_1^*s_1(i)\},\{U_{i+1},V_{i+1}\})\simeq \OO/(f_1^*s_1(i),U_{i+1})\ot f_1^*p_{i}^*\OO(-1)$; 
\item $\und{\Hom}(\{f_1^*p_i^*z_0\cdot U_{i},f_1^*s_2(i)\},\{(U_{i},V_{i}\})\simeq 
\OO/(f_1^*s_2(i),U_{i})\ot f_1^*p_i^*\OO(-1)$; 
\item $\und{\End}(\{f_2^*p_0^*z_1\cdot U_1,f_2^*s_1(0)\})\simeq \OO/(f_2^*p_0^*z_1\cdot U_1,f_2^*s_1(0))$,
\end{itemize}
where for $i=1$ we replace the last two factors with
\begin{align*}
&\und{\Hom}(\{f_1^*p_{1}^*z_0\cdot f_2^*p_0^*z_1\cdot U_1,s_1\},\{f_2^*p_0^*z_1\cdot U_1,f_2^*s_1(0)\})\simeq\\
&\OO/(f_2^*p_0^*z_1\cdot U_1,s_1)\ot f_1^*p_{1}^*\OO(-1).
\end{align*}
This leads to an isomorphism over $\ov{R}_{i0}\times \A^k$,
$$\und{\Hom}(P_{i0},Q_0)\simeq \OO/((U_r\ |\ r\neq 0),f_2^*p_0^*z_1\cdot U_1)\ot f_1^*p_i^*\OO(-1)\simeq \OO_Z(-1),$$
where $Z=\P^1\times C_2(U_0)$, with $\OO(-1)$ coming from the factor $\P^1$. 
Hence, the cohomology of this sheaf vanish.

\noindent
{\bf Case $r=j+1$, $\Hom(Q_{j+1},P_{ij})$}.
Assume first that $i>j+1$.
We can work locally near $(R_{ij}\cap R_{*,j+1})\times \A^k$, so we can replace $f_2^*s_1(j)$ with $C_{2,j+1}^{-1}$ 
and $f_2^*s_2(j)$ with $V_j$ (resp., (resp., $f_2^*s_1(j+1)$ with $V_{j+2}$ and $f_2^*s_2(j+1)$ with $C_{2,j}$)
in the equations of $R_{ij}$ (resp., $R_{*,j+1}$). Thus, 
$\und{\Hom}(Q_{j+1},P_{ij})\ot f_1^*p_i\OO(1)$ is the tensor product of
\begin{itemize}
\item $\und{\End}(\{U_r,V_r\})\simeq \OO/(U_r,V_r)$, for $r\neq i,i+1,j+1$;
\item $\und{\Hom}(\{U_{i+1},V_{i+1}\},\{f_1^*p_i^*z_1\cdot U_{i+1},f_1^*s_1(i)\})\simeq \OO/(U_{i+1},f_1^*s_1(i));$
\item $\und{\Hom}(\{U_{i},V_{i}\},\{f_1^*p_i^*z_0\cdot U_{i},f_1^*s_2(i)\})\simeq \OO/(U_{i},f_1^*s_2(i));$
\item $\und{\Hom}(\{C_{2,j+1}^{-1}\cdot U_{j+1},C_{2,j}\},\{C_{2,j}\cdot U_{j+1},C_{2,j+1}^{-1}\})\simeq\OO/(C_{2,j+1}^{-1},C_{2,j})[-1].$
\end{itemize}
Hence, we get an isomorphism on $(R_{ij}\cap R_{*,j+1})\times \A^k$,
\begin{equation}\label{sh-Hom-Qj-1Pij-isom}
\und{\Hom}(Q_{j+1},P_{ij})\simeq \OO/(U_r \ |\ r\neq j+1)\ot f_1^*p_i^*\OO(-1)[-1]=
\OO_{(R_{ij}\cap R_{*,j+1})\times \A^1_{U_{j+1}}} \ot f_1^*p_i^*\OO(-1)[1],
\end{equation}
which has vanishing cohomology.

In the case $i=j+1$ we can use 
$$\ldots,V_{j+3},f_1^*s_1(j+1),s_{j+1},V_{j},\ldots$$
as equations of $R_{j+1,j}$ and
$$\ldots,V_{j+3},V_{j+2},f_2^*p_j^*z_1,V_{j},\ldots$$
as equations of $Q_{*,j+1}$, so 
$\und{\Hom}(Q_{j+1},P_{j+1,j})\ot f_1^*p_i\OO(1)$ is the tensor product of
\begin{itemize}
\item $\und{\End}(\{U_r,V_r\})\simeq \OO/(U_r,V_r)$, for $r\neq j+1,j+2$;
\item $\und{\Hom}(\{U_{j+2},V_{j+2}\},\{f_1^*p_{j+1}^*z_1\cdot U_{j+2},f_1^*s_1(j+1)\})\simeq \OO/(U_{j+2},f_1^*s_1(j+1));$
\item $\und{\Hom}(\{s_{j+1} f_1^*p_{j+1}^*z_0\cdot U_{j+1},f_2^*p_{j}^*z_1\},
\{f_1^*p_{j+1}^*z_0 f_2^*p_{j}^*z_1\cdot U_{j+1},s_{j+1}\})\simeq\OO/(f_2^*p_{j}^*z_1,s_{j+1})\ot f_2^*p_{j}^*\OO(1)[-1].$
\end{itemize}
Taking into account the trivialization of $f_2^*p_{j}^*\OO(1)$, we see that isomorphism \eqref{sh-Hom-Qj-1Pij-isom} still holds,
and so the $\Hom$-space vanishes.

\noindent
{\bf Case $r=j-1$, $\Hom(Q_{j-1},P_{ij})$}.
We work locally near $(R_{ij}\cap R_{*,j-1})\times\A^k$, so we can use
$$\ldots,V_{i+2},f_1^*s_1(i),f_1^*s_2(i),V_{i-1},\ldots,V_{j+1},C_{2,j-1},V_{j-1},\ldots$$
as equations of $R_{ij}$ (even in the case $i=j+1$), and
$$\ldots,V_{j+1},C_{2,j}^{-1},V_{j-1},\ldots$$
as equations of $R_{*,j-1}$. Thus, we can compute
$\und{\Hom}(Q_{j-1},P_{ij})\ot f_1^*p_i\OO(1)$ as the tensor product of 
\begin{itemize}
\item $\und{\End}(\{U_r,V_r\})\simeq \OO/(U_r,V_r)$, for $r\neq i,i+1,j$;
\item $\und{\Hom}(\{U_{i+1},V_{i+1}\},\{f_1^*p_i^*z_1\cdot U_{i+1},f_1^*s_1(i)\})\simeq \OO/(U_{i+1},f_1^*s_1(i))$; 
\item $\und{\Hom}(\{U_{i},V_{i}\},\{f_1^*p_i^*z_0\cdot U_{i},f_1^*s_2(i)\})\simeq \OO/(U_{i},f_1^*s_2(i));$
\item $\und{\Hom}(\{C_{2,j-1}\cdot U_{j},C_{2,j}^{-1}\},\{C_{2,j}^{-1}\cdot U_{j},C_{2,j-1}\})\simeq
\OO/(C_{2,j}^{-1},C_{2,j-1})[-1].$
\end{itemize}
This leads to
$$\und{\Hom}(Q_{j-1},P_{ij})\simeq \OO_{(R_{ij}\cap R_{*,j-1})\times \A^1_{U_{j}}} \ot f_1^*p_i^*\OO(-1)[1],$$
which has vanishing cohomology.
\end{proof}

\begin{lem}\label{PQ-morphisms-lem} 
(i) For $j<k$, the space $\Hom(Q_j,P_{kj})$ is freely generated over $\End(Q_j,P_{kj})$ by
the generator $\de_{j}$ of degree $0$ that corresponds to the morphism
$$\OO_{R_{*j}}\to \OO_{R_{kj}}.$$
On the other hand, the space $\Hom(P_{kj},Q_j)$ is freely generated over $\End(Q_j,P_{kj})$ by the generator
$\ga_{j}$ of degree $0$ the corresponds to the morphism
$$\OO_{R_{kj}}\rTo{X_1\cdot?} \OO_{R_{*j}}.$$
We have $\ga_{j}\de_{j}=\de_{j}\ga_{j}=X_1$.

\noindent
(ii) For $j\le k$, the space $\Hom(Q_j,P_{k,j-1})$ (resp., $\Hom(P_{k,j-1},Q_j)$)
is freely generated over $\End(Q_j,P_{k,j-1})$ by
$\de_{j-1}\a_{j-1}$ (resp., $\b_{j-1}\ga_{j-1}$),
where we use generators from Lemma \ref{Qj-morphisms-lem} and from part (i).
For $j<k-1$ the space $\Hom(Q_j,P_{k,j+1})$ (resp., $\Hom(P_{k,j+1},Q_j)$)
is freely generated over $\End(Q_j,P_{0,j+1}$) by 
$\de_{j+1}\b_{j}$ (resp., $\a_{j}\ga_{j+1}$).

\noindent
(iii) For $j<k$, we have $\de_{j-1}\a_{j-1}=\a_{k,j-1}\de_j$ in $\Hom(Q_j,P_{k,j-1})$
(resp., $\b_{j-1}\ga_{j-1}=\ga_j\b_{k,j-1}$ in $\Hom(P_{k,j-1},Q_j)$). We also have
$\de_j\b_{j-1}=\b_{k,j-1}\de_{j-1}$ in $\Hom(Q_{j-1},P_{kj})$ (resp., 
$\a_{j-1}\ga_j=\ga_{j-1}\a_{k,j-1}$ in $\Hom(P_{kj},Q_{j-1})$).
\end{lem}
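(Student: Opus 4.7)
The plan is to follow the same local--computation template used in Lemmas \ref{Pij-morphisms-lem}, \ref{Qj-morphisms-lem} and \ref{Hom-P-Q-van-lem}: present $Q_j$ and $P_{k,j}$ as tensor products of Koszul matrix factorizations in a neighborhood of $(\ov{R}_{*j}\cap \ov{R}_{kj})\times\A^k$, compute $\und{\Hom}$ using Lemma \ref{Koszul-mf-lem}, and then push forward to $\A^{2k+2}_{U_1,\ldots,U_k,V_1,\ldots,V_k,X_1,X_2}$.

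For part (i), the crucial point is that the case $i=k$ differs from the $i<k$ case treated in Lemma \ref{Hom-P-Q-van-lem}: the equations cutting out $\ov{R}_{kj}$ in its neighborhood involve $C_{1,k-1}$ (rather than $f_1^*s_1(i),f_1^*s_2(i)$) and $P_{kj}$ carries no twist $f_1^*p_k^*\OO(-1)$. Accordingly, for $0<j<k-1$ one decomposes
\[
\bw_{2,k}=\sum_{r\neq j,j+1,k} U_rV_r+X_1U_k\cdot C_{1,k-1}+f_2^*s_1(j)\cdot f_2^*p_j^*z_1\cdot U_{j+1}+f_2^*s_2(j)\cdot f_2^*p_j^*z_0\cdot U_j,
\]
presenting $Q_j$ and $P_{kj}$ as tensor products of Koszul matrix factorizations whose factors agree except in the $(X_1,C_{1,k-1})$-slot. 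In that slot $Q_j$ uses $\{U_k,V_k\}=\{X_1C_{1,k-1}U_k,1\}$ style data while $P_{kj}$ uses $\{X_1U_k,C_{1,k-1}\}$, so the relevant local $\und{\Hom}$ is governed by Lemma \ref{Koszul-mf-lem}(iii): this produces $\OO/(X_1,C_{1,k-1})$ in one direction and $\OO(-X_1)/(C_{1,k-1},\ldots)$ in the other, with the generating maps being, respectively, the projection $\OO_{\ov{R}_{*j}}\to \OO_{R_{kj}}$ and multiplication by $X_1$, $\OO_{R_{kj}}\to \OO_{\ov{R}_{*j}}$. Tensoring with the remaining factors and pushing forward gives a free module of rank one over $\End(Q_j,P_{kj})$, with $\gamma_j\delta_j=\delta_j\gamma_j=X_1$ following from $\a\b=ab$ in the Koszul model. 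The boundary cases $j=k-1$ and $j=0$ require only the obvious replacements in the $(X_2,C_{2,k-1})$ or $(f_2^*s_1(0))$ slots and are handled identically.

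For part (ii), I would argue by composition rather than a new $\und{\Hom}$ calculation. The local computation as in Lemma \ref{Hom-P-Q-van-lem} case $r=j\pm 1$, modified at the $i=k$ slot as above, shows that $\und{\Hom}(Q_j,P_{k,j-1})$ is supported on $(R_{kj}\cap R_{*,j-1})\times\A^2_{U_j,X_1}$ and is a free rank-one module after taking global sections. To exhibit a generator, compose the generator $\delta_{j-1}\in\Hom(Q_{j-1},P_{k,j-1})$ of part (i) with $\alpha_{j-1}\in\Hom(Q_j,Q_{j-1})$ from Lemma \ref{Qj-morphisms-lem}; the degree matches and the image in the local model of Lemma \ref{Koszul-mf-lem}(iii) is nonzero, hence this is the generator. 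The analogous composition $\beta_{j-1}\gamma_{j-1}$ handles the dual direction, and similarly $\delta_{j+1}\beta_j$, $\alpha_j\gamma_{j+1}$ for $j<k-1$.

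For part (iii), the relations $\delta_{j-1}\a_{j-1}=\a_{k,j-1}\delta_j$ and the three analogues are verified by tracing both sides through the description of the generators as morphisms between structure sheaves of unions of components of $\ov{\ZZ}_{2,k}$. Concretely, both compositions factor through the structure sheaf $\OO_{R_{kj}\cup R_{k,j-1}\cup R_{*,j-1}}$ (or its appropriate twist), and since the space $\Hom(Q_j,P_{k,j-1})$ is free of rank one in the relevant $\bbL_B$-degree (by (ii)), the two candidate generators can differ only by an invertible scalar in $\End(Q_j,P_{k,j-1})$; examination of the leading term on the common support $R_{kj}\cap R_{k,j-1}\cap R_{*,j-1}$ fixes this scalar to be $1$. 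The main technical obstacle is ensuring the correct asymmetry between $\gamma_j$ and $\delta_j$ predicted by Lemma \ref{Koszul-mf-lem}(iii)---in particular, tracking the twist by $\OO(-a)=\OO(-X_1)$ so that the grading $\deg(\delta_j)=0$, $\deg(\gamma_j)=0$ (with respect to the $\Z$-grading \eqref{comp-grading}, where $|X_1|=0$) and the identity $\gamma_j\delta_j=X_1$ come out consistently; once this is done, parts (ii) and (iii) are essentially bookkeeping.
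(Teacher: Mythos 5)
Your treatment of (i) and (ii) is essentially the paper's own: present $Q_j$ and $P_{kj}$ as tensor products of Koszul matrix factorizations near the relevant intersection, apply Lemma \ref{Koszul-mf-lem} (its part (iii) in the slot where $\{U_k,X_1C_{1,k-1}\}$ meets $\{X_1U_k,C_{1,k-1}\}$), and push forward to the affine base; this is exactly how the paper obtains $\und{\Hom}(Q_j,P_{kj})\simeq\und{\Hom}(P_{kj},Q_j)\simeq\OO_{p^{-1}\ov{R}_{kj}}/(U_k)$ and, in (ii), $\OO_{(R_{k,j-1}\cap R_{*j})\times\A^1_{U_j}}[-1]$, followed by a chase of the Koszul generators. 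Two of your local identifications are mis-transcribed, though neither affects the strategy: with $a=X_1$, $b=C_{1,k-1}$, $c=U_k$, Lemma \ref{Koszul-mf-lem}(iii) gives $\OO/(C_{1,k-1},U_k)$ rather than $\OO/(X_1,C_{1,k-1})$ (the support must contain the $X_1$-direction, or the $\Hom$-space could not be free over $\End(Q_j,P_{kj})$, which contains $X_1$), and in (ii) the $X_1$-line already lies inside $R_{k,j-1}\cap R_{*j}$, so your extra factor $\A^2_{U_j,X_1}$ double-counts it; also $\{U_k,V_k\}$ is simply $\{U_k,X_1C_{1,k-1}\}$. Where you genuinely diverge is (iii): the paper proves $\de_{j-1}\a_{j-1}=\a_{k,j-1}\de_j$ and its analogues functorially, from a morphism of exact triangles of structure sheaves, whereas you argue by rank-one freeness plus grading: since $2x_1,2x_2,2u_1,\ldots,2u_k$ are independent in $\bbL_B$, the graded piece of $\Hom(Q_j,P_{k,j-1})$ in the generator's degree is one-dimensional, so both composites are scalars times the generator and a leading-term check at the common component fixes the scalars. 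That route works, but be aware it relies on part (ii) and on the $\bbL_B$-grading (in the collapsed $\Z$-grading \eqref{comp-grading} the degree-zero part of $\End$ is large, so ``nonzero in the right degree'' alone would not suffice), and the leading-term verification is in substance the same local computation that the paper packages more cleanly into the commuting diagram of triangles; the same caveat applies to your ``nonzero image, hence generator'' step in (ii), which should be read as identifying the image with the Koszul generator, not merely as nonvanishing.
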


\begin{proof} 
(i) As in Lemma \ref{Hom-P-Q-van-lem}, we get an isomorphism on $\ov{R}_{kj}\times\A^k$,
$$\und{\Hom}(Q_j,P_{kj})\simeq \OO_{p^{-1}\ov{R}_{kj}}/(U_k).$$
Now the locus $U_k=0$ in $p^{-1}\ov{R}_{kj}$ is the product $\A^1_{X_1}\times C_2(U_j,U_{j+1})$ for $j>0$, and
$\A^1_{X_1}\times C_2(U_1)$ for $j=0$.
The space of global functions on this locus coincides with $\End(Q_j,P_{kj})$.

Assume now that $j<k-1$.
Then as before, we get an isomorphism on $\ov{R}_{*j}\times \A^k$,
\begin{align*}
&\und{\Hom}(P_{kj},Q_j)\simeq \OO/(U_r \ |\ r\neq k,j,j+1)\ot \OO/(U_k,C_{1,k-1}) \ot \\
&\OO/(f_2^*p_j^*z_1\cdot U_{j+1},f_2^*p_j^*z_0\cdot U_{j})\simeq
\OO_{p^{-1}\ov{R}_{kj}}/(U_k),
\end{align*}
which is the same sheaf as before.

In the case $j=k-1$, the sheaf $\und{\Hom}(P_{k,k-1},Q_{k-1})$ is the tensor product of
\begin{itemize}
\item $\und{\End}(\{U_r,V_r\})\simeq \OO/(U_r,V_r)$, for $r<k-1$;
\item $\und{\Hom}(\{X_1f_2^*p_{k-1}^*z_1\cdot U_k,s_k\},\{f_2^*p_{k-1}^*z_1\cdot U_k,f_2^*s_{k-1}(1)\})\simeq \OO/(f_2^*p_{k-1}^*z_1\cdot U_k,s_1)$;
\item $\und{\End}(\{f_2^*p_{k-1}^*z_0\cdot U_{k-1},f_2^*s_2(k-1)\})\simeq \OO/(f_2^*p_{k-1}^*z_0\cdot U_{k-1},f_2^*s_2(k-1))$,
\end{itemize}
so we get an isomorphism on $R_{k,k-1}\times\A^k$,
$$\und{\Hom}(P_{k,k-1},Q_{k-1})\simeq \OO/((U_r \ |\ r<k-1),f_2^*p_{k-1}^*z_1\cdot U_k,f_2^*p_{k-1}^*z_0\cdot U_{k-1})=\OO_Z,$$
where $Z\simeq \A^1_{X_1}\times C_2(U_{k-1},U_k)$.
Hence,
$$\Hom(P_{k,k-1},Q_{k-1})\simeq \k[X_1,U_{k-1},U_k]/(U_{k-1}U_k)=\End(P_{k,k-1},Q_{k-1}).$$

\noindent
(ii) Similarly to Lemma \ref{Hom-P-Q-van-lem} we get the following local isomorphisms
$$\und{\Hom}(Q_j,P_{k,j-1})\simeq \und{\Hom}(P_{k,j-1},Q_j)\simeq \OO_{(R_{k,j-1}\cap R_{*j})\times\A^1_{U_{j}}}[-1],$$
$$\und{\Hom}(Q_j,P_{k,j+1})\simeq \und{\Hom}(P_{k,j+1},Q_j)\simeq \OO_{(R_{k,j+1}\cap R_{*j})\times\A^1_{U_{j+1}}}[-1].$$
Chasing the generators from Lemma \ref{Koszul-mf-lem} we get the assertion.

\noindent
(iii) The relation $\de_{j-1}\a_{j-1}=\a_{k,j-1}\de_j$ comes from the morphism of exact triangles
\begin{diagram}
\OO_{R_{*j-1}}&\rTo{c_{2,{j-1}}}&\OO_{R_{*j-1}\cup R_{*j}}&\rTo{}& \OO_{R_{*j}} &\rTo{\a_{j-1}}& \OO_{R_{*j-1}}[1]\\
\dTo{\de_{j-1}}&&\dTo{}&&\dTo{\de_j}&&\dTo{\de_{j-1}}\\
\OO_{R_{0,j-1}}&\rTo{c_{2,j-1}}&\OO_{R_{k,j-1}\cup R_{k,j}}&\rTo{}& \OO_{R_{k,j}}&\rTo{\a_{k,j-1}}&\OO_{R_{k,j-1}}[1]
\end{diagram}
The other relations are proved similarly.
\end{proof}

\subsubsection{Recovering the multigrading; shifted generators}\label{Recovering-multigrading}

Recall that we are interested in the $\bbL_B$-graded category of $\wt{T}$-equivariant matrix factorizations
$$\BB_{2,k}:=\MF_{\wt{T}}(\ov{\XX}_{2,k},\bw_{2,k}),$$
and our generating matrix factorizations $P_{ij}$ and $Q_j$ have natural lifts
to this category (recall that we equip $f_1^*p_i^*\OO(1)$ with the $T$-equivariant structure, so that $f_1^*p_i^*z_0$ has
weight $0$).

The group $\wt{H}$ of characters of the torus $\wt{T}$ is a free abelian group with generators $x_1,x_2,u_1,\ldots,u_k,l_0$.
Hence, the grading group $\bbL_B=2\wt{H}+\Z l_0$ is free on generators $2x_1,2x_2,2u_1,\ldots,2u_k,l_0$.
So far, we computed morphisms in the $\Z$-graded category obtained 
from $\BB_{2,k}$ by
collapsing the $\bbL_B$-grading via the homomorphism
$f:\bbL_B\to \Z$ given by 
$$f(2x_1)=f(2x_2)=f(2u_i)=0, f(l_0)=1.$$
for which we have
$f(2v_i)=2$, $f(2c_{1i})=f(2c_{2i})=2(k-i)$, and $f(2x_3)=2k$.

Now for all morphisms between the generators $(P_{ij}, Q_j)$ in this $\Z$-graded category we can identify
morphisms in $\BB_{2,k}$ that they come from, and in this way we will
determine completely morphisms between our generators in $\BB_{2,k}$.

We start with endomorphisms of $P_{ij}$ and $Q_j$. Recall that all of them are generated by 
the multiplication with functions $X_1,X_2$ and $U_i$. Hence, we lift them to the same endomorphisms
in $\BB_{2,k}$, and they acquire the natural (doubled) grading, so $X_i$ will have the grading $2x_i$ and $U_i$ will have
the grading $2u_i$.

The exact triangles in Lemmas \ref{Pij-morphisms-lem}, \ref{PQ-morphisms-lem} and \ref{Qj-morphisms-lem}
show that the generators of morphisms between $(P_{ij})$ have the following lifts:
$$\a_{ij}\in\Hom^{l_0-2c_{2,j}}(P_{i,j+1},P_{i,j}), \ \a_j\in\Hom^{l_0-2c_{2,j}}(Q_{j+1},Q_{j}),$$
$$\ga_{ij}\in\Hom^{l_0-2c_{1,i}+2c_{1,i+1}}(P_{i+1,j},P_{i,j}) \ \text{ for } i<k, \ \ga_{kj}\in\Hom^{l_0-2c_{1,k-1}}(P_{k,j},P_{k-1,j})$$
$$\de_j\in\Hom^0(Q_j,P_{kj}).$$
The gradings of the remaining generators are determined by the relations and by the gradings of the endomorphisms.

Now let us define the following shifted generators in $\BB_{2,k}$:
$$\wt{Q}_j:=Q_j[k-j]\ot (-2c_{2,j}-2c_{2,j+1}-\ldots-2c_{2,k-1}),$$
$$\wt{P}_{ij}:=P_{ij}[2k-i-j]\ot (-2c_{1,i}-2c_{2,j}-2c_{2,j+1}-\ldots-2c_{2,k-1}).$$
Then the morphisms $\a_{ij}$, $\a_j$, $\ga_{ij}$ and $\de_j$ induce morphisms of degree $0$ between these shifted
generators.

\begin{thm}\label{B-side-formality-thm}
The $\bbL_B$-graded $A_\infty$-algebra of endomorphisms of $\bigoplus \wt{Q}_j\oplus \bigoplus \wt{P}_{ij}$ is formal.
The $\bbL_B$-grading on its cohomology is uniquely determined by the condition that the morphisms $\a_{ij}$, $\a_j$, $\ga_{ij}$ and $\de_j$ have degree $0$,
while $X_r$ (resp., $U_r$), viewed as an endomorphism of $\wt{Q}_j$ or $\wt{P}_{ij}$, has degree $2x_r\in \bbL_B$
(resp., $2u_r\in \bbL_B$).
\end{thm}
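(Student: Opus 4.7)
The plan is to apply the formality criterion of Lemma~\ref{formality-lem} with the admissible homomorphism $f:\bbL_B\to\Z$ defined by $f(l_0)=1$ and $f(2x_r)=f(2u_r)=0$, which is precisely the collapse yielding the $\Z$-grading \eqref{comp-grading} of Section~\ref{n=2-morphisms-sec}. Since $\wt{T}$ is a reductive torus, the equivariant homological perturbation lemma stated just after the definition of $\bbL$-graded $A_\infty$-algebras produces a minimal $\bbL_B$-graded $A_\infty$-model $A$ of the dg endomorphism algebra of $\bigoplus\wt{Q}_j\oplus\bigoplus\wt{P}_{ij}$, whose underlying cohomology algebra has already been computed in Section~\ref{n=2-morphisms-sec} via Lemmas~\ref{Pij-End-lem}, \ref{Qj-End-lem}, \ref{Pij-morphisms-lem}, \ref{Qj-morphisms-lem} and \ref{PQ-morphisms-lem}. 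By Lemma~\ref{formality-lem}, it suffices to verify that the collapsed $\Z$-graded algebra $A/f$ is concentrated in degree zero.

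The endomorphism algebra of each individual object $\wt{P}_{ij}$ or $\wt{Q}_j$ is generated, by Lemmas~\ref{Pij-End-lem} and \ref{Qj-End-lem}, by the multiplications with $X_1, X_2, U_1, \ldots, U_k$, all of which have $\Z$-degree $0$ under \eqref{comp-grading}. For morphisms between distinct shifted generators, Lemmas~\ref{Pij-morphisms-lem}, \ref{Qj-morphisms-lem} and \ref{PQ-morphisms-lem} express every $\Hom$-space as a rank-one free module over the appropriate tensor product of endomorphism algebras, generated by compositions of $\a_{ij}, \b_{ij}, \ga_{ij}, \de_{ij}, \a_j, \b_j, \ga_j, \de_j$. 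The homological shifts $[2k-i-j]$ and $[k-j]$ together with the line bundle twists $\otimes(-2c_{1,i}-2c_{2,j}-\ldots-2c_{2,k-1})$ in the definitions of $\wt{P}_{ij}$ and $\wt{Q}_j$ are arranged precisely so that $\a_{ij}, \a_j, \ga_{ij}, \de_j$ all land in $\Z$-degree $0$; the complementary generating morphisms are then automatically also in $\Z$-degree zero, because the relations $\a_{ij}\b_{ij}=U_{j+1}$, $\ga_{ij}\de_{ij}=U_{i+1}$, $\a_j\b_j=U_{j+1}$, $\de_j\ga_j=X_1$ (together with $|U_r|=|X_r|=0$) force the degrees of $\b_{ij}, \de_{ij}, \b_j, \ga_j$.

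Once $A/f$ is concentrated in degree zero, Lemma~\ref{formality-lem} immediately yields $\mathfrak{m}_n=0$ for all $n>2$, establishing formality of the $\bbL_B$-graded $A_\infty$-algebra. The uniqueness of the $\bbL_B$-grading on $H^*(A)$ then follows from the fact that this algebra is generated as an associative algebra by the elements $X_r, U_r, \a_{ij}, \a_j, \ga_{ij}, \de_j$: once their $\bbL_B$-degrees are prescribed, the degrees of $\b, \de_{ij}, \ga_j$ and of all further compositions are forced by the multiplicative relations. The main technical content is the degree-counting verification in the middle paragraph, which reduces to the identity $2c_{2,j-1}-2c_{2,j}=2v_j=2l_0-2u_j$ (and its analogue for $c_{1,\bullet}$) converting the line bundle twists into compatible homological and $\wt{T}$-weight shifts---a direct bookkeeping exercise rather than a genuine obstacle.
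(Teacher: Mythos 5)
Your proposal is correct and follows essentially the same route as the paper: collapse the $\bbL_B$-grading via the admissible homomorphism $f$ killing $2x_r$ and $2u_r$, check that all quiver generators (the distinguished arrows $\a_{ij},\a_j,\ga_{ij},\de_j$ together with $X_r,U_r$, and then $\b_{ij},\de_{ij},\b_j,\ga_j$ forced by the relations $\a\b=U_{j+1}$, $\ga\de=U_{i+1}$, $\de_j\ga_j=X_1$) sit in $\Z$-degree $0$, and invoke Lemma~\ref{formality-lem}; the uniqueness argument via multiplicative generation is likewise the one in the paper.
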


\begin{proof} Let us denote by $S$ a multiplicatively closed subset of arrows in $\End(\bigoplus \wt{Q}_j\oplus \bigoplus \wt{P}_{ij})$ (viewed as a quiver with relations), which includes all arrows $X_r$ and $U_r$ (forming a loop at some vertex), as well as the arrows $\a_{ij}$, $\a_j$, $\ga_{ij}$ and $\de_j$. 
The computations in Sec.\ \ref{End-sec}, \ref{P-morphisms-sec}, \ref{QP-morphisms-sec}, \ref{Q-morphisms-sec} show
that for every other generator $t$ of the $\End$-algebra, there exists an element $s\in S$ such that $st$ is a nonzero element of $S$.
Hence, the grading of the entire $\End$-algebra is uniquely determined by the grading of the elements of $S$.
 
The formality statement follows from Lemma \ref{formality-lem}, due to the fact that with the respect to the $\Z$-grading induced by $f:\bbL_B\to \Z$,
the subset $S$ has degree $0$.
\end{proof}

\subsection{Localization for $n=2$}

As in Section \ref{n1-loc-sec}, we denote by $\BB_{2,k}^\Z$
(resp., $\ov{\BB}_{2,k}^\Z$) 
the category of $\Z$-graded matrix factorizations on $\bw_{2,k}$ on $\XX_{2,k}$ (resp., $\ov{\XX}_{2,k}$),
corresponding to the grading 
$$|U_i|=2, \ |X_i|=|C_{ij}|=|V_i|=0.$$

We have equivalences 
$$\BB_{2,k}^\Z\simeq D^b\Coh(\ZZ_{2,k}), \ \ \ov{\BB}_{2,k}^\Z\simeq D^b\Coh(\ov{\ZZ}_{2,k}),$$
and we can identify $D^b\Coh(\ZZ_{2,k})$ with the quotient of $\ov{\BB}_{2,j}^\Z$ by 
the subcategory of matrix factorizations supported on
the boundary of $\ov{\ZZ}_{2,k}$, 
$$D:=f_2^{-1}(p)=\crit(w)\cap (\ov{\YY}_{2,k}\setminus \YY_{2,k}),$$
where $p$ is the infinite point on $\ov{R}_0=\P^1\sub \ov{\YY}_{1,k}$.

Recall that $D$ is a nodal chain,
$$D=D'_0\cup D_0\cup D_1\cup\ldots \cup D_k,$$
where $D_i=\ov{R}_{i0}\setminus R_{i0}$ for $i>0$ and
$D'_0\cup D_0=\ov{R}_{00}\setminus R_{00}$.
Thus, by Lemma \ref{generation-lem-k=1},
our subcategory is generated by the matrix factorizations corresponding to
the structure sheaves of the components $D_0,\ldots,D_k$, as well as the structure sheaf of $D$.
Actually, the same proof shows that the objects 
$$\OO_D,\OO_{D_0}(-1),\ldots,\OO_{D_k}(-1)$$
generate $D^b\Coh(D)$.

Similarly to Lemma \ref{n1-resolution-lem} we can find the resolutions for these objects.
Let us set for brevity $\OO(H_i^v):=f_1^*p_i^*\OO(1)$, $\OO(H_i^h):=f_2^*p_i^*\OO(1)$
(here $v$ stands for ``vertical", $h$ stands for ``horizontal").
Let us set $\ov{R}_{i,\le j}:=\cup_{j'\le j}\ov{R}_{ij'}$,  $\ov{R}_{i*}:=\cup_j \ov{R}_{ij}$
$\ov{R}_{\le i,j}:=\cup_{i'\le i}\ov{R}_{i'j}$, and
consider the objects, for $i<j$,
$$P_{i,\le j}:=\OO_{\ov{R}_{i,\le j}}(H_{j-1}^h+\ldots+H_0^h-H_i^v),$$
$$P_{i,\le i}:=\OO_{\ov{R}_{i*}}(H_i^h+\ldots+H_0^h-H_i^v),$$
$$P'_{i,\le j}:=\OO_{\ov{R}_{i,\le j}}(H_j^h+\ldots+H_{1}^h-H_i^v),$$
$$P'_{i,\le i}:=\OO_{\ov{R}_{i*}}(H_i^h+\ldots+H_{1}^h-H_i^v),$$
$$P_{\le i,j}:=\OO_{\ov{R}_{\le i,j}}(-H_i^v), \ \ P_{ii}=P_{\le i,i}:=\OO_{\ov{R}_{ii}}(H_i^h-H_i^v),$$
$$P'_{\le i,j}:=\OO_{\ov{R}_{\le i,j}}, \ \ P'_{ii}=P'_{\le i,i}:=\OO_{\ov{R}_{ii}}.$$
Note that for $i>0$,
$$P_{i,\le 0}=P'_{i,\le 0}=P_{i0}, \ \  P_{\le k,j}=P'_{\le k,j}=Q_j.$$
It is also convenient to set $P'_{ij}=P_{ij}$ for $i>j$.

\begin{lem}\label{n2-resolution-lem}
(i) We have the following exact triangles, for $i>j$,
$$P_{ij}[-1]\to P_{\le i-1,j}\to P_{\le i,j}\to\ldots,$$
$$P'_{\le i,j}\to P'_{\le i-1,j}\to P_{ij}[1]\to\ldots,$$
and for $i\ge j$,
$$P_{i,\le j-1}\to P_{i,\le j}\to P_{ij}\to\ldots,$$
$$P'_{ij}\to P'_{i,\le j}\to P'_{i,\le j-1}\to\ldots,$$
which recursively express the objects $(P_{\le i,j})$, $(P_{i,\le j})$ in terms of the generators.
Finally, we have the exact triangles, for $i>0$,
$$P'_{i,\le i}\to P_{i,\le i}\to \OO_{D_i}(-1),$$
and
$$P'_{00}\to P_{00}\to \OO_{D_0}(-1).$$

\noindent
(ii) One has $\OO_D\simeq f_2^*\OO_p$, $Q_j=f_2^*\OO_{R_j}$, so the pull-back of the exact triangles
of Lemma \ref{n1-resolution-lem} under $f_2$ give a resolution for $\OO_D$.
\end{lem}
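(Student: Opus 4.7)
The plan is to proceed exactly as in Lemma~\ref{n1-resolution-lem}: each exact triangle will come from a genuine short exact sequence of coherent sheaves
\[
0\to L|_{C'}(-C\cap C')\to L|_{C\cup C'}\to L|_C\to 0,
\]
associated with a decomposition of a union of components of $\ov{\ZZ}_{2,k}$ as $C\cup C'$, twisted by an appropriate line bundle $L$ built out of the classes $H_i^v=f_1^*p_i^*H$ and $H_j^h=f_2^*p_j^*H$.

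For part~(i), I will handle the four families of triangles in turn. The first two come from the vertical decomposition $\ov{R}_{\le i,j}=\ov{R}_{\le i-1,j}\cup \ov{R}_{ij}$: the triangle $P_{ij}[-1]\to P_{\le i-1,j}\to P_{\le i,j}$ arises after twisting by $-H_i^v$, while $P'_{\le i,j}\to P'_{\le i-1,j}\to P_{ij}[1]$ comes from the analogous sequence with the roles of ``kernel'' and ``quotient'' reversed (here no twist is needed). The next two triangles are entirely analogous, using the horizontal decomposition $\ov{R}_{i,\le j}=\ov{R}_{i,\le j-1}\cup \ov{R}_{ij}$; the only subtlety occurs when $i=j$, in which case one component of the decomposition is the full row $\ov{R}_{i*}$ and the relevant section cutting out the intersection divisor is $s_i$ (or $s_k$) from Lemma~\ref{k=2-complete-int-lem}(ii). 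Finally, the triangles ending in $\OO_{D_i}(-1)$ come from
\[
0\to \OO_{\ov{R}_{i*}}(-H_i^v)\to \OO_{\ov{R}_{i*}}\to \OO_{D_i}(-1)\to 0,
\]
using the linear equivalence, inside $\ov{R}_{i*}$, between the compactifying boundary divisor $D_i$ and the fiber class $H_i^v$; a similar sequence on $\ov{R}_{0*}=\ov{R}_{00}$ handles the $D_0$ case.

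Part~(ii) is essentially formal. The chart-by-chart description at the start of this section shows that $f_2:\ov{\YY}_{2,k}\to \ov{\YY}_{1,k}$ is a toric morphism with $f_2^{-1}(\ov{R}_j)=\ov{R}_{*j}$ and $f_2^{-1}(p)=D$ as reduced schemes, so $Q_j\simeq f_2^*\OO_{\ov{R}_j}$ and $\OO_D\simeq f_2^*\OO_p$; since $Lf_2^*$ preserves triangles, pulling back the resolution of $\OO_p$ from Lemma~\ref{n1-resolution-lem} yields the desired resolution of $\OO_D$. The main obstacle I anticipate is purely bookkeeping: verifying the linear equivalences between the divisor classes $H_i^v$, $H_j^h$, and the pairwise intersection divisors of components that are required in order to identify the kernel in each short exact sequence with the correctly twisted generator ($P_{\le i-1,j}$, $P_{i,\le j-1}$, etc.). These equivalences follow from the complete-intersection descriptions in Lemma~\ref{k=2-complete-int-lem}, together with the factorizations $V_i=s_1(i)\cdot p_i^*z_1=s_2(i-1)\cdot p_{i-1}^*z_0$ of Lemma~\ref{proj-P1-lem}(ii); but tracking them consistently through the many twists defining $P_{ij}$, $P'_{ij}$, $Q_j$ constitutes the bulk of the work.
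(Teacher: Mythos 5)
Your overall strategy --- obtaining each triangle from a short exact sequence of sheaves on a union of components, exactly as in Lemma \ref{n1-resolution-lem}, and pulling back under $f_2$ for part (ii) --- is the same as the paper's, and the treatment of the four recursive families and of part (ii) is essentially on track. However, your construction of the final triangles is wrong. The claimed linear equivalence $D_i\sim H_i^v$ inside $\ov{R}_{i*}$ fails: $D_i$ is contained in $\ov{R}_{i0}$ only and is a fiber of $p_0\circ f_2$ (the locus $X_3=\infty$), so $\OO_{\ov{R}_{i*}}(D_i)\simeq H_0^h$, which is trivial on every component $\ov{R}_{ij}$ with $j\ge 1$, whereas $H_i^v$ has degree $1$ on the $C_{1,i}$-ruling of each such component. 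Consistently, $P_{i,\le i}$ and $P'_{i,\le i}$ differ by the twist $H_0^h$, not $H_i^v$, so no twist of your proposed sequence $\OO_{\ov{R}_{i*}}(-H_i^v)\to\OO_{\ov{R}_{i*}}\to\OO_{D_i}(-1)$ can yield the required triangle $P'_{i,\le i}\to P_{i,\le i}\to\OO_{D_i}(-1)$ (and the cokernel of multiplication by a section would in any case be the structure sheaf of its zero locus, not a $(-1)$-twist). The correct sequence is multiplication by the section of $H_0^h=f_2^*p_0^*\OO(1)$ vanishing on $D$ (the pullback of the section denoted $X_2^{-1}$ in the last triangle of Lemma \ref{n1-resolution-lem}); the twist $\OO_{D_i}(-1)$ then appears because all $H_j^h$ restrict trivially to $D_i$ (as $f_2(D_i)=p$) while $-H_i^v$ restricts to $\OO_{\P^1}(-1)$.

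Second, the identifications you defer as ``bookkeeping'' contain the one genuinely nontrivial point, which the paper isolates: on the diagonal components $R_{ii}$ (and on $\ov{R}_{00}$), which are blow-ups of a point of $\P^1\times\A^1_{X_2}$, one needs
$\OO_{R_{ii}}(E_i^v)\simeq\OO_{R_{ii}}(-E_i^h)\simeq\OO_{R_{ii}}(H_i^v-H_i^h)$,
where $E_i^v=R_{ii}\cap R_{i+1,i}$ and $E_i^h=R_{ii}\cap R_{i,i-1}$. This is precisely what identifies the subobject with the correctly twisted generator whenever your recursion reaches the diagonal: in your first family with $i-1=j$ the kernel is $\OO_{\ov{R}_{jj}}(-E_j^v)$ and must be recognized as $P_{jj}=\OO_{\ov{R}_{jj}}(H_j^h-H_j^v)$, and in your fourth family with $i=j$ the subobject $\OO_{\ov{R}_{ii}}(-E_i^h+H_i^h-H_i^v)$ must be recognized as $P'_{ii}=\OO_{\ov{R}_{ii}}$; the analogous relation on $\ov{R}_{00}$ also governs the very last triangle, since $P_{00}$ and $P'_{00}$ differ by $H_0^h-H_0^v$. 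These relations do not follow from the complete-intersection equations of Lemma \ref{k=2-complete-int-lem} alone but from the blow-up geometry of the diagonal components; without them (and with the $D_i$-triangle as you set it up) the recursion does not close.
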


\begin{proof} These are pretty straightforward. The only nontrivial point is to use the following relations between divisors
on $R_{ii}$ (and $\ov{R}_{00}$), which is isomorphic to the blow up of a point on $\A^1\times\P^1$.
Let us set $E_i^v:=R_{ii}\cap R_{i+1,i}$, $E_i^h:=R_{ii}\cap R_{i,i-1}$. Then one has
$$\OO_{R_{ii}}(E_i^v)\simeq \OO_{R_{ii}}(-E_i^h)\simeq \OO_{R_{ii}}(H_i^v-H_i^h).$$
\end{proof}
  
\section{Matching the A-side with the B-side}

\subsection{Equivalence of the Fukaya category with one stop with the compactified LG-model}

Here we prove Theorem \ref{main-thm-bis}.

First, we observe that we can match the grading groups. Namely, for this we use canonical decompositions
$$\bbL=\Z\oplus H,$$
$$\bbL_B=\Z\cdot l_0\oplus 2H,$$
where $H=H_1(\Sigma)$ (see Sec.\ \ref{Fuk-gen-sec}). The isomorphism $\bbL\to \bbL_B$ sends
$(n,h)$ to $nl_0+2h$. Recall that by definition the homomorphism $\si:\bbL\to \Z/2$ sends $H$ to $0$,
hence our isomorphism is compatible with the homomorphisms to $\Z/2$.

In the case $n=1$, we consider the correspondence between the generating objects
$$P_i\leftrightarrow L_i, \ i=0,\ldots,k.$$
It is easy to see that it extends to the isomorphism between the corresponding (cohomology) endomorphism algebras,
so that endomorphisms $U_i$ and $X_1$ on the B-side correspond to $u_i$ and $x_1$ on the $A$-side, 
and $a_i:P_i\to P_{i-1}$ (resp., $b_i:P_{i-1}\to P_i$)  corresponds to $r_i:L_i\to L_{i-1}$ (resp. $\ell_i:L_{i-1}\to L_i$).
It is easy to see that there is a unique (up to common shift by an element of $\bbL$) choice of gradings of $L_i$, so that
the gradings match. Since both endomorphism algebras are formal, the equivalence follows.

In the case $n=2$, we consider the following correspondence between the generating objects
\begin{equation}\label{objects-correspondence-eq}
\begin{array}{l}
P_{ij}\leftrightarrow L_{i}\times L_{j}, \ \text{ for } 0\le j<i\le k, \nonumber\\
Q_j\leftrightarrow L_{j}\times L_{k+1}, \ \text{ for } 0\le j\le k.
\end{array}
\end{equation}

A straightforward check shows that the morphisms on both sides match, where the generators 
$u_i$ (resp., $x_1,x_2$) on the A-side correspond to $U_{i}$ (resp., $X_1,X_2$) on the B-side.

By \cite[Thm.\ 3.2.5(iii)]{LPsymhms}, up to shifts of the generators, the grading on the A-side is uniquely determined by the degrees of $u_r$ and $x_r$.
Hence, by choosing appropriate shifts, we can make it match with the grading on the B-side. Again both endomorphism algebras are formal, so
the equivalence follows.

\subsection{Matching the localizations}  

Now we can deduce Theorem \ref{main-thm} by localization.

\subsubsection{Case $n=1$}

We have to match the resolution \eqref{n1-T-resolution} for the object $T$ in the Fukaya category with the resolution for $\OO_p$ obtained 
in Lemma \ref{n1-resolution-lem}. The exact triangles of that Lemma show recursively the following correspondence between the A-side and the B-side:
$$\OO_{R_{[0,i]}}(1,\ldots,1,0)\leftrightarrow [L_i\rTo{r_i}\ldots\rTo{r_1} L_0], \ i<k,$$
$$\OO_{R_{[0,k]}}(1,1,\ldots,1)\leftrightarrow [L_k\rTo{r_k}\ldots\rTo{r_1} L_0],$$
$$\OO_{R_{[0,i]}}(0,1,\ldots,1)\leftrightarrow [L_0\rTo{\ell_1}\ldots\rTo{\ell_i} L_i], \ i\le k.$$
Finally, we can identify the morphism 
$$\OO_{R_{0,k}}(0,1,\ldots,1)\to \OO_{R_{[0,k]}}(1,\ldots,1)$$
in the resolution for $\OO_p$ with the degree $0$ morphism of twisted complexes
$$[L_0\to \ldots \to L_k]\to [L_k\to \ldots \to L_0]$$
given by $x_1:L_k\to L_k$.
The cone of this morphism is exactly the resolution of the arc $T$, which therefore corresponds to $\OO_p$.


\subsubsection{Case $n=2$}

We will match the resolutions of $L_i\times T$ obtained from \eqref{n2-T-resolution} with the resolutions in Lemma \ref{n2-resolution-lem}.
The exact triangles of that Lemma show recursively the following correspondences for $i\le k$:
\begin{itemize}
\item for $j=0,\ldots,i-1$,
$P_{i,\le j}\leftrightarrow [L_{i,j}\to\ldots \to L_{i,0}]$,
\item for $r=k-1,\ldots,i$,
$P_{\le r,i}\leftrightarrow [L_{k+1,i}\to L_{k,i}[-1]\to\ldots\to L_{r+1,i}[-1]]$,
\item for $j=0,\ldots,i-1$,
$P'_{i,\le j}\leftrightarrow [L_{i,0}\to \ldots \to L_{i,j}]$,
\item for $r=k-1,\ldots,i$,
$P'_{\le r,i}\leftrightarrow [L_{r+1,i}[1]\to\ldots\to L_{k,i}[1]\to L_{k+1,i}]$.
\end{itemize}
Next, using the exact triangles for $P_{i,\le i}$ and $P'_{i,\le i}$ we sew together the above complexes and get the correspondences
$$P_{i,\le i}\leftrightarrow [L_{k+1,i}\to L_{k,i}[-1]\to\ldots\to L_{i+1,i}[-1]\to L_{i,i-1}\to\ldots \to L_0],$$
$$P'_{i,\le i}\leftrightarrow [L_{i,0}\to\ldots\to L_{i,i-1}\to L_{i+1,i}[1]\to\ldots\to L_{k,i}[1]\to L_{k+1,i}].$$  
Finally, using the exact triangle for $\OO_{D_i}(-1)$, we see that it corresponds to the complex \eqref{n2-T-resolution} representing
$T\times L_i$.

For $i=k+1$, similarly to the case $n=1$, we check that the complex \eqref{n2-T-resolution-bis} corresponds to the resolution for $\OO_D$
in Lemma \ref{n2-resolution-lem}(ii).

\end{document}